\numberwithin{equation}{section}
 \newcommand{\R}{\mathbb{R}}
\newcommand{\Z}{\mathbb{Z}}
\newcommand{\C}{\mathbb{C}}
\newcommand{\Hn}{\mathbb{H}}
\newcommand{\gl}{\mathfrak{gl}}
\renewcommand{\sl}{\mathfrak{sl}}
\renewcommand{\so}{\mathfrak{so}}
\renewcommand{\sp}{\mathfrak{sp}}
\renewcommand{\st}{\mathsf{stn}}
\renewcommand{\sc}{\mathsf{sc}}
\renewcommand{\ll}{\ell_{3}}
\renewcommand{\Re}{\mathbbm{Re}}
\renewcommand{\Im}{\mathbbm{Im}}
\DeclareMathOperator{\imm}{\mathsf{Im}}
\DeclareMathOperator{\Tor}{\mathsf{Tor}}
\DeclareMathAlphabet{\mathscrbf}{OMS}{mdugm}{b}{n}
\DeclareMathOperator{\SO}{\mathsf{SO}}
\DeclareMathOperator{\Sp}{\mathsf{Sp}}
 \DeclareMathOperator{\SU}{\mathsf{SU}}
\DeclareMathOperator{\U}{\mathsf{U}}
\DeclareMathAlphabet{\mathpzc}{OT1}{pzc}{m}{it}
\DeclareMathOperator{\Hh}{\mathsf{H}}
\DeclareMathOperator{\E}{\mathsf{E}}
\DeclareMathOperator{\Gl}{\mathsf{GL}}
\DeclareMathOperator{\Sl}{\mathsf{SL}}
\DeclareMathOperator{\Aut}{\mathsf{Aut}}
\DeclareMathOperator{\id}{\mathsf{id}}
\DeclareMathOperator{\Ss}{S}
\DeclareMathOperator{\Ed}{\mathsf{End}}
\DeclareMathOperator{\J}{\mathsf{J}}
\DeclareMathOperator{\K}{\mathsf{K}}
\DeclareMathOperator{\Ad}{\mathsf{Ad}}
\DeclareMathOperator{\vol}{\mathsf{vol}}
\DeclareMathOperator{\Id}{\mathsf{Id}}
\newcommand{\fr}{\mathfrak}
\newcommand{\al}{\alpha}
\newcommand{\be}{\beta}
\newcommand{\mc}{\mathcal}
\newcommand{\mf}{\mathsf}
\newcommand{\cc}{\big(}
\newcommand{\CC}{\Big(}
\newcommand{\rr}{\big)}
\newcommand{\RR}{\Big)}
\DeclareMathAlphabet{\mathscrbf}{OMS}{mdugm}{b}{n}
\DeclareMathOperator{\ad}{ad}
\DeclareMathOperator{\Tr}{\mathsf{Tr}}
\DeclareMathOperator{\Hom}{\mathsf{Hom}}
\DeclareMathOperator{\ke}{\mathsf{Ker}}
\DeclareMathOperator{\dd}{d}
\newcommand{\thickline}{\noalign{\hrule height 1pt}}
\newtheorem{theorem}{Theorem}[section]
\newtheorem{lem}[theorem]{Lemma}
\newtheorem{prop}[theorem]{Proposition}
\newtheorem{corol}[theorem]{Corollary}
\newtheorem{rem}[theorem]{Remark}
\theoremstyle{definition}
\newtheorem{defi}[theorem]{Definition}
\newtheorem{example}[theorem]{Example}
\theoremstyle{remark}
\numberwithin{equation}{section}
\definecolor{dark}{rgb}{0.18,0.18,0.68}
\definecolor{mydark}{rgb}{0.78,0.08,0.08}
\definecolor{crew}{rgb}{0.2,0.5,0.2}
\definecolor{mmg}{rgb}{0.31,0.50,0.23}
\definecolor{dblue}{rgb}{0.01,0.01,0.44}
\definecolor{red}{rgb}{0.57,0.11,0.15}
\definecolor{cobalt}{RGB}{61,89,171}
\title[Differential geometry of $\SO^\ast(2n)$-type structures]{Differential geometry of $\SO^\ast(2n)$-type structures}
\author{Ioannis Chrysikos} 
\address{Faculty of Science, University of Hradec Kr\'alov\'e, Rokitanskeho 62, Hradec Kr\'alov\'e
50003, Czech Republic}
\email{ioannis.chrysikos@uhk.cz}
\author{Jan Gregorovi\v c} 
\address{Faculty of Science, University of Hradec Kr\'alov\'e, Rokitanskeho 62, Hradec Kr\'alov\'e
50003, Czech Republic}
\email{jan.gregorovic@seznam.cz}
\author{Henrik Winther} 
\address{Department of Mathematics and Statistics, Masaryk University, Kotl\'a\v{r}sk\'a 2, Brno 611 37, Czech Republic} 
\email{winther@math.muni.cz}
\begin{document}

\begin{abstract}
We study $4n$-dimensional smooth manifolds  admitting a $\SO^*(2n)$- or a $\SO^*(2n)\Sp(1)$-structure, where $\SO^*(2n)$ is the quaternionic real form of $\SO(2n, \C)$. We show that such $G$-structures, called    \textsf{almost hypercomplex/quaternionic skew-Hermitian structures},  form the symplectic analogue of the  better known almost hypercomplex/quaternionic-Hermitian structures (hH/qH for short). We present  several equivalent definitions of $\SO^*(2n)$- and $\SO^*(2n)\Sp(1)$-structures in terms of almost symplectic forms compatible with an almost hypercomplex/quaternionic structure,   a quaternionic skew-Hermitian form, or a   symmetric  4-tensor, the latter  establishing  the counterpart of the fundamental 4-form in almost hH/qH  geometries.  The intrinsic torsion of such structures is presented in terms of Salamon's $\E\Hh$-formalism, and  the  algebraic types of the corresponding geometries  are classified.   We construct explicit adapted connections to our $G$-structures and specify certain normalization conditions, under which these connections become minimal. Finally, we present the classification of symmetric spaces $K/L$ with $K$ semisimple admitting an invariant torsion-free $\SO^*(2n)\Sp(1)$-structure. This paper  is the first in a series aiming at the description of the differential geometry of   $\SO^*(2n)$- and $\SO^*(2n)\Sp(1)$-structures. 
 \end{abstract}

\maketitle

\tableofcontents


\section*{Introduction}\label{intro}
This article is the first in a series  studying  $4n$-dimensional manifolds $M$ $(n>1)$ admitting  a reduction of the frame bundle to the Lie subgroups $\SO^*(2n)$, or $\SO^*(2n)\Sp(1)$ of $\Gl(4n,\R)$.  Here, $\SO^*(2n)$ denotes the quaternionic real form $\SO(2n, \C)$, and $\SO^*(2n)\Sp(1)$ denotes the Lie group $\SO^*(2n)\times_{\Z_2}\Sp(1)$. Such structures lie inside of the realm of almost hypercomplex and almost quaternionic geometries, respectively.  The aim of this note is to highlight them as the symplectic analogue of the well-known almost hypercomplex-Hermitian (hH) structures, almost quaternionic-Hermitian (qH) structures and their pseudo-Riemannian counterparts, which have been examined by many leaders in differential geometry, see for example  \cite{Ale68,  Salamon86, Swann, AM, Cortes, Cabrera}.

Recall that given  an almost hypercomplex manifold $(M, H)$, a pseudo-Riemannian  metric $g$ which is $H$-Hermitian,  corresponds to  a reduction to $\Sp(p, q)\subset\Gl(n, \Hn)$.  Similarly, given an almost quaternionic manifold $(M, Q)$ a pseudo-Riemannian  metric $g$ which is $Q$-Hermitian, corresponds to a reduction to $\Sp(p, q)\Sp(1)\subset\Gl(n, \Hn)\Sp(1)$. Note that these pseudo-Riemannian metrics exist only if certain topological conditions are satisfied.  Here, as usual, we have interpreted  an almost hypercomplex structure $H=\{J_a : a=1, 2, 3\}$ as a $\Gl(n, \Hn)$-structure, and an almost quaternionic structure $Q\subset\Ed(TM)$ as a $\Gl(n, \Hn)\Sp(1)$-structure, respectively.  After fixing a $\Gl(n, \Hn)$- or a $\Gl(n, \Hn)\Sp(1)$-structure,  Riemannian metrics of the above type always exist and correspond to the reductions induced by  the maximal compact subgroups, i.e., the inclusions $\Sp(n)\subset\Gl(n, \Hn)$ and $\Sp(n)\Sp(1)\subset\Gl(n, \Hn)\Sp(1)$, respectively.

By comparison, the structures that we treat in this article arise from almost symplectic forms $\omega$ which are $H$-Hermitian, respectively  $Q$-Hermitian. Hence, it is  natural to refer to such $G$-structures by the terms  \textsf{almost hypercomplex skew-Hermitian structures}, denoted by  $(H, \omega)$, and  \textsf{almost  quaternionic skew-Hermitian structures}, denoted by $(Q, \omega)$, respectively, a terminology  which is also motivated by the discussion in  \cite{Harvey} of the eight types of inner product spaces.  It is also convenient to refer to such  non-degenerate  $\Sp(1)$-invariant  real-valued 2-forms by the term  \textsf{scalar 2-forms}. To provide some further motivation behind our considerations, recall that for an almost quaternionic manifold $(M, Q)$ the space $\Lambda^{2}T^*M$ of antisymmetric bilinear 2-forms admits the following $\Gl(n, \Hn)\Sp(1)$-equivariant decomposition into irreducible  submodules
	\[
		\Lambda^2T_{x}^*M = \Lambda^2_{\Im(\Hn)}T_{x}^*M  \oplus \Lambda^2_{\Re(\Hn)} T_{x}^*M\,,\quad \  \forall \ x\in M\,.
\]
In terms of bundles and the $\E\Hh$-formalism of Salamon   (see \cite{Salamon82}), we  may identify $\Lambda^2_{\Im(\Hn)} T_{x}^*M\cong [\Lambda^2\E]^*\otimes[S^2\Hh]^*$ and  this is the module where the  (local) K\"ahler forms in almost  qH geometry  take values. Hence, nowadays there is a rich variety of works related to such 2-forms, see  for example \cite{Cortes, Cabrera04, Cabrera} and the references therein.  The second module $\Lambda^2_{\Re(\Hn)} T_{x}^*M\cong [S^2\E]^*\otimes [\Lambda^2\Hh]^*$  is spanned by the scalar 2-forms and has not received the same attention yet.  Under the $\SO^*(2n)\Sp(1)$-action it takes the form $\Lambda^2_{\Re(\Hn)} T_{x}^*M\cong [S^2_0\E]^*\oplus\langle\omega_0\rangle$, where $\omega_0$ is the standard scalar 2-form on $[\E\Hh]\cong T_{x}M$. In this paper, we   will use the notion of scalar 2-forms  to facilitate a systematic treatment of the geometries under examination.  

From the viewpoint of holonomy theory, recall that by a classical result of Hano and Ozeki \cite{Hano},  a connected and simply connected smooth manifold $M^{4n}$  can always be equipped with an affine connection with torsion,  whose holonomy group will coincide with $\SO^*(2n)$ (or  with $\SO^*(2n)\Sp(1)$).  
 Therefore, there are proper examples of manifolds with non-integrable  $\SO^*(2n)$- and  $\SO^*(2n)\Sp(1)$-structures, which   make the examination of  such geometries a reasonable task.   
 In contrast,  for $\SO^*(2n)$ it is known by Bryant \cite{Bryant} that torsion-free affine connections with (irreducible) full holonomy group $\SO^*(2n)$ cannot exist. This is because Berger's first criterion fails for the Lie algebra $\fr{so}^*(2n)$.
   On the other hand,  there are well-known constructions of torsion-free connections with prescribed symplectic holonomy (see \cite{Bryant91, ChiMS1, ChiMS2})  and in particular there exists torsion-free connections with full holonomy $\SO^*(2n)\Sp(1)$ (see  \cite{Bryant, MS1, Schw, Schw2, CahS}).  In other words, $\SO^*(2n)\Sp(1)$ is a real non-symmetric Berger subgroup and  appears in  the list of \textsf{exotic holonomies}, see  \cite{Bryant} and see also \cite[Table 3]{MS1}. 
   In addition to these advances,  \v{C}ap and Sala\v{c} \cite{Cap, CapII} have recently  discussed  {special symplectic connections} within the more general framework of parabolic conformally symplectic structures.

   Since  much of our attention has been attracted by the general non-integrable case of $\SO^*(2n)$- and $\SO^*(2n)\Sp(1)$-structures,       
the approach  in this paper differs   from those which are mainly devoted to the torsion-free case (\cite{Bryant, MS1, Schw, Schw2,  CahS}).  In  particular, a main contribution of this work is based on the establishment of  the local geometry of  $\SO^*(2n)$- or $\SO^*(2n)\Sp(1)$-structures, via a  geometric approach  based   on  defining tensor fields, adapted frames, adapted connections, intrinsic torsion modules, minimal connections and normalization conditions.  This method allows us to proceed systematically with a differential-geometric treatment  of manifolds carrying such structures  and  highlight some parts of their intrinsic geometry. 

Let us summarize some basic properties of our $G$-structures, by fixing   an almost hypercomplex skew-Hermitian manifold $(M, H=\{I, J, K\}, \omega)$.  We show that such a manifold admits   three   pseudo-metric tensors $g_{I}, g_{J}, g_{K}$, which  are of signature $(2n, 2n)$ (but not of Norden type).  It turns out that any of $g_I, g_J, g_K$, or their linear combination, provides  an embedding $\SO^*(2n)\subset\U(n, n)\subset\SO(2n, 2n)$.  However, there is no natural way to pick a unique compatible metric among these embeddings, and in  particular for an almost  quaternionic skew-Hermitian structure $(Q, \omega)$,  these metrics exist only locally. 
We also introduce 
a symmetric 4-tensor $\Phi$, called the \textsf{fundamental 4-tensor}, given by
\[
\Phi:=g_{I}\odot g_{I}+g_{J}\odot g_{J}+g_{K}\odot g_{K}=\mf{Sym}(g_{I}\otimes g_{I}+g_{J}\otimes g_{J}+g_{K}\otimes g_{K})\,.
\]
For an almost  quaternionic skew-Hermitian structure $(Q, \omega)$, we show that  the fundamental tensor $\Phi$  is globally defined, hence it forms  the analogue of the fundamental 4-form on an almost  qH manifold. 
Note that $\Phi$ provides an equivalent definition of   almost quaternionic skew-Hermitian structures, while  a similar  characterization  occurs also in terms of  a \textsf{quaternionic skew-Hermitian form} $h$ on $M$, defined by
\[
 	h:=\omega\id_{TM}+ g_{I}I+g_{J}J+g_{K}K\in  \Gamma\big(T^{*}M\otimes T^{*}M\otimes \Ed(TM)\big)\,.
\]
These tensors   also occur on an almost hypercomplex skew-Hermitian manifold $(M, H, \omega)$, although we prove that  they are both stabilized by the larger group $\SO^*(2n)\Sp(1)$. They are important since  they have analogous applications as the fundamental 4-form on almost hH/qH  geometries, a fact which we thoroughly  investigate in the second part of this series.
 In this paper,  we use the Obata connection $\nabla^H$ related to an almost hypercomplex structure, or an Oproiu connection $\nabla^{Q}$ related to an almost quaternionic structure $Q$, to present   adapted $\SO^*(2n)$- and   $\SO^*(2n)\Sp(1)$-connections, denoted by $\nabla^{H, \omega}$ and $\nabla^{Q, \omega}$,  respectively.   
  
With the aim of exploring the underlying geometries by using these  adapted connections, we proceed by presenting  the intrinsic torsion of $\SO^*(2n)$- or $\SO^*(2n)\Sp(1)$-structures. This description is given  in a convenient way, in terms of   the $\E\Hh$-formalism of Salamon.  Therefore, we  compute the (second) Spencer cohomology $\mc{H}^{0, 2}(\fr{so}^*(2n))$ and $\mc{H}^{0, 2}(\fr{so}^*(2n)\oplus\fr{sp}(1))$ associated to the Lie algebras $\fr{so}^*(2n)$ and $\fr{so}^{*}(2n)\oplus\fr{sp}(1)$, respectively, and present the number of   algebraic types  of such geometric structures.  For  $n>3$ and for $\SO^*(2n)\Sp(1)$ we obtain five pure   types $\mc{X}_i$ $(i=1, \ldots,5)$ and a totality of $2^5$ algebraic types. 
 The case for $\SO^*(2n)$ is rather complicated, due to the appearance of some multiplicities.  However,  we  prove that  for $n>3$ the number of algebraic types of $\SO^*(2n)$-geometries is equal to $2^{10}$, and moreover we specify seven special $\Sp(1)$-invariant classes $\mc{X}_1,\ldots, \mc{X}_7$, determined in terms of $\Sp(1)$-invariant conditions. For the low-dimensional cases $n=2, 3$, we finally show that  both structures under investigation include some extra algebraic types.  We also obtain a characterization of geometries with intrinsic torsion a 3-form, or with intrinsic torsion of vectorial type. 
 
Another contribution of this first part is  the explicit description of certain normalization conditions, which allow us to regard the   adapted connections $\nabla^{H, \omega}$ and $\nabla^{Q, \omega}$  as  minimal  connections for our structures. This description is based on our intrinsic torsion decompositions and the theory that we establish about these two adapted connections. We then  rely on these minimal connections to answer the question of equivalence of $\SO^*(2n)$- or $\SO^*(2n)\Sp(1)$-structures. 
Note that in the generic case this is a  non-trivial task due to the multiplicities appearing in the torsion decomposition into irreducible submodules.

A final contribution of this work is the classification of symmetric spaces $K/L$ with $K$ semisimple, admitting a $K$-invariant torsion-free $\SO^*(2n)\Sp(1)$-structure.  To obtain this classification, we are based on previous results obtained by the second author in \cite{G13} and on the classification of pseudo-Wolf spaces given by Alekseevsky and Cort\'es in \cite{ACort}. We prove that only the following three series of symmetric spaces admit such an invariant torsion-free structure:
\[
\SO^*(2n+2)/\SO^*(2n)\U(1)\,,\quad 
\SU(2+p,q)/(\SU(2)\SU(p,q)\U(1))\,,\quad\ \Sl(n+1,\mathbb{H})/(\Gl(1,\mathbb{H})\Sl(n,\mathbb{H}))\,.
\]
Note that  the last two coset spaces belong to the list of pseudo-Wolf spaces, and  moreover the second family for $q=0$ gives rise to the compact Wolf space $\SU(2+p)/{\sf S}(\U(2)\times\U(p))$.

Let us now briefly introduce the main applications of the results obtained in this paper,  which are presented  in the second part \cite{CGWPartII} of this series. There, we 
\begin{itemize}
\item   use  the algebraic types $\mc{X}_{i_1\ldots i_j}$ of the intrinsic torsion to derive  1st-order integrability conditions  corresponding to  $\SO^*(2n)$- and $\SO^*(2n)\Sp(1)$-structures, for the underlying almost hypercomplex,  quaternionic, or symplectic geometries,   respectively;
\item  focus on the fundamental tensor $\Phi$ and examine its interaction with distinguished connections, such as the Obata connection $\nabla^{H}$, or the unimodular Oproiu connection $\nabla^{Q, \vol}$,  or an arbitrary almost symplectic connection $\nabla^{\omega}$, where $\omega$ is the scalar 2-form. This allows us to provide further geometric interpretations of  some classes $\mc{X}_{i_1\ldots i_j}$;
\item  provide some general constructions of such geometries and in particular illustrate many types of non-integrable $\SO^*(2n)$- and $\SO^*(2n)\Sp(1)$-structures via explicit examples.  
\item describe certain topological conditions which constrain the existence of such $G$-structures, and introduce the related ``spin structures''.
\end{itemize}
The content of our further investigation, which includes a description of the curvature invariants related to the $G$-structures under examination,  twistor constructions, and    other open tasks, is summarized in the last section of \cite{CGWPartII}. We plan to resolve some of these open problems in  the third part of this series.

The structure of the paper is given as follows. In Section \ref{linears} we recall some generalities of the Lie groups $\SO^\ast(2n)$ and $\SO^{\ast}(2n)\Sp(1)$ and next  introduce  the reader to the world of linear $\SO^\ast(2n)$-structures and linear $\SO^{\ast}(2n)\Sp(1)$-structures.  Based on the $\E\Hh$-formalism   we develop a theoretical framework for  linear $G$-structures  of this type, which allows us to  derive  the corresponding defining tensors  explicitly,   introduce the associated adapted bases and provide some details of  the symplectic point of view.   In  the Appendix  \ref{appendix}  we provide an alternative description of linear $\SO^\ast(2n)$-  and $\SO^{\ast}(2n)\Sp(1)$-structures in terms of right quaternionic vector spaces.  In Section \ref{sec2}   we introduce $\SO^\ast(2n)$-  and $\SO^{\ast}(2n)\Sp(1)$-structures on smooth manifolds and describe many of their basic features, as well  the fundamental 4-tensor $\Phi$ and the quaternionic skew-Hermitian form $h$.  In Section \ref{secintrinsic} we present the construction of the corresponding adapted connections,   analyze the associated intrinsic torsion modules and their related decompositions into irreducible submodules. In the following section, we present minimal connections and  the corresponding normalization conditions, and    solve the equivalence problem. Finally,  in Section \ref{tfexamples} we discuss torsion-free examples. We also present a few remarks about special symplectic holonomy.

\medskip
 \noindent {\bf Acknowledgments:}  
    I.C. and J.G.  acknowledge full support by Czech Science Foundation  via the project GA\v{C}R No.~19-14466Y.  H.W. thanks  the University of  Hradec Kr\'alov\'e for hospitality and 
    A. \v{C}ap for useful feedback. 
    
\section{Linear $\SO^\ast(2n)$-structures and linear $\SO^{\ast}(2n)\Sp(1)$-structures}\label{linears}
 In this section we introduce the notion of \textsf{linear  hypercomplex skew-Hermitian  structures} and \textsf{linear   quaternionic skew-Hermitian  structures}. These are linear geometric structures determined by the  action of the Lie groups $\SO^\ast(2n)$ and  $\SO^{\ast}(2n)\Sp(1)$, respectively, on some $4n$-dimensional vector space $V$.   We will show that similarly to the case of $\Sp(n)$- or $\Sp(n)\Sp(1)$-structures, there are  many invariant tensors that can be used to define such structures. 
 	
	\smallskip
Below we shall mainly use the $\E\Hh$-formalism  of Salamon (\cite{Salamon82, Salamon86}), instead of an arbitrary quaternionic vector space. However, before we begin with preliminaries about these groups and their representations,  it is convenient to include a short summary of  some classical   definitions.
\begin{defi}\label{basicsright}
	\textsf{1)} A \textsf{linear hypercomplex structure} $H$ on $V$ is a triple $\{I, J, K\}$ of linear complex structures $I, J, K\in \Ed(V)$ satisfying the quaternionic relations, i.e., $I^{2}=J^2=K^2=-\id=IJK$.\\
		\textsf{2)}  A \textsf{linear quaternionic structure}  $Q$ on $V$ is a 3-dimensional subspace of $\Ed(V)$ spanned by an arbitrary linear hypercomplex structure $H$, i.e., $Q=\langle H\rangle$. In this case $H$ is called an \textsf{admissible basis} of $Q$ and it is easy to see that any two admissible bases of $Q$ are related by an element in $\SO(3)$. \\
\textsf{3)}   
	 For the algebra $\Hn$ of quaternions we will denote by $\Re(\Hn):=\R$ (respectively, $\Im(\Hn):=\sp(1)$) its real (respectively, imaginary) part. We have the corresponding Lie group decomposition $\Hn^{\times}=\R^{\times}\Sp(1)$, where as usual we set $\Hn^{\times}:=\Hn\backslash\{0\}$ and $\R^{\times}:=\R\backslash\{0\}$. A choice of an admissible basis $H=\{I, J, K\}$ of a linear quaternionic structure $Q$ on $V$ provides an isomorphism $\sp(1) \cong Q,$  and moreover the diffeomorphisms $\Sp(1)\cong\{\mu_0\id_V+\mu_1I+\mu_2J+\mu_3K : \mu_0^2+\mu_1^2+\mu_2^2+\mu_3^2=1\}\cong\Ss^3$ and 
\[
\Sp(1)\cap\fr{sp}(1)\cong\Ss(Q):=\{\mu_1 I+\mu_2 J+\mu_3 K : \mu_1^{2}+\mu_2^{2}+\mu_3^{2}=1\}\,.
\]	 
So $\Ss(Q)$ is the space of complex structures in $Q$, i.e.,  the space of endomorphisms $\J\in Q$ such that $\J^2=-\id$. Note that $\Ss(Q)\cong\Ss^2$, where $\Ss^n$ will denote the $n$-sphere.
	 \\
	\textsf{4)}   
	 Given a linear complex structure $J\in\Ed(V)$, a real-valued bilinear form $f$ on $V$ is called  \textsf{Hermitian} if $f(Jx, Jy)=f(x, y)$ for any $x, y\in V$.\\ 
	\textsf{5)} 
Given a linear hypercomplex structure $H=\{J_{a} : a=1, 2, 3\}$ on $V$, a real-valued bilinear form $f$ on $V$ is called 
\textsf{Hermitian with respect to $H$} ($H$-Hermitian for short), if $f$ is a Hermitian bilinear form with respect to $J_{a}$ for any $a=1, 2, 3$, and 
 \textsf{Hermitian with respect to $Q$} ($Q$-Hermitian for short),  if $f(\J x, \J y)=f(x, y)$ for any $\J\in \Ss(Q)$ and $x, y\in V$. \\
\textsf{6)}   Let $V$ be a vector space with a linear quaternionic structure $Q=\langle H\rangle$, where $H$ is an admissible basis providing the isomorphism $\Hn\cong \R\oplus Q$. Traditionally,  a  \textsf{quaternionic skew-Hermitian form} is defined to be a $\R$-bilinear map $h : V\times V\to\Hn$, satisfying  
\[
h(xp, yq)=ph(x, y)\bar{q}, \quad h(x, y)=-\overline{h(y, x)}, \quad \forall \ x, y\in V, \ p, q\in\Hn.
\]
Note that the first condition actually says that the form $h:V\times V\to \Hn$ does {\it not} depend on the admissible basis $H$ providing the isomorphism $\Hn\cong \R\oplus Q$.   
In particular,   $h$ is a \textsf{sesquilinear form} and as we will show later (see Proposition \ref{usefrelskewherm}), up to (quaternionic) linear automorphisms, any finite dimensional quaternionic vector space admits a unique non-degenerate quaternionic  skew-Hermitian form, see also \cite[Chapter 2]{Harvey}.
\end{defi}

\subsection{Preliminaries about the Lie group $\SO^{\ast}(2n)$}
	We are mainly interested in cases with $n\geq 2$, and then the Lie group $\SO^{\ast}(2n)$ is a non-compact real form of $\SO(2n, \C)$, of real dimension $n(2n-1)$, which is semisimple for $n=2$ and simple for $n\geq 3$ (see \cite{Hel}). We shall refer to   $\SO^{\ast}(2n)$ by the term \textsf{quaternionic real form}. Be aware that there exist many different notations for the Lie group $\SO^{\ast}(2n)$, for example  $\SO(n, \Hn)$ in \cite{Bryant, Schw, Schw2}, $\U_n^*(\Hn)$ in \cite{Oni}, or $\mathsf{Sk}(n,\Hn)$ in \cite{Harvey}. Traditionally, $\SO^{\ast}(2n)$ is defined as the stabilizer of a quaternionic  skew-Hermitian form $h$, as above,  an interpretation that we will discuss in detail below.  If we view  $\SO(2n, \C)$ as the Lie group of complex linear transformations preserving  the standard complex Euclidean metric on $\E:=\C^{2n}$, then the real form $\SO^{\ast}(2n)$ of $\SO(2n, \C)$ is the fixed point set of the following involution:
\[
\sigma(\phi)=\begin{pmatrix}
		0 & -\id_{\C^n}\\
		\id_{\C^n} & 0\\
	\end{pmatrix}^{-1}
    (\bar{\phi}^t)^{-1} 
   \begin{pmatrix}
		0 & -\id_{\C^n}\\
		\id_{\C^n} & 0\\
	\end{pmatrix},
 \]
for $\phi\in \SO(2n, \C)$, i.e.,
\[
\SO^{\ast}(2n)=\{\phi\in \SO(2n, \C) : \sigma(\phi)=\phi\}\,.
\]
In this way $\E$ becomes the standard representation of $\SO^{\ast}(2n)$.  The Lie algebra $\fr{so}^{\ast}(2n)$  of $\SO^{\ast}(2n)$ is represented  by the following endomorphisms of $\E$:
\[
\left( 
    \begin{smallmatrix}
 Z_1 & -Z_2\\
 \overline{Z_2}&\overline{Z_1}
  \end{smallmatrix}
  \right),
\]
for $Z_1,Z_2$ complex $(n\times n)$-matrices satisfying $Z_1^t=-Z_1$ and $Z_2^t=\overline{Z_2}$, see \cite{Hel}.  There are other presentations of $\fr{so}^{\ast}(2n)$ related to identifications of $\E$ as a right quaternionic vector space, which we review in    Appendix \ref{appendix} (see Proposition \ref{gradbas}).

Recall now that  $\SO^\ast(2n)$ is connected, with $\pi_1(\SO^\ast(2n))=\Z$.      Therefore,  $\SO^*(2n)$ is not simply connected and there are further Lie groups with Lie algebra $\fr{so}^{\ast}(2n)$, which we describe in \cite[Appendix A]{CGWPartII}. 
In fact, since the restricted root system of   $\fr{so}^{\ast}(2n)$ depends on the parity of the quaternionic dimension $n$, there are two related \textsf{Satake diagrams}, which we present below (a detailed exposition of real forms and Satake diagrams can be found in Onishchik's book \cite{Oni}, see also \cite[pp.~214-223]{CS}).
{\small \[
\begin{tabular}{rcrc}
\begin{picture}(15,45)(0,0)
\put(2, 25){\makebox(0,0){$n=2m  \ :$}}
\end{picture} & 
 \begin{picture}(160,40)(-15,-23)
\put(0, 0){\circle*{4}}
\put(0,8){\makebox(0,0){{\tiny$\Lambda_1$}}}
\put(2, 0){\line(1,0){14}}
\put(18, 0){\circle{4}}
\put(18.5,8){\makebox(0,0){{\tiny$\Lambda_2$}}}
\put(20, 0){\line(1,0){13.5}}
\put(35.5, 0){\circle*{4}}
\put(36.5,8){\makebox(0,0){{\tiny$\Lambda_3$}}}
 \put(37.5, 0){\line(1,0){12}}
\put(59.2, 0){\makebox(0,0){$\ldots$}}
\put(67, 0){\line(1,0){13}}
 \put(82, 0){\circle*{4}}
\put(84, 0){\line(1,0){12}}
\put(100, 1){\line(2,1){10.5}}
\put(100, -1){\line(2,-1){11}}
\put(98, 0){\circle{4}}
\put(98,-7){\makebox(0,0){{\tiny$\Lambda_{n-2}$}}}
\put(85.5, 8){\makebox(0,0){{\tiny{$\Lambda_{n-3}$}}}}
\put(112.5, -7){\circle{4}}
\put(120, 13){\makebox(0,0){{\tiny$\Lambda_{n-1}$}}}
\put(111, -16){{\tiny$\Lambda_{n}$}}
\put(112.5, 7){\circle*{4}}
\end{picture}  & \quad 
\begin{picture}(15,45)(0,0)
\put(12, 25){\makebox(0,0){$n=2m+1  \ :$}}
\end{picture} \quad\quad\quad & 
 \begin{picture}(150,40)(-15,-23)
\put(0, 0){\circle*{4}}
\put(0,8){\makebox(0,0){{\tiny$\Lambda_1$}}}
\put(2, 0){\line(1,0){14}}
\put(18, 0){\circle{4}}
\put(18.5,8){\makebox(0,0){{\tiny$\Lambda_2$}}}
\put(20, 0){\line(1,0){13.5}}
\put(35.5, 0){\circle*{4}}
\put(36.5,8){\makebox(0,0){{\tiny$\Lambda_3$}}}
 \put(37.5, 0){\line(1,0){12}}
\put(59.2, 0){\makebox(0,0){$\ldots$}}
\put(67, 0){\line(1,0){13}}
 \put(82, 0){\circle{4}}
\put(84, 0){\line(1,0){12}}
\put(100, 1){\line(2,1){10.5}}
\put(100, -1){\line(2,-1){10.5}}
\put(98, 0){\circle*{4}}
\put(98,-7){\makebox(0,0){{\tiny$\Lambda_{n-2}$}}}
\put(85.5, 8){\makebox(0,0){{\tiny{$\Lambda_{n-3}$}}}}
\put(112.5, -7){\circle{4}}
\put(120, 13){\makebox(0,0){{\tiny$\Lambda_{n-1}$}}}
\put(111, -16){{\tiny$\Lambda_{n}$}}
\put(112.5, 7){\circle{4}}
\put(113, -5){\vector(0,1){9.5}}
\put(113, -4){\vector(0, -1){1.5}}
\end{picture}
\end{tabular}
\]}
  Therefore, for $n=2,3,4$ there are special isomorphisms with classical Lie algebras, and we present the related details in the second part of this work.   However, since $\E$ is not the standard representation of these classical Lie algebras, this viewpoint does not relate these geometric structures with geometric structures which have been studied earlier. 
   
\subsection{The $\E\Hh$-formalism adapted to $\SO^*(2n)$ and $\SO^*(2n)\Sp(1)$}\label{ehformsec}
The most convenient way to  visualize a \textsf{linear quaternionic structure} is by using the $\E\Hh$-formalism of Salamon \cite{Salamon82, Salamon86}. The $\E\Hh$-formalism is usually used for the standard representation $\E$ of $\Gl(n,\Hn)$ or $\Sp(n)$ (or $\Sp(p,q)$), and we adapt this formalism to $\SO^{\ast}(2n)$.  So, let us  denote by $\Hh$ the standard representation of $\Sp(1)$ on $\C^2$. This is of quaternionic type, and the same holds for the $\SO^{\ast}(2n)$-representation $\E$.  Thus, the maps defined by
 \[
\epsilon_{\E} : \left( 
    \begin{smallmatrix}
a \\
b 
  \end{smallmatrix}
  \right)
  \mapsto 
  \left( 
    \begin{smallmatrix}
-\bar{b} \\
\bar{a}
  \end{smallmatrix}
  \right)\,,\quad \epsilon_{\Hh} : \left( 
    \begin{smallmatrix}
a \\
b 
  \end{smallmatrix}
  \right)
  \mapsto 
  \left( 
    \begin{smallmatrix}
-\bar{b} \\
\bar{a}
  \end{smallmatrix}
  \right),
\]
for $a,b\in \C^n,a,b\in \C$, respectively, are  complex anti-linear involutions of $\E$ and $\Hh$, which commute with the actions of $\SO^{\ast}(2n)$ and $\Sp(1)$, respectively.  Let us also denote the group $\SO^{\ast}(2n)\times_{\Z_2}\Sp(1)=(\SO^{\ast}(2n)\times\Sp(1))/{\Z_2}$ by 
\[
\SO^{\ast}(2n)\Sp(1)\,,
\]
which is the image  of the product $\SO^{\ast}(2n)\times\Sp(1)$ in $\Ed(\E\otimes_\C \Hh)$, via the tensor product representation.   
 The standard quaternionic representation of $\SO^{\ast}(2n)\Sp(1)$ is the real form $[\E\Hh]$  inside $\E\otimes_\C \Hh$, fixed by the real structure $\epsilon_{\E}\otimes \epsilon_{\Hh}$. Note that the $\E\Hh$-formalism extends this description to all tensor products of $\E$ and $\Hh$,  where products of $\epsilon_{\E}$ and $\epsilon_{\Hh}$ still define a real structure, for which we shall maintain the same notation   $[\ ]$. 
 
 Now, since $\E$ is of quaternionic type, we have
\[
N_{\fr{gl}(\E)}(\fr{so}^{\ast}(2n))=\fr{so}^{\ast}(2n)\oplus \fr{sp}(1)\oplus\mathbb{R}\,, \quad C_{\fr{gl}(\E)}(\fr{so}^{\ast}(2n))=\fr{sp}(1)\oplus\mathbb{R}\,.
\] 
In particular, the  $\fr{sp}(1)$-action commutes with $\SO^{\ast}(2n)$ and defines a linear quaternionic structure   on $[\E\Hh]$, i.e., there is admissible basis $H=\{J_1,J_2,J_3\}$ of $\sp(1)$ such that 
\[
J_1^{2}=J_2^2=J_3^2 =-\Id=J_1J_2J_3\,.
\]
 
In this series of papers  we will maintain  the following conventions. 
 
\smallskip
 \noindent{\bf{\textsf{ Conventions.}}}  \textsf{(i)}   $Q_0$ will denote the  quaternionic structure  on $[\E\Hh]$, defined by the $\fr{sp}(1)$-action. \\
 \textsf{(ii)} The notation $R(\alpha)$ will encode a  complex irreducible module corresponding to the highest   weight $\alpha$, for some of the Lie algebras of $\SO^\ast(2n)$ or   $\SO^\ast(2n)\Sp(1)$. The letter  $\theta$ will be used for the  fundamental weight of $\sp(1)$, and $\pi_k$, $k=1,\dots,2n$ will be the fundamental weights of $\so^\ast(2n)$. For instance, for $n>2$ we have    $\E\Hh=R(\pi_1)\otimes_{\C} R(\theta)$.  It is also useful to mention the following $\SO^*(2n)$-equivariant isomorphisms
\begin{eqnarray*}
R(\pi_k)&=&\Lambda^{k}\E \ \text{for}  \ k<n-1\,, \\
   R(k\pi_1)&=&S^{k}_{0}\E\cong S^{k}\E/ S^{k-2}\E\,.
\end{eqnarray*}
Here, $S^{k}_{0}\E$ denotes the trace free part of $S^{k}\E$,   for any $k>0$, see next section   for more details. 
For $k=\text{even}$, the complex representation $R(k\pi_1)$ is of real type, and for $k=\text{odd}$ we see that $R(k\pi_1)$ is of quaternionic type, similarly to the $\Sp(n)$-action (or $\Sp(p, q)$-action).  Note however that $\Lambda^{k}\E$ is only irreducible for $\SO^*(2n)$, but reducible in the $\Sp(n)$-case.   For modules appearing in low-dimensional cases we have some exceptions which are indicated in Table \ref{Table1}, together with the corresponding dimensions (see also \cite{FH}).

    \begin{table}[ht]
\centering
\renewcommand\arraystretch{1.5}

\begin{tabular}{l|c|c|c|c}
& $n>4$ & $n=4$& $n=3$& $n=2$\\
\hline

$\E$ & $R(\pi_1)$ & $R(\pi_1)$& $R(\pi_1)$& $R(\pi_1+\pi_2)$\\
$\dim_{\C}$ & $2n$ & $8$ & $6$ & $4$\\
\hline
$\Lambda^2\E$ & $R(\pi_2)$ & $R(\pi_2)$& $R(\pi_2+\pi_3)$& $R(2\pi_1)\oplus R(2\pi_2)$\\
$\dim_{\C}$ & $n(2n-1)$ & $28$ & $15$ & $6$\\
\hline
$S^2_0\E$ & $R(2\pi_1)$ & $R(2\pi_1)$& $R(2\pi_1)$& $R(2\pi_1+2\pi_2)$\\
$\dim_{\C}$ & $2n^2+n-1$ & $35$ & $20$ & $9$ \\
\hline
$\K$& $R(\pi_1+\pi_2)$ & $R(\pi_1+\pi_2)$& $R(\pi_1+\pi_2+\pi_3)$& $R(\pi_1+3\pi_2)\oplus R(3\pi_1+\pi_2)$\\
$\dim_{\C}$ & $\frac{8}{3}(n^{3}-n)$  & $160$ & $64$ & $16$ \\
\hline
$\Lambda^3\E$& $R(\pi_3)$ & $R(\pi_3+\pi_4)$& $R(2\pi_2)\oplus R(2\pi_3)$& $R(\pi_1+\pi_2)$\\
$\dim_{\C}$ &  $ \frac{2n}{3} (2n-1) (n-1)$   & $56$ & $20$ & $4$ \\
\hline
$S^3_0\E$& $R(3\pi_1)$ & $R(3\pi_1)$& $R(3\pi_1)$& $R(3\pi_1\oplus 3\pi_2)$ \\
$\dim_{\C}$ &  $ \frac{2n}{3} (2n-1) (n+2)$ & $112$  &  $50$ & $16$ \\
\hline
\end{tabular}
\vspace{0.5cm}
\caption{\small Some $\SO^*(2n)$-modules for low  quaternionic dimensions.}\label{Table1}
\end{table}

Let us now recall the following basic result, which is useful for our considerations  (see also \cite{Salamon86}).
 \begin{lem}\label{invarena}
\textsf{1)} The module $[\Hh\Hh]^*$   admits  a 1-dimensional $\Sp(1)$-submodule $[\Lambda^2\Hh]^*$ spanned by a real form of the complex-valued volume form on $\Hh$. The latter is defined by
$
\omega_{\Hh}( \left( 
    \begin{smallmatrix}
a \\
b
  \end{smallmatrix}
  \right),\left( 
    \begin{smallmatrix}
c \\
d
  \end{smallmatrix}
  \right))
:=\frac12(ad-bc)\,,
$
for  $a,b,c,d\in \C$, and satisfies $\omega_{\Hh}(\epsilon_{\Hh} x,\epsilon_{\Hh} y)=\overline{\omega_{\Hh}(x,y)}$. In particular, $\omega_{\Hh}$ establishes   an isomorphism $\Hh^*\cong \Hh$.\\
\textsf{2)}   The  linear quaternionic structure $Q_0$ on $[\E\Hh]$ is isomorphic to $[S^2\Hh]^*$, that is $[S^2\Hh]^*\cong_{\omega_{\Hh}}Q_0=\sp(1)$. 
Moreover, $[\Hh\Hh]^*\cong_{\omega_{\Hh}} [\Hh\Hh^*]=\R\oplus  Q_0=\R\oplus \sp(1)$, and a choice of an admissible basis for $Q_0$  provides  an isomorphism $[\Hh\Hh]^*\cong \Hn$.\\
\textsf{3)} The module $[S^2\E]^*$     admits a 1-dimensional $\SO^{\ast}(2n)$-submodule spanned by a real form of the complex-valued metric on $\E$. The latter is defined by
$
g_{\E}(\left( 
    \begin{smallmatrix}
a \\
b
  \end{smallmatrix}
  \right),\left( 
    \begin{smallmatrix}
c \\
d
  \end{smallmatrix}
  \right)):=a^tc+b^td\,,
$
for $a,b,c,d\in \C^n$, and  satisfies $g_{\E}(\epsilon_{\E} x,\epsilon_{\E} y)=\overline{g_{\E}(x,y)}$. In particular, $g_{\E}$ establishes an isomorphism $\E^*\cong \E$.
\end{lem}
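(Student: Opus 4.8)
The plan is to treat the three parts by a common template: exhibit the claimed invariant tensor in the given coordinates, check directly that it satisfies the stated conjugation identity (so that it lies in the real form cut out by the appropriate product of $\epsilon$'s, as encoded by the bracket notation), observe invariance under the relevant group, and finally use non-degeneracy to read off the displayed isomorphism. Since $\omega_{\Hh}$ and $g_{\E}$ are restrictions of standard invariant forms, invariance is essentially automatic; the actual content lies in the reality checks and, for part~2, in keeping the several real-structure identifications consistent.

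\emph{Parts 1 and 3.} Up to the normalizing factor $\tfrac12$, $\omega_{\Hh}$ is the standard symplectic (determinant) form on $\C^2$, which is preserved by $\Sl(2,\C)\supset\Sp(1)$; hence $\omega_{\Hh}$ is $\Sp(1)$-invariant and spans the trivial summand in $\Hh^*\otimes\Hh^*=S^2\Hh^*\oplus\Lambda^2\Hh^*$. Substituting $\epsilon_{\Hh}\binom{a}{b}=\binom{-\bar b}{\bar a}$ into the formula for $\omega_{\Hh}$ gives in one line $\omega_{\Hh}(\epsilon_{\Hh}x,\epsilon_{\Hh}y)=\overline{\omega_{\Hh}(x,y)}$, so $\omega_{\Hh}$ is fixed by the real structure on $\Hh^*\otimes\Hh^*$ induced by $\epsilon_{\Hh}$ and therefore $\omega_{\Hh}\in[\Lambda^2\Hh]^*\subset[\Hh\Hh]^*$; non-degeneracy makes $v\mapsto\omega_{\Hh}(v,\cdot)$ an $\Sp(1)$-equivariant isomorphism $\Hh\xrightarrow{\sim}\Hh^*$ intertwining $\epsilon_{\Hh}$ with its dual. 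Part~3 is entirely parallel: $g_{\E}\big(\binom{a}{b},\binom{c}{d}\big)=a^tc+b^td$ is precisely the complex Euclidean metric on $\E=\C^{2n}$ whose isometry group defines $\SO(2n,\C)$, hence it is $\SO^*(2n)$-invariant; the same substitution with $\epsilon_{\E}$ yields $g_{\E}(\epsilon_{\E}x,\epsilon_{\E}y)=\overline{g_{\E}(x,y)}$, so $g_{\E}\in[S^2\E]^*$ and spans the $1$-dimensional trivial $\SO^*(2n)$-submodule complementary to $[S^2_0\E]^*$; non-degeneracy gives the $\SO^*(2n)$-equivariant isomorphism $\E^*\cong\E$.

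\emph{Part 2.} I would first recall that $S^2\Hh$ is the $3$-dimensional irreducible $\Sp(1)$-module, namely the complexification of the adjoint representation $\sp(1)$, and is of real type (in general $S^k\Hh$ is of quaternionic type for $k$ odd and of real type for $k$ even). Using $\omega_{\Hh}\colon\Hh^*\cong\Hh$ from part~1, identify $\Ed(\Hh)=\Hh\otimes\Hh^*\cong\Hh\otimes\Hh=S^2\Hh\oplus\Lambda^2\Hh$, under which the trace-free endomorphisms correspond to $S^2\Hh$ and the scalars to $\Lambda^2\Hh$ (the trace pairing becomes the antisymmetric $\omega_{\Hh}$-contraction, which kills $S^2\Hh$). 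By definition $Q_0$ is the image of $\sp(1)$ acting on the $\Hh$-factor of $[\E\Hh]$, i.e.\ the real form of the trace-free part of $\Ed(\Hh)$; hence $Q_0\cong[S^2\Hh]\cong_{\omega_{\Hh}}[S^2\Hh]^*=\sp(1)$. Passing to real forms, $[\Hh\Hh^*]=[\Ed(\Hh)]$ is the algebra of $\C$-endomorphisms of $\Hh$ commuting with $\epsilon_{\Hh}$; since $\epsilon_{\Hh}^2=-\id$ this is $\Hn$ as a real associative algebra, with its scalar/trace-free splitting being $\R\oplus\Im(\Hn)=\R\oplus Q_0$. Transporting through the second $\omega_{\Hh}$-factor gives $[\Hh\Hh]^*\cong_{\omega_{\Hh}}[\Hh\Hh^*]=\R\oplus Q_0$, and choosing an admissible basis $\{J_1,J_2,J_3\}$ of $Q_0$ identifies $\R\oplus Q_0$ with $\Hn$ via $1,J_1,J_2,J_3\mapsto 1,i,j,k$, whence $[\Hh\Hh]^*\cong\Hn$.

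The routine ingredients are the two conjugation computations and standard $\SU(2)$-representation theory. The one point requiring attention is in part~2: one must verify that the real form of $\Ed(\Hh)$ cut out by conjugation with $\epsilon_{\Hh}$ is the quaternion algebra and not $\mathrm{M}_2(\R)$ — this is exactly where $\epsilon_{\Hh}^2=-\id$ is used — and that the copy of $\sp(1)$ realized as $Q_0$ is the imaginary part $\Im(\Hn)$; granting this, all the bracket identifications and the final isomorphism $[\Hh\Hh]^*\cong\Hn$ follow formally.
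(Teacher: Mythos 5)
Your proof is correct. Note that the paper itself gives no argument for this lemma: it is recalled as a standard fact with a reference to Salamon, so there is nothing to compare against beyond consistency with the paper's conventions. Your verification supplies exactly the missing content: the two one-line conjugation computations $\omega_{\Hh}(\epsilon_{\Hh}x,\epsilon_{\Hh}y)=\overline{\omega_{\Hh}(x,y)}$ and $g_{\E}(\epsilon_{\E}x,\epsilon_{\E}y)=\overline{g_{\E}(x,y)}$ place the two tensors in the real forms $[\Lambda^2\Hh]^*$ and $[S^2\E]^*$, invariance follows from $\Sp(1)\subset\Sl(2,\C)$ and $\SO^*(2n)\subset\SO(2n,\C)$, and non-degeneracy yields the dualities. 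In part 2 you correctly isolate the only delicate point: under $\omega_{\Hh}$ the symmetric square goes to the trace-free endomorphisms (since the trace pairing is the skew contraction), and the fixed-point algebra of conjugation by $\epsilon_{\Hh}$ in $\Ed(\Hh)$ is $\Hn$ rather than $\mathrm{M}_2(\R)$ precisely because $\epsilon_{\Hh}^2=-\id$, with $\sp(1)=\mathfrak{su}(2)$ sitting inside it as the trace-free part $\Im(\Hn)=Q_0$. This matches the identifications the paper uses later (e.g.\ in Lemma \ref{IJKstandard} and Propositions \ref{usefrel} and \ref{usefrelskewherm}), so no gap remains.
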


Let us point out some further similarities and differences between the $\E\Hh$-formalism adapted to $\Gl(n,\Hn)$- or $\Sp(p,q)$-actions in comparison to $\SO^{\ast}(2n)$-actions.

\begin{itemize}
\item  The $\SO^{\ast}(2n)$-action can be naturally extended to a $\Gl(n,\Hn)$-action, however the real form of $g_{\E}\in S^2\E^*$ will be no longer invariant.  Thus in this case there is no canonical isomorphism $\E^*\cong \E$. Nevertheless, independently of the actions, non-degenerate  elements of $[S^2\E]^*$ induce  skew-symmetric $\R$-bilinear forms (2-forms) on $[\E\Hh]$. Under the $\SO^{\ast}(2n)$- and  $\SO^{\ast}(2n)\Sp(1)$-action, we will study these 2-forms in a great detail in next subsection.
\item The invariant tensor of the $\Sp(p,q)$-action providing  the isomorphism $\E^*\cong \E$,  is   an element  of $\Lambda^2\E^*$   instead of $S^2\E^*$, which we may denote by $\omega_{\E}$, see also Remark \ref{zerorem}. In general, independently of the actions, non-degenerate elements of $[\Lambda^2\E]^*$  induce qH pseudo-Euclidean metrics on $[\E\Hh]$.
\item When decomposing low order tensor products of $\E$ with respect to $\Sp(p, q)$, it is customary to introduce the $\Sp(p, q)$-module $\K$, see      \cite{Swann2}. However, note that whenever we decompose  low order tensor products of $\E$ with respect to $\SO^*(2n)$,   the $\SO^*(2n)$-module $\K$ represents a different submodule in the corresponding decomposition. Nevertheless, $\K$ has the same dimension as in the $\Sp(p, q)$-case  and also the same expression in terms of fundamental weights (but for different Lie algebras).
\end{itemize}


\subsection{Invariants of actions of $\SO^{\ast}(2n)$ and $\SO^{\ast}(2n)\Sp(1)$}\label{defEH}
  Let us now  discuss important representations of  $\SO^{\ast}(2n)$ and $\SO^{\ast}(2n)\Sp(1)$ for $n>1$, in detail.   We first treat  the representations indicated in the previous section, and in particular in the above Lemma \ref{invarena}.

 \begin{lem}\label{IJKstandard}
The module $[\E\Hh]$ consists of the following elements of $\E\otimes_\C \Hh$:
\begin{align*}
a:= \left( 
    \begin{smallmatrix}
a \\
0
  \end{smallmatrix}
  \right)\otimes \left( 
    \begin{smallmatrix}
1 \\
0
  \end{smallmatrix}
  \right)+\left( 
    \begin{smallmatrix}
0 \\
\bar{a}
  \end{smallmatrix}
  \right)\otimes \left( 
    \begin{smallmatrix}
0 \\
1
  \end{smallmatrix}
  \right),\\
  bj:= \left( 
    \begin{smallmatrix}
0 \\
-b
  \end{smallmatrix}
  \right)\otimes \left( 
    \begin{smallmatrix}
1 \\
0
  \end{smallmatrix}
  \right)+ \left( 
    \begin{smallmatrix}
    \bar{b}\\
0
  \end{smallmatrix}
  \right)\otimes \left( 
    \begin{smallmatrix}
0 \\
1
  \end{smallmatrix}
  \right),
\end{align*}
for $a,b\in \C^n$. The  linear quaternionic structure $Q_{0}$ on $[\E\Hh]$ can be defined by the following admissible basis $H_{0}=\{\mc{J}_{a}\in\Ed([\E\Hh]) : a=1, 2, 3\}$:
\[
	 \mc{J}_{1}=i\id_{\C^{2n}} \otimes\begin{bmatrix}
		1 & 0\\
		0 & -1\\
	\end{bmatrix}\,,\quad \mc{J}_{2}=\id_{\C^{2n}} \otimes\begin{bmatrix}
		0 & -1\\
		1 & 0\\
	\end{bmatrix}\,,\quad \mc{J}_{3}=i\id_{\C^{2n}} \otimes\begin{bmatrix}
		0 & -1\\
		-1 & 0\\
	\end{bmatrix}
	\]
such that
\begin{gather*}
(a_1+a_2\mc{J}_1+a_3\mc{J}_2+a_4\mc{J}_3)(a+bj)=(a_1+a_2i+a_3j+a_4k)(a+bj)
\end{gather*}
for $a,b\in \C^n$, where on the right-hand side we view $a+bj\in [\E\Hh]$ as  an element of $\Hn^n$.
\end{lem}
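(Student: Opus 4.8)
The plan is to verify Lemma~\ref{IJKstandard} by direct computation, exploiting the explicit coordinate descriptions already set up for $\E$, $\Hh$ and the real structures $\epsilon_\E$, $\epsilon_\Hh$. First I would check that the two families of elements $a$ and $bj$ indeed lie in $[\E\Hh]$, i.e. are fixed by $\epsilon_\E\otimes\epsilon_\Hh$. This is immediate: applying $\epsilon_\E$ to $\left(\begin{smallmatrix}a\\0\end{smallmatrix}\right)$ gives $\left(\begin{smallmatrix}0\\\bar a\end{smallmatrix}\right)$ and $\epsilon_\Hh$ to $\left(\begin{smallmatrix}1\\0\end{smallmatrix}\right)$ gives $\left(\begin{smallmatrix}0\\1\end{smallmatrix}\right)$, so the first summand of $a$ maps to the second and vice versa; the same holds for $bj$ with the appropriate signs. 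Since $[\E\Hh]$ is a real form of dimension $4n$ and the elements $a+bj$ with $a,b\in\C^n$ already span a $4n$-real-dimensional subspace, these are exactly the elements of $[\E\Hh]$.

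Next I would check that the three endomorphisms $\mc{J}_1,\mc{J}_2,\mc{J}_3$, as defined via $\id_{\C^{2n}}$ tensored with explicit $2\times2$ matrices acting on $\Hh$, actually preserve the real subspace $[\E\Hh]$ and commute with the $\SO^*(2n)$-action. Commutation with $\SO^*(2n)$ is automatic because each $\mc{J}_a$ acts only on the $\Hh$-factor. Preservation of $[\E\Hh]$ amounts to checking that each $\mc{J}_a$ anti-commutes appropriately with $\epsilon_\E\otimes\epsilon_\Hh$ in the sense that it commutes with the real structure — concretely one checks $(\id\otimes M_a)\circ(\epsilon_\E\otimes\epsilon_\Hh)=(\epsilon_\E\otimes\epsilon_\Hh)\circ(\id\otimes M_a)$ for $M_1=\left(\begin{smallmatrix}i&0\\0&-i\end{smallmatrix}\right)$ etc.; this is a small computation with $\epsilon_\Hh\colon\left(\begin{smallmatrix}a\\b\end{smallmatrix}\right)\mapsto\left(\begin{smallmatrix}-\bar b\\\bar a\end{smallmatrix}\right)$ and complex conjugation of the matrix entries.

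The heart of the proof is the identity
\[
(a_1+a_2\mc{J}_1+a_3\mc{J}_2+a_4\mc{J}_3)(a+bj)=(a_1+a_2 i+a_3 j+a_4 k)(a+bj),
\]
where the right-hand side uses the quaternion algebra structure on $\Hn^n\cong[\E\Hh]$. By linearity it suffices to verify the four cases separately: $\mc{J}_1(a+bj)=i(a+bj)=ia+(ib)j$ — note $ia$ should be read as multiplication by $i$ in $\C^n$ but $(ib)j$ involves moving $i$ past $j$, so on the $b$-slot one gets $-ib$; then I check this matches applying $\mc{J}_1=i\,\id\otimes\mathrm{diag}(1,-1)$ to the explicit expansion of $a+bj$. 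Similarly $\mc{J}_2$ should reproduce left-multiplication by $j$, which sends $a+bj\mapsto ja+bj\cdot j=-\bar b+\bar a j$ after using $ja=\bar a j$ and $j^2=-1$ (being careful with the convention $jz=\bar z j$ for $z\in\C$), and I verify this agrees with $\id\otimes\left(\begin{smallmatrix}0&-1\\1&0\end{smallmatrix}\right)$ applied to the coordinate form. The case $\mc{J}_3$ follows from $\mc{J}_3=\mc{J}_1\mc{J}_2$ (or $k=ij$) and the previous two. From $\mc{J}_1^2=\mc{J}_2^2=-\Id$ and $\mc{J}_1\mc{J}_2=\mc{J}_3$ the full quaternion relations $\mc{J}_1^2=\mc{J}_2^2=\mc{J}_3^2=-\Id=\mc{J}_1\mc{J}_2\mc{J}_3$ follow formally, so $H_0$ is genuinely an admissible basis. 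Finally, since the $\mc{J}_a$ commute with $\SO^*(2n)$ and span a quaternionic structure, and since $N_{\fr{gl}(\E)}(\fr{so}^*(2n))=\fr{so}^*(2n)\oplus\fr{sp}(1)\oplus\R$ forces the commuting $\fr{sp}(1)$ to be unique, this $\fr{sp}(1)=\langle\mc{J}_1,\mc{J}_2,\mc{J}_3\rangle$ is exactly the one defining $Q_0$, completing the identification.

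The main obstacle — really the only place where care is needed — is bookkeeping of conventions: the quaternion multiplication rule ($jz=\bar z j$ versus $zj=j\bar z$), the placement of the real structure on $\Hh$ versus $\E$, and the sign choices in the $2\times2$ matrices representing $\mc{J}_2,\mc{J}_3$. A sign error anywhere produces the opposite orientation of the quaternionic structure or the relation $\mc{J}_1\mc{J}_2=-\mc{J}_3$ instead of $+\mc{J}_3$. So I would fix the convention $j\cdot(a+bj)$ etc. explicitly at the start and carry it consistently; everything else is routine linear algebra.
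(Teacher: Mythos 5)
Your proposal is correct and follows essentially the same route as the paper: a direct check that the real structure $\epsilon_\E\otimes\epsilon_\Hh$ fixes the elements $a$ and $bj$ (with a dimension count), that the $\mc{J}_a$ preserve $[\E\Hh]$, and that they reproduce left multiplication by $i,j,k$ on $a+bj\in\Hn^n$. The only caveat is a slip in your heuristic bookkeeping (left multiplication by $i$ on $bj$ gives $(ib)j$ with no conjugation, and the relevant product for $\mc{J}_2$ is $j\cdot(bj)=-\bar b$, not $(bj)\cdot j$), but since your stated conclusions and the planned verification against the explicit coordinate expansion are correct, this does not affect the argument.
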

\begin{rem}
\textnormal{Let us emphasize that the identification with $\Hn^n$ is possible only after the choice of  an admissible basis,  and that  in general  $a+bj\in [\E\Hh]$ is just notation.  However, there is also an identification $\bar{a}-bj\in [\E\Hh]$ with $a+bj \in \Hn^n$ as  right quaternionic vector spaces, and for convenience the details related to the view of  $\Hn^n$ as a left  or right quaternionic vector space  are described in Appendix \ref{appendix}. So, let us focus  on $[\E\Hh]$ from now on.}
\end{rem}
 \begin{proof}
It is a simple observation that the real structure on $[\E\Hh]$ defined by $\epsilon_{\E}\otimes\epsilon_{\Hh}$ fixes the elements $a, bj$, and that the real dimension is $4n$, as  it is expected. Now, the chosen representation of $\Sp(1)$ on $\Hh$ suggests using the following elements of $\Sp(1)\cap \sp(1)= S(Q)$ to define an admissible  basis $\{\mc{J}_{a} : a=1, 2, 3\}$:
\[
	 \begin{bmatrix}
		i & 0\\
		0 & -i\\
	\end{bmatrix},\quad \begin{bmatrix}
		0 & -1\\
		1 & 0\\
	\end{bmatrix},\quad \begin{bmatrix}
		0 & -i\\
		-i & 0\\
	\end{bmatrix}.
\]
Since we can always move $i$ from $\Hh$ to $\E$, it is simple computation to check that such endomorphisms preserve $[\E\Hh]$ and take the claimed form $\{\mc{J}_1, \mc{J}_2, \mc{J}_3\}$. Also,  it is not hard to prove that the elements in $H_{0}:=\{\mc{J}_{a}, a=1, 2, 3\}$ correspond to the left multiplication by $i, j, k$, respectively, when we view $a+bj$ as an element of $\Hn^n$.
 \end{proof}
 
 \begin{defi}
We shall refer to the admissible basis $H_{0}$ of the linear quaternionic structure $Q_{0}$ on  $[\E\Hh]$ presented in Lemma \ref{IJKstandard} be the term \textsf{standard admissible basis} of $Q_{0}$ on $[\E\Hh]$. 
\end{defi}

 Next, we prove that the elements $g_{\E}, \omega_{\Hh}$ introduced before, can be combined into a $\SO^{\ast}(2n)\Sp(1)$-invariant linear symplectic form on $[\E\Hh]$.
 
 \begin{prop}\label{usefrel}
 The expression $\omega_{0}:=g_{\E}\otimes \omega_{\Hh}$ defines  a  non-degenerate $\SO^{\ast}(2n)\Sp(1)$-invariant real 2-form  on $[\E\Hh]$, explicitly given by
\[
\omega_0(a+bj,c+dj)=\Re(a^t\bar{d}-b^t\bar{c})\,,\quad \forall \ a+bj, \ c+dj\in [\E\Hh]\,.
\]
\end{prop}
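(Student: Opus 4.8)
The plan is to verify the claimed formula for $\omega_0$ by direct computation using the explicit descriptions of $g_\E$, $\omega_\Hh$ and the elements $a, bj \in [\E\Hh]$ from Lemma \ref{IJKstandard}, and then to read off non-degeneracy and invariance from the corresponding properties of the two factors. First I would expand $a + bj \in [\E\Hh] \subset \E\otimes_\C\Hh$ in its four tensor summands: $a+bj$ is the sum of $\left(\begin{smallmatrix}a\\0\end{smallmatrix}\right)\otimes\left(\begin{smallmatrix}1\\0\end{smallmatrix}\right)$, $\left(\begin{smallmatrix}0\\\bar a\end{smallmatrix}\right)\otimes\left(\begin{smallmatrix}0\\1\end{smallmatrix}\right)$, $\left(\begin{smallmatrix}0\\-b\end{smallmatrix}\right)\otimes\left(\begin{smallmatrix}1\\0\end{smallmatrix}\right)$, $\left(\begin{smallmatrix}\bar b\\0\end{smallmatrix}\right)\otimes\left(\begin{smallmatrix}0\\1\end{smallmatrix}\right)$, and similarly for $c+dj$. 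Then $(g_\E\otimes\omega_\Hh)(a+bj, c+dj) = \sum g_\E(\cdot,\cdot)\,\omega_\Hh(\cdot,\cdot)$ over the sixteen pairs of summands. Because $\omega_\Hh\left(\left(\begin{smallmatrix}1\\0\end{smallmatrix}\right),\left(\begin{smallmatrix}1\\0\end{smallmatrix}\right)\right)=\omega_\Hh\left(\left(\begin{smallmatrix}0\\1\end{smallmatrix}\right),\left(\begin{smallmatrix}0\\1\end{smallmatrix}\right)\right)=0$ and $\omega_\Hh\left(\left(\begin{smallmatrix}1\\0\end{smallmatrix}\right),\left(\begin{smallmatrix}0\\1\end{smallmatrix}\right)\right)=\tfrac12=-\omega_\Hh\left(\left(\begin{smallmatrix}0\\1\end{smallmatrix}\right),\left(\begin{smallmatrix}1\\0\end{smallmatrix}\right)\right)$, only the eight cross terms survive, and I would collect them into $\tfrac12\big[g_\E(a\text{-part},\bar d\text{-part}) - g_\E(\bar b\text{-part},\text{?}) + \dots\big]$. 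Using $g_\E\left(\left(\begin{smallmatrix}x\\0\end{smallmatrix}\right),\left(\begin{smallmatrix}y\\0\end{smallmatrix}\right)\right)=x^t y$, the surviving terms assemble (after combining each term with its conjugate coming from the real structure) into $\Re(a^t\bar d - b^t\bar c)$; I expect the factor of $\tfrac12$ to cancel against the two occurrences of each monomial (a summand and its $\epsilon_\E\otimes\epsilon_\Hh$-conjugate partner). This is the one genuinely fiddly bookkeeping step, so I would lay out the sixteen-term table carefully rather than wave at it.

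Next, I would argue that $\omega_0$ is real-valued: since $g_\E(\epsilon_\E x,\epsilon_\E y) = \overline{g_\E(x,y)}$ and $\omega_\Hh(\epsilon_\Hh x,\epsilon_\Hh y) = \overline{\omega_\Hh(x,y)}$ (both stated in Lemma \ref{invarena}), the product $g_\E\otimes\omega_\Hh$ satisfies $(g_\E\otimes\omega_\Hh)((\epsilon_\E\otimes\epsilon_\Hh)u,(\epsilon_\E\otimes\epsilon_\Hh)v) = \overline{(g_\E\otimes\omega_\Hh)(u,v)}$, and on $[\E\Hh]$, which is fixed pointwise by $\epsilon_\E\otimes\epsilon_\Hh$, this forces $\omega_0(u,v) = \overline{\omega_0(u,v)} \in \R$; equivalently one just notes that the final formula is manifestly a real part. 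Skew-symmetry is immediate because $g_\E$ is symmetric and $\omega_\Hh$ is antisymmetric, so the tensor product is antisymmetric; this also matches $\Re(a^t\bar d - b^t\bar c) = -\Re(c^t\bar b - d^t\bar a)$.

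For invariance: $g_\E$ is by construction $\SO^*(2n)$-invariant (Lemma \ref{invarena}(3)) and $\Sp(1)$ acts trivially on the $\E$-factor; dually, $\omega_\Hh$ is $\Sp(1)$-invariant (Lemma \ref{invarena}(1), being a real form of the $\Sp(1)$-volume form on $\Hh$) and $\SO^*(2n)$ acts trivially on the $\Hh$-factor. Hence $\omega_0 = g_\E\otimes\omega_\Hh$ is invariant under $\SO^*(2n)\times\Sp(1)$, and since it descends to the quotient $\SO^*(2n)\Sp(1) = \SO^*(2n)\times_{\Z_2}\Sp(1)$ acting on $[\E\Hh]$ (the $\Z_2$ acts as $-\id$ on both factors, so trivially on the tensor product, hence preserves a $2$-form), it is $\SO^*(2n)\Sp(1)$-invariant. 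Finally, non-degeneracy follows because $g_\E$ induces the isomorphism $\E^*\cong\E$ and $\omega_\Hh$ induces $\Hh^*\cong\Hh$ (both in Lemma \ref{invarena}), so $g_\E\otimes\omega_\Hh$ induces $\E^*\otimes\Hh^* \cong \E\otimes\Hh$, i.e.\ it is non-degenerate on $\E\otimes_\C\Hh$ and therefore on its real form $[\E\Hh]$; alternatively, from the explicit formula, if $\omega_0(a+bj,\,\cdot\,)=0$ for all $c+dj$, then choosing $d$ and $c$ freely and using non-degeneracy of $(x,y)\mapsto\Re(x^t\bar y)$ on $\C^n$ forces $a=b=0$. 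The only real work is the componentwise expansion in the first paragraph; everything after that is inheritance from the two factor forms.
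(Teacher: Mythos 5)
Your proposal is correct and follows essentially the same route as the paper: a direct expansion of $(g_{\E}\otimes\omega_{\Hh})(a+bj,c+dj)$ using the explicit elements of Lemma \ref{IJKstandard}, yielding $\tfrac12(a^t\bar{d}-b^t\bar{c}+\bar{a}^td-\bar{b}^tc)=\Re(a^t\bar{d}-b^t\bar{c})$, with skew-symmetry, invariance and non-degeneracy inherited from the two factor forms. The only cosmetic difference is that the paper groups the summands into two tensor monomials per vector (a four-term expansion) rather than your sixteen-term table, and your added remarks on realness and the descent to the quotient $\SO^*(2n)\Sp(1)$ merely spell out what the paper leaves implicit.
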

\begin{proof}
We   compute 
\begin{eqnarray*}
\omega_0(a+bj, c+dj)&:=&g_{\E}\otimes \omega_{\Hh}(\left( 
    \begin{smallmatrix}
a \\
-b
  \end{smallmatrix}
  \right)\otimes \left( 
    \begin{smallmatrix}
1 \\
0
  \end{smallmatrix}
  \right)+\left( 
    \begin{smallmatrix}
 \bar{b} \\
\bar{a}
  \end{smallmatrix}
  \right)\otimes \left( 
    \begin{smallmatrix}
0 \\
1
  \end{smallmatrix}
  \right),\left( 
    \begin{smallmatrix}
c \\
-d
  \end{smallmatrix}
  \right)\otimes \left( 
    \begin{smallmatrix}
1 \\
0
  \end{smallmatrix}
  \right)+\left( 
    \begin{smallmatrix}
 \bar{d} \\
\bar{c}
  \end{smallmatrix}
  \right)\otimes \left( 
    \begin{smallmatrix}
0 \\
1
  \end{smallmatrix}
  \right))\\
  &=&\frac12(a^t\bar{d}-b^t\bar{c}+\bar{a}^td-\bar{b}^tc)=\Re(a^t\bar{d}-b^t\bar{c}).
\end{eqnarray*}
This is clearly a 2-form, which is $\SO^*(2n)\Sp(1)$-invariant and non-degenerate by definition.
\end{proof}

Let us look for further invariants. We start with  the space $[\E\Hh]^*\otimes_\R [\E\Hh]^*$ of   real-valued bilinear forms on $[\E\Hh]$, for which we have the following   equivariant decompositions into submodules, which are irreducible for $n>2$.
 \begin{prop}\label{modules}
 
\[
\renewcommand\arraystretch{1.3}
\begin{tabular}{l l l}
$\Lambda^2[\E\Hh]^*$  &  $\cong_{\Gl(n,\Hn)\Sp(1)}$ & $[\Lambda^2 \E]^*\otimes_\R \sp(1)\oplus [S^2\E]^*\otimes_\R [\Lambda^2\Hh]^*$\,, \\  
                                      & $\cong_{\SO^{\ast}(2n)\Sp(1)}$ & $[\Lambda^2 \E]^*\otimes_\R \sp(1) \oplus [S^2_0\E]^*\oplus \langle\omega_0 \rangle$\,,\\  
                                      & $\cong_{\Gl(n, \Hn)}$ & $3[\Lambda^2 \E]^*  \oplus [S^2\E]^*$\,,  \\  
                                      & $\cong_{\SO^{\ast}(2n)}$ & $3[\Lambda^2 \E]^*  \oplus [S^2_0\E]^*\oplus \langle\omega_0 \rangle$\,. \\ 
$S^2[\E\Hh]^*$              & $\cong_{\Gl(n,\Hn)\Sp(1)}$ & $[\Lambda^2\E]^*\otimes_\R [\Lambda^2\Hh]^*\oplus [S^2\E]^*\otimes_\R \sp(1)$\,, \\ 
                                       & $\cong_{\SO^{\ast}(2n)\Sp(1)}$ & $[\Lambda^2\E]^*\oplus [S^2_0\E]^*\otimes_\R \sp(1)\oplus \langle\omega_0 \rangle \otimes_\R \sp(1)$\,, \\ 
                                         & $\cong_{\Gl(n, \Hn)}$  & $[\Lambda^2\E]^*\oplus 3[S^2\E]^*$\,, \\                                        & $\cong_{\SO^{\ast}(2n)}$ & $[\Lambda^2\E]^*\oplus 3[S^2_0\E]^* \oplus 3\R$\,. 
                                                                                \end{tabular}
                                       \]
\end{prop}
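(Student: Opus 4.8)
The plan is to decompose $[\E\Hh]^*\otimes_\R[\E\Hh]^*$ using the tensor-product structure $[\E\Hh]^*\cong_\C \E^*\otimes_\C\Hh^*$ together with the real structure $\epsilon_\E\otimes\epsilon_\Hh$, and then match the pieces against the symmetric/antisymmetric splitting. First I would write, over $\C$,
\[
(\E\otimes_\C\Hh)^*\otimes_\C(\E\otimes_\C\Hh)^*\cong (\E^*\otimes_\C\E^*)\otimes_\C(\Hh^*\otimes_\C\Hh^*),
\]
and use the $\Sp(1)$-decomposition $\Hh^*\otimes_\C\Hh^*\cong S^2\Hh^*\oplus\Lambda^2\Hh^*$, where $S^2\Hh^*$ is the adjoint $\sp(1)^\C$ and $\Lambda^2\Hh^*$ is the trivial line spanned by $\omega_\Hh$ (Lemma \ref{invarena}, part 1). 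Correspondingly I would split $\E^*\otimes_\C\E^*\cong S^2\E^*\oplus\Lambda^2\E^*$, which for $\SO^*(2n)$ refines further as $S^2\E^*\cong_{\SO^*(2n)} S^2_0\E^*\oplus\langle g_\E\rangle$ (Lemma \ref{invarena}, part 3), while for $\Gl(n,\Hn)$ one keeps $S^2\E^*$ and $\Lambda^2\E^*$ intact. Because $\Hh$ and $\E$ are both of quaternionic type and the whole tensor product is of real type, the assignment of real forms is governed by the standard rule: a factor symmetric in $\Hh$ times a factor antisymmetric in $\E$ (or vice versa) lands in a given real module, and the real points are cut out by the appropriate product of the $\epsilon$'s.

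The second step is to match parity. A bilinear form on $[\E\Hh]$ is symmetric exactly when it is symmetric under the simultaneous swap of both $\E$- and $\Hh$-slots; hence in $S^2[\E\Hh]^*$ we collect the summands $(S^2\E^*\otimes\Lambda^2\Hh^*)\oplus(\Lambda^2\E^*\otimes S^2\Hh^*)$, taking real points, and in $\Lambda^2[\E\Hh]^*$ we collect $(\Lambda^2\E^*\otimes\Lambda^2\Hh^*)\oplus(S^2\E^*\otimes S^2\Hh^*)$. Rewriting real points of a tensor with the trivial line $[\Lambda^2\Hh]^*$ just removes that factor; rewriting real points with $S^2\Hh^*$ produces a tensor over $\R$ with $\sp(1)$ (again by Lemma \ref{invarena}, part 2, $[S^2\Hh]^*\cong\sp(1)$). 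This immediately yields the $\SO^*(2n)\Sp(1)$- and $\Gl(n,\Hn)\Sp(1)$-lines of both tables, e.g.\ $\Lambda^2[\E\Hh]^*\cong_{\SO^*(2n)\Sp(1)}[\Lambda^2\E]^*\otimes_\R\sp(1)\oplus[S^2_0\E]^*\oplus\langle\omega_0\rangle$, with $\langle\omega_0\rangle$ being precisely the real form of $g_\E\otimes\omega_\Hh$ identified in Proposition \ref{usefrel}.

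For the $\Gl(n,\Hn)$- and $\SO^*(2n)$-lines (without $\Sp(1)$), I would forget the $\Sp(1)$-module structure on the $\Hh$-factors: as a real $\Sp(1)$-invariant, $\sp(1)\cong_\R\R^3$ is three copies of the trivial module, so $[\Lambda^2\E]^*\otimes_\R\sp(1)\cong 3[\Lambda^2\E]^*$, $[S^2\E]^*\otimes_\R\sp(1)\cong 3[S^2\E]^*$, etc. Substituting these multiplicities into the already-established $\Sp(1)$-equivariant lines gives the remaining four decompositions directly — for instance $S^2[\E\Hh]^*\cong_{\SO^*(2n)}[\Lambda^2\E]^*\oplus 3[S^2_0\E]^*\oplus 3\R$. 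The only genuine subtlety, and the step I expect to demand care, is the real-structure bookkeeping: one must verify that $\epsilon_\E\otimes\epsilon_\Hh$ acts on each complex summand as a real structure (not a quaternionic one), so that taking real points genuinely halves the real dimension and produces the stated $[\ ]$-modules without hidden multiplicities or sign twists. This follows from the quaternionic type of both $\E$ and $\Hh$ and the multiplicativity of the Frobenius--Schur indicator under tensor product, but writing it out correctly for the line $\langle\omega_0\rangle$ versus $[\Lambda^2\E]^*$ inside $\Lambda^2$ (where both factors are antisymmetric and one must not double-count) is where an error would most naturally creep in; a dimension check against Table \ref{Table1} provides the final sanity test. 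The irreducibility claim for $n>2$ is then inherited from the irreducibility of $S^2_0\E$, $\Lambda^2\E$, and $S^2\E$ under the respective groups, which is standard.
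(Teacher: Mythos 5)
Your overall strategy is the right one, and it is essentially what the paper itself points to: Proposition \ref{modules} is stated in the paper with only a remark that it follows from basic representation theory (the plethysm decomposition of $\Lambda^2$ and $S^2$ of a tensor product together with the real-form identifications of Lemma \ref{invarena}), and the paper carries out exactly this computation for $\Lambda^2\E\Hh$ inside the proof of Proposition \ref{genCar1}. Your treatment of the restriction from $\Gl(n,\Hn)\Sp(1)$ (resp.\ $\SO^*(2n)\Sp(1)$) to $\Gl(n,\Hn)$ (resp.\ $\SO^*(2n)$) via $\sp(1)\cong_{\R}\R^3$, the refinement $S^2\E^*\cong S^2_0\E^*\oplus\langle g_\E\rangle$, and the real-structure bookkeeping are all fine.

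However, there is a concrete error at the crux of the argument, namely the parity matching. Under the simultaneous swap of the $\E$- and $\Hh$-slots, the summands $S^2\E^*\otimes S^2\Hh^*$ and $\Lambda^2\E^*\otimes\Lambda^2\Hh^*$ lie in the $+1$ eigenspace (signs $(+1)(+1)$ and $(-1)(-1)$), so they constitute $S^2[\E\Hh]^*$, while the mixed summands $S^2\E^*\otimes\Lambda^2\Hh^*$ and $\Lambda^2\E^*\otimes S^2\Hh^*$ lie in the $-1$ eigenspace and constitute $\Lambda^2[\E\Hh]^*$. You state the correct criterion but then assign the summands the other way around: as written, your step would interchange the $\Lambda^2$- and $S^2$-rows of the table, and the formula you subsequently quote for $\Lambda^2[\E\Hh]^*$ (which \emph{is} the correct one from the proposition) does not follow from your own assignment. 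A quick sanity check exposing the slip: $\omega_0=g_\E\otimes\omega_\Hh$ is a $2$-form by Proposition \ref{usefrel}, i.e.\ symmetric in $\E$ times antisymmetric in $\Hh$ is \emph{antisymmetric} on $[\E\Hh]$, so the mixed-parity pieces must sit in $\Lambda^2[\E\Hh]^*$; likewise the qH metrics $(\ ,\ )_0=\omega_\E\otimes\omega_\Hh$ (antisymmetric in both factors) are symmetric, placing $\Lambda^2\E^*\otimes\Lambda^2\Hh^*$ in $S^2[\E\Hh]^*$. With the assignment corrected, the rest of your argument goes through and reproduces the proposition; a dimension count against Table \ref{Table1} then confirms the result as you suggest.
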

\begin{rem}\label{zerorem}
\textnormal{
\textsf{1)}  These results are obtained by applying basic representation theory, see for example \cite{FH}, or the summary given in \cite{CGW}.   The reader may feel more familiar with  the decompositions for the  $\Gl(n,\Hn)\Sp(1)$- and $\Gl(n, \Hn)$-actions,  see for example  \cite{Salamon86, Swann, Cabrera}.\\   
\textsf{2)} The $\Gl(n, \Hn)\Sp(1)$-module $[\Lambda^2\E]^*\subset S^2[\E\Hh]^*$ is spanned by the qH pseudo-Euclidean metrics on $[\E\Hh]$ of signature $(4p,4q)$, which  have a standard representative   given by  (see \cite{Salamon86, ACort}) 
\[
( \ , \ )_{0}:=\omega_{\E}\otimes\omega_{\Hh}\,.
\]
  At the same time, $[\Lambda^2\E]^*$ can be viewed as  the $\Gl(n, \Hn)$-module of the hH pseudo-Euclidean metrics on $[\E\Hh]$ of signature $(4p,4q)$.  The corresponding \textsf{fundamental   2-forms}  (skew-symmetric non-degenerate $\R$-valued bilinear forms)
\[
  \omega_{a}(\cdot\,,\cdot):=(\cdot\,, \mc{J}_{a}\cdot)_{0}\,,\quad a=1, 2, 3,  
\]
where  $H_{0}=\{\mc{J}_{a} : a=1, 2, 3\}$  is  the standard admissible basis,  are three distinguished elements of the $\Gl(n, \Hn)$-module $3[\Lambda^2 \E]^*$, inducing a $\Sp(1)$-submodule of the $\Gl(n, \Hn)\Sp(1)$-module $[\Lambda^2 \E]^*\otimes_\R [S^2\Hh]^*$ (independently of the choice of an admissible basis).\\
\textsf{3)} The irreducible $\Gl(n, \Hn)\Sp(1)$-module $[S^2\E]^*\otimes_\R [\Lambda^2\Hh]^*\subset \Lambda^2[\E\Hh]^*$, which is also an irreducible $\Gl(n, \Hn)$-module,
   consists of 2-forms of a distinguished type, playing an important role in our considerations related to linear $\SO^*(2n)$- or $\SO^{*}(2n)\Sp(1)$-structures. For such 2-forms,  we shall use the following terminology.}
\end{rem}
\begin{defi}\label{scalardef}
\textsf{1)} Consider $[\E\Hh]$ with the standard admissible basis $H_{0}$. Then,  a 2-form $\omega$ on $[\E\Hh]$ is called a \textsf{scalar 2-form} (with respect to $H_{0}$), if $\omega$ is  non-degenerate and $H_{0}$-Hermitian. \\
\textsf{2)} Consider $[\E\Hh]$ with the standard linear quaternionic structure  $Q_{0}$. Then, a  2-form $\omega$ on $[\E\Hh]$ is called a \textsf{scalar 2-form} (with respect to $Q_{0}$), if $\omega$ is  non-degenerate and $Q_{0}$-Hermitian. 
  \end{defi}
  Next we shall refer to the scalar 2-form $\omega_0$ on $[\E\Hh]$ introduced in Proposition \ref{usefrel} via the term  \textsf{standard scalar 2-form} on $[\E\Hh]$ (with respect to $Q_{0}=\langle H_0\rangle$). 
As an immediate consequence of  Proposition \ref{usefrel}  we conclude that 
\[
\Aut\big(H_{0}, \omega_0\big)=\SO^*(2n)\,,\quad \Aut\big(Q_{0}=\langle H_{0}\rangle, \omega_0\big)=\SO^*(2n)\Sp(1)\,,
\] 
respectively. Let us now provide a useful  characterization of scalar 2-forms.  

\begin{prop}\label{usefrel1}
The following conditions are equivalent for a non-degenerate 2-form $\omega$   on $[\E\Hh]${\rm :}
\begin{enumerate}
\item[$\sf{(1)}$] $\omega$ is conjugated to $\omega_0$ by an element of $\Gl(n,\Hn)$.
\item[$\sf{(2)}$] $\omega\in [S^2\E]^*=[S^2_0\E]^*\oplus\langle\omega_{0}\rangle$.  
\item[$\sf{(3)}$] $\omega$ is $\Sp(1)$-invariant, i.e., $\omega(A\cdot\,,A\cdot)=\omega(\cdot\,,\cdot)$, for all $A\in \Sp(1)$.
\item[$\sf{(4)}$] $\omega(A\cdot\,,\cdot)+\omega(\cdot\,,A\cdot)=0$, for all $A\in \sp(1)$.
\item[$\sf{(5)}$]  \label{winv}\label{defquaternion-symplectic2} $\omega(\J\cdot\,, \J\cdot)=\omega(\cdot\,, \cdot)$, for all $\J\in \Ss(Q_{0})$, which means that $\omega$ is scalar.
\item[$\sf{(6)}$]  $\omega(\J\cdot\,,\cdot)+\omega(\cdot\,, \J\cdot)=0$, for all $\J\in \Ss(Q_{0})$.
\item[$\sf{(7)}$] $\omega(J_a\cdot\,, J_a\cdot)=\omega(\cdot\,,\cdot)$, for any admissible  basis $H=\{J_a : a=1, 2, 3\}$ of $Q_{0}$.
\item[$\sf{(8)}$] $\omega(J_a\cdot\,, \cdot)+\omega(\cdot, J_a\cdot)=0$, for any  admissible  basis $H=\{J_a : a=1, 2, 3\}$ of $Q_{0}$.
\end{enumerate}
\end{prop}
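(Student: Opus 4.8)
The plan is to prove the chain of equivalences by establishing a loop, together with a few shortcuts, rather than proving each pair separately. The natural backbone is
\[
\sf{(1)}\Rightarrow\sf{(3)}\Leftrightarrow\sf{(4)}\Rightarrow\sf{(2)}\Rightarrow\sf{(1)}\,,
\]
and then observing that $\sf{(3)}\Leftrightarrow\sf{(5)}\Leftrightarrow\sf{(7)}$ and $\sf{(4)}\Leftrightarrow\sf{(6)}\Leftrightarrow\sf{(8)}$ once one understands how $\Sp(1)$, the sphere $\Ss(Q_0)$, and an arbitrary admissible basis sit inside one another. The implication $\sf{(1)}\Rightarrow\sf{(3)}$ is immediate since $\omega_0$ is $\SO^*(2n)\Sp(1)$-invariant by Proposition \ref{usefrel}, so in particular $\Sp(1)$-invariant, and conjugation by $\Gl(n,\Hn)$ commutes with the $\Sp(1)$-action (the $\Sp(1)$-factor centralizes $\Gl(n,\Hn)$ inside $\Gl(4n,\R)$, cf.\ the centralizer computation $C_{\fr{gl}(\E)}(\fr{so}^*(2n))=\sp(1)\oplus\R$ and its $\Gl(n,\Hn)$-analogue). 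The equivalence $\sf{(3)}\Leftrightarrow\sf{(4)}$ is the usual passage between a Lie group action and its Lie algebra action: differentiating $\omega(A\cdot,A\cdot)=\omega(\cdot,\cdot)$ along a curve in $\Sp(1)$ through the identity gives $\sf{(4)}$, and conversely $\sf{(4)}$ integrates to $\sf{(3)}$ because $\Sp(1)$ is connected.

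For $\sf{(4)}\Rightarrow\sf{(2)}$ I would argue representation-theoretically using Proposition \ref{modules}. A $2$-form $\omega$ lies in $\Lambda^2[\E\Hh]^*$, which under $\SO^*(2n)\Sp(1)$ decomposes as $[\Lambda^2\E]^*\otimes_\R\sp(1)\oplus[S^2_0\E]^*\oplus\langle\omega_0\rangle$; the condition $\sf{(4)}$ says precisely that $\omega$ is annihilated by the $\sp(1)$-action, i.e.\ lies in the $\sp(1)$-trivial isotypic component, which is exactly $[S^2_0\E]^*\oplus\langle\omega_0\rangle=[S^2\E]^*$ (the factor $[\Lambda^2\E]^*\otimes_\R\sp(1)$ carries the nontrivial $\sp(1)$-action). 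Then $\sf{(2)}\Rightarrow\sf{(1)}$ follows from the standard fact — here it is really the content of the classification of quaternionic skew-Hermitian forms, Proposition \ref{usefrelskewherm}, together with Lemma \ref{invarena} — that $\Gl(n,\Hn)$ acts transitively on the non-degenerate elements of $[S^2\E]^*$, so any non-degenerate $\omega\in[S^2\E]^*$ is $\Gl(n,\Hn)$-conjugate to the standard one $\omega_0=g_\E\otimes\omega_\Hh$. One should be a little careful here that $\Gl(n,\Hn)$ acts on $\E$ preserving the quaternionic type but \emph{not} $g_\E$, so I would phrase this as: the orbit of $\omega_0$ under $\Gl(n,\Hn)$ is open (its stabilizer $\SO^*(2n)$ has the right dimension $n(2n-1)=\dim[S^2\E]^*_{\mathbb R}/\!\!\sim$ ... ) and is in fact the whole set of non-degenerate forms, invoking the uniqueness statement from Harvey \cite[Chapter 2]{Harvey} cited after Definition \ref{basicsright}.

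Finally, the statements $\sf{(5)},\sf{(6)},\sf{(7)},\sf{(8)}$. Since $\Ss(Q_0)=\Sp(1)\cap\sp(1)$ and every element of $\Sp(1)$ of the form $\mu_0\id+\mu_1 J_1+\mu_2 J_2+\mu_3 J_3$ with $\mu_0\ne\pm1$ is, up to scaling the imaginary part, $\exp$ of an element of $\Ss(Q_0)$ (indeed $\Ss(Q_0)$ generates $\Sp(1)$ as a group, and the $J\in\Ss(Q_0)$ are exactly the unit imaginary quaternions, whose real linear span is all of $\sp(1)$), conditions $\sf{(5)}$ and $\sf{(6)}$ are each equivalent to the pair $\sf{(3)},\sf{(4)}$: $\sf{(6)}$ for all $\J\in\Ss(Q_0)$ means $\omega$ kills every unit imaginary quaternion acting on $[\E\Hh]$, hence by $\R$-linearity kills all of $\sp(1)$, which is $\sf{(4)}$; and $\sf{(5)}$ for all $\J\in\Ss(Q_0)$ says $\omega$ is fixed by a generating set of $\Sp(1)$, hence by all of $\Sp(1)$, which is $\sf{(3)}$ (for the converse use $\Ss(Q_0)\subset\sp(1)$ and $\Ss(Q_0)\subset\Sp(1)$ respectively). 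For $\sf{(7)}$ and $\sf{(8)}$ one uses that any admissible basis $H=\{J_a\}$ of $Q_0$ is related to the standard one $H_0$ by an element of $\SO(3)$ (Definition \ref{basicsright}.2), so the $J_a$ are again unit elements of $\Ss(Q_0)$ and $\{J_1,J_2,J_3\}$ spans $\sp(1)$; thus $\sf{(8)}$ is $\sf{(4)}$ tested on a basis of $\sp(1)$, hence equivalent to $\sf{(4)}$ by linearity, and $\sf{(7)}$ together with the quaternion relation $J_1J_2=J_3$ forces invariance under the whole group $\langle J_1,J_2,J_3\rangle=\Sp(1)$, hence $\sf{(3)}$; conversely $\sf{(3)}\Rightarrow\sf{(7)}$ and $\sf{(4)}\Rightarrow\sf{(8)}$ trivially. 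The last sentence of Definition \ref{scalardef} then identifies $\sf{(5)}$ (equivalently $\sf{(7)}$) as the statement that $\omega$ is scalar, closing the circle.

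I expect the main obstacle to be pinning down $\sf{(2)}\Rightarrow\sf{(1)}$ cleanly, i.e.\ the transitivity of $\Gl(n,\Hn)$ on non-degenerate elements of $[S^2\E]^*$: one must either cite the classification of quaternionic skew-Hermitian forms (Harvey, or Proposition \ref{usefrelskewherm}) or reprove it by a Gram–Schmidt-type normalization over $\Hn$, taking care that over $\Hn$ "symmetric" bilinear forms on $\E$ correspond under $g_\E$ to skew-Hermitian quaternionic forms and that these have a unique non-degenerate model; everything else is elementary Lie-group/Lie-algebra bookkeeping about $\Sp(1)$ and $\Ss(Q_0)$.
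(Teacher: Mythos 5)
Your skeleton is sound and, apart from bookkeeping, follows the same two pillars as the paper: the identification of conditions (2)--(8) with $\Sp(1)$- (equivalently $\sp(1)$-, equivalently $\Ss(Q_0)$-) invariance via the decomposition of $\Lambda^2[\E\Hh]^*$ in Proposition \ref{modules}, and the transitivity of $\Gl(n,\Hn)$ on non-degenerate elements of $[S^2\E]^*$ for the link with (1). The one substantive gap is precisely at the step you flag, $\sf{(2)}\Rightarrow\sf{(1)}$. Citing Proposition \ref{usefrelskewherm} here would be circular: in the paper the uniqueness statement of that proposition (any quaternionic skew-Hermitian form is $\Gl(n,\Hn)$-conjugate to $h_0$) is itself deduced from Proposition \ref{usefrel1} (via Proposition \ref{signprop}), so it cannot be used as input. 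Your open-orbit count is also not conclusive on its own, since an open orbit need not exhaust the non-degenerate elements (compare real symmetric forms, where each signature gives an open $\Gl(n,\R)$-orbit); what rules out ``signatures'' must be proved or imported from Harvey. The paper closes this gap internally: writing $\omega(\cdot\,,\cdot)=(\cdot\,,A\cdot)_0$ with $A\in\sp(n)$ invertible (this is the quaternionic-linear-algebra incarnation of $\omega\in[S^2\E]^*$), it diagonalizes $A$ by $\Sp(n)$ into purely imaginary quaternionic entries, scales each entry to $j$ by $\Gl(1,\Hn)$, and checks by direct computation that $(\cdot\,,j\id\,\cdot)_0=\omega_0$. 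So either carry out this normalization (your ``Gram--Schmidt over $\Hn$'' option, made precise) or cite Harvey's classification as an external fact; do not lean on Proposition \ref{usefrelskewherm}.

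A smaller but genuine slip: $\langle J_1,J_2,J_3\rangle$ is not $\Sp(1)$ but the order-eight quaternion group $\{\pm\id,\pm J_1,\pm J_2,\pm J_3\}$, so ``invariance under a generating set of $\Sp(1)$'' is not a valid reading of $\sf{(7)}$ for a single admissible basis. The conclusion survives for two easy reasons you can substitute: (i) condition $\sf{(7)}$ as stated quantifies over \emph{all} admissible bases, and every $\J\in\Ss(Q_0)$ belongs to some admissible basis, so $\sf{(7)}$ is literally $\sf{(5)}$; or (ii) even for one admissible basis, the adjoint action of the quaternion group on $\sp(1)$ has no nonzero invariants, so the $Q_8$-invariant part of $\Lambda^2[\E\Hh]^*$ already equals $[S^2\E]^*$, which gives $\sf{(3)}$. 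With these two repairs your argument is complete and essentially equivalent to the paper's; the remaining steps ($\sf{(1)}\Rightarrow\sf{(3)}$ via the centralizer property, $\sf{(3)}\Leftrightarrow\sf{(4)}$ by connectedness of $\Sp(1)$, $\sf{(6)}\Leftrightarrow\sf{(4)}$ and $\sf{(8)}\Leftrightarrow\sf{(4)}$ by spanning $\sp(1)$) are fine.
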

\begin{proof}
Clearly, (2) is equivalent to (3), because $[\Lambda^2\Hh]^*$ is a trivial $\Sp(1)$-module and $[S^2\Hh]^*\cong \sp(1)$ is not.  Now, we may express the qH  pseudo-Euclidean metric  $( \ , \ )_{0}$  on $[\E\Hh]$ as
\[
(a+bj,c+dj)_{0}=\Re(a^t\bar{c}+b^t\bar{d})\,.
\]
It follows that $(x,Ay)_{0}=(Ay,x)_{0}=-(y,Ax)_{0}$, for any $A\in \sp(n)$ and $x,y\in [\E\Hh]$. Since $[S^2\E]^*\cong \sp(n)$, the scalar 2-forms correspond to invertible elements of $\sp(n)$. Moreover, the $\Sp(1)$-action on the 2-form $\omega(\cdot\,,\cdot):=(\cdot\,,A\cdot)_{0}$ commutes with $A\in \sp(n)$. Therefore, the claims (2)--(8) are equivalent due to the usual properties of $( \ , \ )_{0}$. 
Now, because $\omega_0$ is scalar, and the space $ [S^2\E]^*\otimes_\R [\Lambda^2\Hh]^*$ is $\Gl(n,\Hn)$-invariant, it remains to show that    for some   invertible $A\in \sp(n)$, the expression $(\cdot\,, A\cdot)_{0}$ is conjugated to $\omega_0$.   Indeed, for $B\in \Gl(n,\Hn)$, we get
\[
\omega(B\cdot\,,B\cdot)=(B\cdot\,,AB\cdot)_{0}=(\cdot\,,B^*AB\cdot)_{0}\,,
\]
 where $B^*$ is the conjugate transpose. Also, for $B\in \Sp(n)$ we see that $B^*AB=B^{-1}AB$, and it is  known that every element $A\in \sp(n)$ is conjugated to a diagonal purely imaginary quaternionic matrix (in a maximal torus in $\sp(n)$).  Now, since for any $a\in \sp(1)$ there is $b\in \Gl(1,\Hn)$, such that $\bar bab=j\in \Hn$, we may find $B\in \Gl(n,\Hn)$ such that 
\[
\omega(Bx,By)=(x,j\id_{[EH]}y)_{0}\,.
\]
But then for $x=a+bj,y=c+dj\in [\E\Hh]$ we obtain $j\id_{[EH]}y=d-cj$, and thus 
 \[
 \omega(Bx,By)=(a+bj,d-cj)_{0}=\Re(a^t\bar{d}-b^t\bar{c})=\omega_0(a+bj,c+dj)\,.
 \]
\end{proof}

Let us now focus on   invariant symmetric bilinear  forms on $[\E\Hh]$  induced by the module 
\[
 \langle\omega_0 \rangle \otimes_\R \sp(1)\subset S^2[\E\Hh]^*\,.
 \]
  This module is trivial under the $\SO^{\ast}(2n)$-action,  but nontrivial for the $\Sp(1)$-action. Let us show how these symmetric bilinear  forms can provide three $\SO^{\ast}(2n)$-invariant pseudo-Euclidean metrics,  which form an analogue of the usual fundamental (K\"ahler) 2-forms arising in the (linear) hH/qH setting.

\begin{prop}\label{signprop}
\textsf{1)}  The vector space $[\E\Hh]$   admits three  pseudo-Euclidean metrics  $g_{a}(\cdot\,,\cdot):=\omega_{0}(\cdot\,, \mc{J}_{a}\cdot)$   of signature $(2n, 2n)$, satisfying
\[
 g_{a}(\mc{J}_a\cdot\,, \mc{J}_a\cdot)=g_{a}(\cdot\,,\cdot)\,,\quad \forall \ a=1, 2, 3\,,
 \]
 where  $H_{0}=\{\mc{J}_{a} : a=1, 2, 3\}$ is the  standard admissible basis  and  $\omega_{0}$ is  the standard  scalar 2-form on $[\E\Hh]$. \\ 
 \textsf{2)}   Assume that   $H=\{I, J, K\}$ is an admissible basis of $Q_0$ and that  $\omega$ is a scalar 2-form on $[\E\Hh]$ with respect to $Q_0$. Then, the elements $g_{I}=\omega(\cdot\,, I\cdot)$, $g_{J}=\omega(\cdot\,, J\cdot)$ and $g_{K}=\omega(\cdot\,, K\cdot)$ are simultaneously conjugated to $g_1,g_2,g_3$ by an element in $\Gl(n,\Hn)\Sp(1)$, and thus they have the same properties.\\
  \textsf{3)}  For any $\J\in \Ss(Q_{0})$,  the tensor
   \[
\langle\cdot\,,\cdot\rangle_{\J}:=\omega_0(\cdot\,,\J\cdot)=g_{\E}\otimes \J\in \langle\omega_0 \rangle \otimes_\R \sp(1)
\]
is   a pseudo-Euclidean metric of signature $(2n, 2n)$  satisfying   $\langle\J x, \J y\rangle_{\J}=\langle x, y\rangle_{\J}$, for all $x,y\in [\E\Hh]$.\\
  \textsf{4)} For any $\J\in \Ss(Q_{0})$, the tensor
  \[
g_{\J}(\cdot\,, \cdot):=\omega(\cdot\,, \J\cdot)\in [S^2\E]^*\otimes \sp(1)
\]
is conjugated to $\langle\cdot\,,\cdot\rangle_{\J}$ by an element of $\Gl(n,\Hn)$, and thus has the same properties, too.
Note that $\langle\cdot\,,\cdot\rangle_{\mc{J}_a}=g_{a}(\cdot, \cdot)$, for $a=1, 2, 3$.
\end{prop}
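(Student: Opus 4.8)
The plan is to reduce all four parts to Proposition \ref{usefrel1} together with the single structural fact, recorded in Section \ref{ehformsec}, that $\Gl(n,\Hn)$ (acting on the $\E$-factor) commutes with $\sp(1)=Q_0$ (acting on the $\Hh$-factor); essentially no fresh computation is needed beyond one optional coordinate check.

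For Part 1 I would start from the fact that $\omega_0$ is a scalar $2$-form, so Proposition \ref{usefrel1}, items (5) and (8), give that $\omega_0$ is $\mc{J}_a$-Hermitian and that $\mc{J}_a$ is $\omega_0$-skew. Combined with $\mc{J}_a^2=-\Id$, these two identities yield in one line that $g_a:=\omega_0(\cdot,\mc{J}_a\cdot)$ is symmetric and $\mc{J}_a$-Hermitian; non-degeneracy is immediate since $\omega_0$ is non-degenerate and $\mc{J}_a$ is invertible, and the $\SO^*(2n)$-invariance is immediate since $\omega_0$ and $\mc{J}_a$ are both $\SO^*(2n)$-invariant. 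For the signature I would use a second, anticommuting, complex structure: for $b\neq a$ one has $\mc{J}_a\mc{J}_b=-\mc{J}_b\mc{J}_a$, and using $\mc{J}_b$-Hermiticity of $\omega_0$ once more forces $g_a(\mc{J}_b\cdot,\mc{J}_b\cdot)=-g_a(\cdot,\cdot)$; thus $\mc{J}_b$ is a $g_a$-anti-isometry, so $g_a$ is congruent to $-g_a$, which pins the signature to $(2n,2n)$. As a sanity check one may instead compute $g_1(a+bj,a+bj)=2\Im(a^t\bar b)$ directly from the formula for $\omega_0$ in Proposition \ref{usefrel} and read off the split signature. Part 3 is then literally the same argument with an arbitrary $\J\in\Ss(Q_0)$ in place of $\mc{J}_a$ (Proposition \ref{usefrel1} applies to every such $\J$), choosing an anticommuting $\J'\in\Ss(Q_0)$ for the signature; the extra point there, the identification $\langle\cdot,\cdot\rangle_\J=g_\E\otimes\J\in\langle\omega_0\rangle\otimes_\R\sp(1)$, I would deduce from $\omega_0=g_\E\otimes\omega_\Hh$ and Lemma \ref{invarena}(2), since the $\J$-action is supported on the $\Hh$-factor and $\omega_\Hh(\cdot,\J\cdot)$ is exactly the image of $\J$ under $[S^2\Hh]^*\cong\sp(1)$.

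Parts 2 and 4 I would handle by conjugation, doing Part 4 first since it is simpler. As $\omega$ is scalar with respect to $Q_0$, Proposition \ref{usefrel1}(1)$\Leftrightarrow$(5) supplies $B\in\Gl(n,\Hn)$ with $\omega(B\cdot,B\cdot)=\omega_0$; since $B$ commutes with $\J\in\sp(1)$ one gets $g_\J(B\cdot,B\cdot)=\omega(B\cdot,B\J\cdot)=\omega_0(\cdot,\J\cdot)=\langle\cdot,\cdot\rangle_\J$, so $g_\J$ is conjugate to $\langle\cdot,\cdot\rangle_\J$ by $B$ and therefore shares its signature and $\J$-Hermiticity, while $\langle\cdot,\cdot\rangle_{\mc{J}_a}=g_a$ is the defining formula from Part 1. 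For Part 2 I would first move the admissible basis: $H$ and $H_0$ differ by an $\SO(3)$-rotation (Definition \ref{basicsright}(2)), realized by conjugation with some $C\in\Sp(1)$; then $\omega':=\omega(C\cdot,C\cdot)$ is again scalar (conjugation by $\Sp(1)$ preserves scalarness, e.g.\ via item (3) of Proposition \ref{usefrel1}), and applying Part 4 to $\omega'$ with the standard basis $H_0$ produces $B\in\Gl(n,\Hn)$ simultaneously conjugating the three forms $\omega'(\cdot,\mc{J}_a\cdot)$ to $g_1,g_2,g_3$. Composing the two conjugations gives the element $CB\in\Gl(n,\Hn)\Sp(1)$ that simultaneously conjugates $g_I,g_J,g_K$ to $g_1,g_2,g_3$, whence the ``same properties'' assertion, the Hermiticity with respect to $I,J,K$ being preserved because $CB$ intertwines $I,J,K$ with $\mc{J}_1,\mc{J}_2,\mc{J}_3$.

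All the calculations are short; the one place I expect to need care, and the main obstacle, is the bookkeeping in Part 2 — keeping track of which tensor factor each group element acts on, so that the $\Sp(1)$- and $\Gl(n,\Hn)$-conjugations genuinely combine into one element of $\Gl(n,\Hn)\Sp(1)$, and checking that conjugation transports Hermiticity from $\mc{J}_a$ to the \emph{rotated} complex structures $I,J,K$.
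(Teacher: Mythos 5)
Your argument is correct, and it differs from the paper's proof in the one step that actually requires work: the signature. The paper reduces to $\langle\cdot\,,\cdot\rangle_{\J}$ exactly as you do (conjugating $\omega$ to $\omega_0$ by some $B\in\Gl(n,\Hn)$, which commutes with the $\sp(1)$-action), but then writes $\J=\mu_1\mc{J}_1+\mu_2\mc{J}_2+\mu_3\mc{J}_3$, computes $g_1,g_2,g_3$ explicitly in the coordinates $a+bj$, and determines the signature by restricting to each quaternionic line $\langle e_\ell,\mc{J}_1e_\ell,\mc{J}_2e_\ell,\mc{J}_3e_\ell\rangle$ and diagonalizing the resulting $4\times 4$ matrix (eigenvalues $\pm1$, each twice). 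Your route replaces this computation by the observation that any anticommuting $\mc{J}_b$ (or $\J'\in\Ss(Q_0)$ anticommuting with $\J$) is a $g_a$-anti-isometry, since $g_a(\mc{J}_bx,\mc{J}_by)=\omega_0(\mc{J}_bx,-\mc{J}_b\mc{J}_ay)=-g_a(x,y)$ by $\mc{J}_b$-Hermiticity of $\omega_0$; hence $g_a\cong-g_a$ and, being non-degenerate on a $4n$-dimensional space, must have signature $(2n,2n)$. This is shorter and coordinate-free, and it automatically covers Part 3 for arbitrary $\J\in\Ss(Q_0)$; what it does not give you are the explicit formulas $g_1=\Im(a^t\bar d-b^t\bar c)$, $g_2=\Re(a^tc+b^td)$, $g_3=\Im(a^tc+b^td)$, which the paper reuses later (e.g.\ in the proof of Proposition \ref{usefrelskewherm}), so your optional coordinate check is worth keeping for that reason. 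Your treatment of Part 2 is also slightly more explicit than the paper's: the paper absorbs the change of admissible basis into the statement ``it suffices to prove the claims for $\langle\cdot\,,\cdot\rangle_{\J}$'' for general $\J\in\Ss(Q_0)$, whereas you realize the $\SO(3)$-rotation by an adjoint $\Sp(1)$-element $C$ and verify that $CB$ simultaneously conjugates $g_I,g_J,g_K$ to $g_1,g_2,g_3$; note that by item (3) of Proposition \ref{usefrel1} you in fact have $\omega(C\cdot,C\cdot)=\omega$, so your $\omega'$ is not merely scalar but equal to $\omega$, which simplifies the bookkeeping you were worried about.
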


\begin{proof}
It is not hard  to check that  $g_{\J }(\J\cdot\,, \J\cdot)=\omega(\cdot\,, -\J^3\cdot)=g_{\J}(\cdot\,, \cdot)$, for $\J\in \Ss(Q_0)$. Moreover, since $\omega$ is conjugated to $\omega_0$ by an element in $\Gl(n,\Hn)$, it suffices to prove the claims for $\langle \cdot, \cdot\rangle_{\J}$.  By using the standard admissible  basis $\{\mc{J}_{a} : a=1, 2, 3\}$ on $[\E\Hh]$ we may consider the linear complex structure $\J=\mu_1\mc{J}_1+\mu_2 \mc{J}_2+\mu_3 \mc{J}_3, \mu_1,\mu_2,\mu_3\in \R$, with $\sum_{a=1}^{3}\mu_a^2=1$.  Then, we obtain $\langle\cdot\,,\cdot\rangle_{\J}=\sum_{a=1}^{3}\mu_a\langle\cdot\,,\cdot\rangle_{\mc{J}_{a}}=\sum_{a=1}^{3}\mu_{a}g_{a}$, where  
 \begin{eqnarray*}
g_{1}(a+bj, c+dj)&=&\langle a+bj, c+dj\rangle_{\mc{J}_1}=\Re(-ia^t\bar{d}+ib^t\bar{c})=\Im(a^t\bar{d}-b^t\bar{c})\,,\\
g_2(a+bj, c+dj)&=&\langle a+bj, c+dj\rangle_{\mc{J}_2}=\omega_0(a+bj, -\bar{d}+\bar{c}j)=\Re(a^tc+b^td)\,,\\
g_{3}( a+bj, c+dj)&=&\langle a+bj, c+dj\rangle_{\mc{J}_3}=\Im(a^tc+b^td)
\end{eqnarray*}
  are split signature $(2n, 2n)$ metrics. To check the signature we proceed as follows: If $e_\ell$ is $\ell$-th vector of standard basis of $\C^n$, then $\langle\cdot\,,\cdot\rangle_{\J}$ restricted to $e_\ell,\mc{J}_1e_\ell, \mc{J}_2e_\ell, \mc{J}_3e_\ell$ takes the  form
\[ \left( \begin {array}{cccc} \mu_2&\mu_3&0&-\mu_1
\\ \mu_3&-\mu_2&\mu_1&0
\\ 0&\mu_1&\mu_2&\mu_3
\\ -\mu_1&0&\mu_3&-\mu_2\end {array}
 \right)\,.
\]
 This matrix has four eigenvalues: Two of them are given by  $\sqrt{\mu_1^2+\mu_2^2+\mu_3^2}=1$, and the other by $-\sqrt{\mu_1^2+\mu_2^2+\mu_3^2}=-1$. Since the 4-dimensional subspaces for different basis vectors are clearly orthogonal to each other, it follows that the signature is $(2n, 2n).$ This completes the proof.
\end{proof}
\begin{rem} \label{SOninvariantmetrics} 
\textnormal{\textsf{1)} Let us remark that  although the above pseudo-Euclidean metrics $g_{a}$  have signature $(2n, 2n)$, they are {\it not}  {Norden metrics}, since none of them is $H_0$-Hermitian. In particular, each $g_{a}$ is  $\mc{J}_{a}$-Hermitian, but not Hermitian for $\mc{J}_{b}$ with $b\neq a$. As a consequence of the first observation, there is an embedding \[
\SO^*(2n)\subset\U(n, n)\subset\SO(2n, 2n)\,.
\]
The same  conclusion applies for $g_{\J}$ (and $\langle \cdot , \cdot \rangle_{\J}$).  We mention that $\Aut\big(H_{0}, g_{i}\big)=\SO^*(2n)$,  but 
\[
\Aut\big(Q_{0}=\langle H_{0}\rangle,  g_{i}\big)= \SO^{\ast}(2n)\U(1)\ltimes \mathbb{Z}_2\subset\Gl(n,\Hh)\Sp(1)\,,
\]
for any $i=1, 2, 3$, and the same applies for the stabilizer of $\langle\cdot\,,\cdot\rangle_{\J}$ in $\Gl(n,\Hh)\Sp(1)$.  Here,  one should view $\U(1)$ as the  stabilizer of $\J\in S(Q)$, and  observe that $\mathbb{Z}_2$ acts as $-\id$ on $\J$ and $\omega_{0}$. }\\
\noindent \textnormal{\textsf{2)}  Finally, let us also mention that  the condition    $-g_{J_{a}}(J_{a}x, J_{a}y)+g_{J_{a}}(x, y)=0$, for any  $a=1, 2, 3$,  $x, y\in [\E\Hh]$, and any admissible basis $H=\{J_a : a=1, 2, 3\}$  of $Q_0$,  is equivalent to the final condition  posed in Proposition \ref{usefrel1}.}
\end{rem}

The Killing form of $\sp(1)$ provides a trivial $\SO^{\ast}(2n)\Sp(1)$-invariant submodule in  
\[
([S^2\E]^*\otimes \sp(1))\otimes \sp(1)\subset S^2[\E\Hh]^*\otimes \Gl([\E\Hh])\,.
\]
We show that this allows us to encode  the data   $\{\omega_0,g_1,g_2,g_3\}$ described above, into a single \textsf{quaternionic skew-Hermitian form}  on $[\E\Hh]$, which we may denote by
\[
h\in [\E\Hh]^*\otimes_\R [\E\Hh]^*\otimes_\R\Gl([\E\Hh])\,.
\]
\begin{defi}\label{defnew01}
A $\R$-bilinear form which is valued in endomorphisms  of $[\E\Hh]$,  that is an element $h\in [\E\Hh]^*\otimes_\R [\E\Hh]^*\otimes_\R\Gl([\E\Hh])$, is said to be  \textsf{quaternionic skew-Hermitian} if the following three conditions are satisfied:
\begin{itemize}
\item $\Re(h)(x,y):=\frac12 (h(x,y)-h(y,x))\in \R\cdot \id$;
\item  $\Im(h)(x,y):=\frac12 (h(x,y)+h(y,x))\in  Q_0=\sp(1)$;
 \item $h(\J \cdot \,, \cdot)=\J\circ h(\cdot \,, \cdot)$,
 \end{itemize}
  for all $x,y\in [\E\Hh]$ and  $\J\in \Ss(Q_0)$.   We call $\Re(h),\Im(h)$ the \textsf{real}, respectively \textsf{imaginary part}  of  $h$.   
\end{defi}

\begin{rem}
\textnormal{The standard admissible  basis $H_0$ on $[\E\Hh]$ provides the identification $\Hn\cong \R\oplus Q_0$, and the second condition in the more traditional Definition \ref{basicsright}   is clearly equivalent to the first two conditions in Definition \ref{defnew01}. Finally, the third condition in Definition \ref{defnew01} is clearly equivalent to the first condition in Definition \ref{basicsright} by $\R$-bilinearity of $h$.}
\end{rem}
 \begin{prop}\label{usefrelskewherm}
There is a unique $\SO^{\ast}(2n)\Sp(1)$-invariant trivial submodule in $S^2[\E\Hh]^*\otimes \sp(1)$ which provides the following imaginary part of the \textsf{standard quaternionic skew-Hermitian form} $h_{0}=\Re(h_{0})+\Im(h_{0})=g_{\E}\otimes \id_{[\Hh\Hh^*]}\in [\E\Hh]^*\otimes_\R [\E\Hh]^*\otimes_\R \Gl([\E\Hh])$ on $[\E\Hh]$,  
  \begin{equation}\label{fundskewherm}
 	\Im(h_{0})(\cdot\,,\cdot):=\sum_{a=1}^{3}g_{a}(\cdot\,,\cdot)\mc{J}_{a}\,,  
\end{equation}
where $H_{0}=\{\mc{J}_{a} : a=1, 2, 3 \}$ is the standard admissible basis and $g_{a}, a=1, 2, 3$ are defined in Proposition \ref{signprop}. Moreover, 
\[
\Re(h_{0})(x,y):=\omega_0(x,y)\otimes \id
\]
 is the real part of $h_{0}$ and the stabilizer in $\Gl([\E\Hh])$ of  $h_{0}$ is the Lie group $\SO^{\ast}(2n)\Sp(1)$, in particular
 \[
 \Aut(h_{0})=\SO^{\ast}(2n)\Sp(1)\,.
 \]
Finally, $h_{0}$ is equivalent to the linear quaternionic structure $Q_{0}=\langle H_0\rangle$ and the scalar 2-form $\omega_0$ on $[\E\Hh]$, while any quaternionic skew-Hermitian form $h$ on $[\E\Hh]$ is conjugated to $h_0$ by an element in $\Gl(n,\Hn)$.
\end{prop}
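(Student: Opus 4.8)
The argument splits naturally into four parts, and I will describe the plan for each.

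\emph{The trivial submodule and its generator.} By Proposition~\ref{modules} the only $\SO^\ast(2n)$-trivial constituent of $S^2[\E\Hh]^*$ is the three-dimensional summand $\langle\omega_0\rangle\otimes_\R\sp(1)$, spanned by the metrics $g_1,g_2,g_3$ of Proposition~\ref{signprop} (recall $g_a=g_\E\otimes\mc{J}_a$); as an $\Sp(1)$-module this is the adjoint representation. Hence the $\SO^\ast(2n)$-trivial part of $S^2[\E\Hh]^*\otimes_\R\sp(1)$ is $(\langle\omega_0\rangle\otimes_\R\sp(1))\otimes_\R\sp(1)\cong\sp(1)\otimes\sp(1)$, whose $\Sp(1)$-invariant subspace is one-dimensional, namely the image of the Killing form, spanned by $\sum_{a=1}^3 g_a\otimes\mc{J}_a$; this settles existence and uniqueness. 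I would then take $\Im(h_0):=\sum_a g_a\mc{J}_a$ as a generator: its $\SO^\ast(2n)$-invariance is clear since $\SO^\ast(2n)$ fixes each $g_a$ and $\mc{J}_a$, and its $\Sp(1)$-invariance follows because an admissible change of basis acts by one and the same $\SO(3)$-matrix on $(g_1,g_2,g_3)$ and on $(\mc{J}_1,\mc{J}_2,\mc{J}_3)$, so the contraction is fixed.

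\emph{Identifying $\Re(h_0)$, $\Im(h_0)$ and checking Definition~\ref{defnew01}.} Next I would decompose the canonical element $\id_{[\Hh\Hh^*]}$: regarded through $\omega_\Hh\colon\Hh^*\cong\Hh$ as an $\Ed(\Hh)$-valued bilinear form on $\Hh$, it splits under $[\Hh\Hh]^*=\langle\omega_\Hh\rangle\oplus S^2\Hh^*$ into a multiple of $\omega_\Hh\otimes\id$ and a piece valued in $S^2\Hh^*\cong\sp(1)=Q_0$. Tensoring with $g_\E$ and using $\omega_0=g_\E\otimes\omega_\Hh$ (Proposition~\ref{usefrel}) and $g_a=\omega_0(\cdot,\mc{J}_a\cdot)$ then gives $h_0=g_\E\otimes\id_{[\Hh\Hh^*]}=\omega_0\otimes\id+\sum_a g_a\mc{J}_a$; this is a short explicit check on the vectors $a+bj$ of Lemma~\ref{IJKstandard}. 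Since $\omega_0$ is antisymmetric with values in $\R\cdot\id$ while each $g_a$ is symmetric with $\sum_a g_a\mc{J}_a$ valued in $Q_0$, this exhibits $\Re(h_0)=\omega_0\otimes\id$ and $\Im(h_0)=\sum_a g_a\mc{J}_a$ as in Definition~\ref{defnew01}; the remaining condition $h_0(\J\cdot,\cdot)=\J\circ h_0(\cdot,\cdot)$ for $\J\in\Ss(Q_0)$ is immediate from $h_0=g_\E\otimes\id_{[\Hh\Hh^*]}$, because $\Ss(Q_0)$ acts only on the $\Hh$-factor of $[\E\Hh]$ and precomposing the identity-like element $\id_{[\Hh\Hh^*]}$ with such an operator coincides with postcomposing it. So $h_0$ is quaternionic skew-Hermitian.

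\emph{The stabilizer and equivalence with $(Q_0,\omega_0)$.} The inclusion $\SO^\ast(2n)\Sp(1)\subseteq\Aut(h_0)$ follows from the above, since $\SO^\ast(2n)$ fixes $\omega_0$ and each $g_a,\mc{J}_a$, and $\Sp(1)$ fixes $\omega_0\otimes\id$ while rotating $(g_a)$ and $(\mc{J}_a)$ by the same $\SO(3)$-matrix, so $h_0(\phi\cdot,\phi\cdot)=\phi\circ h_0(\cdot,\cdot)\circ\phi^{-1}$. For the converse, let $\phi\in\Gl([\E\Hh])$ preserve $h_0$. Taking the part of $h_0(\phi\cdot,\phi\cdot)=\phi\circ h_0(\cdot,\cdot)\circ\phi^{-1}$ that is antisymmetric in the two vector arguments and noting that $\Re(h_0)=\omega_0\otimes\id$ is untouched by conjugation, one gets $\phi^*\omega_0=\omega_0$; taking the symmetric part, $\phi$ conjugates $\Span\{\Im(h_0)(x,y):x,y\in[\E\Hh]\}=Q_0$ onto itself, so $\phi\in N_{\Gl([\E\Hh])}(Q_0)$. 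Because $Q_0\cong\so(3)$ is simple with only inner automorphisms, $\phi$ agrees modulo $C_{\Gl([\E\Hh])}(Q_0)=\Gl(n,\Hn)$ with an element of $\Sp(1)$, hence $\phi\in\Gl(n,\Hn)\Sp(1)$; together with $\phi^*\omega_0=\omega_0$ and $\phi Q_0\phi^{-1}=Q_0$, the identity $\Aut(Q_0,\omega_0)=\SO^\ast(2n)\Sp(1)$ recorded after Definition~\ref{scalardef} forces $\phi\in\SO^\ast(2n)\Sp(1)$. Equivalence of $h_0$ with the pair $(Q_0,\omega_0)$ is then formal: the pair reconstructs $h_0$ by the displayed formula, independently of the admissible basis, while $Q_0=\Span\{\Im(h_0)(x,y)\}$ and $\omega_0\otimes\id=\Re(h_0)$ reconstruct the pair. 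Finally, for uniqueness up to $\Gl(n,\Hn)$, let $h$ be a non-degenerate quaternionic skew-Hermitian form and write $\Re(h)=\omega\otimes\id$, so $\omega$ is non-degenerate; expanding $h(\J x,y)=\J\circ h(x,y)$ into its $\R\cdot\id$- and $Q_0$-components and letting $\J$ range over $\Ss(Q_0)$ shows both that $\omega$ is $Q_0$-Hermitian (condition~(8) of Proposition~\ref{usefrel1}), hence a scalar $2$-form, and that $\Im(h)$ is forced to equal $\sum_a\omega(\cdot,\mc{J}_a\cdot)\mc{J}_a$; by Proposition~\ref{usefrel1} there is $g\in\Gl(n,\Hn)$ with $\omega(g\cdot,g\cdot)=\omega_0$, and since $g$ centralizes $Q_0$ this same formula yields $g^*\Im(h)=\Im(h_0)$ and hence $g^*h=h_0$.

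I expect the genuinely non-routine point to be the reverse inclusion $\Aut(h_0)\subseteq\SO^\ast(2n)\Sp(1)$: the idea is that the $Q_0$-valued part $\Im(h_0)$ forces any symmetry to normalize $Q_0$, whereupon the absence of outer automorphisms of $\so(3)$ confines $\phi$ to $\Gl(n,\Hn)\Sp(1)$ and one falls back on the already known stabilizer of $(Q_0,\omega_0)$. The companion observation — that the third defining condition rigidly expresses the imaginary part of a quaternionic skew-Hermitian form through its (scalar) real part — is what turns the $\Gl(n,\Hn)$-uniqueness into a corollary of Proposition~\ref{usefrel1}. Everything else is either representation theory already packaged in Proposition~\ref{modules} or explicit manipulation with the data of Lemmas~\ref{invarena} and~\ref{IJKstandard}.
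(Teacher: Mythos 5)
Your proposal is correct and follows essentially the same route as the paper: identify $\Re(h_0)=\omega_0\otimes\id$ and $\Im(h_0)=\sum_a g_a\mc{J}_a$ from the splitting of $g_{\E}\otimes\id_{[\Hh\Hh^*]}$, use the fact that the values of $\Im(h_0)$ span $Q_0$ to force any symmetry into $\Gl(n,\Hn)\Sp(1)$ and then cut down by $\Aut(Q_0,\omega_0)=\SO^*(2n)\Sp(1)$, and obtain the $\Gl(n,\Hn)$-conjugacy from Proposition \ref{usefrel1}. The only differences are presentational: where the paper verifies the skew-Hermitian conditions and the basis-independence by explicit quaternionic coordinate computations, you argue structurally and, usefully, make explicit two points the paper leaves terse — the uniqueness of the trivial submodule via Proposition \ref{modules} together with the $\Sp(1)$-invariants of $\sp(1)\otimes\sp(1)$, and the component-wise argument showing that the condition $h(\J x,y)=\J\circ h(x,y)$ forces $\Im(h)=\sum_a\omega(\cdot,\mc{J}_a\cdot)\mc{J}_a$ with $\omega$ scalar.
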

 \begin{proof}
 Observe first  that there is a non-degenerate skew $\C$-Hermitian form $\mathpzc{h}$ on $[\E\Hh]$, defined by 
 \[
 \mathpzc{h}(a+bj, c+dj):=\omega_0(a+bj, c+dj)+\langle a+bj, c+dj\rangle_{\mc{J}_1} \mc{J}_1=a^t\bar{d}-b^t\bar{c}\,.
 \]
Moreover, we see that
\begin{eqnarray*}
\langle a+bj, c+dj\rangle_{\mc{J}_2}\mc{J}_2&=&\omega_0(a+bj, -\bar{d}+\bar{c}j)\mc{J}_2=\Re(a^tc+b^td)\mc{J}_2\,,\\
\langle a+bj, c+dj\rangle_{\mc{J}_3}\mc{J}_3&=&\langle a+bj, -\bar{d}+\bar{c}j\rangle_{\mc{J}_3}\mc{J}_3=\Im(a^tc+b^td)\mc{J}_3\,,
\end{eqnarray*}
and therefore, $\langle x,y\rangle_{\mc{J}_2}\mc{J}_2+\langle x,y\rangle_{\mc{J}_3}\mc{J}_3=g_{\E}(\left( \begin{smallmatrix}
a \\
b
  \end{smallmatrix}  \right),\left( \begin{smallmatrix}
c \\
d
  \end{smallmatrix}  \right))\mc{J}_2.$   Define now
  \[
  h_0:=g_{\E}\otimes \id_{[\Hh\Hh^*]}: [\E\Hh]\otimes [\E\Hh]\to [\Hh\Hh^*]\cong\Hn\,,\]
where we view $[\Hh\Hh^*]\subset \Hh\otimes_\C\Hh$ as a 1-dimensional left quaternionic vector space via the map
\[
p=p_1+p_2j\mapsto p_1\left( 
    \begin{smallmatrix}
1 \\
0
  \end{smallmatrix}
  \right)\otimes \left( 
    \begin{smallmatrix}
0 \\ 
1
  \end{smallmatrix}
  \right)-\bar{p_1}\left( 
    \begin{smallmatrix}
0 \\
1
  \end{smallmatrix}
  \right)\otimes \left( 
    \begin{smallmatrix}
1 \\
0
  \end{smallmatrix}
  \right)+p_2\left( 
    \begin{smallmatrix}
1 \\
0
  \end{smallmatrix}
  \right)\otimes \left( 
    \begin{smallmatrix}
1 \\
0
  \end{smallmatrix}
  \right)+\bar{p_2}\left( 
    \begin{smallmatrix}
0 \\
1
  \end{smallmatrix}
  \right)\otimes \left( 
    \begin{smallmatrix}
0 \\
1
  \end{smallmatrix}
  \right).
\]
Then,    for any $a+bj, c+dj\in [\E\Hh]$,  we obtain that
\begin{align*}
h_0(a+bj, c+dj)&=(
a^t\bar{d}-b^t\bar{c}
)\otimes \left( 
    \begin{smallmatrix}
1 \\
0
  \end{smallmatrix}
  \right)\otimes \left( 
    \begin{smallmatrix}
0 \\
1
  \end{smallmatrix}
  \right)+( 
\bar{b}^tc -
\bar{a}^td
)\otimes \left( 
    \begin{smallmatrix}
0 \\
1
  \end{smallmatrix}
  \right)\otimes \left( 
    \begin{smallmatrix}
1 \\
0
  \end{smallmatrix}
  \right)\\
  &+( a^tc+
b^td
)\otimes \left( 
    \begin{smallmatrix}
1 \\
0
  \end{smallmatrix}
  \right)\otimes \left( 
    \begin{smallmatrix}
1 \\
0
  \end{smallmatrix}
  \right)+(
\bar{b}^t\bar{d}+
\bar{a}^t\bar{c}
)\otimes \left( 
    \begin{smallmatrix}
0 \\
1
  \end{smallmatrix}
  \right)\otimes \left( 
    \begin{smallmatrix}
0 \\
1
  \end{smallmatrix}
  \right)\\
  &=a^t\bar{d}-b^t\bar{c}+(a^tc+b^td)\mc{J}_2\\
  &=\mathpzc{h}(a+bj, c+dj)+g_{\E}(a+bj, c+dj)\mc{J}_{2}\,.
\end{align*}
This shows that the definition of $h_0$ is independent of the choice of an admissible hypercomplex basis. Moreover, by   Definition \ref{basicsright} it remains to check the following:
\begin{align*}
h_0(c+dj,a+bj)&=-\overline{\mathpzc{h}(a+bj, c+dj)}+g_{\E}(a+bj, c+dj)\mc{J}_2=-\overline{h_0(a+bj, c+dj)}\,,\\
h_0(p(a+bj),c+dj)&=p_1h_0(a+bj,c+dj)+p_2h_0(-\bar{b}+\bar{a}j,c+dj)=p_1h_0(a+bj,c+dj)\\
&+p_2j(-j)(-\bar{b}^t\bar{d}-\bar{a}^t\bar{c}+(-\bar{b}^tc+\bar{a}^td)j)=ph_0(a+bj,c+dj)\,,
\end{align*}
for any $p=p_1+p_2j\in \Hn$. Consequently,  $h_0$ is a quaternionic skew-Hermitian form on $[\E\Hh]$.\\
To conclude the proof, note that  $\SO^{\ast}(2n)\Sp(1)$ is clearly contained in the stabilizer of $h_0$.  In addition,  the linear quaternionic structure  $Q_0$ is spanned by the imaginary part $\Im(h_0)(x,y)$, for $x,y\in [\E\Hh]$, and thus the stabilizer of $h_0$ is contained in $\Gl(n,\Hn)\Sp(1)$. On the other hand,  the real part of $h_0(x,y)$ recovers the scalar 2-form $\omega_0$, whose stabilizer  inside $\Gl(n,\Hn)\Sp(1)$ coincides with the Lie group $\SO^{\ast}(2n)\Sp(1)$.  Thus, the last claim follows, since   for any quaternionic skew-Hermitian form $h$ on $[\E\Hh]$ we have 
\[
\omega=\Re(h)\,,\quad  \Im(h)=\sum_{a=1}^{3}\omega(\cdot, \mc{J}_{a}\cdot)\mc{J}_{a}=\sum_{a=1}^{3}g_{\mc{J}_{a}}(\cdot\,, \cdot)\mc{J}_{a}\,,
\]
where $\omega, g_{\mc{J}_a}$ are simultaneously conjugated to $\omega_0,g_a$ by an element in $\Gl(n,\Hn)$.
 \end{proof}

Again the Killing form of $\sp(1)$ provides a trivial submodule in $S^2 (\langle\omega_0 \rangle \otimes_\R \sp(1))$, 
and thus a trivial $\SO^{\ast}(2n)\Sp(1)$-invariant submodule of $S^4[\E\Hh]^*$.  Next we will use  Proposition \ref{usefrelskewherm} to prove  that this tensor provides an analogue of the fundamental 4-form  $\Omega_0=\sum_{a}\omega_{a}\wedge\omega_{a}$, appearing in the theory of hH/qH structures. 
 
 \begin{prop}\label{usefrelfund}
There is a unique $\SO^{\ast}(2n)\Sp(1)$-invariant trivial submodule in $S^4[\E\Hh]^*$, spanned by the totally symmetric  4-tensor
  \begin{equation}
	 \Phi_{0}:=g_{1}\odot g_{1}+g_{2}\odot g_{2}+g_{3}\odot g_{3}\,,
	\label{fundtensorlin}
\end{equation}
where  $\odot$  denotes the symmetrized tensor product and $g_{a}, a:=1, 2, 3$ are defined in Proposition \ref{signprop}. Moreover,  the \textsf{complete symmetrization} $\mathsf{Sym}$ of $\omega_0(\cdot\,, \Im(h_0)\cdot)$, where $\Im(h_0)$ is defined by {\rm (\ref{fundskewherm})}, satisfies the relation
 \[
\Phi_{0}=\mathsf{Sym}\big(\omega_0(\cdot\,, \Im(h_0)\cdot)\big).
\]
Thus, the stabilizer of $\Phi_0$ in $\Gl([\E\Hh])$ is $\SO^{\ast}(2n)\Sp(1)$, i.e., $\Aut(\Phi_{0})=\SO^{\ast}(2n)\Sp(1)$. 
\end{prop}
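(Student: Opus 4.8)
\medskip\noindent\textbf{Proof proposal.} The plan is to establish, in turn, (i) uniqueness of the trivial submodule, (ii) the identity $\Phi_0=\mathsf{Sym}\big(\omega_0(\cdot\,,\Im(h_0)\cdot)\big)$, and (iii) the computation of $\Aut(\Phi_0)$. For (i): existence of a trivial $\SO^\ast(2n)\Sp(1)$-summand in $S^4[\E\Hh]^\ast$ is recorded just before the statement (the Killing form of $\sp(1)$ inside $S^2(\langle\omega_0\rangle\otimes_\R\sp(1))$), so I only need the trivial isotypic component to be one-dimensional. Here I would complexify: by Lemma \ref{invarena} one has $\E^\ast\cong\E$ and $\Hh^\ast\cong\Hh$, so $(S^4[\E\Hh]^\ast)\otimes_\R\C\cong S^4_\C(\E\otimes_\C\Hh)$ as an $\SO(2n,\C)\times\Sp(1,\C)$-module, and Cauchy's formula together with $\dim_\C\Hh=2$ yields
\[
S^4_\C(\E\otimes_\C\Hh)\ \cong\ \big(S^4\E\otimes S^4\Hh\big)\ \oplus\ \big(S^{(3,1)}\E\otimes S^{(3,1)}\Hh\big)\ \oplus\ \big(S^{(2,2)}\E\otimes S^{(2,2)}\Hh\big).
\]
Only the last summand carries an $\Sp(1,\C)$-trivial vector, since $S^{(2,2)}\Hh$ is one-dimensional and trivial while $S^4\Hh$ and $S^{(3,1)}\Hh\cong S^2\Hh$ are not; and because $S^{(2,2)}\E\subset S^2(\Lambda^2\E)=S^2(\fr{so}(2n,\C))$ contains exactly one trivial $\SO(2n,\C)$-summand — equivalently, the $\SO(2n,\C)$-invariants in $\E^{\otimes 4}$ are the three double $g_\E$-contractions, which span an $\mathfrak{S}_4$-module of type $(4)\oplus(2,2)$ — one gets $\dim_\R(S^4[\E\Hh]^\ast)^{\SO^\ast(2n)\Sp(1)}=1$ (the cases $n=2,3$ checked directly with Table \ref{Table1}). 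Finally $\Phi_0$ is $\SO^\ast(2n)\Sp(1)$-invariant — each $g_a$ is $\SO^\ast(2n)$-invariant by Proposition \ref{signprop}, and $\Sp(1)$ acts on $(g_1,g_2,g_3)$ through the $\SO(3)$-representation obtained by conjugating the $\mc{J}_a$ (using $\Sp(1)$-invariance of $\omega_0$), which fixes $\sum_a g_a\odot g_a$ — and nonzero, since $\Phi_0(e_\ell,e_\ell,e_\ell,e_\ell)\ge g_2(e_\ell,e_\ell)^2=1$; hence it spans the trivial component.

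For (ii): this is a short polarization computation. Using $\Im(h_0)(y,z)=\sum_{a}g_a(y,z)\mc{J}_a$ from \eqref{fundskewherm} and $g_a(\cdot\,,\cdot)=\omega_0(\cdot\,,\mc{J}_a\cdot)$ from Proposition \ref{signprop}, the $(0,4)$-tensor $\omega_0(\cdot\,,\Im(h_0)\cdot)$ evaluates on $(x,y,z,w)$ to $\sum_a g_a(y,z)\,\omega_0(x,\mc{J}_a w)=\sum_a g_a(y,z)\,g_a(x,w)$; since each $g_a$ is symmetric, every $2{+}2$ index pattern of $g_a\otimes g_a$ has the same complete symmetrization, namely $g_a\odot g_a$, so $\mathsf{Sym}\big(\omega_0(\cdot\,,\Im(h_0)\cdot)\big)=\sum_a g_a\odot g_a=\Phi_0$.

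For (iii): the inclusion $\SO^\ast(2n)\Sp(1)\subseteq\Aut(\Phi_0)$ is exactly the invariance just used. For the reverse inclusion I would pass to Lie algebras: $\fr{aut}(\Phi_0):=\{X\in\gl([\E\Hh]):X\cdot\Phi_0=0\}$ is a Lie subalgebra containing $\fr{so}^\ast(2n)\oplus\sp(1)$, and it suffices to prove equality. Complexifying, $\gl(\E\otimes_\C\Hh)=\gl(\E)\otimes\gl(\Hh)$ decomposes over $\fr{so}(2n,\C)\oplus\sl(2,\C)$ — using $\gl(\E)=\fr{so}(2n,\C)\oplus S^2_0\E\oplus\C$ and $\gl(\Hh)=\sl(2,\C)\oplus\C$ — into the subalgebra itself together with the summands $\C\cdot\id$, $S^2_0\E$, $\fr{so}(2n,\C)\otimes\sl(2,\C)$ and $S^2_0\E\otimes\sl(2,\C)$, each irreducible for $n\ge 3$ (for $n=2$ one argues with Table \ref{Table1}). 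It is then enough to check that none of these pieces lies in $\fr{aut}(\Phi_0)\otimes\C$, and by irreducibility a single non-annihilating element per piece suffices: for the scalars, $\id\cdot\Phi_0=-4\Phi_0\ne0$; for $S^2_0\E$ — the ``$\gl(n,\Hn)$-direction'' — if it lay in $\fr{aut}(\Phi_0)$ then, together with $\fr{so}(2n,\C)$ and the scalars, all of $\gl(\E)$ acting through the first tensor factor would map $\Phi_0$ into $\C\Phi_0$, making $\C\Phi_0$ a one-dimensional $\Gl(\E)$-submodule of $S^4(\E\otimes_\C\Hh)^\ast$, which is impossible because $\Phi_0$ lies in the $\Gl(\E)$-Weyl module $S^{(2,2)}\E^\ast$, irreducible of dimension $>1$; a parallel argument, or a direct computation via $g_a=g_\E\otimes h_a$, handles $\fr{so}(2n,\C)\otimes\sl(2,\C)$ and $S^2_0\E\otimes\sl(2,\C)$. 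Thus $\fr{aut}(\Phi_0)=\fr{so}^\ast(2n)\oplus\sp(1)$, so $\Aut(\Phi_0)$ normalizes $\SO^\ast(2n)\Sp(1)$, and one finishes by determining which components of $N_{\Gl([\E\Hh])}(\SO^\ast(2n)\Sp(1))$ actually fix $\Phi_0$ — the central dilations being excluded by $\id\cdot\Phi_0\ne0$ and the remaining finite ambiguities being those of Remark \ref{SOninvariantmetrics}.

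I expect this last point — the reverse inclusion, i.e.\ showing that the representation-theoretically unavoidable extra directions in $\gl([\E\Hh])$ genuinely move $\Phi_0$ — to be the main obstacle. An alternative would be to reconstruct the pair $(Q_0,\omega_0)$, equivalently $h_0$, directly from the quartic $\Phi_0$ and then invoke $\Aut(h_0)=\SO^\ast(2n)\Sp(1)$ from Proposition \ref{usefrelskewherm}; but extracting $\omega_0$ from $\Phi_0$ is not transparent, which is why the Lie-algebra route seems preferable. Everything else — the Cauchy/invariant-theory count in (i), the polarization in (ii), and the easy inclusion in (iii) — is routine.
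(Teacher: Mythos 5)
Your route is essentially the paper's: the uniqueness count via the plethysm (Cauchy) decomposition of $S^4(\E\otimes_\C\Hh)$, with only the summand $(2,2)\E\otimes(2,2)\Hh$ carrying $\Sp(1)$-trivial vectors; the polarization identity $\mathsf{Sym}\big(\omega_0(\cdot\,,\Im(h_0)\cdot)\big)=\sum_a g_a\odot g_a$; and the reduction of $\Aut(\Phi_0)$ to a Lie-algebra statement checked against the $\so^\ast(2n)\oplus\sp(1)$-module decomposition of $\gl([\E\Hh])$. Parts (i) and (ii) are complete, and your first-fundamental-theorem count of the $\SO(2n,\C)$-invariants in $(2,2)\E$ is actually more explicit than the paper's assertion; likewise your Weyl-module argument for the $[S^2_0\E]^*$-direction is a clean substitute for the paper's remark that $\gl(n,\Hn)$ preserves no symmetric $4$-tensor, and the scalar direction is handled correctly.

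The gap sits exactly where you predicted: the components $S^2_0\E\otimes\sl(2,\C)$ and, above all, $\so(2n,\C)\otimes\sl(2,\C)$ (the real form $[\Lambda^2\E S^2\Hh]^*$) are dispatched with ``a parallel argument, or a direct computation'', but the parallel you actually describe does not transfer. For these pieces $\sl(2,\C)$ acts through the $\Hh$-factor, so containment in $\fr{aut}(\Phi_0)$ does not make $\C\Phi_0$ a $\Gl(\E)$-submodule of anything; moreover $\so(2n,\C)\oplus\sl(2,\C)\oplus\big(\so(2n,\C)\otimes\sl(2,\C)\big)$ is not a subalgebra, so you cannot simply invoke the nonexistence of quartic invariants for a larger classical algebra. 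This component is precisely where the paper's proof does its only genuine computation: writing a pure element $A\in[\Lambda^2\E S^2\Hh]^*$ as $\omega_0(Ax,y)=\rho_A(x,Jy)$ for a quaternion-Hermitian metric $\rho_A$ and $J$ in an admissible basis, one finds $A\cdot g_I=A\cdot g_K=0$ and $A\cdot g_J=2\rho_A$, hence $A\cdot\Phi_0=4\,\rho_A\odot g_J\neq0$. If you want to avoid that computation, you need an extra step, e.g.: $\so(2n,\C)\oplus\sl(2,\C)\oplus(S^2_0\E\otimes S^2\Hh)$ is the complexification of $\sp(\omega_0)$, which has no nonzero invariant in $S^4[\E\Hh]^*$, killing $S^2_0\E\otimes\sl(2,\C)$; and brackets of two elements of $\so(2n,\C)\otimes\sl(2,\C)$ have nonzero projection onto $S^2_0\E\otimes S^2\Hh$ (for $n\geq2$), so since $\fr{aut}(\Phi_0)$ is a subalgebra and a sum of isotypic components, containing that piece would again force $\sp(4n,\C)\subseteq\fr{aut}(\Phi_0)$, a contradiction. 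As written, the decisive step is missing. (Concerning the group-level conclusion $\Aut(\Phi_0)=\SO^\ast(2n)\Sp(1)$: the paper itself argues only at the infinitesimal level, so your closing remark about normalizers and components is not worse off than the original.)
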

\begin{proof}
The computation of the  dimension of the space of invariant symmetric 4-tensors requires deeper results from representation theory, which we avoid to  review in detail and refer to \cite{FH}.  Recall that  $\theta$ is the fundamental weight of $\fr{sp}(1)$, and $\Hh=R(\theta)$.   The  following equality is a special case of an equivariant isomorphism which holds for any tensor product of Lie algebra modules (see \cite{FH}):
 \begin{equation}
		S^4(\E\Hh) = \sum_{Y\in \text{Young}(4)} Y(\E) \otimes Y(\Hh),
	\end{equation}
	 where in general $\text{Young}(n)$ denotes the \textsf{set of plethysms} associated to \textsf{Young diagrams} with $n$ boxes.  For $n=4$, there are the following  five Young  diagrams:
\[
 {\small\begin{Young}
\cr
\cr
\cr
\cr
\end{Young}}\,, \quad  \begin{Young} 
& \cr
\cr
\cr
\end{Young}\,, \quad
\begin{Young} 
& \cr
& \cr
\end{Young}\,, \quad
\begin{Young}
\cr & 
&  \cr
\end{Young}\,,
 \quad
 \begin{Young}
& & & \cr
\end{Young} \,.
\]
Then, with respect to $\Sp(1)$ we obtain the following:
	\begin{itemize}
		\item $(4)R(\theta) = R(4\theta)$;
		\item $(3,1)R(\theta) = R(2\theta)$;
		\item $(2,2)R(\theta) = R(0)$, the trivial representation;
		\item $(2,1,1)R(\theta) = \{0\}$, 0 dimensional;
		\item $(1,1,1,1)R(\theta) = \{0\}$, 0 dimensional.
	\end{itemize}
	This shows that any trivial $\SO^\ast(2n)\Sp(1)$-invariant subspace of $S^4(\E\Hh)$ must be contained in the summand $(2,2)\E\otimes (2,2)\Hh,$
	 and the dimension is equal to the dimension of $\SO(2n,\C)$-invariant subspaces in $(2,2)\E$.   In particular,  the dimension of the space of invariants is one,  which yields the tensor $\Phi_{0}$ (this claim is valid also for the low dimensional cases included in Table \ref{Table1}).
	Indeed, the space $\langle g_1, g_2, g_3 \rangle \subset S^2[\E\Hh]^*$ is $\SO^\ast(2n)$-trivial, but $\Sp(1)$-invariant, and equivariantly isomorphic to the space of imaginary quaternions $\Im (\Hn)$,  equipped with the standard admissible basis $H_{0}$. Since the latter space is self-dual, and has an invariant inner product given by the sum of squares of the admissible basis, the tensor $\Phi_{0}$ given by formula (\ref{fundtensorlin}) is also invariant and thus spans the invariant subspace.\\
	Finally, it is a simple observation that  $\Phi_{0}=\mathsf{Sym}\big(\omega_0(\cdot\,, \Im(h_0)\cdot)\big)$, where $\mathsf{Sym}$ is the operator of complete symmetrization, thus by Proposition \ref{usefrelskewherm} the stabilizer of $\Phi_0$ in $\Gl([\E\Hh])$ must contain the Lie group  $\SO^{\ast}(2n)\Sp(1)$, that is $\SO^*(2n)\Sp(1)\subseteq \Aut(\Phi_0)$.  We will prove also the other inclusion at an infinitesimal level. First, under   the $\SO^*(2n)\Sp(1)$-action we see by Proposition \ref{modules} that    $\Ed([\E\Hh])$ decomposes as follows: 
	\[
	\Ed([\E\Hh]) \cong \R\cdot\Id \oplus  \ \fr{sp}(1) \oplus \so^\ast(2n) \oplus \frac{\fr{sl}(n,\Hn)}{\so^\ast(2n)} \oplus\frac{\sp(\omega_0)}{\so^\ast(2n)\oplus \sp(1)} \oplus [\Lambda^2\E S^2\Hh]^*\,,
\]
where $\R\cdot\id\cong \langle\omega_0\rangle$ and
\[
\fr{sp}(1)\cong [S^2\Hh]^*\,,\quad \fr{so}^*(2n)\cong [\Lambda^{2}\E]^*\,,\quad      \displaystyle\frac{\fr{sl}(n,\Hn)}{\so^\ast(2n)}\cong  [S^2_0\E]^*\,,\quad 
 \displaystyle\frac{\sp(\omega_0)}{\so^\ast(2n)\oplus \sp(1)}\cong [S^2_0\E]^*\otimes\fr{sp}(1)\,.
\]
  Note that for $n>2$ the above decomposition can be read in terms of irreducible submodules.
	The Lie algebra of $\Aut(\Phi_{0})$ is a proper submodule of the above.
If an element of $\fr{sp}(\omega_0)$ or $\gl(n,\Hn)$ preserves $\Phi_{0}$, then this element should belong to $\so^\ast(2n) \oplus \sp(1)$, since none of the algebras  $\fr{sp}(\omega_0)$ or $\gl(n,\Hn)$  preserves a symmetric 4-tensor.   On the other hand, by Remark \ref{zerorem}  we can express the pure tensors $A$ in the final submodule $[\Lambda^2\E S^2\Hh]^*$ as 
 \[
 \omega_0(Ax, y)=\rho_{A}(x,Jy)\,,
 \]
  for some qH pseudo-Euclidean metric $\rho_{A}$ depending on $A$, and some almost complex structure  $J$  belonging to an admissible basis $H$ of $Q_0$. Then, we can  compute the action $A\cdot \Phi_0$. To do so, we need the action of $A$ on $g_{I}, g_{J}, g_{K}$. Based on the    fact that $\omega_0$ is scalar  and by using Proposition \ref{usefrel1}, we deduce that 
\begin{eqnarray*}
(A\cdot g_{I})(x, y)&=&-g_{I}(Ax, y)-g_{I}(x, Ay)=-\omega_{0}(Ax, Iy)-\omega_{0}(Ay, Ix)\\
&=&-\rho_{A}(x, JIy)-\rho_{A}(y, JIx)=0\,,\\
(A\cdot g_{J})(x, y)&=&-g_{J}(Ax, y)-g_{J}(x, Ay)=-\omega_{0}(Ax, Jy)-\omega_{0}(Ay, Jx)\\
&=&-\rho_{A}(x, J^2y)-\rho_{A}(y, J^2x)=2\rho_{A}(x, y)\,,\\
(A\cdot g_{K})(x, y)&=&-g_{K}(Ax, y)-g_{K}(x, Ay)=-\omega_{0}(Ax, Ky)-\omega_{0}(Ay, Kx)\\
&=&-\rho_{A}(x, JKy)-\rho_{A}(y, JKx)=0\,.
 \end{eqnarray*}
  Thus, by the definition of $\Phi_0$ we finally obtain
   \[
  A\cdot \Phi_0=4\rho_{A}\odot g_J\,,
  \]
   which  never vanishes. In particular, for linear independent pure tensors $A$ we see that also the corresponding qH pseudo-Euclidean metrics $\rho_{A}$ are linear independent. Thus, together with the previous inclusion we deduce that  the Lie algebra of $\Aut(\Phi_{0})$ coincides with $\so^\ast(2n)\oplus \sp(1)$. 
\end{proof}


\subsection{Linear $\SO^\ast(2n)$-structures and linear $\SO^{\ast}(2n)\Sp(1)$-structures}\label{mainlinear}

Let $H=\{J_1,J_2,J_3\}$ be a linear hypercomplex structure on $4n$-dimensional vector space $V$, or let $H=\{J_1,J_2,J_3\}$ be an admissible  basis of  a linear quaternionic structure $Q$ on $V$. Next we will show that the  basis defined below provides the identification with the $\E\Hh$-formalism.

\begin{defi}\label{EHbases}
 We say that a basis  $e_1,\dots,e_{2n},f_1,\dots,f_{2n}$ of $V$ is \textsf{adapted} to $H$ if
\[
J_1(e_{c})=e_{c+n}\,,\quad
J_2(e_{c})=f_{c}\,,\quad 
J_3(e_{c})=f_{c+n}
\]
 for $c=1,\dots n$.  Let us  also use the notation 
 \[
 a=(u_1+iu_{n+1},\dots,u_n+iu_{2n})^t\,,\quad bj=(v_1+iv_{n+1},\dots,v_n+iv_{2n})^t
 \]
  for the coordinates $(u_1,\dots,u_{2n},v_1,\dots,v_{2n})^t$ in the basis which provides the isomorphism $V\cong [\E\Hh]$.
\end{defi}
We should mention that such a  basis is not an {\it admissible basis to $H$} in terms of \cite[Def.~1.4]{AM} (see also the appendix, Section \ref{appendix}).
\begin{example}\label{examplebase1}
For $n=2$,  assume that $e_1, e_2$ are non-zero vectors in $\Hn^2$ for which the quaternionic lines $\Hn \cdot e_1$ and $\Hn\cdot e_2$ do not coincide. If $H$ is the linear hypercomplex structure induced by left multiplication via $i, j, k$, then the basis  adapted to $H$ is given by $\{e_1, e_2, ie_1, ie_2, je_1, je_2, ke_1, ke_2\}$.
\end{example}
\begin{prop}\label{hypbasis}
 Let $H=\{J_1,J_2,J_3\}$ be a linear hypercomplex structure on $4n$-dimensional vector space $V$, or let $H=\{J_1,J_2,J_3\}$ be an admissible basis of a linear quaternionic structure $Q$ on $V$. Then, there is a basis adapted to   $H$, such that   
 \[
 V\cong [\E\Hh]\,,
 \]
and under this isomorphism we get the identification $H=H_{0}=\{\mc{J}_{a} : a=1, 2, 3\}$, where $H_{0}$ is the standard admissible basis of $Q_0$ on $[\E\Hh]$.
\end{prop}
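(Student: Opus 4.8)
The plan is to construct the adapted basis explicitly and then verify that in the induced coordinates the complex structures $J_1,J_2,J_3$ act exactly as $\mc{J}_1,\mc{J}_2,\mc{J}_3$ do on $[\E\Hh]$. First I would build a basis adapted to $H$ in the sense of Definition \ref{EHbases}: pick vectors $e_1,\dots,e_n$ whose images under the quaternionic action span $V$ over $\Hn$ (equivalently, $e_1,\dots,e_n$ descend to a basis of the quotient $V/\langle J_1,J_2,J_3\rangle$-orbit structure, i.e.\ of $V$ viewed as an $n$-dimensional $\Hn$-module), and set $e_{c+n}:=J_1(e_c)$, $f_c:=J_2(e_c)$, $f_{c+n}:=J_3(e_c)$ for $c=1,\dots,n$. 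The quaternion relations $J_1^2=J_2^2=J_3^2=-\id=J_1J_2J_3$ guarantee that the $4n$ vectors $e_1,\dots,e_{2n},f_1,\dots,f_{2n}$ are linearly independent (each quaternionic line contributes a $4$-dimensional real subspace on which $\{J_a\}$ acts as left multiplication by $i,j,k$), hence form a basis of $V$.

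Next I would use this basis to define the linear isomorphism $V\cong[\E\Hh]$ via the coordinate convention stated in Definition \ref{EHbases}: given $x=\sum_{k=1}^{2n}u_k e_k+\sum_{k=1}^{2n}v_k f_k$, set $a:=(u_1+iu_{n+1},\dots,u_n+iu_{2n})^t\in\C^n$ and $bj$ with $b:=(v_1+iv_{n+1},\dots,v_n+iv_{2n})^t\in\C^n$, and send $x\mapsto a+bj\in[\E\Hh]$ in the notation of Lemma \ref{IJKstandard}. It is immediate from the reality condition (fixed points of $\epsilon_\E\otimes\epsilon_\Hh$) that this map lands in $[\E\Hh]$ and is an $\R$-linear bijection since both sides have real dimension $4n$.

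The heart of the argument is then checking that $J_a$ corresponds to $\mc{J}_a$ under this identification. By the adaptedness relations, $J_1$ sends $e_c\mapsto e_{c+n}$, $e_{c+n}=J_1 e_c\mapsto J_1^2 e_c=-e_c$, and similarly $J_1 f_c=J_1 J_2 e_c=-J_3 J_1 e_c$ — using $J_1J_2=J_3$, $J_2J_3=J_1$, $J_3J_1=J_2$ and the anticommutativity that follows from $I^2=J^2=K^2=-\id=IJK$ — one computes $J_1 f_c=f_{c+n}$ and $J_1 f_{c+n}=-f_c$. Translating these four relations into the coordinates $(u,v)$ shows $J_1$ acts by $(a,b)\mapsto(ia,-ib)$, which is precisely multiplication by $i$ on $\E$ on each $\Hh$-component, i.e.\ $\mc{J}_1=i\,\id\otimes\mathrm{diag}(1,-1)$ in the notation of Lemma \ref{IJKstandard}. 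The analogous bookkeeping for $J_2$ and $J_3$ gives $\mc{J}_2$ and $\mc{J}_3$; alternatively, one invokes the last sentence of the proof of Lemma \ref{IJKstandard}, namely that $\mc{J}_1,\mc{J}_2,\mc{J}_3$ are exactly left multiplication by $i,j,k$ on $\Hn^n$, and matches this against the definition of an adapted basis (left multiplication by $i,j,k$ on each quaternionic line spanned by $e_c$), which is what the adaptedness relations encode. Finally, in the quaternionic case one notes that $Q=\langle H\rangle$ is then carried to $Q_0=\langle H_0\rangle$ since both are the $\sp(1)$-span of the respective admissible bases.

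The main obstacle — though it is more a matter of care than of depth — is the sign/index bookkeeping in the third paragraph: one must track consistently how the three quaternion relations propagate through the six vectors $e_c,e_{c+n},f_c,f_{c+n}$ and confirm that the resulting $4\times4$ blocks match the explicit matrices $\mc{J}_1,\mc{J}_2,\mc{J}_3$ of Lemma \ref{IJKstandard} rather than some other admissible basis differing by an element of $\SO(3)$. A clean way to organize this is to fix a single index $c$, restrict to the $\R$-span of $\{e_c,e_{c+n},f_c,f_{c+n}\}$, identify it with $\Hn$ via $e_c\mapsto 1$, $e_{c+n}\mapsto i$, $f_c\mapsto j$, $f_{c+n}\mapsto k$, and observe that adaptedness says exactly that $J_1,J_2,J_3$ are left multiplication by $i,j,k$ there; this reduces the whole verification to the single-quaternion computation already recorded in the proof of Lemma \ref{IJKstandard}.
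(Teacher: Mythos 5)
Your proof follows essentially the same route as the paper's: construct an adapted basis from vectors $e_1,\dots,e_n$ that form an $\Hn$-basis of $V$, identify $V\cong[\E\Hh]$ via the coordinate convention of Definition \ref{EHbases}, and reduce the identification $J_a=\mc{J}_a$ to the fact, recorded at the end of the proof of Lemma \ref{IJKstandard}, that $\mc{J}_1,\mc{J}_2,\mc{J}_3$ act as left multiplication by $i,j,k$ on $\Hn^n$. One bookkeeping slip worth flagging: in the $a+bj$ parametrization $J_1$ acts as $(a,b)\mapsto(ia,ib)$ (i.e.\ left multiplication by $i$), not $(ia,-ib)$ --- the sign $-i$ appears only on the second $\Hh$-slot of the tensor representation, whose entries are $\bar b,\bar a$ --- and likewise the intermediate expression $J_1J_2e_c=-J_3J_1e_c$ is off (one has $J_1J_2e_c=J_3e_c=f_{c+n}$ directly); neither slip affects your conclusion, since your closing reduction of the whole verification to the single-quaternionic-line computation of Lemma \ref{IJKstandard} is correct and suffices.
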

\begin{proof}
 Clearly, there is an $n$-tuple of linearly independent vectors $e_1,\dots,e_{n}$ such that 
\[
e_{c+n}:=J_1(e_{c})\,,\quad
f_{c}:=J_2(e_{c})\,,\quad
f_{c+n}:=J_3(e_{c})\,,
\]
are all linearly independent,  which means that  there is a basis adapted to $H$ in the above terms.  It is clear that under the isomorphism $V\cong [\E\Hh]$ provided by this basis, we have  
\[
(a_1+a_2J_1+a_3J_2+a_4J_3)(a+bj)=(a_1+a_2i+a_3j+a_4k)(a+bj)=(a_1+a_2\mc{J}_1+a_3\mc{J}_2+a_4\mc{J}_3)(a+bj)
\]
  and thus $H=H_{0}$. This proves our assertion. 
\end{proof}

 Proposition \ref{usefrelskewherm} in combination with the above construction,  motivates us to proceed with the following definitions.

 \begin{defi}\label{def1} 
 Let   $V$ be a $4n$-dimensional real vector space.   A pair $(h,H)$ consisting of an element  $h\in V^*\otimes V^*\otimes\Gl(V)$ and a  linear hypercomplex structure $H=\{J_a\in\Ed(V) : a=1, 2, 3\}$,  is said to be a \textsf{linear hypercomplex skew-Hermitian structure} on  $V$ (\textsf{linear hs-H structure} for short), if  the following conditions are satisfied:
	\begin{enumerate}
\item[\textsf{(1)}]  The real part $\Re(h)(x,y):=\frac12(h(x,y)-h(y,x))$ of $h$ satisfies $\Re(h)(x,y)=\omega(x,y)\cdot \id$,  for all $x,y\in V$ and   some non-degenerate 2-form $\omega$ on $V$.
\item[\textsf{(2)}] $h$ is a quaternionic skew-Hermitian form with respect to $H$, that is, $\omega$ is a scalar 2-form and 
\[
h(x,y)=\omega(x,y)\id+\sum_{a=1}^3 g_{J_a}(x,y)J_a
\]
 holds for all $x,y\in V$.  
\end{enumerate}
	\end{defi}
	
	\begin{rem}
 \textnormal{Let us also emphasize that the linear quaternionic structure generated by   $H$, can be equivalently obtained by the image of the imaginary part  of $h$, defined by
 \[
 \Im(h):=\frac12(h(x,y)+h(y,x))\,, \quad \forall \ x, y\in V\,.
 \]}  
\end{rem}

 \begin{defi}\label{def2}
 Let   $V$ be a $4n$-dimensional real vector space.     An element $h\in V^*\otimes V^*\otimes\Gl(V)$ is said to be a  \textsf{linear quaternionic skew-Hermitian structure}  (\textsf{linear qs-H structure} for short), if  the following conditions are satisfied:
	\begin{enumerate}
\item[\textsf{(1)}]  The real part $\Re(h)$ of $h$ satisfies $\Re(h)(x,y)=\omega(x,y)\cdot \id$, for all $x,y\in V$ and  some non-degenerate 2-form $\omega$ on $V$.
\item[\textsf{(2)}] The imaginary part   $\Im(h)$ of $h$  induces  a linear quaternionic structure $Q$ on $V$.
\item[\textsf{(3)}] $h$ is a  quaternionic skew-Hermitian form,  that is $\omega$ is a scalar 2-form and 
\[
h(x,y)=\omega(x,y)\id+\sum_{a=1}^3 g_{J_a}(x,y)J_a
\]
 holds for any admissible basis $H=\{J_1,J_2,J_3\}$ of $Q$ and for all $x,y\in V$.   \end{enumerate}
 Often, we shall  call such an admissible basis  $H$ of $Q$ also an \textsf{admissible basis} of the linear quaternionic skew-Hermitian structure $(h, Q=\langle H\rangle)$.
	\end{defi}

 Let us now specify the bases that allow us to  relate linear \textsf{hs-H} and \textsf{qs-H} structures, as defined above,  with the results from the previous subsection.

 \begin{prop}\label{adaptbas}
 Let $(h, H=\{J_1,J_2,J_3\})$ be a linear hypercomplex skew-Hermitian structure on $V$, or let $H=\{J_1,J_2,J_3\}$ be an  admissible basis of a  linear  quaternionic skew-Hermitian structure $(h, Q=\langle H\rangle)$. Set $\omega:=\Re(h)$. Then, there is a symplectic basis of $\omega$ adapted to $H$, that is 
\[
 \omega(e_r,e_s)=0\,,\quad \omega(f_r,f_s)=0\,,\quad \omega(e_r,f_r)=1\,,\quad\omega(e_r,f_s)=0\,, (r\neq s)
 \]
 for $1\leq r\leq 2n$ and $1\leq s\leq 2n$, 
 and 
 \[
J_1(e_{c})=e_{c+n}\,,\quad
J_2(e_{c})=f_{c}\,,\quad 
J_3(e_{c})=f_{c+n}\,,
\] 
for $1\leq c\leq n$, respectively.  Moreover, under the isomorphism $V\cong [\E\Hh]$ provided by the basis adapted to $H$, we have  $\omega=\omega_0$,  $H= H_{0}$, where $\omega_0$ is the standard scalar 2-form  and $H_{0}$ is the standard admissible basis on $[\E\Hh]$, respectively.  In particular, the following claims hold:
 
 \begin{enumerate}
\item[${\sf (1)}$] A linear hypercomplex skew-Hermitian structure on $V$ is equivalent to a pair $(H,\omega)$, consisting of a linear hypercomplex structure $H=\{J_1,J_2,J_3\}$ and a scalar 2-form $\omega$, both defined on $V$.  Equivalently,  a linear hypercomplex skew-Hermitian structure on $V$ is a $\SO^*(2n)$-structure on $V$.
 \item[${\sf (2)}$]  A linear quaternionic skew-Hermitian structure on $V$ is equivalent to a pair $(Q,\omega)$, consisting of a  linear quaternionic structure $Q$ and a scalar 2-form $\omega$, both defined on $V$.  Equivalently,  a linear quaternionic skew-Hermitian structure on $V$ is a $\SO^*(2n)\Sp(1)$-structure on $V$.
   \end{enumerate}
\end{prop}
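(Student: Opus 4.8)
The plan is to refine the basis provided by Proposition~\ref{hypbasis} --- which adapts $H$ but ignores $\omega$ --- into one that is simultaneously adapted to $H$ and a symplectic basis for $\omega:=\Re(h)$; granting this, the ``in particular'' claims become formal consequences of the normal-form and stabilizer statements already established.

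First I would invoke Proposition~\ref{hypbasis} to obtain an isomorphism $V\cong[\E\Hh]$ under which $H=H_{0}$. By condition~(2) of Definition~\ref{def1} (respectively condition~(3) of Definition~\ref{def2}), the $2$-form $\omega$ is scalar with respect to $H$, so it transports to a scalar $2$-form on $[\E\Hh]$ with respect to $H_{0}$. Then I would apply Proposition~\ref{usefrel1} to obtain $B\in\Gl(n,\Hn)$ with $\omega(Bx,By)=\omega_{0}(x,y)$ for all $x,y$. The crucial point is that $\Gl(n,\Hn)$ is exactly the commutant of $H_{0}=\{\mc{J}_{a}\}$ in $\Gl([\E\Hh])$, so $\mc{J}_{a}\circ B=B\circ\mc{J}_{a}$; hence the image under $B$ of the standard adapted basis of $[\E\Hh]$ is still adapted to $H_{0}$, and by the defining property of $B$ it is a symplectic basis for $\omega$ precisely when the standard adapted basis is a symplectic basis for $\omega_{0}$. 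This last fact is a short direct computation from the formula $\omega_{0}(a+bj,c+dj)=\Re(a^{t}\bar d-b^{t}\bar c)$ of Proposition~\ref{usefrel}, tracking how $e_{c},J_{1}e_{c},J_{2}e_{c},J_{3}e_{c}$ sit inside $[\E\Hh]\cong\Hn^{n}$: one gets $\omega_{0}(e_{r},e_{s})=\omega_{0}(f_{r},f_{s})=0$ and $\omega_{0}(e_{r},f_{s})=\delta_{rs}$ for $1\le r,s\le 2n$. Pulling the image of the standard basis under $B$ back to $V$ yields the asserted symplectic basis adapted to $H$, and under the resulting isomorphism $V\cong[\E\Hh]$ one has $\omega=\omega_{0}$, $H=H_{0}$ (and $Q=Q_{0}$ in the quaternionic case, since $Q_{0}=\langle H_{0}\rangle$); the quaternionic case runs verbatim, fixing an admissible basis $H$ of $Q$ and using condition~(3) of Definition~\ref{def2}.

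For the equivalence statements: given an \textsf{hs-H} structure $(h,H)$ and $\omega:=\Re(h)$, Definition~\ref{def1} forces $\omega$ to be scalar and $h=\omega\,\id+\sum_{a}g_{J_{a}}J_{a}$ with $g_{J_{a}}=\omega(\cdot,J_{a}\cdot)$, so the datum $(h,H)$ is equivalent to the datum $(H,\omega)$. Conversely, for a linear hypercomplex $H$ and a scalar $2$-form $\omega$ I would set $h:=\omega\,\id+\sum_{a}\omega(\cdot,J_{a}\cdot)J_{a}$ and check that $\Re(h)=\omega\,\id$ --- using that $g_{J_{a}}=\omega(\cdot,J_{a}\cdot)$ is symmetric when $\omega$ is scalar, by Proposition~\ref{signprop} --- together with the remaining requirements of Definition~\ref{def1}; after reducing to the standard model $([\E\Hh],H_{0},\omega_{0},h_{0})$ via the previous paragraph, this is exactly Proposition~\ref{usefrelskewherm}. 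Finally, by Proposition~\ref{usefrel} one has $\Aut(H_{0},\omega_{0})=\SO^{*}(2n)$, and since the first part of the argument exhibits every \textsf{hs-H} structure as $\Gl(V)$-conjugate to the standard one, the \textsf{hs-H} structures on $V$ are precisely the reductions of the frame bundle of $V$ to $\SO^{*}(2n)$. Case~(2) is identical, working with an admissible basis of $Q$, the quaternionic branch above, and $\Aut(Q_{0},\omega_{0})=\SO^{*}(2n)\Sp(1)$ from Proposition~\ref{usefrel}.

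The hard part is the dovetailing of the two normal forms: Proposition~\ref{hypbasis} normalises $H$ and Proposition~\ref{usefrel1} normalises the scalar $2$-form $\omega$, and what makes them compatible --- the step to be careful about --- is that the element conjugating $\omega$ to $\omega_{0}$ can be taken in $\Gl(n,\Hn)$, which commutes with $H_{0}$, so $\omega$ is brought into standard form \emph{without disturbing} $H=H_{0}$. The only genuinely computational ingredient is the verification that the standard adapted basis of $[\E\Hh]$ is a symplectic basis for $\omega_{0}$ in the stated normalisation; once that is secured, the passage to the language of $G$-structures is automatic from the stabilizer identities $\Aut(H_{0},\omega_{0})=\SO^{*}(2n)$ and $\Aut(Q_{0},\omega_{0})=\SO^{*}(2n)\Sp(1)$.
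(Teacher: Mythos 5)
Your proposal is correct and follows essentially the same route as the paper's proof: adapt to $H$ via Proposition~\ref{hypbasis}, bring $\omega$ to $\omega_{0}$ with a conjugating element $B\in\Gl(n,\Hn)$ from Proposition~\ref{usefrel1}, exploit that $B$ commutes with $H_{0}$ so the adapted basis survives the change of coordinates, and deduce the equivalence/$G$-structure claims from Proposition~\ref{usefrelskewherm} and the stabilizer identities. The only difference is that you spell out the (correct) computation that the standard adapted basis is symplectic for $\omega_{0}$, which the paper leaves as a simple observation.
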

\begin{proof}
The definition of a \textsf{linear hs-H structure} or a \textsf{linear qs-H structure}, provides pairs $(H,\omega)$ and $(Q,\omega)$, respectively, with the claimed properties. Picking an admissible basis for $Q$, reduces us to the situation of a pair  $(H=\{J_1,J_2,J_3\},\omega)$.
Due to Proposition \ref{hypbasis}, there is a basis $e_1',\dots,e_{2n}',f_1',\dots,f_{2n}'$ adapted to $H$, and $\omega$ is a scalar 2-form on $[\E\Hh]$ under the isomorphism $V\cong [\E\Hh]$. We also know by Proposition \ref{usefrel1} that $\omega$ is conjugated to $\omega_0$ by  an element $B\in \Gl(n,\Hn)$.  This provides a basis $e_1,\dots,e_{2n},f_1,\dots,f_{2n}$ of $V$, such that (after the change of coordinates) $\omega=\omega_0$.  It is a simple observation that $\omega_0$ is the standard symplectic form in these coordinates, and thus $e_1,\dots,e_{2n},f_1,\dots,f_{2n}$ is a symplectic basis adapted to the linear hypercomplex structure $H$ on $V$. This is because the action of $B$ commutes with the action of $H$. In particular, the standard quaternionic skew-Hermitian form $h_0$ introduced in Proposition \ref{usefrelskewherm} defines a \textsf{linear hs-H structure}, and  a \textsf{linear qs-H structure} on $V$. By the last claim in Proposition \ref{usefrelskewherm}, if we start   with a \textsf{linear hs-H} or \textsf{qs-H structure}, then we just obtain its coordinates in the basis $e_1,\dots,e_{2n},f_1,\dots,f_{2n}$ of $V$, and therefore all claims (1) and (2) must hold (see the next section for more details on $G$-structures). 
\end{proof}

Having discussed many alternative ways to define  the particular types of linear structures that we are  interested in, it is convenient to summarize their differences from the well-known linear  hH/qH structures,    which we encode below in Table \ref{Table2}.
 
   \begin{table}[ht]
\centering
\renewcommand\arraystretch{1.3}
{\footnotesize \begin{tabular}{c | l | l | c | l }
 { linear $G$-structure} & initial data & tensors & fundam.  tensor & { stabilizer $G$} \\
 \thickline 
  hH & $\big(( \ , \ )_{0}=\omega_{\E}\otimes\omega_{\Hh}, H_0\big)$ & $\omega_{a}(\cdot , \cdot)=(\cdot, J_{a}\cdot)_{0}$ & $\Omega_0\in(\Lambda^{4}[\E\Hh]^*)^{G}$ & $\Sp(n)$ \\
 hs-H & $\big(\omega_{0}=g_{\E}\otimes\omega_{\Hh}, H_0\big)$ & $g_{a}(\cdot , \cdot)=\omega_{0}(\cdot, J_{a}\cdot)$ & $\Phi_0\in(S^{4}[\E\Hh]^*)^{G}$ & $\SO^{\ast}(2n)$ \\
 qH & $ \big(( \ , \ )_{0}, Q_0=\langle H_0\rangle\big)$ & $\omega_{a}(\cdot , \cdot)=(\cdot, J_{a}\cdot)_{0}$ & $\Omega_0\in(\Lambda^{4}[\E\Hh]^*)^{G}$  & $\Sp(n)\Sp(1)$ \\
 qs-H & $\big(\omega_{0}, Q_0=\langle H_0\rangle\big)$ & $g_{a}(\cdot , \cdot)=\omega_{0}(\cdot, J_{a}\cdot)$ & $\Phi_0\in (S^{4}[\E\Hh]^*)^{G}$ & $\SO^{\ast}(2n)\Sp(1)$
 \end{tabular}}
 \vspace{0.5cm}
\caption{\small hH/qH linear structures Vs hs-H/qs-H  linear structures}\label{Table2}
 \end{table}
Proposition \ref{adaptbas} is a powerful tool which  we will often  apply  when  we examine $\SO^*(2n)$- and $\SO^*(2n)\Sp(1)$-structures  on  manifolds. Moreover, it motivates us to introduce the following 
\begin{defi}\label{basSKEW}
 Let $(h,H=\{J_1,J_2,J_3\})$ be a linear \textsf{hs-H} structure on $V$ or let $H=\{J_1,J_2,J_3\}$ be an  admissible  basis of a   linear quaternionic skew-Hermitian form $h$.  We say that the  symplectic basis adapted to $H$ by Proposition \ref{adaptbas}, is a \textsf{skew-Hermitian basis} of the linear \textsf{hs-H} or linear \textsf{qs-H} structure, respectively.
\end{defi}	
\begin{example}\label{examplebase2}
By Example \ref{examplebase1}, we can consider $\Hn^2$ endowed with $H$ given by the left quaternionic multiplication, and an  adapted basis  to  $H$ given by $\fr{B}:=\{e_1, e_2, ie_1, ie_2, je_1, je_2, ke_1, ke_2\}$.  Let  $\omega$  be  the bilinear form   on $\Hn^2$ defined by 
\[
\omega(x,y)=\frac12(x^tj\bar{y}-y^tj\bar{x})\,,\quad \forall \ x,y\in \Hn^2\,,
\]
 where $\bar{x}$ denotes the quaternionic conjugate. Then, $\omega$ is a scalar 2-form with respect to  $H$ and the basis $\fr{B}$   
  is a skew-Hermitian basis of the \textsf{linear hs-H} structure $(h,H)$. Here,  the linear quaternionic skew-Hermitian form $h$ is induced by $\omega$ and $H$  and so it takes form $h(x,y)=x^tj\bar{y}$, see also Corollary \ref{corolA1} in the appendix.
\end{example}
Note that in this example, we have chosen $\omega$ is such a way  that the  adapted basis to $H$ is the same with the skew-Hermitian basis of the \textsf{linear hs-H structure} $(h, H)$.  However, this is not the generic  case, and we should emphasize that in general an explicit transition between a basis adapted to $H$ and a skew-Hermitian basis, can be  carried out by a generalization of the \textsf{Gram-Schmidt orthogonalization process}. Since the action of $H$ identifies $V$ with a {\it left} quaternionic vector space,   these ``transitions'' between the bases    can {\it not} be realized by left multiplication by quaternionic matrices.  Thus, we will postpone the explicit construction to the appendix,    where bases that provide an identification of $V$ with a right quaternionic vector space are specified.


\subsection{The symplectic viewpoint}\label{mainsymplectic}

 In  Section \ref{mainlinear} we started by fixing a linear hypercomplex structure $H$  on $V$, and by using bases adapted to $H$ we obtained the identification $V\cong [\E\Hh]$. This enabled  a convenient   description of  linear \textsf{hs-H} and \textsf{qs-H} structures in terms of the $\E\Hh$-formalism. In this section, we shall adopt the opposite point of view. This means that  we will fix a \textsf{linear symplectic form $\omega$  on $V$} (i.e., a non-degenerate 2-form on $V$) and a \textsf{symplectic basis}, to get an identification $V\cong\R^{4n}$. This procedure will allow  us to examine linear \textsf{hs-H} and \textsf{qs-H} structures from a symplectic point of view, which can be analyzed  in terms of the  standard symplectic form $\omega_{\st}(x, y)=x^{t}S_{0}y$ on $\R^{4n}$. Here,  as usual, $S_0$ is the matrix defined by 
 \[
 S_{0}:=\begin{pmatrix} 0 & \Id_{2n} \\ -\Id_{2n} & 0 \end{pmatrix}\,.
 \]

With this goal in mind, it is convenient to recall first  the notion of  the so-called \textsf{symplectic twistor space} attached to a symplectic vector space, see also  \cite{CahGR}. 
\begin{defi}
The \textsf{symplectic twistor space} of $(\R^{4n}, \omega_{\st})$ of signature $(p,q)$ is the set of all linear complex structures compatible with $\omega_{\st}$, that is
\begin{itemize}
\item $J^2=-\id_{\R^{4n}}$;
\item $\omega_{\st}(Jx,Jy)=\omega_{\st}(x,y)$, for any $x, y\in\R^{4n}$;
\item the pseudo-Euclidean Hermitian metric $g_J(\cdot\,,\cdot):=\omega_{st}(\cdot\,, J\cdot)$ has signature $(p,q)$;
\end{itemize}
\end{defi}

We can  now describe the twistor space in the following way:
\begin{lem}\label{twistfiber}
The union of all symplectic twistor spaces for all signatures coincides with the  space $\Sp(4n,\R)\cap \sp(4n,\R)$, and in these terms the following claims hold:\\
\textsf{1)} The adjoint orbits of $\Sp(4n,\R)$ in $\Sp(4n,\R)\cap \sp(4n,\R)$ are uniquely characterized by the signature $(4n-2q,2q)$ of the metric
\[
 g_J(x, y)=\omega_{\st}(x, Jy)=x^{t}S_{0}Jy\,,
\]
that is, the stabilizer in $\Sp(4n,\R)$ of a point  in an  orbit with signature $(4n-2q,2q)$ is the Lie group $\U(2n-q,q)$.

\textsf{2)} If $I,J,K\in \Sp(4n,\R)\cap \sp(4n,\R)$ define a linear hypercomplex or a  linear quaternionic structure on $\R^{4n}$, then $I,J,K\in \Sp(4n,\R)/\U(n,n)$, i.e., they are elements of the symplectic twistor space of signature $(n,n)$.
 \end{lem}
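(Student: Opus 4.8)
The plan is to first identify $\Sp(4n,\R)\cap\sp(4n,\R)$ with the union of all symplectic twistor spaces, then prove transitivity of $\Sp(4n,\R)$ on each fixed-signature stratum by passing to the associated pseudo-Hermitian form, and finally deduce part (2) from a short anti-isometry argument. First I would unravel the matrix conditions: $J\in\Sp(4n,\R)\cap\sp(4n,\R)$ means $J^tS_0J=S_0$ together with $J^tS_0+S_0J=0$, and these two identities together are equivalent to $J$ being symplectic with $J^2=-\id$ (one direction is immediate after multiplying the two relations; conversely a symplectic $J$ with $J^2=-\id$ has $J^{-1}=-J$, hence $J^tS_0=S_0J^{-1}=-S_0J$). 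Thus $\Sp(4n,\R)\cap\sp(4n,\R)$ is exactly the set of $\omega_{\st}$-compatible complex structures, i.e. the union of the twistor spaces over all signatures. Then I would record the elementary properties of $g_J(x,y):=\omega_{\st}(x,Jy)$: from $\omega_{\st}(Jx,Jy)=\omega_{\st}(x,y)$ and $J^2=-\id$ one gets that $g_J$ is symmetric and non-degenerate, that $J$ is a $g_J$-isometry, and that $g_J(v,Jv)=0$ while $g_J(Jv,Jv)=g_J(v,v)$ for all $v$. Peeling off $g_J$-nondegenerate, $J$-invariant $2$-planes $\langle v,Jv\rangle$ — each of definite signature $(2,0)$ or $(0,2)$, whose $g_J$-orthocomplement coincides with its $\omega_{\st}$-orthocomplement and is therefore again $J$-invariant — shows inductively that the signature of $g_J$ is always of the form $(4n-2q,2q)$.

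For part (1), I would first note that conjugation acts as a change of basis on metrics, $g_{AJA^{-1}}(Ax,Ay)=g_J(x,y)$ for $A\in\Sp(4n,\R)$, so the signature is a conjugation invariant. For the converse (transitivity) I would pass to the complex structure: regard $V=\R^{4n}$ as a complex vector space via $i\cdot v:=Jv$ and set $h:=g_J+i\,\omega_{\st}$. A short check gives $h(Jx,y)=-i\,h(x,y)$, $h(x,Jy)=i\,h(x,y)$ and $h(y,x)=\overline{h(x,y)}$, so $h$ is a non-degenerate Hermitian form, of complex signature $(2n-q,q)$ precisely when $g_J$ has real signature $(4n-2q,2q)$. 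By Sylvester's law of inertia for Hermitian forms (equivalently, a complex Gram--Schmidt of the type alluded to around Definition \ref{basSKEW}), any two triples $(\R^{4n},J,h)$ with the same $q$ are related by a $\C$-linear isomorphism; such a map preserves $\omega_{\st}=\operatorname{Im}h$, so it lies in $\Sp(4n,\R)$ and conjugates one $J$ into the other. Hence $\Sp(4n,\R)$ acts transitively on each fixed-signature twistor space, and these are exactly its adjoint orbits. Finally, $\mathrm{Stab}_{\Sp(4n,\R)}(J)=\{A\in\Sp(4n,\R):AJ=JA\}$ consists of the $\C$-linear maps preserving $\omega_{\st}$, hence also $g_J$ and $h$; these are precisely the elements of the unitary group $\U(2n-q,q)$ of $h$, and conversely any element of $\U(2n-q,q)$ preserves $\operatorname{Im}h=\omega_{\st}$ and commutes with $J$, giving $\mathrm{Stab}_{\Sp(4n,\R)}(J)=\U(2n-q,q)$.

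For part (2), suppose $I,J,K\in\Sp(4n,\R)\cap\sp(4n,\R)$ satisfy the quaternionic relations (after choosing an admissible basis, in the quaternionic-structure case). The quaternionic relations force $IJ=-JI$, and since $J$ preserves $\omega_{\st}$ I would compute
\[
g_I(Jx,Jy)=\omega_{\st}\bigl(Jx,I(Jy)\bigr)=-\omega_{\st}\bigl(Jx,J(Iy)\bigr)=-\omega_{\st}(x,Iy)=-g_I(x,y),
\]
so $J$ is an invertible anti-isometry of the non-degenerate symmetric form $g_I$. Then $g_I$ and $-g_I$ are isometric, hence have the same signature, which is therefore split, i.e. $(2n,2n)$; the same applies to $g_J$ and $g_K$. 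By part (1) this places $I,J,K$ in the twistor space of signature $(n,n)$, namely $\Sp(4n,\R)/\U(n,n)$.

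The only step that is not essentially formal is the transitivity claim in part (1); routing the signature bookkeeping through Sylvester's theorem for the Hermitian form $h$ is what makes the orbit classification work, while the algebraic reductions of the first step, the stabilizer identification, and the anti-isometry argument for part (2) are all short.
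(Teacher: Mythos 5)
Your proof is correct, and while the opening reduction (showing that $\Sp(4n,\R)\cap\sp(4n,\R)$ is exactly the set of $\omega_{\st}$-compatible complex structures) is the same as in the paper, the two substantive parts are handled by genuinely different means. For part (1) the paper simply invokes the well-known classification of adjoint orbits of $\Sp(4n,\R)$ in $\sp(4n,\R)$ and exhibits the explicit representatives $J_{2n-q,q}$, reading off the signature of $g_J$ from them (and leaving the stabilizer claim as standard), whereas you prove transitivity from scratch by packaging $g_J$ and $\omega_{\st}$ into the pseudo-Hermitian form $h=g_J+i\,\omega_{\st}$ on $(\R^{4n},J)$ and applying Sylvester's law of inertia, which also gives you the identification of the stabilizer with $\U(2n-q,q)$ explicitly; your route is more self-contained and makes the orbit classification and the stabilizer computation part of one argument, at the cost of carrying out the sesquilinearity checks, while the paper's route is shorter because it leans on known normal forms in $\sp(4n,\R)$. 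For part (2) the paper reduces to Proposition \ref{signprop}, i.e.\ to the explicit eigenvalue computation of the signature of $g_{\J}$ on $[\E\Hh]$ in an adapted basis, whereas your anti-isometry argument ($IJ=-JI$ and $J$-invariance of $\omega_{\st}$ give $g_I(Jx,Jy)=-g_I(x,y)$, forcing split signature) is basis-free, shorter, and in fact re-proves the signature statement of Proposition \ref{signprop} rather than consuming it. Both approaches are sound; yours trades citations for a few elementary verifications, all of which you carry out correctly.
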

 \begin{proof}
 By definition, any  $J\in \Sp(4n,\R)\cap \sp(4n,\R)$ satisfies  $\omega_{\st}(Jx, Jy)=\omega_{\st}(x, y)$ and $\omega_{\st}(Jx, y)+\omega_{\st}(x, Jy)=0$, for any $x, y\in\R^{4n}$. Thus $\omega_{\st}(x, y)=\omega_{\st}(Jx, Jy)=-\omega_{\st}(x, J^{2}y)$. Since $\omega_{\st}$ is non-degenerate, this implies $J^2=-\id$.
Conversely, if  $J$ is such that $\omega_{\st}(Jx, Jy)=\omega_{\st}(x,y)$ and $J^2=-\id$, then 
\[
\omega_{\st}(Jx, y)+\omega_{\st}(x, Jy)=\omega_{\st}(J^{2}x, Jy)+\omega_{\st}(x, Jy)=0\,.
\]
 This proves our  initial claim.
 Now, to prove 1) note that the adjoint orbits of $\Sp(4n,\R)$ in $\sp(4n,\R)$ are well-known and the representatives $J$ of orbits {satisfying} $J^2=-\id$ are given by 
 \[
 J_{2n-q,q}:=\begin{pmatrix} 0 & 0 & \Id_{2n-q} &0 \\ 
0 & 0 & 0 &-\Id_{q} \\
-\Id_{2n-q}& 0 &0 &0 \\
0 & \Id_{q} & 0 &0 \end{pmatrix}\,.
\]
 Thus, the  pseudo-Euclidean metric defined by  $\omega_{\st}(x, J_{2n-q,q}y)=x^{t}S_{0}J_{2n-q,q}y$ has signature  $(4n-2q,2q)$.
Finally, we should mention that the  assertion 2) is a consequence of  Proposition \ref{signprop}.
  \end{proof}
  
The proof of Lemma \ref{twistfiber} suggests defining the following operation:

\begin{defi}
We say  that $A^T\in \Gl(\R^{4n})$ is the \textsf{symplectic transpose} of $A\in \Gl(\R^{4n})$ if
\[
\omega_{\st}(A^{T}x,y)=-\omega_{\st}(x, Ay)
\]
holds for all $x,y\in \R^{4n}.$
\end{defi}

 We shall now prove that the symplectic transpose always exists.  
\begin{lem}\label{symptrans}
Let $\mathscr{B}_{\st}$ be the symplectic basis on $\R^{4n}$ such that $\omega_{\st}$ is given by $\omega_{\st}(x,y)=x^tS_0y$. Then $A^T=S_0A^tS_0$, where $A^t$ is the usual transpose of $A$ in the coordinates of the symplectic basis.  
 \end{lem}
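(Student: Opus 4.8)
The statement has two halves: that the symplectic transpose $A^T$ is well defined, and that in the symplectic basis $\mathscr{B}_{\st}$ it is given by the explicit matrix $S_0A^tS_0$. I would dispose of the first half immediately by a non-degeneracy argument: if $B_1,B_2\in\Gl(\R^{4n})$ both satisfy $\omega_{\st}(B_ix,y)=-\omega_{\st}(x,Ay)$ for all $x,y$, then $\omega_{\st}\big((B_1-B_2)x,y\big)=0$ for all $y$, so $(B_1-B_2)x=0$ for every $x$ since $\omega_{\st}$ is non-degenerate; hence $A^T$ is unique provided it exists. It therefore suffices to exhibit one operator with the required property, and the natural candidate is $S_0A^tS_0$, where $A^t$ is the ordinary transpose in the coordinates of $\mathscr{B}_{\st}$.

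The second half is then a one-line verification using two elementary identities for $S_0$ in these coordinates, both read off from the block form $S_0=\left(\begin{smallmatrix}0 & \Id_{2n}\\ -\Id_{2n} & 0\end{smallmatrix}\right)$: namely $S_0^t=-S_0$ and $S_0^2=-\Id_{4n}$. Indeed, for all $x,y\in\R^{4n}$,
\[
\omega_{\st}(S_0A^tS_0\,x,\,y)=(S_0A^tS_0\,x)^tS_0\,y=x^tS_0^t\,A\,S_0^tS_0\,y=x^tS_0\,A\,S_0^2\,y=-x^tS_0A\,y=-\omega_{\st}(x,Ay),
\]
which is exactly the defining relation of $A^T$. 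By the uniqueness just established, $A^T=S_0A^tS_0$, as claimed.

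There is no real obstacle here: the only thing to watch is the sign bookkeeping coming from $S_0^t=-S_0$ (together with $S_0^2=-\Id$), and once these are in hand the computation is mechanical. If one wishes, one may also record the consistency check $((A^T)^T=A$, equivalently $(S_0A^tS_0)$ applied twice returns $A$ — which again follows from $S_0^2=-\Id$ and $(S_0)^t=-S_0$ — though this is not needed for the statement.
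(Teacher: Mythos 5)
Your proposal is correct and follows essentially the same route as the paper: a direct matrix computation in the symplectic basis using $S_0^t=-S_0$ and $S_0^2=-\Id$ (equivalently $S_0^{-1}=-S_0$), with non-degeneracy of $\omega_{\st}$ pinning down $A^T=S_0A^tS_0$. The only cosmetic difference is that the paper manipulates $\omega_{\st}(Ay,x)$ to read off the transpose, whereas you verify the candidate $S_0A^tS_0$ against the defining relation and invoke uniqueness, which is the same argument in a slightly different order.
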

 \begin{proof}
By a direct computation we obtain
\[
\omega_{\st}(Ay, x)=y^tA^tS_0x=y^tS_0(S_0^{-1}A^tS_0)x=\omega_{\st}(y, S_0^{-1}A^tS_0x)=-\omega_{\st}(-S_0A^tS_0x, y)
\] 
and by  the non-degeneracy of $\omega_{\st}$, it follows that $A^T=S_0A^tS_0$.
  \end{proof}
  As a corollary, we deduce  the following:
\begin{corol}
The 2-form $\omega_{\st}$ is Hermitian  with respect to a linear complex structure $J$ on $\R^{4n}$, if and only if $J^T=J$.
 \end{corol}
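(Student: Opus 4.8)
The corollary compares two conditions on a linear complex structure $J$ on $\R^{4n}$: that $\omega_{\st}$ be Hermitian with respect to $J$, i.e.\ $\omega_{\st}(Jx,Jy)=\omega_{\st}(x,y)$ for all $x,y\in\R^{4n}$; and that $J^T=J$, which by the definition of the symplectic transpose means $\omega_{\st}(Jx,y)=-\omega_{\st}(x,Jy)$ for all $x,y\in\R^{4n}$. The plan is to obtain each of these conditions from the other by a single substitution, using only the identity $J^2=-\id$ (part of the hypothesis that $J$ is a linear complex structure) together with the non-degeneracy of $\omega_{\st}$.

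To prove that Hermiticity implies $J^T=J$, I would replace $y$ by $Jy$ in the Hermitian identity and invoke $J^2=-\id$ to get $\omega_{\st}(Jx,J^2y)=\omega_{\st}(x,Jy)$, that is $-\omega_{\st}(Jx,y)=\omega_{\st}(x,Jy)$; hence $\omega_{\st}(Jx,y)=-\omega_{\st}(x,Jy)=\omega_{\st}(J^Tx,y)$ for all $x,y$, and the non-degeneracy of $\omega_{\st}$ forces $J=J^T$. For the converse, starting from $J^T=J$ one computes $\omega_{\st}(Jx,Jy)=\omega_{\st}(J^Tx,Jy)=-\omega_{\st}(x,J^2y)=\omega_{\st}(x,y)$, again using $J^2=-\id$ in the last step, so $\omega_{\st}$ is Hermitian with respect to $J$.

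A matrix variant of the same argument uses Lemma \ref{symptrans}: in a symplectic basis $\omega_{\st}(x,y)=x^tS_0y$, so Hermiticity becomes $J^tS_0J=S_0$ while $J^T=J$ becomes $S_0J^tS_0=J$; multiplying the first identity on the right by $J^{-1}=-J$ gives $J^tS_0=-S_0J$, from which $S_0J^tS_0=J$ follows after using $S_0^2=-\id$, and the reverse implication is this computation read backwards. I do not anticipate any real obstacle here — the whole proof is a two-line manipulation — the one point worth stating explicitly being that both conditions are a priori statements about bilinear forms (resp.\ matrices), so the passage between them rests on the non-degeneracy of $\omega_{\st}$, which is exactly what makes the symplectic transpose well defined in Lemma \ref{symptrans}.
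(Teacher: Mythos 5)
Your proposal is correct and follows essentially the same argument as the paper: a single substitution ($y\mapsto Jy$, versus the paper's $x\mapsto Jx$) combined with $J^2=-\id$, the defining property of the symplectic transpose, and non-degeneracy of $\omega_{\st}$. The matrix reformulation via Lemma \ref{symptrans} is a harmless restatement of the same computation, not a different route.
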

 \begin{proof}
 Assume that $\omega_{\st}$ is $J$-Hermitian, i.e., $\omega_{\st}(Jx, Jy)=\omega_{\st}(x, y)$ for any $x, y\in\R^{4n}$. Then, by replacing $x$ by $Jx$ and by definition of $J^{T}$ we obtain
 \[
\omega_{\st}(J^2x, Jy)= \omega_{\st}(Jx, y)=-\omega_{\st}(y, Jx)=\omega_{\st}(J^{T}y, x)\,.
 \]
 Since  also $\omega_{\st}(J^2x, Jy)=-\omega_{\st}(x, Jy)=\omega_{\st}(Jy, x)$, we finally obtain $\omega_{\st}(Jy, x)=\omega_{\st}(J^{T}y, x)$, 
 that is $J=J^{T}$. The converse is treated similarly.
 \end{proof}
 
 Let us now link the above description with the structures that we are interested in. So, assume  that  $(\R^{4n}, \omega_{\st})$ is endowed with a  linear quaternionic structure $Q\subset\Ed(\R^{4n})$, for which $\omega_{\st}$ is \textsf{scalar}, that is $\omega_{\st}$ is $Q$-Hermitian in terms of the Definition \ref{basicsright} (see also Proposition \ref{usefrel1}).  Then, by the above description it follows that 
  \begin{corol}
 The 2-sphere $S(Q)=\Sp(1)\cap\sp(1)$ associated to a linear quaternionic structure $Q$ is a subspace of  the $\Sp(4n,\R)/\U(n, n)$-orbit. 
 \end{corol}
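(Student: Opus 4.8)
The plan is to check that each complex structure $\J\in \Ss(Q)$ lies inside $\Sp(4n,\R)\cap\sp(4n,\R)$ and that the pseudo-Euclidean metric $g_{\J}(\cdot\,,\cdot)=\omega_{\st}(\cdot\,,\J\cdot)$ it determines has split signature $(2n,2n)$; by Lemma~\ref{twistfiber} this places $\J$ in the symplectic twistor space of signature $(2n,2n)$, i.e.\ in the $\Sp(4n,\R)$-orbit with stabilizer $\U(n,n)$, which is exactly the assertion. This is essentially the argument behind part~2) of Lemma~\ref{twistfiber}, now carried out for an arbitrary point of $\Ss(Q)$ rather than just for a triple defining the quaternionic structure.

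First I would fix an admissible basis $H=\{I,J,K\}$ of $Q$, so that a general $\J\in \Ss(Q)$ can be written as $\J=\mu_1 I+\mu_2 J+\mu_3 K$ with $\mu_1^2+\mu_2^2+\mu_3^2=1$; in particular $\J^2=-\id$. Since by assumption $\omega_{\st}$ is scalar with respect to $Q$, it is $Q$-Hermitian in the sense of Definition~\ref{basicsright}, so $\omega_{\st}(\J x,\J y)=\omega_{\st}(x,y)$ for all $x,y\in\R^{4n}$ (equivalently, conditions $\sf{(5)}$ and $\sf{(6)}$ of Proposition~\ref{usefrel1} hold for $\J$). Hence $\J$ is a linear complex structure compatible with $\omega_{\st}$, and by Lemma~\ref{twistfiber} it belongs to $\Sp(4n,\R)\cap\sp(4n,\R)$, i.e.\ to the union of all symplectic twistor spaces of $(\R^{4n},\omega_{\st})$.

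It then remains to pin down the orbit through $\J$. By part~1) of Lemma~\ref{twistfiber}, this orbit is determined by the signature of $g_{\J}(\cdot\,,\cdot)=\omega_{\st}(\cdot\,,\J\cdot)$. Using a skew-Hermitian basis adapted to $H$ to identify $(\R^{4n},\omega_{\st},Q)$ with $([\E\Hh],\omega_0,Q_0)$ as in Proposition~\ref{adaptbas}, the tensor $g_{\J}$ is carried onto the metric studied in Proposition~\ref{signprop} (parts 3) and 4)), which has signature $(2n,2n)$. Therefore the stabilizer of $\J$ in $\Sp(4n,\R)$ equals $\U(2n-n,n)=\U(n,n)$, so $\J$ lies in the orbit $\Sp(4n,\R)/\U(n,n)$; as $\J\in \Ss(Q)$ was arbitrary, $\Ss(Q)$ is contained in that single orbit. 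I do not expect a genuine obstacle here: the only point needing explicit justification is the passage from the standard scalar form $\omega_0$, for which the signature was computed in Proposition~\ref{signprop}, to the present form $\omega_{\st}$, which is exactly what Proposition~\ref{adaptbas} (or directly part~4) of Proposition~\ref{signprop}) supplies.
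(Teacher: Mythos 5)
Your argument is correct and follows essentially the same route as the paper: the corollary is presented there as an immediate consequence of Lemma \ref{twistfiber} (whose proof of part 2 rests on Proposition \ref{signprop}), and you simply write out this chain — compatibility of each $\J\in \Ss(Q)$ with $\omega_{\st}$ places it in $\Sp(4n,\R)\cap\sp(4n,\R)$, and the signature $(2n,2n)$ computation of Proposition \ref{signprop}, transported via Proposition \ref{adaptbas}, identifies the orbit with stabilizer $\U(n,n)$. The only difference is that you carry out the check for an arbitrary point of $\Ss(Q)$ rather than just an admissible triple, which is a harmless (indeed slightly more careful) elaboration of the paper's intended proof.
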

  Let us now characterize the space of such linear quaternionic structures $Q$.

\begin{lem}\label{quatchar} Let $(\R^{4n}, \omega_{\textsf{stn}})$ be the standard symplectic vector space.  Then the following hold:\\
\textsf{1)} Let $H$ be  a linear hypercomplex structure on $\R^{4n}$ such that the corresponding symplectic bases are adapted to $H$, in terms of Definition \ref{EHbases} and let $f : \R^{4n}\to[\E\Hh]$ be the induced isomorphism.  This defines a surjective map from the space of symplectic bases onto the space of all linear hypercomplex structures $H$,  and moreover onto the space of all linear quaternionic structures $Q$ on $\R^{4n}$, such that  
\[
\omega_{\st}=f^{*}\omega_{0}\,,\quad H=f^{*}H_{0}\,,\quad Q=f^{*}Q_{0}\,.
\]
In particular, the pairs $(\omega_{\st}, H)$ and $(\omega_{\st}, Q)$ are linear \textsf{hs-H}/\textsf{qs-H} structures, respectively.\\
\textsf{2)} Two  symplectic bases of $\R^{4n}$ define the same  linear \textsf{hs-H} structure,  if and only if the transition matrix between them is an element of $\SO^*(2n)$, and they define the same   linear \textsf{qs-H} structure, if and only if the transition matrix between them is an element of $\SO^*(2n)\Sp(1).$  
 \end{lem}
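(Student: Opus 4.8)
The plan is to leverage Proposition \ref{adaptbas} and Proposition \ref{hypbasis} directly, translating their content from the ``$V\cong[\E\Hh]$'' picture to the ``$V=\R^{4n}$ with fixed $\omega_{\st}$'' picture. For part \textsf{1)}, I would start from a symplectic basis $\mathscr{B}$ of $(\R^{4n},\omega_{\st})$ and a linear hypercomplex structure $H$ on $\R^{4n}$ for which $\mathscr{B}$ is adapted to $H$ in the sense of Definition \ref{EHbases}. By Proposition \ref{hypbasis} this basis produces an isomorphism $f\colon\R^{4n}\to[\E\Hh]$ under which $H=f^{*}H_{0}$; the point to check is that, because the basis is simultaneously symplectic for $\omega_{\st}$ and adapted to $H$, the pulled-back form $f^{*}\omega_{0}$ coincides with $\omega_{\st}$. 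This is essentially a bookkeeping computation: in the coordinates of the adapted symplectic basis, $\omega_{0}$ is given by the standard matrix $S_{0}$ (this is exactly what the proof of Proposition \ref{adaptbas} established, namely that $\omega_{0}$ is the standard symplectic form in skew-Hermitian coordinates), and $\omega_{\st}$ is also given by $S_{0}$ in a symplectic basis, so the two agree. Setting $Q:=f^{*}Q_{0}=\langle f^{*}H_{0}\rangle$ then gives the quaternionic statement, and the fact that $(\omega_{\st},H)$, $(\omega_{\st},Q)$ are linear \textsf{hs-H}/\textsf{qs-H} structures follows since $\Aut(H_{0},\omega_{0})=\SO^{*}(2n)$ and $\Aut(Q_{0},\omega_{0})=\SO^{*}(2n)\Sp(1)$ by Proposition \ref{usefrel}, pulled back along $f$. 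For surjectivity: given \emph{any} linear hypercomplex structure $H$ (resp.\ quaternionic structure $Q$) on $\R^{4n}$ for which $\omega_{\st}$ is scalar, Proposition \ref{adaptbas} produces a skew-Hermitian symplectic basis adapted to $H$, which is the desired preimage; and conversely every linear \textsf{hs-H}/\textsf{qs-H} structure with underlying form $\omega_{\st}$ arises this way, again by Proposition \ref{adaptbas}.

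For part \textsf{2)}, the plan is the standard $G$-structure argument. Two symplectic bases $\mathscr{B}_{1}$, $\mathscr{B}_{2}$ give isomorphisms $f_{1},f_{2}\colon\R^{4n}\to[\E\Hh]$, and the transition matrix is $A=f_{1}^{-1}\circ f_{2}\in\Gl(4n,\R)$ (viewed in the fixed symplectic coordinates). Since both bases are symplectic for $\omega_{\st}=f_i^{*}\omega_0$, the element $A$ automatically lies in $\Sp(4n,\R)\cong\Sp(\omega_{0})$. Now the two bases define the same linear \textsf{hs-H} structure precisely when $f_{1}^{*}H_{0}=f_{2}^{*}H_{0}$ and $f_1^*\omega_0 = f_2^*\omega_0$, i.e.\ when $A$ fixes both $H_{0}$ and $\omega_{0}$, which by Proposition \ref{usefrel} (and the identity $\Aut(H_0,\omega_0)=\SO^{*}(2n)$ noted after it) is equivalent to $A\in\SO^{*}(2n)$. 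The \textsf{qs-H} case is identical with $Q_{0}$ in place of $H_{0}$, using $\Aut(Q_{0},\omega_{0})=\SO^{*}(2n)\Sp(1)$.

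The only genuinely non-routine point is verifying in part \textsf{1)} that ``$\mathscr{B}$ symplectic and adapted to $H$'' already forces $f^{*}\omega_{0}=\omega_{\st}$ rather than merely $f^{*}\omega_{0}\in[S^2\E]^*$; a priori an adapted basis need not be skew-Hermitian. I expect this to be resolved by observing that Definition \ref{EHbases} pins down enough of the coordinate expression of $\omega_{0}$ — combined with the symplectic normalization of $\mathscr{B}$ — that the scalar $2$-form is forced to be the standard one, exactly as in the computation inside the proof of Proposition \ref{adaptbas}; alternatively one may need to remark that the surjectivity claim is really about the \emph{skew-Hermitian} symplectic bases furnished by Proposition \ref{adaptbas}, so that one restricts attention to those from the outset. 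Either way, once this normalization is in hand the rest of the lemma is a formal consequence of the automorphism-group identifications already proved, so I would keep the write-up short and cite Propositions \ref{usefrel}, \ref{hypbasis}, and \ref{adaptbas} freely.
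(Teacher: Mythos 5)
Your proposal is correct and follows essentially the same route as the paper, whose proof consists of exactly the two ingredients you use: surjectivity from the existence of adapted (skew-Hermitian) symplectic bases (Propositions \ref{hypbasis}/\ref{adaptbas}) and the transition-matrix claims from the stabilizer identifications $\Aut(H_0,\omega_0)=\SO^*(2n)$, $\Aut(Q_0,\omega_0)=\SO^*(2n)\Sp(1)$ (Propositions \ref{usefrel}/\ref{usefrelskewherm}). Your extra check that an adapted symplectic basis automatically pulls $\omega_0$ back to $\omega_{\st}$ (since $\omega_0$ has the standard matrix $S_0$ in adapted coordinates) is exactly the observation implicit in the paper's proof of Proposition \ref{adaptbas}, so no gap remains.
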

 \begin{proof}
By the existence result for  bases adapted to the linear hypercomplex structure $H$ (see Proposition \ref{hypbasis}),  the surjectivity follows. The claims about the stabilizers follow by Proposition \ref{usefrelskewherm}.
  \end{proof}


 \section{Almost hypercomplex/quaternionic skew-Hermitian   structures}\label{sec2}
 The  description of the most basic features of linear $\SO^*(2n)$- and $\SO^*(2n)\Sp(1)$-structures  given in the previous section, enables  us to conveniently investigate $\SO^*(2n)$-type structures on smooth manifolds.  For the convenience of the reader,  let us  begin by refreshing a few basic facts from the theory of $G$-structures (for  more details see \cite{Kob2, Salamon89, Schw, BG08}). 
  
Let us  fix, once and for all, a  connected $4n$-dimensional smooth manifold $M$ and some reference $4n$-dimensional  real vector space  $V$ (which we will use as a model of $T_{x}M$).
The  frame bundle    $\mc{F}=\mc{F}(M)$ of $M$ consists of all  linear  isomorphisms  between the tangent space $T_{x}M$ of $M$  at $x\in M$ and $V$,  which we view as  \textsf{co-frames} $u :   T_{x}M\to V$.  The frame bundle $\mc{F}$  is a principal $\Gl(V)$-bundle over $M$.  A \textsf{$G$-structure}   on   $M$ is  defined  to be  a reduction of  the frame bundle  to  a  closed Lie subgroup $G\subset\Gl(V)$, i.e.,  a sub-bundle $\mathcal{P}\subset\mc{F}(M)$ with structure group $G$.

 Let  $\pi : \mc{P}\to M$  be a   $G$-structure on $M$ and let $\vartheta\in\Omega^{1}(\mc{P}, V)$ be the  \textsf{tautological 1-form} on $\mc{P}$, defined by  $\vartheta(X)=u(\pi_{*}(X))$ for any co-frame $u\in  \mc{P}$ and $X\in T_{u} \mc{P}$.  The tautological form is   \textsf{strictly horizontal},  in the sense that the kernel of $\vartheta$ coincides with the vertical sub-bundle $T^{\mf{ver}}\mc{P}$ of the tangent bundle $T\mc{P}$, and $G$-equivariant, i.e., $r_{a}^{*}\vartheta=a^{-1}\vartheta$ for any $a\in G$,  where $r_{a}$ denotes the right translation by an element  $a\in G$. 
Such 1-forms may characterize a $G$-structure, in particular  under our assumptions,  a $G$-structure on $M$ is equivalently defined to be a principal $G$-bundle $\pi : \mc{P}\to M$ over $M$ together with a  1-form $\vartheta\in\Omega^{1}(P, V)$ such that  $\ker\vartheta=\ker\dd\pi=T^{\mf{ver}}\mc{P}$   and  $r_{a}^{*}\vartheta=a^{-1}\vartheta$, for any $a\in G$. This definition enables generalizing the notion of $G$-structures to the case where $\rho : G\to \Gl(V)$ is a covering of a closed subgroup of $\Gl(V)$ (like the case of spin structures), by assuming $r_{a}^{*}\vartheta=\rho(a)^{-1}\vartheta$.

 Let us now recall the following  examples of $G$-structures which we will use frequently below.

\begin{defi}
\textsf{1)} 
An \textsf{almost  hypercomplex structure} on $M$ is a $G$-structure with  $G=\Gl(n, \Hn)$.  This mean that $M$ admits  a triple $H=\{J_{a} : a=1, 2, 3\}$ of smooth endomorphisms  $J_{a}\in\Ed(TM)$ satisfying the quaternionic identity    $J_{1}^2=J_{2}^2=J_{3}^2=-\Id=J_{1}J_{2}J_{3}$. 
Any almost hypercomplex structure $H$ induces a linear hypercomplex structure  $H_{x}$ at each $T_{x}M$, which  establishes a linear isomorphism $(T_{x}M\,, H_x)\cong ([\E\Hh], H_{0})$. Such a pair $(M, H)$  is said to be an \textsf{almost hypercomplex manifold}. Note that $\Aut(H)\cong\Gl(n, \Hn)$,  and in this case the reduction  bundle of the frame bundle of $M$ consists of all bases of $T_{x}M$ adapted to $H_{x}$. Such a basis induces a linear hypercomplex  isomorphism $u :   T_{x}M\to [\E\Hh]$. \\ 
\textsf{2)}  An \textsf{almost quaternionic structure} on $M$  is a $G$-structure with   $G=\Gl(n,\Hn)\Sp(1)$. This means that $M$ admits a rank-3 smooth sub-bundle $Q\subset \Ed(TM)\cong T^*M\otimes TM$ which is locally generated by an almost hypercomplex structure $H=\{J_{a} : a=1, 2,3\}$.  Such a locally defined triple $H$ is called a (local) \textsf{admissible frame} of $Q$, and the pair $(M, Q)$ is called  an 
\textsf{almost quaternionic manifold}.    Note that $\Aut(Q)\cong\Gl(n,\Hn)\Sp(1)$, and  in this case the reduction  bundle of the frame bundle of $M$ consists of  all bases of $T_{x}M$ adapted to some admissible basis $H_x$ of $Q_x$.  Such a basis induces a linear quaternionic  isomorphism $u :   T_{x}M\to [\E\Hh]$.\\
\textsf{3)}  An \textsf{almost symplectic structure} on $M$  is a $G$-structure  with $G=\Sp(4n,\R)$.  This means that $M$ admits a non-degenerate   2-form $\omega$, called an \textsf{almost symplectic form}. Such a pair $(M, \omega)$ is referred to as an \textsf{almost symplectic manifold}. Note that $\Aut(\omega)\cong\Sp(4n, \R)$, and in this case the reduction of $\mc{F}(M)$ to $\Sp(2n,\R)$ consist all  symplectic bases of $(T_{x}M, \omega_x)$. Similarly, such a basis induces a linear symplectomorphism $u :   T_{x}M\to\R^{4n}$, where we consider $\R^{4n}$ as endowed with the standard  linear symplectic form $\omega_{\textsf{stn}}$.
  \end{defi}
 Let us also recall the following operators, which are naturally defined  on any almost symplectic manifold $(M,\omega)$.
 \begin{defi}\label{asymsym}
 Let $L_{X}\in\Ed(TM)$ be an endomorphism on an almost symplectic manifold $(M, \omega)$, induced by  a vector-valued smooth 2-form $L$ on $M$, that is $L_{X}=L(X, \cdot)$ for any $X\in \Gamma(TM)$. Then:\\
 \textsf{1)} The \textsf{symplectic transpose} $L^{T}_{X}$ of $L_{X}$ with respect to  $\omega$ is defined by
\[
\omega(L_{X}^{T}Y, Z)+\omega(Y, L_{X}Z)=0\,,\quad \forall \  X, Y, Z \in \Gamma(TM)\,.
\]
  \textsf{2)} The operator of \textsf{symmetrization}/\textsf{antisymmetrization}  of $L_{X}$ with respect to the symplectic transpose  $L_{X}^{T}$, is respectively defined by
 \[
 Sym(L_{X}):=\frac{1}{2}\big(L_X+L_{X}^{T}\big)\,,\quad   Asym(L_{X}):=\frac{1}{2}\big(L_X-L_{X}^{T}\big)\,.
  \]
 \end{defi}
  
  \noindent
  \textsf{A remark of caution:}  Below we shall use the $\E\Hh$-formalism, where it is appropriate to  emphasize on the role of topology of the smooth manifold $M$ admitting an almost quaternionic structure $Q\subset\Ed(TM)$. This is because not everything from $\E\Hh$-formalism extends globally to a manifold setting, see also \cite{MR, Salamon82, Salamon86}.  Recall first that the quaternionic structure $Q$ is naturally identified with an associated bundle over $M$ with fiber $[S^2\Hh]^*$,   via the canonical section $\omega_{\Hh}$ of the associated bundle with  fiber $[\Lambda^2\Hh]^*$. 
However, since $\Gl(n,\Hn)\Sp(1)$ is a quotient of $\Gl(n,\Hn)\times \Sp(1)$, the  bundle analogies of the modules $\E$ and $\Hh$ are not necessarily globally defined over $M$.  In the second part of this work we describe the analogous  result for  $\SO^*(2n)\Sp(1)$-structures,  see  the appendix in  \cite{CGWPartII}.  Obviously, another obstruction is the \textsf{global trivializability} of $Q$.  Hence, an admissible frame $H=\{I,J,K\}$ of $Q$, or the vector bundles associated to a  $\Gl(n,\Hn)\Sp(1)$-structure via the representations  $\E$ and $\Hh$,  may  not exist globally. However, note that the  projectivization $\mathbb{P}(\Hh)$ of $\Hh$ globally exists and provides us with the \textsf{twistor bundle} (or \textsf{unit sphere bundle})   $Z\to M$ associated to any almost quaternionic manifold $(M, Q)$.  As an example, note that there are manifolds (e.g., the quaternionic projective space), which   can not carry a $\Gl(n,\Hn)$-structure, but admit a  $\Gl(n,\Hn)\Sp(1)$-structure.  We partially examine the topology of $\SO^*(2n)$-  and $\SO^*(2n)\Sp(1)$-structures in \cite{CGWPartII}.

\subsection{Scalar 2-forms}\label{mandef}
Let us now  introduce $\SO^{\ast}(2n)$-type structures on smooth manifolds. This topic will constitute the core of this article. From now on, we may fix   $V=[\E\Hh]$ and assume that $n>1$.  We begin with  the following definition, as the analogue of Definition \ref{scalardef} of scalar 2-forms on manifolds.  

\begin{defi} Let  $M$ be a smooth connected manifold.  \\
\textsf{1)} Assume that $M$ admits an almost hypercomplex structure $H=\{J_{a} : a=1, 2, 3\}$. Then, a  smooth 2-form $\omega\in\Gamma(\Lambda^{2}T^*M)$ is called \textsf{$H$-Hermitian}, if $\omega_{x}$ is  Hermitian with respect to $H_{x}=\{(J_{a})_{x} : a=1, 2, 3\}$ in terms of Definition \ref{basicsright}, for any $x\in M$. An everywhere non-degenerate $H$-Hermitian 2-form $\omega\in\Gamma(\Lambda^{2}T^*M)$ will be called a \textsf{scalar 2-form} (with respect to $H$) on $M$.   \\
\textsf{2)} Assume that $M$ admits an almost quaternionic structure $Q$. Then, a  smooth  2-form $\omega\in\Gamma(\Lambda^{2}T^*M)$ is called \textsf{$Q$-Hermitian}, if $\omega_{x}$ is  Hermitian with respect to $Q_{x}$ in terms of Definition \ref{basicsright}, for any $x\in M$. An everywhere non-degenerate $Q$-Hermitian 2-form $\omega\in\Gamma(\Lambda^{2}T^*M)$ will be called a \textsf{scalar 2-form} (with respect to $Q$) on $M$.
\end{defi}
 
As a consequence of Proposition    \ref{adaptbas} we may now pose the following characterization of smooth scalar  2-forms in the manifold setting.
 \begin{corol}\label{usefscalar}
 \textsf{1)}  Let  $(M, H=\{J_{a} : a=1, 2, 3\})$ be an almost hypercomplex manifold. Then, a real-valued  smooth 2-form $\omega\in\Gamma(\Lambda^{2}T^*M)$ is a smooth scalar 2-form, if and only if  there is a linear hypercomplex skew-Hermitian structure $(H_x,h_x)$ on $T_xM$ for any $x\in M$ such that $\omega_x=\Re(h_x)$, i.e., $\omega_x$ is a scalar 2-form on $T_xM$ (with respect to $H_x$), for any $x\in M$.\\
\textsf{2)}  Let  $(M, Q)$ be an almost quaternionic manifold. Then a real-valued  smooth 2-form $\omega\in\Gamma(\Lambda^{2}T^*M)$ is a smooth scalar 2-form, if and only if  there is a linear quaternionic skew-Hermitian structure $h_x$ on $T_xM$ for any $x\in M$ such that $\omega_x=\Re(h_x)$ and $Q_x$ is the quaternionic structure induced by $\Im(h_x)$,  i.e., $\omega_x$ is  a scalar 2-form on $T_xM$ (with respect to $Q_x$), for any $x\in M$.\\
\textsf{3)} Let $(M, H=\{J_{a} : a=1, 2, 3\})$ or  $(M, Q)$ be as above. Then, the set of smooth scalar 2-forms on $M$ defines a  sub-bundle of $\Lambda^{2}T^{*}M$, which we denote by $\Lambda^{2}_{\sc}T^{*}M$.
 \end{corol}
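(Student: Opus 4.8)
The plan is to deduce all three parts directly from the pointwise dictionary already established in Proposition \ref{adaptbas}, together with the observation that "being a scalar 2-form" is an open, $G$-invariant condition on the fibre that is compatible with the given $G$-structure. First I would address parts (1) and (2), which are essentially tautological restatements of Proposition \ref{adaptbas}: given an almost hypercomplex structure $H$ on $M$, the linear data $(H_x, \omega_x)$ at a point is, by Proposition \ref{adaptbas}(1), the same as a linear \textsf{hs-H} structure on $T_xM$, i.e.\ exactly a linear hypercomplex skew-Hermitian structure $(H_x, h_x)$ with $\omega_x = \Re(h_x)$; here $h_x$ is recovered from $\omega_x$ and $H_x$ by the formula $h_x(X,Y) = \omega_x(X,Y)\id + \sum_{a=1}^3 g_{(J_a)_x}(X,Y)(J_a)_x$ of Definition \ref{def1}, with $g_{(J_a)_x}(\cdot,\cdot) := \omega_x(\cdot, (J_a)_x\cdot)$. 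Conversely, if such a linear \textsf{hs-H} structure $(H_x,h_x)$ exists at every point with $\omega_x = \Re(h_x)$, then each $\omega_x$ is non-degenerate and $H_x$-Hermitian by Definition \ref{def1}, so $\omega$ is a smooth scalar 2-form. Part (2) is identical, using Proposition \ref{adaptbas}(2) and Definition \ref{def2}, with the extra bookkeeping that $Q_x$ is the span of the image of $\Im(h_x)$, as noted in the remark following Definition \ref{def1}.

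For part (3), the plan is to exhibit $\Lambda^2_{\sc}T^*M$ as an associated sub-bundle of $\Lambda^2 T^*M$. Fix the reference space $V = [\E\Hh]$ with its standard admissible basis $H_0$ (resp.\ standard quaternionic structure $Q_0$), and let $\mc{P}\subset\mc{F}(M)$ be the $\Gl(n,\Hn)$-reduction (resp.\ $\Gl(n,\Hn)\Sp(1)$-reduction) determined by $H$ (resp.\ $Q$); its fibre over $x$ consists of the frames $u\colon T_xM\to V$ carrying $H_x$ to $H_0$ (resp.\ $Q_x$ to $Q_0$), as recalled in the definition of almost hypercomplex/quaternionic structures. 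Let $W\subset\Lambda^2 V^*$ be the subset of scalar 2-forms on $V$ in the sense of Definition \ref{scalardef}, i.e.\ the non-degenerate elements of $[S^2\E]^*\subset\Lambda^2 V^*$ (by Proposition \ref{usefrel1}, (2)$\Leftrightarrow$(5) resp.\ (2)$\Leftrightarrow$(7), this is precisely the set of $H_0$-Hermitian, resp.\ $Q_0$-Hermitian, non-degenerate 2-forms). The linear subspace $[S^2\E]^*$ is invariant under $\Gl(n,\Hn)$ — this is exactly the content of Proposition \ref{usefrel1}(1), since conjugation by $\Gl(n,\Hn)$ preserves scalarity — and a fortiori under $\Gl(n,\Hn)\Sp(1)$; non-degeneracy is an open condition preserved by any linear action, so $W$ is an open $G$-invariant cone in $[S^2\E]^*$. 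Therefore the associated bundle $\mc{P}\times_G W \to M$ is a well-defined fibre sub-bundle of the associated bundle $\mc{P}\times_G\Lambda^2 V^* = \Lambda^2 T^*M$, and by parts (1)–(2) its smooth sections are exactly the smooth scalar 2-forms on $M$. We set $\Lambda^2_{\sc}T^*M := \mc{P}\times_G W$.

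The only genuine subtlety — and the point I would be most careful about — is the local/global issue flagged in the "remark of caution" preceding this subsection: although $\E$ and $\Hh$ need not exist as global bundles in the quaternionic case, the subspace $[S^2\E]^*$ of $\Lambda^2 V^*$ \emph{is} $\Gl(n,\Hn)\Sp(1)$-invariant (not merely $\Gl(n,\Hn)$-invariant), so the construction of $\mc{P}\times_G W$ goes through with structure group $G = \Gl(n,\Hn)\Sp(1)$ directly and no choice of local admissible frame is needed; this is why the sub-bundle $\Lambda^2_{\sc}T^*M$ is globally defined even when $H$ is only local. I would remark that the $\SO^*(2n)$- and $\SO^*(2n)\Sp(1)$-structures of the paper are then precisely the reductions of $\mc{P}$ corresponding to a choice of global section of $\Lambda^2_{\sc}T^*M$, which motivates the terminology and sets up the next section; the fibre $W$ itself decomposes $G$-equivariantly as the non-degenerate locus of $[S^2_0\E]^*\oplus\langle\omega_0\rangle$, in line with Proposition \ref{modules}.
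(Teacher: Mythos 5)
Your proposal is correct and follows essentially the same route as the paper: parts (1) and (2) are read off pointwise from Proposition \ref{adaptbas} together with Definitions \ref{def1} and \ref{def2}, and part (3) is the associated-bundle construction with fibre the open $\Gl(n,\Hn)\Sp(1)$-invariant cone of non-degenerate elements in $[S^2\E]^*$ (cf.\ Proposition \ref{usefrel1} and Remark \ref{zerorem}), which is exactly how the paper later realizes $\Lambda^2_{\sc}T^*M$. Your care about the global quaternionic case and the non-vector-bundle nature of the fibre matches the paper's remark following the corollary.
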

 \begin{rem}
 \textnormal{Note that $\Lambda^{2}_{\sc}T^*M\subset\Lambda^{2}_{\Re(\Hn)}T^*M$ is {\it not} a vector sub-bundle of $\Lambda^{2}T^*M$, due to the requirement that smooth scalar 2-forms are non-degenerate.  The reader may consult Proposition \ref{usefrel1} to derive further equivalent characterizations of scalar 2-forms on smooth manifolds.}
 \end{rem}
We can now proceed by introducing the geometric structures that we are mainly interested in.  
\begin{defi}
\textsf{1a)}  An \textsf{almost hypercomplex skew-Hermitian structure} $(H, \omega)$ on a  $4n$-dimensional  manifold $M$ (\textsf{almost hs-H structure} for short)  consists of an almost hypercomplex structure $H=\{J_{a} : a=1, 2, 3\}$ and  a  smooth scalar 2-form $\omega \in \Gamma(\Lambda^{2}_{\sc}T^*M)$  (with respect to $H$).  A manifold $M$ endowed with an almost hypercomplex skew-Hermitian structure will be referred to as an \textsf{almost hypercomplex skew-Hermitian manifold} (\textsf{almost hs-H manifold} for short), and denoted by $(M, H, \omega)$.\\
\textsf{1b)} A \textsf{hypercomplex symplectomorphism} $f :  (M, H, \omega)\to (\hat{M}, \hat{H}, \hat{\omega})$ between two  almost hypercomplex skew-Hermitian manifolds is a diffeomorphism $f : M\to \hat{M}$ satisfying $H=f^*\hat{H}$ and $\omega=f^*\hat{\omega}$. \\
\textsf{2a)}  An \textsf{almost quaternionic skew-Hermitian structure} $(Q, \omega)$ on a  $4n$-dimensional  manifold $M$  (\textsf{almost qs-H structure} for short)   consists of an almost quaternionic structure $Q\subset\Ed(TM)$ and a smooth scalar 2-form $\omega \in \Gamma(\Lambda^{2}_{\sc}T^*M)$  (with respect to $Q$).   A manifold $M$ endowed with an almost  quaternionic skew-Hermitian structure will be referred to as an \textsf{almost quaternionic skew-Hermitian manifold} (\textsf{almost qs-H manifold} for short), and denoted by $(M, Q, \omega)$.\\
\textsf{2b)} A \textsf{quaternionic symplectomorphism} $f :  (M, Q, \omega)\to (\hat{M}, \hat{Q}, \hat{\omega})$ between two  almost quaternionic skew-Hermitian manifolds is a diffeomorphism $f : M\to \hat{M}$ satisfying $Q=f^*\hat{Q}$ and $\omega=f^*\hat{\omega}$. 
\end{defi}

 From this definition it's obvious that 
\begin{itemize}
\item An \textsf{almost hs-H manifold} $(M, H, \omega)$ is already an almost hypercomplex manifold $(M, H)$.
\item  An \textsf{almost qs-H manifold} $(M, Q, \omega)$ is  already an almost quaternionic manifold $(M, Q)$.
\end{itemize}
Therefore, the structures that we treat are special subclasses of almost hypercomplex/quaternionic structures, endowed with a bit more structure provided by the smooth scalar 2-form $\omega$. In particular, they form the symplectic counterparts of almost  hH structures and almost qH structures, respectively, which are almost hypercomplex/quaternionic structures endowed with a (pseudo)-Riemannian metric which is Hermitian with respect to $H$ and $Q$, respectively. 
  On the other side, 
\begin{itemize}
\item Any \textsf{almost hs-H} manifold $(M, H, \omega)$ or  any  \textsf{almost qs-H} manifold $(M, Q, \omega)$ is already an almost symplectic manifold $(M, \omega)$.
\end{itemize}
Hence,  one may start with an almost symplectic structure $\omega$ and look for ``compatible''    almost hypercomplex/quaternionic structures, in the sense that we require $\omega$ to be a scalar 2-form with respect to such an almost hypercomplex/quaternionic structure.

\smallskip
Next we  describe the bundle reductions corresponding to such $G$-structures.
By Proposition \ref{adaptbas} we  obtain the following characterization.
 \begin{prop}\label{hsH}
\textsf{1)} An \textsf{almost hs-H manifold} is  a  $4n$-dimensional connected manifold $M$, whose frame bundle $\mc{F}=\mc{F}(M)$  admits a reduction to $\SO^{\ast}(2n)\subset\Gl([\E\Hh])$, namely $\mc{P}=\sqcup_{x\in M}\mc{P}_{x},$ where
\[
\mc{P}_{x}:=\big\{u:=(e_1, \ldots, e_{2n}, f_1, \ldots, f_{2n}) : u \ \text{skew-Hermitian basis of} \ (T_{x}M, H_{x}, \omega_{x}) \big\}\,.
\]
Thus, $\mc{P}$ is a principal $\SO^{\ast}(2n)$-bundle over $M$, and we can identify
  \[
\mathcal{F}=\mathcal{P}\times_{\SO^{*}(2n)}\Gl([\E\Hh])\,,\quad\text{and}\quad TM=\mc{F}\times_{\Gl(V)}V\cong \mathcal{P}\times_{\SO^*(2n)}[\E\Hh]\,.
\] 
\textsf{2)} The set of $\SO^{\ast}(2n)$-structures on $M$ coincides with the space of sections of the quotient bundle  
\[
\mathcal{F}/\SO^{\ast}(2n)=\mathcal{F}\times_{\Gl(V)}\big(\Gl(V)/\SO^{\ast}(2n)\big)
\] 
with  typical fiber isomorphic to the  space $\Gl([\E\Hh])/\SO^{\ast}(2n)$.
 \end{prop}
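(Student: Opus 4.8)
The plan is to unwind the definitions and assemble the statement from pieces that are already in place. By definition, an \textsf{almost hs-H manifold} is a connected $4n$-dimensional manifold $M$ carrying an almost hypercomplex structure $H$ together with a smooth scalar $2$-form $\omega\in\Gamma(\Lambda^2_{\sc}T^*M)$ compatible with $H$. By Corollary \ref{usefscalar}, this amounts to a linear \textsf{hs-H} structure $(H_x,h_x)$ on each tangent space $T_xM$ with $\omega_x=\Re(h_x)$, varying smoothly. So the first step is to attach to each point $x\in M$ the set $\mc P_x$ of skew-Hermitian bases of $(T_xM,H_x,\omega_x)$, in the sense of Definition \ref{basSKEW}; Proposition \ref{adaptbas} guarantees $\mc P_x$ is non-empty, since a skew-Hermitian basis exists for any linear \textsf{hs-H} structure, and under such a basis $(T_xM,H_x,\omega_x)$ is identified with $([\E\Hh],H_0,\omega_0)$.

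The second step is to check that $\mc P=\sqcup_{x\in M}\mc P_x$ is a principal $\SO^*(2n)$-subbundle of the frame bundle $\mc F(M)$. The key input here is exactly Proposition \ref{usefrelskewherm} together with its reformulation via $\omega_0$ and $H_0$: the group of linear automorphisms of $[\E\Hh]$ fixing both $H_0$ and $\omega_0$ is precisely $\SO^*(2n)$ (equivalently $\Aut(h_0)\cap$ nothing extra, but more directly $\Aut(H_0,\omega_0)=\SO^*(2n)$ as recorded right after Definition \ref{scalardef}). Hence $\SO^*(2n)$ acts simply transitively on $\mc P_x$: given two skew-Hermitian bases $u,u'$ at $x$, the transition $u'\circ u^{-1}\in\Gl([\E\Hh])$ preserves $H_0$ and $\omega_0$, so it lies in $\SO^*(2n)$, and conversely any element of $\SO^*(2n)$ carries a skew-Hermitian basis to another. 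Local triviality follows from smoothness of $H$ and $\omega$: choosing a smooth local co-frame and applying the (smooth, algorithmic) Gram--Schmidt-type procedure of Proposition \ref{adaptbas} produces a smooth local section of $\mc P$, which together with the $\SO^*(2n)$-action gives a local trivialization. This establishes that $\mc P$ is a principal $\SO^*(2n)$-bundle; the associated bundle identifications $\mc F=\mc P\times_{\SO^*(2n)}\Gl([\E\Hh])$ and $TM=\mc F\times_{\Gl(V)}V\cong\mc P\times_{\SO^*(2n)}[\E\Hh]$ are then the standard general facts about reductions of structure group, applied with $V=[\E\Hh]$.

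For part \textsf{2)}, I would argue in the reverse direction. Given any reduction $\mc Q\subset\mc F(M)$ to $\SO^*(2n)$, transport the standard data $(H_0,\omega_0)$ on $[\E\Hh]$ through the co-frames in $\mc Q$: since all transition functions lie in $\SO^*(2n)=\Aut(H_0,\omega_0)$, the resulting $H_x:=u^{-1}H_0 u$ and $\omega_x:=u^*\omega_0$ are independent of the chosen $u\in\mc Q_x$ and assemble into a smooth almost hypercomplex structure $H$ and a smooth scalar $2$-form $\omega$, i.e.\ an \textsf{almost hs-H structure} whose bundle of skew-Hermitian frames is $\mc Q$. This sets up a bijection between $\SO^*(2n)$-structures and \textsf{almost hs-H structures}, and the usual identification of $G$-structures with sections of $\mc F/G$ — namely $\mc F/\SO^*(2n)=\mc F\times_{\Gl(V)}(\Gl(V)/\SO^*(2n))$ with typical fiber $\Gl([\E\Hh])/\SO^*(2n)$ — gives the claimed description of the set of such structures as the space of sections of this quotient bundle.

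The only genuinely non-formal point, and hence the main obstacle, is verifying that the local trivializations are \emph{smooth}: one must check that the skew-Hermitian frame produced by Proposition \ref{adaptbas} depends smoothly on the point $x$. This is where the smoothness of $H$ and of $\omega$, plus smoothness of the Gram--Schmidt-type orthonormalization (which is built from polynomial and square-root operations on smoothly varying data, with the square roots taken of quantities that are nowhere zero by non-degeneracy of $\omega$), has to be invoked carefully; everything else is a bookkeeping exercise with the algebraic results of Section \ref{linears}, principally Propositions \ref{usefrelskewherm} and \ref{adaptbas}.
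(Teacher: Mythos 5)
Your argument is correct and follows essentially the same route as the paper, which states Proposition \ref{hsH} as a direct consequence of Proposition \ref{adaptbas} (pointwise skew-Hermitian bases identifying $(T_xM,H_x,\omega_x)$ with $([\E\Hh],H_0,\omega_0)$, stabilizer $\Aut(H_0,\omega_0)=\SO^*(2n)$) together with standard $G$-structure theory; you have merely spelled out the bookkeeping, including the smooth local sections and the reduction-versus-section correspondence for part \textsf{2)}.
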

Observe that the existence of  global sections of $\mathcal{F}/\SO^{\ast}(2n)$ is purely topological in nature.  
\begin{example}
\textnormal{Consider $M=[\E\Hh]$. Then,  $M$ is an \textsf{almost hs-H manifold}, and the space of almost  \textsf{hs-H}-structures   coincides with the space of functions from $[\E\Hh]$ to $\Gl([\E\Hh])/\SO^{\ast}(2n)$.   At  $x\in M$ such a function describes a linear transformation from $(H_x, \omega_x)$ to $(H_{0}, \omega_{0})$}.  
\end{example}

Similarly, by the discussion in Section   \ref{mainlinear} and Proposition   \ref{adaptbas} we obtain an analogous statement for  \textsf{almost qs-H structures}.
\begin{prop}\label{qsH}
\textsf{1)} An \textsf{almost qs-H manifold} is  a  $4n$-dimensional connected manifold $M$, whose frame bundle $\mc{F}=\mc{F}(M)$  admits  a reduction to $\SO^{\ast}(2n)\Sp(1)\subset\Gl([\E\Hh])$, namely $\mc{Q}=\sqcup_{x\in M}\mc{Q}_{x}\subset \mc{F}$, where 
\[
\mc{Q}_{x}:=
\left\{
\begin{tabular}{cc}
$ u:=(e_1, \ldots, e_{2n}, f_1, \ldots, f_{2n})$  & $ : u \ \text{skew-Hermitian basis of} \ (T_{x}M, Q_{x}=\langle H_{x}\rangle, \omega_{x})$ \\
& $ \text{for all admissible bases} \ H_x \ \text{of} \ Q_{x}$
\end{tabular}
\right\}.
\]
Thus, $\mc{Q}$ is a principal $\SO^{\ast}(2n)\Sp(1)$-bundle over $M$, and we can identify
  \[
\mathcal{F}=\mathcal{Q}\times_{\SO^{*}(2n)\Sp(1)}\Gl([\E\Hh])\,,\quad\text{and}\quad TM=\mc{F}\times_{\Gl(V)}V\cong \mathcal{Q}\times_{\SO^*(2n)\Sp(1)}[\E\Hh]\,.
\] 
\textsf{2)} The set of $\SO^{\ast}(2n)\Sp(1)$-structures on $M$ coincides with the space of sections of the quotient bundle  
\[
\mathcal{F}/\SO^{\ast}(2n)\Sp(1)=\mathcal{F}\times_{\Gl(V)}\big(\Gl(V)/\SO^{\ast}(2n)\Sp(1)\big)
\] 
with  typical fiber isomorphic to the  space $\Gl([\E\Hh])/\SO^{\ast}(2n)\Sp(1)$.
 \end{prop}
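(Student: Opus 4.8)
The plan is to prove Proposition \ref{qsH} by a direct adaptation of the argument already carried out for Proposition \ref{hsH}, relying on the linear-algebraic characterization in Proposition \ref{adaptbas}\,(2) and the general yoga of $G$-structures recalled at the start of Section \ref{sec2}. Concretely, fix an almost qs-H manifold $(M,Q,\omega)$. For each $x\in M$, Definition \ref{scalardef} and Corollary \ref{usefscalar}\,(2) say that $(Q_x,\omega_x)$ is a linear qs-H structure on $T_xM$, equivalently (by Proposition \ref{adaptbas}\,(2)) a linear $\SO^{\ast}(2n)\Sp(1)$-structure on $T_xM$. I would \emph{define} $\mc{Q}_x$ to be the set of skew-Hermitian bases of $(T_xM,Q_x=\langle H_x\rangle,\omega_x)$ ranging over all admissible bases $H_x$ of $Q_x$; by the existence part of Proposition \ref{adaptbas} this set is nonempty, and by the last sentence of that proposition (or equivalently by Lemma \ref{quatchar}\,(2)), any two such bases differ precisely by an element of $\SO^{\ast}(2n)\Sp(1)$. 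Hence $\mc{Q}:=\sqcup_{x\in M}\mc{Q}_x$ is a single $\SO^{\ast}(2n)\Sp(1)$-orbit in each fibre of $\mc{F}$, i.e.\ a $\Aut(h_0)=\SO^{\ast}(2n)\Sp(1)$-reduction of $\mc{F}$ in the sense recalled above.

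The one genuinely non-formal point is smoothness of $\mc{Q}$ as a sub-bundle. Here I would argue locally: on a neighbourhood $U$ over which $Q$ admits a smooth admissible frame $H=\{I,J,K\}$ (such $U$ exist by definition of an almost quaternionic structure), the pair $(H,\omega|_U)$ is a smooth almost hs-H structure on $U$, so Proposition \ref{hsH}\,(1) already gives a smooth principal $\SO^{\ast}(2n)$-subbundle $\mc{P}^H\subset\mc{F}|_U$ of skew-Hermitian bases adapted to $H$. One then checks $\mc{Q}|_U=\mc{P}^H\cdot\Sp(1)$, the saturation of $\mc{P}^H$ under the (smooth) right $\Sp(1)$-action coming from the admissible-frame freedom; concretely, changing the admissible basis by $A\in\SO(3)$ and applying the corresponding Gram--Schmidt-type adjustment (cf.\ the remark after Definition \ref{basSKEW}) is a smooth operation, so $\mc{Q}|_U$ is a smooth $\SO^{\ast}(2n)\Sp(1)$-subbundle, and these patch to the global $\mc{Q}$. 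With $\mc{Q}$ in hand, the identifications $\mc{F}=\mc{Q}\times_{\SO^{\ast}(2n)\Sp(1)}\Gl([\E\Hh])$ and $TM\cong\mc{Q}\times_{\SO^{\ast}(2n)\Sp(1)}[\E\Hh]$ are the standard extension/associated-bundle constructions (using $\Gl([\E\Hh])/\SO^{\ast}(2n)\Sp(1)$ and $[\E\Hh]$ as fibres, respectively), exactly as in the hs-H case; conversely any reduction of $\mc{F}$ to $\SO^{\ast}(2n)\Sp(1)$ produces, via $TM\cong\mc{Q}\times_{\SO^{\ast}(2n)\Sp(1)}[\E\Hh]$ and the $\SO^{\ast}(2n)\Sp(1)$-invariant data $(Q_0,\omega_0)$ on $[\E\Hh]$ (Propositions \ref{usefrel}, \ref{signprop}), an almost quaternionic structure $Q$ together with a scalar $2$-form $\omega$, so the correspondence is bijective.

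For part (2), this is pure $G$-structure formalism: reductions of a principal $\Gl(V)$-bundle $\mc{F}$ to a closed subgroup $G$ are in natural bijection with sections of the associated fibre bundle $\mc{F}/G=\mc{F}\times_{\Gl(V)}(\Gl(V)/G)$, whose typical fibre is $\Gl(V)/G$. Applying this with $V=[\E\Hh]$ and $G=\SO^{\ast}(2n)\Sp(1)\subset\Gl([\E\Hh])$, and combining with part (1) which identifies almost qs-H manifolds with $\SO^{\ast}(2n)\Sp(1)$-structures, gives the claim; the typical fibre is $\Gl([\E\Hh])/\SO^{\ast}(2n)\Sp(1)$.

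The main obstacle, such as it is, is the smoothness verification for $\mc{Q}$: one must be careful that the fibrewise construction ``all skew-Hermitian bases for all admissible bases of $Q_x$'' really assembles into a smooth subbundle, rather than merely a fibrewise-defined subset. The local-frame reduction to the already-proven hs-H case, plus smoothness of the admissible-frame change and of the Gram--Schmidt-type orthogonalization (both of which are classical), resolves this; everything else is a transcription of Proposition \ref{hsH} with $\SO^{\ast}(2n)$ replaced by $\SO^{\ast}(2n)\Sp(1)$ and ``linear hs-H'' by ``linear qs-H''.
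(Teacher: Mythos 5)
Your argument is correct and takes essentially the same route the paper intends: the paper gives no separate proof of Proposition \ref{qsH}, deriving it directly from Proposition \ref{adaptbas} applied pointwise together with the $G$-structure formalism recalled at the start of Section \ref{sec2}, which is exactly your scheme. Your additional smoothness check via a local admissible frame and saturation of the hs-H reduction of Proposition \ref{hsH} under $\Sp(1)$ is a harmless elaboration (and in fact no Gram--Schmidt-type adjustment is needed there: since $\omega_0$ is $\Sp(1)$-invariant, composing a skew-Hermitian co-frame with $q\in\Sp(1)$ already yields a skew-Hermitian co-frame for the rotated admissible basis).
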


\begin{defi}\label{skewHframe}
\textsf{1)}  Let $(M, H, \omega)$ be an  \textsf{almost hs-H manifold}. A (local) section of the  principal $\SO^*(2n)$-bundle $\mc{P}\to M$ given in Proposition \ref{hsH} is said to be a (local) \textsf{skew-Hermitian frame}.\\
\textsf{2)}  Let $(M, Q, \omega)$ be an  \textsf{almost qs-H manifold}. A (local) section of the  principal $\SO^*(2n)\Sp(1)$-bundle $\mc{Q}\to M$ given in Proposition \ref{qsH} is said to be a (local) \textsf{skew-Hermitian frame} with respect to some local admissible frame $H$ of $Q$. Note that   the local admissible frame $H$ is uniquely determined by the corresponding  skew-Hermitian frame.
\end{defi}
 
\smallskip
 According to Proposition \ref{usefrel1},  the $\SO^*(2n)\Sp(1)$-module of linear scalar 2-forms on $[\E\Hh]$,  denoted by $\Lambda^{2}_{\sc}[\E\Hh]^*$,  is  the set of non-degenerate elements inside $[S^2\E]^*=[S^2_0\E]^*\oplus\langle\omega_0\rangle$.
Hence, given an almost quaternionic skew-Hermitian manifold $(M, Q, \omega)$  with  reduction $\mc{Q}\subset\mc{F}$ described in Proposition \ref{qsH}, the scalar 2-form $\omega$ or any other smooth scalar 2-form can be viewed as a smooth section of the following  associated  bundle: 
 \begin{eqnarray*}
 \Lambda^2_{\sc}T^{*}M&=&\mc{Q}\times_{\SO^*(2n)\Sp(1)}\Lambda^{2}_{\sc}[\E\Hh]^*\\
 &=&\mc{Q}\times_{\SO^*(2n)\Sp(1)}\Lambda^{2}_{\sc, 0}[\E\Hh]^*\oplus \mc{Q}\times_{\SO^*(2n)\Sp(1)}\R^{\times} \omega_{0}
  \\
 &=&\mc{Q}\times_{\SO^*(2n)\Sp(1)}\Lambda^{2}_{\sc, 0}[\E\Hh]^*\oplus\mc{L}_{\omega_{0}}\,.
 \end{eqnarray*}
Here the notation $\Lambda^{2}_{\sc, 0}[\E\Hh]^*$ encodes the non-degenerate elements in $[S^2_{0}\E]^*$,  and 
 \[
 \mc{L}_{\omega_{0}}= \mc{Q}\times_{\SO^*(2n)\Sp(1)}\R^{\times} \omega_{0}
 \]
  is a line bundle without zero sections. 
 Due to Propositions  \ref{usefrel1} and   \ref{hsH},  the reader can describe a similar decomposition of the space of scalar 2-forms $ \Lambda^2_{\sc}T^{*}M$ associated to  an \textsf{almost hs-H manifold} $(M, H, \omega)$.
 
\begin{rem}
	\textnormal{The line bundle $\mc{L}_{\omega_{0}}$ defined above defines the (almost) conformal symplectic version of $\SO^*(2n)\Sp(1)$-structures, which was thoroughly discussed by Sala\v{c} and \v{C}ap \cite{Cap}. For a \textsf{qs-H} manifold $(M, Q, \omega)$, the scalar 2-form $\omega$ defines a global section of $\mc{L}_{\omega_{0}}$, which does not have to exist globally in the conformal symplectic setting (in this case the corresponding structure group is  the Lie group $\SO^*(2n)\Gl(1, \Hn)$).  This is responsible for the differences between our results and the results in \cite{Cap}.}
\end{rem}
Finally, note that  locally an \textsf{almost qs-H structure} on a manifold $M$ can be understood in terms of an \textsf{almost hs-H structure}, although globally the situation differs. Of course, this establishes an analogue  with  the local relation of almost quaternionic structures and almost hypercomplex structures.   Let us summarize this phenomenon as follows:
 \begin{prop}
  Let $(M,  Q, \omega)$ be an \textsf{almost qs-H manifold} and let $x\in M$ be some point of $M$. Then, there exists an open neighbourhood $U\subset M$ of $x$ and an \textsf{almost hs-H structure} $(H, \omega')$ defined on $U$, such that
\[
\omega|_{U}=\omega', \quad Q|_{U}=\langle H\rangle.
\]
\end{prop}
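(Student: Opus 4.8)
The plan is to reduce the global statement to a purely pointwise one that has already been settled. Recall that an almost quaternionic structure $Q$ is, by definition, \emph{locally} generated by an almost hypercomplex structure; so first I would shrink the neighbourhood $U$ of $x$ (if necessary) so that there exists a smooth admissible frame $H=\{I,J,K\}$ of $Q|_U$ with $I^2=J^2=K^2=-\Id=IJK$ on $U$. This $H$ is exactly the candidate for the almost hypercomplex structure in the statement, and it satisfies $Q|_U=\langle H\rangle$ by construction. It remains only to check that the given scalar $2$-form $\omega$ is automatically a scalar $2$-form with respect to this $H$, i.e.\ that we may take $\omega'=\omega|_U$.

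The key point is the pointwise equivalence between $Q$-Hermitian and $H$-Hermitian established in the linear theory. By hypothesis $\omega$ is a smooth scalar $2$-form with respect to $Q$, meaning $\omega_y$ is non-degenerate and $Q_y$-Hermitian for every $y\in M$, in the sense of Definition \ref{basicsright}. Since $H_y=\{I_y,J_y,K_y\}$ is an admissible basis of $Q_y$, the $S(Q_y)$-Hermitian condition $\omega_y(\mathcal{J}x,\mathcal{J}x')=\omega_y(x,x')$ for all $\mathcal{J}\in S(Q_y)$ applies in particular to $\mathcal{J}=I_y,J_y,K_y$, so $\omega_y$ is $H_y$-Hermitian. (More robustly, one invokes the equivalences of Proposition \ref{usefrel1}, which on each $T_yM\cong[\E\Hh]$ show that being $Q_y$-scalar, being $\Sp(1)$-invariant, and being $H_y$-Hermitian for any admissible basis are all the same condition.) Hence $\omega_y$ is a linear scalar $2$-form with respect to $H_y$ at every $y\in U$, and by Corollary \ref{usefscalar} the resulting smooth family $\omega|_U$ is a smooth scalar $2$-form with respect to the almost hypercomplex structure $H$ on $U$. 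Therefore $(H,\omega|_U)$ is an \textsf{almost hs-H structure} on $U$ with $\omega|_U=\omega|_U$ and $Q|_U=\langle H\rangle$, which is what we wanted, with $\omega'=\omega|_U$.

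There is essentially no hard step here: the only nontrivial input is the local trivialisability of the rank-$3$ bundle $Q$ by an admissible frame, which is part of the definition of an almost quaternionic structure (a $\Gl(n,\Hn)\Sp(1)$-structure), and the pointwise compatibility, which is handed to us by Proposition \ref{usefrel1} and Corollary \ref{usefscalar}. The one caveat worth a sentence in the write-up is the contrast with the global situation: the admissible frame $H$ need not extend beyond $U$ and different choices of $H$ on overlapping patches differ by an $\SO(3)$-valued transition, which is precisely why only $Q$ (and not a global $H$) survives globally, while $\omega$ being $\Sp(1)$-invariant is insensitive to this ambiguity. I would close the proof by noting this mirrors exactly the passage from almost quaternionic to almost hypercomplex geometry in the Hermitian setting.
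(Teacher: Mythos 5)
Your argument is correct and is exactly the one the paper intends: the proposition is stated as an immediate consequence of the local existence of an admissible frame $H$ of $Q$ (built into the definition of an almost quaternionic structure) together with the pointwise fact that $Q$-Hermitian implies $H$-Hermitian for any admissible basis, as in Proposition \ref{usefrel1} and Corollary \ref{usefscalar}. Nothing is missing; your closing remark about the $\SO(3)$-ambiguity of local frames is a sensible aside but not needed for the proof.
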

 Explicit  constructions providing  examples of the structures introduced  above,  are analyzed in the second part of this work, in particular see Sections 3, 4 and 5 in \cite{CGWPartII}.  In the final section of this first part, we have collected details related to   \textsf{torsion-free} (or \textsf{integrable}) examples (see below Section \ref{secintrinsic} for adapted connections and intrinsic torsion).

\subsection{The quaternionic skew-Hermitian form and the fundamental 4-tensor}
Since for $n>1$ the Lie group $\SO^{*}(2n)$ is non-compact,  in principle there is {\it no}  underlying Riemannian metric structure on a manifold $M$ with a $\SO^{\ast}(2n)$-structure, and similarly for $G$-structures with $G=\SO^{\ast}(2n)\Sp(1)$.  However,    given any \textsf{almost hs-H manifold} $(M, H=\{I, J, K\}, \omega)$ by Propositions \ref{signprop} and  \ref{adaptbas} we may introduce three pseudo-Riemannian metrics of signature $(2n, 2n)$, defined by
\[
g_{I}(X, Y):=\omega(X, IY)\,,\quad g_{J}(X, Y):=\omega(X, JY)\,,\quad g_{K}(X, Y):=\omega(X, KY)\,,
\]
for any $X, Y\in\Gamma(TM)$.   Note that these metrics are $\SO^{*}(2n)$-invariant, as it is claimed in  Remark \ref{SOninvariantmetrics}.   
The above tensors are also obtained in the case of an  \textsf{almost qs-H manifold} $(M, Q, \omega)$, where  $H=\{I, J, K\}$ is a local admissible frame of $Q$, but they are only locally defined.  In particular, for any local section $\J\in \Gamma(Z)$ the tensor $g_{\J}(X, Y):=\omega(X, \J Y)$ is a locally defined tensor.  Nevertheless, via the assignment $\J\mapsto g_{\J}$  we obtain a global embedding of the twistor bundle   $Z\to M$ into $S^2T^*M$.
 
Let us now construct some globally defined tensors, which allow us to provide alternative definitions of the structures that we  are interested in. Indeed,  by Proposition \ref{usefrelskewherm}  on  an \textsf{almost qs-H manifold} $(M, Q, \omega)$ we obtain a globally defined tensor $h$, given by   
\begin{equation}
 	h:=\omega\id_{TM}+ g_{I}I+g_{J}J+g_{K}K\in  \Gamma\big(T^{*}M\otimes T^{*}M\otimes \Ed(TM)\big)\,,
	\label{qhermtensor}
\end{equation}
 where  $\{I,J,K\}$ is an arbitrary local admissible frame of $Q$. In particular, \[
h(X,Y)Z=\omega(X,Y)Z++ g_{I}(X,Y)IZ+g_{J}(X,Y)JZ+g_{K}(X,Y)KZ
\] for any $X,Y,Z\in\Gamma(TM).$
Since each $(T_xM,h_x)$ is a vector space with a \textsf{linear qs-H structure} as defined in Definition \ref{def2}, we shall refer to $h$ by the term  \textsf{quaternionic skew-Hermitian form} associated to the \textsf{almost qs-H structure} $(Q, \omega)$.  Observe that $h$ is defined even for  the case where  $(M, H, \omega)$ is an \textsf{almost hs-H manifold}. However, note that  $h$   is actually stabilized   by the larger  group $\SO^*(2n)\Sp(1)$, and we may pose the following characterization:
\begin{corol}\label{fund2tensorQ}
A $4n$-dimensional connected smooth manifold $M$ admits a  $\SO^{\ast}(2n)\Sp(1)$-structure, if and only if  admits a smooth (1, 3)-tensor $h$ which in a local frame  of $TM$ is given by the tensor $h_{0}$ of Proposition \ref{usefrelskewherm}.
\end{corol}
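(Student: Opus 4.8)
The plan is to prove both implications of Corollary \ref{fund2tensorQ} by reducing everything to the pointwise (linear) situation, which has already been settled in Proposition \ref{usefrelskewherm}. The statement to be proved is essentially the manifold-level incarnation of the fact that $\Aut(h_0)=\SO^*(2n)\Sp(1)$, so the proof is a routine ``$G$-structure is a section of $\mc F/G$, which is equivalent to a tensor field of a prescribed pointwise type'' argument; the only thing that needs care is the smoothness and the bookkeeping with frames.

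For the forward implication, suppose $M$ carries a $\SO^*(2n)\Sp(1)$-structure. By Proposition \ref{qsH} this is the same as an \textsf{almost qs-H} structure $(Q,\omega)$, so we may form the tensor $h$ of formula (\ref{qhermtensor}) using any local admissible frame $H=\{I,J,K\}$ of $Q$; by Proposition \ref{usefrelskewherm} this $h$ is independent of the choice of admissible frame, hence globally well-defined and smooth. Evaluating at a point $x$ and choosing a skew-Hermitian frame $u\in\mc Q_x$ (Definition \ref{skewHframe}), Proposition \ref{adaptbas} gives $u_*\omega_x=\omega_0$, $u_*Q_x=Q_0$, and then by construction (and Proposition \ref{signprop}) $u_*(g_{I})_x=g_a$ etc., so $u_*h_x=h_0$. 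Thus $h$ is a smooth $(1,3)$-tensor which in any skew-Hermitian local frame equals $h_0$, as required.

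For the converse, suppose $h$ is a smooth $(1,3)$-tensor such that around every point there is a local frame of $TM$ in which $h$ equals $h_0$. Pointwise this says exactly that each $(T_xM,h_x)$ is isomorphic to $([\E\Hh],h_0)$ as a \textsf{linear qs-H} space (Definition \ref{def2}); define $\mc Q_x\subset\mc F_x$ to be the set of co-frames $u:T_xM\to[\E\Hh]$ with $u_*h_x=h_0$. Since $\Aut(h_0)=\SO^*(2n)\Sp(1)$ (Proposition \ref{usefrelskewherm}), each fiber $\mc Q_x$ is a right $\SO^*(2n)\Sp(1)$-torsor; the hypothesis that a local frame realizing $h_0$ exists near each point gives local sections, hence $\mc Q:=\sqcup_x\mc Q_x$ is a smooth principal $\SO^*(2n)\Sp(1)$-subbundle of $\mc F$, i.e.\ a $\SO^*(2n)\Sp(1)$-structure. (Equivalently: the orbit map $\Gl(V)\to\Gl(V)\cdot h_0$ exhibits $\Gl(V)/\SO^*(2n)\Sp(1)$ as the fiber of $\mc F/\SO^*(2n)\Sp(1)$, and $h$ is precisely a smooth section of this bundle, which by Proposition \ref{qsH}(2) is the same as a $\SO^*(2n)\Sp(1)$-structure.) One then checks, using the decomposition $h_0=\Re(h_0)+\Im(h_0)$ from Proposition \ref{usefrelskewherm}, that the induced structure is the \textsf{almost qs-H} structure with $\omega=\Re(h)$ and $Q$ the rank-$3$ subbundle of $\Ed(TM)$ spanned by $\Im(h)$.

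The only genuinely non-formal point is smoothness of $\mc Q$ in the converse direction: one must know that the condition ``$u_*h_x=h_0$'' cuts out a smooth subbundle, which follows because $\SO^*(2n)\Sp(1)=\Aut(h_0)$ is a closed subgroup of $\Gl([\E\Hh])$ and $h$ is smooth, so $\mc Q$ is the preimage of the (locally closed, smooth) orbit $\Gl(V)\cdot h_0$ under the smooth bundle map $\mc F\to V^*\otimes V^*\otimes\Gl(V)$ associated to $h$; the existence of local frames realizing $h_0$ guarantees this preimage is nonempty over each point and hence a principal subbundle. Everything else is a direct transcription of Propositions \ref{adaptbas}, \ref{qsH} and \ref{usefrelskewherm} to the bundle setting.
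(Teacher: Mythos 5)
Your proof is correct and follows essentially the same route the paper intends: the paper states this as an immediate consequence of formula (\ref{qhermtensor}), Proposition \ref{adaptbas}/\ref{qsH} and the identity $\Aut(h_0)=\SO^*(2n)\Sp(1)$ from Proposition \ref{usefrelskewherm}, and your argument is just a careful spelling-out of that standard ``tensor with pointwise model $h_0$ $\Leftrightarrow$ reduction to its stabilizer'' correspondence, including the smoothness of the reduced bundle.
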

Of course, this corollary may serve as an alternative way to define $\SO^{\ast}(2n)\Sp(1)$-structures. 
 Similarly,  by Proposition \ref{usefrelfund}  on  $(M, Q, \omega)$, we obtain a globally defined 4-tensor $\Phi$, given by   
\begin{equation}
	 \Phi:=g_{I}\odot g_{I}+g_{J}\odot g_{J}+g_{K}\odot g_{K}={\sf Sym}(g_{I}\otimes g_{I}+g_{J}\otimes g_{J}+g_{K}\otimes g_{K})\in \Gamma\big(S^{4}T^{*}M\big)\,,
	\label{fundtensor}
\end{equation}
where    ${\sf Sym} : {\mc{T}}^{4}T^{*}M\to S^{4}T^{*}M$  denotes the operator of complete symmetrization at the bundle level, and $\{I,J,K\}$ is an arbitrary local admissible frame of $Q$.
We  call $\Phi$ the \textsf{fundamental 4-tensor field} associated to the \textsf{almost  qs-H structure} $(Q, \omega)$.  Again, $\Phi$ is defined even for  the case where  $(M, H, \omega)$ is an \textsf{almost hs-H manifold}. However,    similarly to  $h$ above, note that   $\Phi$ is actually stabilized   by the larger group $\SO^*(2n)\Sp(1)$, so similarly we get the following
\begin{corol}\label{fund4tensorQ}
A $4n$-dimensional connected smooth manifold $M$ admits a  $\SO^{\ast}(2n)\Sp(1)$-structure, if and only if  admits a symmetric 4-tensor  $\Phi$ which in a local frame  of $TM$ is given by the tensor $\Phi_{0}$ of Proposition \ref{usefrelfund}.
\end{corol}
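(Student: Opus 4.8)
The plan is to deduce the statement from the standard dictionary between reductions of the frame bundle and defining tensor fields, the only non-formal ingredient being the identification $\Aut(\Phi_0) = \SO^{\ast}(2n)\Sp(1)$ already established in Proposition \ref{usefrelfund}; the whole argument runs in parallel with Corollary \ref{fund2tensorQ} for the quaternionic skew-Hermitian form $h$. For the forward implication, let $M$ carry a $\SO^{\ast}(2n)\Sp(1)$-structure with reduction $\mc{Q} \subset \mc{F}$ as in Proposition \ref{qsH}. Since $\Phi_0$ lies in the trivial submodule $(S^4[\E\Hh]^*)^{\SO^{\ast}(2n)\Sp(1)}$ it is fixed by the structure group, so under the identification $S^4T^*M \cong \mc{Q} \times_{\SO^{\ast}(2n)\Sp(1)} S^4[\E\Hh]^*$ the constant $\Phi_0$ determines a well-defined global smooth section $\Phi \in \Gamma(S^4T^*M)$ which in every local skew-Hermitian frame (a local section of $\mc{Q}$) is represented by $\Phi_0$. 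When $\mc{Q}$ arises from an almost qs-H structure $(Q,\omega)$ one checks in addition, by a direct computation with the metrics $g_I,g_J,g_K$ attached to a local admissible frame $H=\{I,J,K\}$ of $Q$, that $\Phi$ coincides with the fundamental $4$-tensor $(\ref{fundtensor})$, but only the existence of $\Phi$ with the prescribed local form is needed for the corollary.

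For the converse, assume $M$ carries $\Phi \in \Gamma(S^4T^*M)$ such that around every point there is a local frame of $TM$ in which $\Phi$ equals $\Phi_0$, and set
\[
\mc{Q} := \{ u \in \mc{F} \ :\ u_* \Phi_{\pi(u)} = \Phi_0 \},
\]
the bundle of co-frames carrying $\Phi$ to the model tensor. The hypothesis gives $\mc{Q}_x \neq \emptyset$ for all $x \in M$, and a smooth local frame realizing $\Phi_0$ is a smooth local section of $\mc{Q}$, so $\mc{Q}$ is a smooth embedded subbundle of $\mc{F}$. If $u_0, u \in \mc{Q}_x$ then $u \circ u_0^{-1} \in \Gl([\E\Hh])$ fixes $\Phi_0$ and hence lies in $\Aut(\Phi_0)$, and conversely left-composition with any element of $\Aut(\Phi_0)$ preserves $\mc{Q}_x$; thus each fiber $\mc{Q}_x$ is a single $\Aut(\Phi_0)$-orbit. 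By Proposition \ref{usefrelfund} we have $\Aut(\Phi_0) = \SO^{\ast}(2n)\Sp(1)$, so $\mc{Q}$ is a principal $\SO^{\ast}(2n)\Sp(1)$-subbundle of $\mc{F}$, i.e.\ a $\SO^{\ast}(2n)\Sp(1)$-structure on $M$.

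I expect no genuine obstacle here: once Proposition \ref{usefrelfund} supplies the equality $\Aut(\Phi_0) = \SO^{\ast}(2n)\Sp(1)$, the rest is bookkeeping with associated bundles and orbit decompositions. The only point requiring mild care is the reading of the hypothesis — ``$\Phi$ is given by $\Phi_0$ in a local frame'' must be understood as asserting that such a frame exists near every point of $M$ — since this is precisely what makes $\mc{Q}$ a non-empty smooth subbundle rather than merely a pointwise subset of $\mc{F}$.
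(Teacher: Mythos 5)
Your proposal is correct and follows essentially the same route as the paper, which treats the corollary as the standard $G$-structure dictionary: the forward direction is the associated-bundle construction of the global tensor $\Phi$ of \eqref{fundtensor}, and the converse is exactly the frame-bundle reduction $\mc{Q}=\{u\in\mc{F} : u_*\Phi=\Phi_0\}$ combined with the identification $\Aut(\Phi_0)=\SO^{\ast}(2n)\Sp(1)$ from Proposition \ref{usefrelfund}. Your remark on reading the hypothesis as the existence, near every point, of a frame realizing $\Phi_0$ is precisely the point that makes $\mc{Q}$ a principal subbundle, and matches the paper's intent.
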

 Corollary \ref{fund4tensorQ}, just like Corollary \ref{fund2tensorQ},  can be used to   define $\SO^{\ast}(2n)\Sp(1)$-structures in an alternative way, via a global symmetric 4-tensor.
Moreover, since $\Phi$ satisfies
\begin{eqnarray*}
\Phi(X, Y, Z, W)&=&\frac{1}{24}\sum_{\sigma\in S^4}\Big(g_{I}(X_{\sigma(1)}, Y_{\sigma(2)})g_{I}(Z_{\sigma(3)}, W_{\sigma(4)})+g_{J}(X_{\sigma(1)}, Y_{\sigma(2)})g_{I}(Z_{\sigma(3)}, W_{\sigma(4)})\\
&&+g_{K}(X_{\sigma(1)}, Y_{\sigma(2)})g_{K}(Z_{\sigma(3)}, W_{\sigma(4)})\Big)
\end{eqnarray*}
for any $X, Y, Z, W\in\Gamma(TM)$, 
we deduce that
\begin{lem}
The fundamental tensor field $\Phi$ satisfies the identities:
\begin{eqnarray*}
\Phi(X, Y, Z, W)&=&\frac{1}{3}\sum_{I\in H}\Big(\fr{S}_{Y, Z, W}g_{I}(X, Y)g_{I}(Z, W)\Big)\\
&=&\frac{1}{3}\Big(g_{I}(X, Y)g_{I}(Z, W)+g_{I}(X, Z)g_{I}(Y, W)+g_{I}(X, W)g_{I}(Y, Z) \\
&&+g_{J}(X, Y)g_{J}(Z, W)+g_{J}(X, Z)g_{J}(Y, W)+g_{J}(X, W)g_{J}(Y, Z) \\
&&+g_{K}(X, Y)g_{K}(Z, W)+g_{K}(X, Z)g_{K}(Y, W)+g_{K}(X, W)g_{K}(Y, Z) \Big)\,,
\end{eqnarray*}
for any $X, Y, Z, W\in\Gamma(TM)$.
\end{lem}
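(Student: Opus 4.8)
The plan is to obtain the identity directly from the complete-symmetrization formula for $\Phi$ recorded in the display preceding the lemma (which is itself just the unpacking of the definition $\Phi=\mathsf{Sym}(g_{I}\otimes g_{I}+g_{J}\otimes g_{J}+g_{K}\otimes g_{K})$ from \eqref{fundtensor}), using only the symmetry of each of $g_I,g_J,g_K$. That symmetry is available because, by Proposition \ref{signprop}, each of these tensors is a pseudo-Euclidean metric, hence a symmetric bilinear form. Writing $(\xi_1,\xi_2,\xi_3,\xi_4):=(X,Y,Z,W)$, the contribution of $g_I$ to the sum over $S^4$ is
\[
\frac{1}{24}\sum_{\sigma\in S^4} g_I(\xi_{\sigma(1)},\xi_{\sigma(2)})\,g_I(\xi_{\sigma(3)},\xi_{\sigma(4)})\,,
\]
and I would first observe that this summand depends on $\sigma$ only through the partition $\{\{\sigma(1),\sigma(2)\},\{\sigma(3),\sigma(4)\}\}$ of $\{1,2,3,4\}$ into two pairs: it is unchanged under transposing $\sigma(1)\leftrightarrow\sigma(2)$, under transposing $\sigma(3)\leftrightarrow\sigma(4)$, and under interchanging the two pairs, since $g_I$ is symmetric and scalar multiplication commutes.

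Next I would note that the subgroup of $S^4$ generated by these three operations has order $8$ (it is a copy of $\mathbb{Z}_2\wr\mathbb{Z}_2$), so the assignment $\sigma\mapsto\{\{\sigma(1),\sigma(2)\},\{\sigma(3),\sigma(4)\}\}$ is $8$-to-$1$ onto the three perfect matchings of $\{1,2,3,4\}$, namely $\{\{1,2\},\{3,4\}\}$, $\{\{1,3\},\{2,4\}\}$, $\{\{1,4\},\{2,3\}\}$. Hence the displayed sum equals
\[
\frac{8}{24}\Big(g_I(X,Y)g_I(Z,W)+g_I(X,Z)g_I(Y,W)+g_I(X,W)g_I(Y,Z)\Big)
=\frac13\,\fr{S}_{Y,Z,W}\,g_I(X,Y)g_I(Z,W)\,,
\]
where in the last equality I again use the symmetry of $g_I$ to recognize the three terms as the cyclic sum over $Y,Z,W$. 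Performing the same computation for $g_J$ and $g_K$ and adding the three contributions yields exactly $\Phi(X,Y,Z,W)=\tfrac13\sum_{I\in H}\fr{S}_{Y,Z,W}\,g_I(X,Y)g_I(Z,W)$, together with its fully expanded nine-term form.

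I do not expect a genuine obstacle: the statement is a purely combinatorial consequence of the definition of $\Phi$ via complete symmetrization. The only points requiring a small amount of care are the multiplicity count — verifying that each of the three pair-partitions is hit by exactly $8$ of the $24$ permutations, equivalently that the subgroup described above has order $8$ with no further coincidences — and the (harmless) invocation of $g_I(X,Y)=g_I(Y,X)$, which is precisely what Proposition \ref{signprop} provides.
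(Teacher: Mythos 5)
Your proposal is correct and follows essentially the same route as the paper, which simply unpacks the complete symmetrization $\Phi=\mathsf{Sym}(g_I\otimes g_I+g_J\otimes g_J+g_K\otimes g_K)$ via the displayed $\frac{1}{24}\sum_{\sigma\in S^4}$ formula and reads off the nine-term expression; your explicit $8$-to-$1$ multiplicity count and the use of the symmetry of $g_I,g_J,g_K$ from Proposition \ref{signprop} make precise exactly the step the paper leaves implicit.
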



\section{Intrinsic torsion of $\SO^*(2n)$-structures and $\SO^*(2n)\Sp(1)$-structures}\label{secintrinsic}
\subsection{Generalities on adapted connections to $G$-structures}
Let us maintain the notation from the introduction in Section \ref{sec2}, and assume that $M$ is a connected manifold, but not necessarily $4n$-dimensional, and  that $G\subset\Gl(V)$ is a closed subgroup.  Recall that the \textsf{torsion} of a linear connection $\nabla$ on $M$ is a smooth section of the \textsf{torsion bundle} $\Tor(M):=\Lambda^{2}T^*M\otimes TM$ defined by
 \[
 T^{\nabla}(X, Y):=\nabla_{X}Y-\nabla_{Y}X-[X, Y]\,,\quad X, Y\ \in\Gamma(TM)\,.
 \]
 If $T^{\nabla}$ vanishes identically, then $\nabla$ is said to be \textsf{torsion-free}. 
 
Let  $\pi : \mc{P}\to M$  be a   $G$-structure on $M$ with tautological 1-form $\vartheta$.   A linear connection $\nabla$  is called \textsf{adapted} to $\mc{P}\subset\mc{F}$, or simply a \textsf{$G$-connection}, when the corresponding connection on the frame bundle $\mc{F}$ of $M$ reduces to $\mc{P}$. Since the Lie algebra $\fr{g}$ of $G$ can be identified with a subalgebra of $\fr{gl}(V)=\Ed(V)$, one can show that the space of $G$-principal connections on $\mc{P}$ is a space modeled on the space
of smooth sections of the associated bundle $\mc{P}\times_{G}(V^{*}\otimes\fr{g})=T^*M\otimes\fr{g}_{\mc{P}},$ where $\fr{g}_{\mc{P}}$ is the adjoint bundle.     Each  $G$-principal connections on $\mc{P}$ is defined by a connection 1-form $\gamma : T\mc{P}\to\fr{g}$ and induces a $G$-connection $\nabla^{\gamma}$ on $TM$. In particular, there is bijective correspondence between $G$-connections and  $G$-principal connections on $\mc{P}$.
For any  adapted connection  $\nabla=\nabla^{\gamma}$  corresponding to a connection 1-form $\gamma : T\mc{P}\to\fr{g}$
we define its  \textsf{torsion form} $\Uptheta^{\gamma}$, which is the vector-valued 2-form on $\mc{P}$  defined by the structure equation
$\Uptheta^{\gamma}=\dd{\vartheta}+\gamma\wedge{\vartheta}$.
  When the torsion form $\Uptheta^{\gamma}$  vanishes, the $G$-connection corresponding to the connection 1-form $\gamma$ is said to be \textsf{torsion-free}, which is in a line with the  definition above.  
 Indeed, $\Uptheta^{\gamma}$ is   $G$-equivariant and strictly horizontal, so it induces a  smooth $G$-equivariant \textsf{torsion function} $t^{\gamma}:\mc{P}\to \Lambda^{2}V^*\otimes V$, which assigns to a co-frame $u\in \mc{P}$ the coordinates  $t^{\gamma}(u)=u(T^{\nabla})$ of the torsion $T^{\nabla}$ in $\Lambda^{2}V^*\otimes V$.

 \begin{defi}
 A $G$-structure $\mc{P}\subset\mc{F}$ is called a \textsf{torsion-free $G$-structure}, or \textsf{1-integrable}, when it admits a torsion-free adapted connection.
 \end{defi}

 Let us fix a  $G$-structure $\pi : \mc{P}\to M$ on $M$ as above.  Recall that the \textsf{first prolongation} of the Lie algebra $\fr{g}$  of $G$ is defined by
  \[
\fr{g}^{(1)}:= (V^*\otimes\fr{g})\cap(S^2V^*\otimes V)=\{\alpha\in V^*\otimes\fr{g} : \alpha(x)y=\alpha(y)x, \ \forall \ x, y\in V\}\subset\Hom(V, \fr{g})\,.
\]
Note that for any Lie subalgebra $\fr{g}\subset\Ed(V)$ we may consider  the $G$-equivariant map
\[
\delta :  V^{*} \otimes\fr{g} \to \Lambda^{2}V^{*}\otimes V\,, \quad \delta(\alpha)(x, y):=\alpha(x)y-\alpha(y)x\,,
\]
 with $\alpha\in V^{*} \otimes\fr{g}$ and $x, y\in V$.  This is     the {\textsf{Spencer operator of alternation}}, which is actually one of the boundary maps of the Spencer complex of $\fr{g}\subset\Ed(V)$, also called \textsf{Spencer differential}.  It fits into the following exact sequence,
\[
0\longrightarrow \ke\delta \cong\fr{g}^{(1)}\longrightarrow V^{*} \otimes\fr{g}\cong\Hom(V, \fr{g})\overset{\delta}{\longrightarrow} \Lambda^{2}V^{*}\otimes V\cong\Hom(\Lambda^{2}V, V)\longrightarrow  \mf{Coker}(\delta)\cong \mc{H}(\fr{g})\longrightarrow 0
\]
where  we denote by $\mc{H}(\fr{g})\equiv \mc{H}^{0, 2}(\fr{g})$ the following Spencer cohomology of $\fr{g}$:
 \[
\mc{H}(\fr{g}):=\Hom(\Lambda^{2}V, V)/\imm(\delta)=\Lambda^{2}V^*\otimes V/\imm(\delta)\,.
 \] 
 Let us consider the vector bundle
$
\mathscr{H}(\fr{g}):=\mc{P}\times_{G}\mc{H}(\fr{g})\,,
$
called the \textsf{intrinsic torsion bundle} over $M$,  and maintain the same notation for  the bundle map
 \[
 \delta : T^*M\otimes \fr{g}_{\mc{P}} \to \Tor(M)\,,
 \] 
induced by  the Spencer operator $\delta :  V^{*} \otimes\fr{g} \to \Lambda^{2}V^{*}\otimes V$.  There is a natural projection from $\Tor(M)$ to $ \mathscr{H}(\fr{g})$ which we shall denote by  $p :\Tor(M)\to \mathscr{H}(\fr{g})$. In these terms, we have an isomorphism
\[
\mathscr{H}(\fr{g})\cong \textsf{Tor}(M)/\imm(\delta)=\Lambda^{2}T^{*}M\otimes TM/\delta(T^*M\otimes\fr{g}_{\mc{P}})\,,
\] 
where  similarly we maintain the same notation for $\imm(\delta)\subset \Lambda^{2}V^*\otimes V$ and the corresponding sub-bundle   induced in $\textsf{Tor}(M)$. It is not hard to see that the projection of the torsion   $T^{\nabla}$   via $p$  to this quotient bundle is the same for all $G$-connections $\nabla$,  and it  only depends  on the specific $G$-structure.  Hence, one obtains a well-defined section  $\tau:=p(T^{\nabla})\in\Gamma(\mathscr{H}(\fr{g}))$ of $\mathscr{H}(\fr{g})$,  which is an invariant of 1st-order structures with structure group $G$, called  the \textsf{intrinsic torsion} of $\mc{P}$.
\begin{rem}
\textnormal{For a given $G$-structure $\mc{P}$ on $M$, the intrinsic torsion measures the obstruction to the existence of adapted torsion-free connections. In particular, it vanishes if and only if $M$ admits a torsion-free adapted connection, i.e.,  $\mc{P}$ is a  torsion-free $G$-structure. }
\end{rem}

Suppose now that there exists a $G$-invariant complementary space $\mc{D}=\mc{D}(\fr{g})$ of $\imm(\delta)$ inside $\Lambda^{2}V^*\otimes V$ which gives rise to the direct sum decomposition
\begin{equation}\label{Dcompl}
\Lambda^{2}V^*\otimes V=\imm(\delta)\oplus \mc{D}(\fr{g})\,.  
\end{equation}
Usually, one refers to $\mc{D}(\fr{g})$ as a \textsf{normalization condition}, and it is easy to prove that such a normalization condition always exists for reductive $G\subset \Gl(V)$.  However, in general there is neither any natural way of fixing such a complement, nor is it necessarily unique.

A normalization condition $\mc{D}(\fr{g})$ for a given $G$-structure on $M$  determines  a smooth sub-bundle of   $\Tor(M)$, which we denote by $\mathscr{D}(\fr{g})$. This is  isomorphic with the associated vector  bundle $\mc{P}\times_{G}\mc{D}(\fr{g})$. Then, the splitting (\ref{Dcompl}) induces the following bundle decomposition
\[
\Tor(M)=\imm(\delta)\oplus \mathscr{D}(\fr{g})\,. 
\]
 In this case, a $G$-connection $\nabla=\nabla^{\gamma}$ is called  \textsf{minimal with respect to the normalization condition $\mc{D}(\fr{g})$} (or simply a   \textsf{$\mc{D}$-connection} if there is no matter of confusion),   if  $T^{\nabla}$ is smooth section of $\mathscr{D}(\fr{g})$, i.e., its torsion function $t^{\gamma}$ takes values in $\mc{D}(\fr{g})$. Moreover, one can show that 
\begin{corol}
 Let $\pi : \mc{P}\to M$ be a $G$-structure on a smooth manifold $M$, where $G\subset\Gl(V)$ is a closed subgroup,  and let $\mc{D}(\fr{g})$ be a normalization condition. Then the space of all  $\mc{D}$-connections  is an  affine space modeled on smooth sections of the associated bundle $\mc{P}\times_{G}\fr{g}^{(1)}$. Hence, whenever the first prolongation $\fr{g}^{(1)}$ is trivial,  then  the principal $G$-bundle $\mc{P}\subset\mc{F}$ admits  a unique (up to choice of a normalization condition)  minimal connection. 
 \end{corol}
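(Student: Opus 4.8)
The plan is to carry out the standard affine-space argument for adapted connections, using the Spencer operator $\delta$ to track how the torsion changes when one modifies a $G$-connection. The first ingredient is the transformation rule for torsion: if $\nabla$ and $\nabla'$ are both adapted to $\mc{P}$, then $A:=\nabla'-\nabla$ is a section of $T^*M\otimes\fr{g}_{\mc{P}}$, and from $T^{\nabla'}(X,Y)=\nabla'_XY-\nabla'_YX-[X,Y]$ one obtains, by a one-line computation, $T^{\nabla'}=T^{\nabla}+\delta(A)$, where $\delta\colon T^*M\otimes\fr{g}_{\mc{P}}\to\Tor(M)$ is the bundle map induced by the Spencer operator of alternation; conversely, adding any such $A$ to a $G$-connection again yields a $G$-connection. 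Hence the torsions of all $G$-connections on $\mc{P}$ form a single affine subset of $\Gamma(\Tor(M))$ whose direction space is $\Gamma(\imm(\delta))$.

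Next I would exhibit at least one $\mc{D}$-connection. The morphism $\delta$ has constant rank, so $\imm(\delta)\subset\Tor(M)$ is a subbundle and the induced surjection $T^*M\otimes\fr{g}_{\mc{P}}\twoheadrightarrow\imm(\delta)$ admits a smooth bundle splitting $s$. Starting from any $G$-connection $\nabla_0$ (which exists by a partition-of-unity argument), decompose its torsion as $T^{\nabla_0}=\tau_{\imm}+\tau_{\mc{D}}$ according to the splitting $\Tor(M)=\imm(\delta)\oplus\mathscr{D}(\fr{g})$ induced by (\ref{Dcompl}), and set $\nabla_1:=\nabla_0-s(\tau_{\imm})$. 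By the transformation rule, $T^{\nabla_1}=T^{\nabla_0}-\delta\big(s(\tau_{\imm})\big)=T^{\nabla_0}-\tau_{\imm}=\tau_{\mc{D}}\in\Gamma(\mathscr{D}(\fr{g}))$, so $\nabla_1$ is minimal with respect to $\mc{D}(\fr{g})$.

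Finally I would pin down the whole set of $\mc{D}$-connections. If $\nabla$ is another such connection, then $A:=\nabla-\nabla_1$ is a section of $T^*M\otimes\fr{g}_{\mc{P}}$ with $\delta(A)=T^{\nabla}-T^{\nabla_1}$ a section of both $\imm(\delta)$ and $\mathscr{D}(\fr{g})$, hence $\delta(A)=0$ since $\imm(\delta)\cap\mathscr{D}(\fr{g})=0$; thus $A$ is a section of $\ke\delta$, that is of $\mc{P}\times_G\fr{g}^{(1)}$. Conversely, adding to $\nabla_1$ any section of $\mc{P}\times_G\fr{g}^{(1)}$ leaves the torsion unchanged and so yields another $\mc{D}$-connection. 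Therefore the space of $\mc{D}$-connections is exactly $\nabla_1+\Gamma(\mc{P}\times_G\fr{g}^{(1)})$, an affine space modeled on $\Gamma(\mc{P}\times_G\fr{g}^{(1)})$, and if $\fr{g}^{(1)}=\{0\}$ it collapses to the single point $\nabla_1$, giving the asserted uniqueness. I do not anticipate any serious obstacle: the delicate points are merely the torsion transformation rule, which is a routine direct computation, and the fact that $\imm(\delta)$ is a subbundle so that the splitting $s$ exists --- and this is exactly the content that makes a normalization condition meaningful, as already noted in the text.
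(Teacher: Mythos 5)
Your proposal is correct, and since the paper states this corollary without proof (it is the standard fact from the general theory of $G$-structures invoked with ``one can show that''), your argument is exactly the intended one: the torsion transformation rule $T^{\nabla'}=T^{\nabla}+\delta(A)$, existence of a $\mc{D}$-connection by splitting off the $\imm(\delta)$-component of the torsion using the decomposition $\Tor(M)=\imm(\delta)\oplus\mathscr{D}(\fr{g})$, and the identification of the difference of two $\mc{D}$-connections as a section of $\ke\delta\cong\mc{P}\times_{G}\fr{g}^{(1)}$, which yields the affine structure and the uniqueness when $\fr{g}^{(1)}=\{0\}$.
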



  \subsection{Adapted connections to $\SO^*(2n)$- and $\SO^{*}(2n)\Sp(1)$-structures}\label{adapatcon}
    For the reminder of this section we shall discuss adapted connections to $\SO^*(2n)$- and $\SO^*(2n)\Sp(1)$-structures.
It is well-known  which linear connections preserve an almost symplectic, an almost hypercomplex, or an almost quaternionic structure, separately, see \cite{Vais, AM} and the references therein, and see also below.  Given an \textsf{almost hs-H manifold} $(M,  H,  \omega)$ we want to specify  a linear connection $\nabla^{H,\omega}$  on $M$ which preserves the pair $(H, \omega)$,  that is
\[
\nabla^{H,\omega}\omega=0, \quad\quad \nabla^{H,\omega} H=0\,.
\] 
It is easy to prove these conditions  are equivalent to the following relations 
\begin{eqnarray*}
\nabla^{H, \omega}_{X}\omega(Y, Z)&=&\omega(\nabla^{H, \omega}_{X}Y, Z)+\omega(Y, \nabla^{H, \omega}_{X}Z)\,,\\
\nabla^{H, \omega}_{X}J_{a}(Y)&=&J_{a}(\nabla^{H, \omega}_{X}Y)\,,\quad a=1, 2, 3\,,
\end{eqnarray*}
 for any  $X, Y, Z\in\Gamma(TM)$, where $H=\{J_a : a=1, 2, 3\}$.
Such a connection is  a $\SO^*(2n)$-connection in the above terms, and  one may refer to  $\nabla^{H,\omega}$  by the term \textsf{almost hypercomplex skew-Hermitian connection}.  When $\nabla^{H, \omega}$ is torsion-free, then it is said to be a  \textsf{hypercomplex skew-Hermitian connection}. 

Similarly,  given an \textsf{almost qs-H manifold} $(M, Q, \omega)$, we want to specify a linear connection $\nabla^{Q,\omega}$  on $M$ which preserves the pair $(Q, \omega)$,  that is
\[
	\nabla^{Q,\omega}\omega=0, \quad\text{and}\quad \nabla^{Q,\omega}_X\sigma\in \Gamma(Q),
	\]
for any smooth vector field $X\in\Gamma(TM)$ and smooth section $\sigma\in\Gamma(Q)$. 
Here the second condition   is equivalent to say that (see for example \cite{AM})
\[
\nabla^{Q, \omega}_{X}J_{a}=\varphi_{c}(X)J_{b}-\varphi_{b}(X)J_{c}\,,\quad \forall \ X\in\Gamma(TM), \ a=1, 2, 3\,,
\]
for any cyclic permutation $(a,b,c)$ of $(1,2,3)$, where  $\{J_a :  a=1, 2, 3\}$ is a local admissible basis of $Q$ and  $\varphi_{a}$ are local 1-forms for any $a=1, 2, 3$.
Such a connection is a $\SO^*(2n)\Sp(1)$-connection and one may call $\nabla^{Q,\omega}$ an \textsf{almost quaternionic skew-Hermitian connection}.  When $\nabla^{Q, \omega}$ is torsion-free, then it is said to be a  \textsf{quaternionic skew-Hermitian connection}.

For the above goal, it is convenient to start with a {\it unique} connection that preserves part of the structure and modify it, to preserve all of it. 
The other connections differ from this connection by a endomorphism valued 1-form with values in $\so^*(2n)$, and $\so^*(2n)\oplus\sp(1)$, respectively.    We should mention that on any     \textsf{almost hs-H manifold} $(M, H, \omega)$ or any \textsf{almost qs-H manifold} $(M^{4n}, Q, \omega)$   we may define an  
  orientation induced by the scalar 2-form $\omega$. This is given by the globally defined volume form
\[
\vol:=\omega^{2n}:=\underbrace{\omega\wedge\cdots\wedge\omega}_{2n-\text{times}}\,.
\]
Hence,  we get the following as an immediate corollary.
\begin{corol}
Any \textsf{almost hs-H/qs-H structure} is a \textsf{unimodular structure}  in terms of \cite{AM}.
\end{corol}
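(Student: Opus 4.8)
The plan is to observe that the claim follows at once from the globally defined volume form $\vol=\omega^{2n}$ exhibited just above, once one recalls what ``unimodular'' means in \cite{AM}. There, an almost hypercomplex (resp.\ almost quaternionic) structure on a $4n$-manifold is \emph{unimodular} precisely when its structure bundle reduces from $\Gl(n,\Hn)$ to $\SL(n,\Hn)$ (resp.\ from $\Gl(n,\Hn)\Sp(1)$ to $\SL(n,\Hn)\Sp(1)$); equivalently, when $M$ admits a volume form adapted to the underlying almost hypercomplex/quaternionic structure. So the proof splits into two parts: (i) the $\SO^*(2n)$-bundle $\mc{P}$ (resp.\ the $\SO^*(2n)\Sp(1)$-bundle $\mc{Q}$) underlying an \textsf{almost hs-H} (resp.\ \textsf{qs-H}) manifold already realizes such a reduction, and (ii) this is witnessed by an explicit compatible volume form, namely $\vol$.

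For (i) I would verify the group inclusion $\SO^*(2n)\subset\SL(n,\Hn)$ (and $\SO^*(2n)\Sp(1)\subset\SL(n,\Hn)\Sp(1)$). Since $\so^*(2n)$ is semisimple — simple for $n\geq 3$ and semisimple for $n=2$, as recalled in the excerpt — it is perfect, so every endomorphism of $[\E\Hh]\cong\R^{4n}$ in the image of $\so^*(2n)$ is a sum of commutators and hence has vanishing real trace; as $\SO^*(2n)$ is connected, exponentiating gives $\det_\R\equiv 1$ on $\SO^*(2n)$, so $\SO^*(2n)\subset\SL(n,\Hn)$ (it already lies in $\Gl(n,\Hn)$). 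The same argument applied to $\so^*(2n)\oplus\sp(1)$ (also semisimple, with $\sp(1)\cong\fr{su}(2)$) yields $\SO^*(2n)\Sp(1)\subset\SL(n,\Hn)\Sp(1)$. Thus $\mc{P}\subset\mc{F}(M)$ (resp.\ $\mc{Q}\subset\mc{F}(M)$), viewed inside the frame bundle via the inclusion $\SO^*(2n)\hookrightarrow\Gl(n,\Hn)$ (resp.\ $\SO^*(2n)\Sp(1)\hookrightarrow\Gl(n,\Hn)\Sp(1)$), is a reduction to the special subgroup.

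For (ii) I would simply note that non-degeneracy of the scalar $2$-form $\omega$ makes $\vol=\omega^{2n}$ nowhere vanishing, hence a genuine, globally defined volume form on $M$; and in any skew-Hermitian frame $\omega$ corresponds to the standard scalar $2$-form $\omega_0$, so $\vol$ corresponds to $\omega_0^{2n}$, showing that $\vol$ is adapted to $\mc{P}$ (resp.\ $\mc{Q}$) and a fortiori compatible with the underlying almost hypercomplex/quaternionic structure. Combining (i) and (ii) gives precisely the defining property of a unimodular structure in the sense of \cite{AM}.

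I do not anticipate any real difficulty: the entire content is the non-degeneracy of $\omega$ together with the unimodularity of $\SO^*(2n)$ and $\SO^*(2n)\Sp(1)$ as linear groups, both available from the excerpt. The only mild subtlety is the step $\det_\C=1\Rightarrow\det_\R=1$ (coming from $\SO^*(2n)\subset\SO(2n,\C)$ and $\Sp(1)\subset\SU(2)$), which one may either verify directly via $\det_\R(T)=\det_\C(T)$ for a $\C$-linear map restricted to a real form, or bypass entirely using the perfectness-of-the-Lie-algebra argument above.
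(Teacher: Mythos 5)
Your proposal is correct and matches the paper's own route: the corollary is stated there as an immediate consequence of the globally defined volume form $\vol=\omega^{2n}$, which is nowhere vanishing because the scalar 2-form $\omega$ is non-degenerate. Your additional verification that $\SO^*(2n)\subset\Sl(n,\Hn)$ and $\SO^*(2n)\Sp(1)\subset\Sl(n,\Hn)\Sp(1)$ (via tracelessness of the semisimple Lie algebras, or via $\det_{\C}=1$) is correct but only makes explicit what the paper leaves implicit in citing \cite{AM}.
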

We shall make use of this corollary especially for $\SO^*(2n)\Sp(1)$-structures, because among all the \textsf{Oproiu connections} $\nabla^Q$ there is a {\it unique} \textsf{unimodular Oproiu connection} $\nabla^{Q, \vol}$ associated to the pair $(Q, \vol)$, see  below for more details.

 A further goal  is to determine explicitly   normalization conditions which establish $\nabla^{H,\omega},\nabla^{Q,\omega}$ as minimal connections. 
For this task it is useful to  proceed with a detailed description of the torsion corresponding to such structures, and in particular of their intrinsic torsion, which we present in Section \ref{intrisictorsion}, while   minimal connections are presented in Section \ref{minconnections}. 

  \smallskip
 We  begin with details about the first prolongation of $\SO^*(2n)$ and $\SO^*(2n)\Sp(1)$. It is known  by results of Cartan and others (see for example  \cite[p.~113]{Bryant} and \cite{AM}),  that  for $\fr{g}=\fr{so}^{\ast}(2n)$, $\fr{g}=\fr{so}^{\ast}(2n)\oplus\sp(1)$,   $\fr{g}=\fr{gl}(n,\Hn)$ and $\fr{g}=\fr{sl}(n,\Hn)\oplus\sp(1)$ the first prolongation  $\fr{g}^{(1)}$ is trivial, $\fr{g}^{(1)}=\{0\}$.  
 However, next we demonstrate this result for $\fr{so}^*(2n)$.. In particular, we will provide a proof of the statement $\ker(\delta)=\fr{so}^{\ast}(2n)^{(1)}=\{0\}$,  based explicitly on the geometric properties of the tensor fields defining a $\SO^*(2n)$-structure, proving in this way also compatibility of our new definitions with previous considerations by other authors.

	\begin{lem}\label{zeroprog1}
	Let  $(H, \omega)$ be an \textsf{almost hs-H  structure}, or let $(Q, \omega)$ be an \textsf{almost qs-H structure} on a $4n$-dimensional   manifold $M$. Let $\fr{g}$ be one of the Lie algebras $\fr{so}^*(2n)$ or $\fr{so}^{*}(2n)\oplus\fr{sp}(1)$. 
Then, the Spencer  operator of alternation
 \[
 \delta : \Hom([\E\Hh], \fr{g})= [\E\Hh]^{*}\otimes \fr{g}\to \Hom(\Lambda^{2}[\E\Hh], [\E\Hh])= \Lambda^{2}[\E\Hh]^{*}\otimes [\E\Hh]
\]
is injective, 
 $\fr{g}^{(1)}=\ker(\delta)=\{0\}$.
 \end{lem}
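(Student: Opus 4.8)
The plan is to argue directly on the level of the defining tensors, exploiting that a $\mathfrak{g}$-connection preserves $\omega$ and $H$ (respectively $Q$), and then use the non-degeneracy of $\omega$ together with the symplectic/Hermitian symmetry properties. Let $\alpha\in\mathfrak{g}^{(1)}=\ker(\delta)$, so that $\alpha\in[\E\Hh]^*\otimes\mathfrak{g}$ satisfies $\alpha(x)y=\alpha(y)x$ for all $x,y\in[\E\Hh]$. Write $\alpha(x)\in\mathfrak{g}\subset\mathfrak{gl}([\E\Hh])$; since $\mathfrak{g}\subseteq\mathfrak{so}^*(2n)\oplus\mathfrak{sp}(1)$, and since both $\mathfrak{so}^*(2n)$ and $\mathfrak{sp}(1)$ annihilate the standard scalar $2$-form $\omega_0$ (this is Proposition \ref{usefrel} together with the equivalences in Proposition \ref{usefrel1}, conditions $\sf(3)$ and $\sf(4)$), every $\alpha(x)$ lies in $\mathfrak{sp}(\omega_0)$, i.e.\ is symmetric with respect to the symplectic transpose introduced in Lemma \ref{symptrans}. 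Concretely, define the trilinear form $\Psi(x,y,z):=\omega_0(\alpha(x)y,z)$. The condition $\alpha(x)\in\mathfrak{sp}(\omega_0)$ gives $\Psi(x,y,z)=\Psi(x,z,y)$ (symmetry in the last two slots), while the prolongation condition $\alpha(x)y=\alpha(y)x$ gives $\Psi(x,y,z)=\Psi(y,x,z)$ (symmetry in the first two slots). A standard permutation argument then forces $\Psi$ to be totally symmetric in $x,y,z$.

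The next step is to use the extra structure — namely that $\alpha(x)$ also commutes with the hypercomplex structure $H_0$ (or normalizes $Q_0$). For the $\mathfrak{so}^*(2n)$ case, $\alpha(x)$ commutes with each $\mathcal{J}_a$; combined with $g_a=\omega_0(\cdot,\mathcal{J}_a\cdot)$ one deduces that the bilinear forms $(y,z)\mapsto\Psi(x,y,z)$ are simultaneously $\mathcal J_a$-Hermitian for $a=1,2,3$, i.e.\ $\Psi(x,\cdot,\cdot)$ is $H_0$-Hermitian. But a totally symmetric trilinear form that is $H_0$-Hermitian in its last two arguments and symmetric under interchanging the first with a last argument must also be $H_0$-Hermitian in every pair. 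Then I would combine the total symmetry with Hermitian-ness in all pairs: for a complex structure $I=\mathcal J_1$ one gets $\Psi(Ix,y,z)=\Psi(x,Iy,z)$ from Hermitian-ness in slots $(1,2)$ after symmetrizing, yet also $\Psi(Ix,y,z)=-\Psi(x,Iy,z)$ would follow from the fact that $I$ itself lies in $\mathfrak{sp}(\omega_0)\cap\text{(anti-symmetric part)}$ — more precisely, using $\omega_0(Iy,z)=-\omega_0(y,Iz)$ together with $\omega_0(\alpha(Ix)y,z)=\omega_0(\alpha(y)Ix,z)$ and $\alpha(y)I=I\alpha(y)$. These two relations together give $\Psi(Ix,y,z)=0$ for all $x,y,z$, and since $I$ is invertible, $\Psi\equiv 0$; by non-degeneracy of $\omega_0$, $\alpha=0$.

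For the $\mathfrak{so}^*(2n)\oplus\mathfrak{sp}(1)$ case one argues similarly but must track the $\mathfrak{sp}(1)$-part: write $\alpha(x)=\beta(x)+\gamma(x)$ with $\beta(x)\in\mathfrak{so}^*(2n)$ and $\gamma(x)\in\mathfrak{sp}(1)$. Since $\mathfrak{sp}(1)$ still annihilates $\omega_0$, the same trilinear form $\Psi$ is totally symmetric, and one repeats the vanishing argument; the point is that $\mathfrak{sp}(1)\subset\mathfrak{sp}(\omega_0)$ does not interfere, because the decisive identity $\alpha(y)I=I\alpha(y)$ is replaced by the weaker normalizing relation $\alpha(y)I = \lambda(y)J-\mu(y)K$ for scalars, but pairing against $\omega_0(\cdot,I\cdot)$ and using total symmetry of $\Psi$ still yields a linear system forcing all coefficients to vanish. \textbf{The main obstacle} is precisely this last bookkeeping in the $\mathfrak{sp}(1)$-case: keeping careful track of how the off-diagonal terms $\lambda(y)J-\mu(y)K$ feed back through the total symmetry of $\Psi$ to close the argument. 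A cleaner route that I would try first, and which likely avoids the obstacle entirely, is to cite the known triviality $\mathfrak{gl}(n,\Hn)^{(1)}=\{0\}$ (stated in the excerpt, due to Cartan, cf.\ \cite[p.~113]{Bryant}, \cite{AM}) and note that $\mathfrak{so}^*(2n)^{(1)}\subseteq\mathfrak{gl}(n,\Hn)^{(1)}=\{0\}$ since $\mathfrak{so}^*(2n)\subset\mathfrak{gl}(n,\Hn)$; but the authors explicitly want a self-contained tensorial proof, so I would present the direct argument above as the main line and relegate the inclusion remark to a footnote.
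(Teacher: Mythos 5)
Your strategy for $\fr{g}=\fr{so}^*(2n)$ is sound, and it is a genuinely different mechanism from the paper's: the paper works directly with the endomorphisms $\alpha_X$, observes that the symplectic condition $\omega(\alpha_XY,Z)=\omega(Y,\alpha_XZ)$ is equivalent (on $\ker\delta$) to the commutation condition, and then uses two anticommuting complex structures $I,J\in H$ to force $K(\alpha_XY)=0$; you instead lower with $\omega_0$ to form the cubic $\Psi(x,y,z)=\omega_0(\alpha(x)y,z)$, prove total symmetry, and play the parity of moving $I$ across slots against itself. Your route only uses one complex structure together with $\omega_0$, so in effect it establishes the stronger vanishing for $\fr{u}(n,n)=\fr{sp}(\omega_0)\cap\fr{gl}([\E\Hh],\mc{J}_1)$ and hence the claim via $\fr{so}^*(2n)\subset\fr{u}(n,n)$ — a perfectly legitimate simplification. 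One correction on execution: your sign attributions are swapped. Hermitian-ness of $\Psi$ in the slots $(1,2)$ gives $\Psi(Ix,y,z)=-\Psi(x,Iy,z)$ (substitute $y\mapsto -Iy$ in $\Psi(Ix,Iy,z)=\Psi(x,y,z)$), not $+$; while the computation via $\alpha(Ix)y=\alpha(y)(Ix)=I\alpha(x)y$ and $\omega_0(Iu,v)=-\omega_0(u,Iv)$ yields $\Psi(Ix,y,z)=-\Psi(x,y,Iz)=+\Psi(x,Iy,z)$, not $-$. Both relations you need are true, so the contradiction $\Psi\equiv 0$ and $\alpha=0$ does go through, but as written each of your two sub-derivations produces the opposite sign of what you assert and must be cleaned up.

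The genuine gap is the case $\fr{g}=\fr{so}^*(2n)\oplus\fr{sp}(1)$, which the lemma also asserts. Your sketch there does not close: the relevant relation is the commutator $[\alpha(y),I]=\lambda(y)J-\mu(y)K$ (not $\alpha(y)I=\lambda(y)J-\mu(y)K$), and the claimed "linear system forcing all coefficients to vanish" is exactly the nontrivial bookkeeping — the $\fr{sp}(1)$-components of $\alpha$ contribute terms of the form $\sum_a c_a(x)\,g_a(y,z)$ to $\Psi$, and total symmetry alone does not obviously kill them; this is the quaternionic analogue of the fact that proving $(\fr{sl}(n,\Hn)\oplus\fr{sp}(1))^{(1)}=\{0\}$ is harder than $\fr{gl}(n,\Hn)^{(1)}=\{0\}$. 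Moreover your fallback inclusion $\fr{so}^*(2n)^{(1)}\subset\fr{gl}(n,\Hn)^{(1)}=\{0\}$ does not cover this case, since $\fr{so}^*(2n)\oplus\fr{sp}(1)\not\subset\fr{gl}(n,\Hn)$; the citation-based repair is $\fr{so}^*(2n)\oplus\fr{sp}(1)\subset\fr{sl}(n,\Hn)\oplus\fr{sp}(1)$ together with the Cartan-type result $(\fr{sl}(n,\Hn)\oplus\fr{sp}(1))^{(1)}=\{0\}$ that the paper quotes. In fairness, the paper's own proof also treats only $\fr{so}^*(2n)$ explicitly and refers to the literature for the remaining algebra, so your proposal is at parity there — but if you intend a self-contained proof of the lemma as stated, you must either supply that reduction or genuinely complete the $\fr{sp}(1)$-bookkeeping rather than assert it.
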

\begin{proof}  As it is mentioned above, we shall prove the statement for $\fr{g}=\fr{so}^{*}(2n)$ only.  Of course, the  vanishing of $\fr{g}^{(1)}$ is a direct consequence of the  embedding  of  $\fr{g}=\fr{so}^{\ast}(2n)$  in $\fr{so}(p, q)$ in  combination with the relation $\fr{so}(p, q)^{(1)}=\{0\}$, see    \cite[p.~113]{Bryant}.  To provide an alternative  proof,
let $\alpha : [\E\Hh]\to\fr{so}^{*}(2n)$ belonging to  the kernel  of $\delta$, that is $\alpha_{X}Y=\alpha_{Y}X$.  Since $\alpha_{X}\in\fr{so}^{*}(2n)$, we have 
\begin{eqnarray}
\alpha_{J_{a}X}Y&=&J_{a}(\alpha_{X}Y), \quad \text{for any} \ \  J_{a}\in H, \ (a=1, 2, 3)\,, \label{so2nalg}\\
\omega(\alpha_{X}Y, Z)&=&\omega(Y, \alpha_{X}Z)\,, \label{so2nalg2}
\end{eqnarray}
for any three vectors $X, Y, Z\in[\E\Hh]$. Since $\alpha_{X}Y=\alpha_{Y}X$  for any  $X, Y$, by    (\ref{so2nalg}) we obtain
\[
J_{a}(\alpha_{Y}X)=J_{a}(\alpha_{X}Y)=\alpha_{J_{a}X}Y=\alpha_{Y}(J_{a}X), \quad  \forall \  a=1, 2, 3,
\]
or in other words $(\ref{so2nalg})$  is equivalent to $\alpha_{X}(J_{a}Y)=J_{a}(\alpha_{X}Y)$, for any  $a=1, 2, 3$, and $X, Y\in [\E\Hh]$.
Moreover, for any triple $I, J, K=IJ$, where $I, J\in H$  are two anticommuting  almost complex structures, we  infer that  $I(\alpha_{X}JY)=\alpha_{IX}(JY)=\alpha_{JY}(IX)$. To see this, in the relation $\alpha_{IX}Y=I\alpha_{X}Y$, replace $X$ by $IX$ and $Y$ by $JY$, and next multiply both sides of the  relation  by $I$.   By  combining these relations, it is now easy  to prove that $K(\alpha_{X}Y)=0$, which obviously implies the vanishing of $\alpha$.   On the other hand, for some $\alpha\in\ker(\delta)$,  by  Proposition \ref{usefrel1}  and the non-degeneracy of $\omega$, we also see  that  (\ref{so2nalg2}) is   equivalent to the condition $\alpha_{Z}(J_{a}Y)=\alpha_{Y}(J_{a}Z)$,  or equivalent to $\alpha_{J_{a}Y}Z=\alpha_{Y}(J_{a}Z)$ for any $a=1, 2, 3$, and hence it coincides with the first condition (\ref{so2nalg}).    
\end{proof}
Lemma \ref{zeroprog1}  has several direct but important consequences, which we summarize in a corollary.
\begin{corol}\label{uniqueminimal} Let $M$ be a $4n$-dimensional connected smooth manifold. Then,\\
 \textsf{1)} An adapted connection $\nabla$ to a   $\SO^{*}(2n)$-structure,  or to a $\SO^{*}(2n)\Sp(1)$-structure  on $M$, or respectively a $\SO^{*}(2n)$- or  a $\SO^{*}(2n)\Sp(1)$-connection, is entirely determined  by its torsion $T^{\nabla}$.\\
 \textsf{2)}  A torsion-free $\SO^{*}(2n)$-connection, or $\SO^{*}(2n)\Sp(1)$-connection, if any, is unique.\\
 \textsf{3)} Let $\fr{g}$ be one of the Lie algebras $\fr{so}^{*}(2n)$ or  $\fr{so}^{*}(2n)\oplus\fr{sp}(1)$, and let $\mc{D}=\mc{D}(\fr{g})$ be a normalization condition related to such a $G$-structure on $M$, where $G$ denotes the connected Lie group corresponding to $\fr{g}$. Then, a $\mc{D}$-connection, or in other words a minimal connection of such a $G$-structure  on $M$  with respect to the normalization condition $\mc{D}(\fr{g})$, is unique.
\end{corol}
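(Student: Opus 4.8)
The plan is to derive all three assertions from Lemma \ref{zeroprog1}, i.e. from the vanishing of the first prolongation $\fr{g}^{(1)}=\ker(\delta)=\{0\}$ for $\fr{g}=\fr{so}^*(2n)$ and $\fr{g}=\fr{so}^*(2n)\oplus\fr{sp}(1)$, together with the affine description of the space of adapted connections recalled above. First I would treat \textsf{1)}. Let $\nabla$ and $\nabla'$ be two adapted connections to the given $G$-structure $\mc{P}\subset\mc{F}$, where $G$ is the connected Lie group with $\Lie(G)=\fr{g}$. They differ by a tensor field $A\in\Gamma(T^*M\otimes\fr{g}_{\mc{P}})$, that is $\nabla'_XY=\nabla_XY+A_XY$ with $A_X\in\fr{g}_{\mc{P}}$ for every $X\in\Gamma(TM)$. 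A direct computation of the torsion gives
\[
T^{\nabla'}(X,Y)-T^{\nabla}(X,Y)=A_XY-A_YX=\delta(A)(X,Y),
\]
where $\delta : T^*M\otimes\fr{g}_{\mc{P}}\to\Tor(M)$ is the bundle map induced by the Spencer operator of alternation. Hence, if $T^{\nabla'}=T^{\nabla}$, then $\delta(A)=0$, so $A$ is a section of $\mc{P}\times_G\ker(\delta)=\mc{P}\times_G\fr{g}^{(1)}$, which is the zero bundle by Lemma \ref{zeroprog1}; therefore $A=0$ and $\nabla'=\nabla$. Assertion \textsf{2)} is then immediate: two torsion-free adapted connections have the same torsion (namely zero), hence coincide by \textsf{1)}.

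For assertion \textsf{3)}, let $\nabla$ and $\nabla'$ be two $\mc{D}$-connections for a fixed normalization condition $\mc{D}(\fr{g})$, so that $T^{\nabla},T^{\nabla'}\in\Gamma(\mathscr{D}(\fr{g}))$. The intrinsic torsion $\tau=p(T^{\nabla})=p(T^{\nabla'})\in\Gamma(\mathscr{H}(\fr{g}))$ is independent of the chosen adapted connection, and since $\Tor(M)=\imm(\delta)\oplus\mathscr{D}(\fr{g})$ with $\mathscr{H}(\fr{g})\cong\Tor(M)/\imm(\delta)$, the restriction $p|_{\mathscr{D}(\fr{g})}$ is a bundle isomorphism. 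Consequently $T^{\nabla}=T^{\nabla'}$, and \textsf{1)} gives $\nabla=\nabla'$. Alternatively, one may simply invoke the corollary preceding Lemma \ref{zeroprog1}, which identifies the $\mc{D}$-connections with an affine space modeled on $\Gamma(\mc{P}\times_G\fr{g}^{(1)})$; when $\fr{g}^{(1)}=\{0\}$ this affine space is a single point.

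The argument is entirely formal once Lemma \ref{zeroprog1} is available, so there is no genuine obstacle here; the only point requiring a little care is the standard bookkeeping that the difference of two adapted connections is a section of the adjoint bundle $\fr{g}_{\mc{P}}$ and that its image under the bundle map $\delta$ computes the difference of the torsions. From there the triviality of $\fr{g}^{(1)}$ does all the work, and one should note in addition that this corollary is precisely what makes it meaningful later to compare two $\SO^*(2n)$- or $\SO^*(2n)\Sp(1)$-structures through the torsion of their (unique) minimal connections.
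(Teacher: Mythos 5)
Your proof is correct and follows exactly the route the paper intends: the corollary is stated as a direct consequence of Lemma \ref{zeroprog1}, using the standard facts that two adapted connections differ by a section of $T^*M\otimes\fr{g}_{\mc{P}}$, that $\delta$ computes the difference of their torsions, and that the space of $\mc{D}$-connections is modeled on $\fr{g}^{(1)}=\{0\}$. Nothing to add.
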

We should emphasize on the fact   that similarly to the case of  almost hypercomplex structures (\cite{Gau}), such normalization conditions $\mc{D}(\fr{g})$ are not unique (due to multiplicities of the involved representations). This provides a certain difficulty to the description of the (local) geometry associated to  $\SO^{*}(2n)$- and $\SO^{*}(2n)\Sp(1)$-structures on $4n$-dimensional smooth manifolds.

Let $(M,  H=\{J_{a}\}, \omega)$ be an \textsf{almost hs-H manifold}. By Obata  \cite{Obata} it is known that there is a {\it unique minimal} affine connection  $\nabla^{H}$ with respect to a certain normalization condition, preserving the almost hypercomplex structure  $H=\{J_a : a=1, 2, 3\}$, that is $\nabla^{H}J_{a}=0$ for any $a=1, 2, 3$.   We will refer to this connection as the \textsf{Obata connection}. 

\begin{prop}\label{charcan}
Let $\nabla_{X}Y=\nabla^{H}_{X}Y+A_{X}Y=\nabla^{H}_{X}Y+A(X, Y)$ be a linear connection on an \textsf{almost hs-H manifold} $(M, H, \omega)$, where $\nabla^{H}$ is the Obata connection and $A$ is a smooth tensor field on $M$ of type $(1, 2)$.  Then, $\nabla$ satisfies the conditions
\[
\nabla\omega=0\,, \quad\text{and}\quad \ \nabla J_{a}=0\,, \ \ \   \forall \ a=1, 2, 3\,,
\] 
if and only if  the following two relations hold for any $X, Y, Z\in\Gamma(TM)$:
\begin{eqnarray}
(\nabla^{H}_{X}\omega)(Y, Z)&=&\omega\cc A(X, Y), Z\rr+\omega\cc Y, A(X, Z)\rr\,, \label{con1} \\
 A(X, J_{a}Y)&=&J_{a}\cc A(X, Y)\rr\,, \ \forall \ a=1, 2, 3\,. \label{con2}
 \end{eqnarray}
In particular, if $\nabla^{H,\omega}$ is an almost hypercomplex skew-Hermitian connection and $A$ is a tensor field on $M$ of type $(1, 2)$, then $\nabla^{H,\omega,A}_XY=\nabla^{H,\omega}_{X}Y+A(X, Y)$ is an almost hypercomplex skew-Hermitian connection, if and only if $A$ has values in $[\E\Hh]^*\otimes \so^*(2n)$.
\end{prop}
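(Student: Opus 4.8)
The plan is to verify the two displayed conditions by expanding $\nabla = \nabla^H + A$ and using that $\nabla^H$ already satisfies $\nabla^H J_a = 0$. First I would compute $\nabla J_a$ directly: for any $X, Y$, we have $(\nabla_X J_a)(Y) = \nabla_X(J_a Y) - J_a(\nabla_X Y) = \nabla^H_X(J_a Y) + A(X, J_a Y) - J_a(\nabla^H_X Y) - J_a(A(X,Y)) = (\nabla^H_X J_a)(Y) + A(X, J_a Y) - J_a(A(X,Y))$. Since $\nabla^H J_a = 0$, this reduces to $A(X, J_a Y) - J_a(A(X,Y))$, so $\nabla J_a = 0$ for all $a=1,2,3$ is equivalent to condition (\ref{con2}). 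Next I would do the analogous expansion for $\omega$: $(\nabla_X \omega)(Y,Z) = X(\omega(Y,Z)) - \omega(\nabla_X Y, Z) - \omega(Y, \nabla_X Z)$, and substituting $\nabla = \nabla^H + A$ gives $(\nabla_X \omega)(Y,Z) = (\nabla^H_X \omega)(Y,Z) - \omega(A(X,Y), Z) - \omega(Y, A(X,Z))$. Hence $\nabla \omega = 0$ is equivalent to (\ref{con1}). This establishes the first ``if and only if''.

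For the final assertion, I would start from an almost hypercomplex skew-Hermitian connection $\nabla^{H,\omega}$ (which exists and preserves both $H$ and $\omega$) and set $\nabla^{H,\omega,A} = \nabla^{H,\omega} + A$. Running exactly the same two expansions as above but with base connection $\nabla^{H,\omega}$ instead of $\nabla^H$: since $\nabla^{H,\omega} J_a = 0$, we get $(\nabla^{H,\omega,A}_X J_a)(Y) = A(X, J_a Y) - J_a(A(X,Y))$, and since $\nabla^{H,\omega}\omega = 0$, we get $(\nabla^{H,\omega,A}_X \omega)(Y,Z) = -\omega(A(X,Y), Z) - \omega(Y, A(X,Z))$. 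Therefore $\nabla^{H,\omega,A}$ preserves $H$ and $\omega$ if and only if, for each fixed $X$, the endomorphism $A_X = A(X, \cdot) \in \Ed(TM)$ commutes with each $J_a$ and satisfies $\omega(A_X Y, Z) + \omega(Y, A_X Z) = 0$. The first set of conditions says $A_X$ lies in the commutant $C_{\gl(T_xM)}(H) = \gl(n,\Hn)$ (pointwise), and the second says $A_X$ is $\omega$-skew-symmetric. The intersection of these, by the very definition of $\SO^*(2n)$ as the stabilizer of $(H_0,\omega_0)$ inside $\Gl([\E\Hh])$ (equivalently, by the Lie-algebra level statement $\so^*(2n) = \gl(n,\Hn) \cap \sp(\omega_0)$, cf.\ the discussion preceding Proposition \ref{usefrelskewherm} and the decomposition of $\Ed([\E\Hh])$ in the proof of Proposition \ref{usefrelfund}), is precisely $\so^*(2n) \subset \Ed(T_xM)$. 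Interpreting this over the whole manifold via the reduction $\mc{P}$, the tensor $A$ takes values in $[\E\Hh]^* \otimes \so^*(2n)$ if and only if $\nabla^{H,\omega,A}$ is again an almost hypercomplex skew-Hermitian connection.

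The computations here are entirely routine; the only genuine point requiring care is the identification of $\gl(n,\Hn)\cap\sp(\omega_0)$ with $\so^*(2n)$, i.e., making sure that an endomorphism commuting with all $J_a$ and killing $\omega$ lands exactly in $\so^*(2n)$ and not in something larger. This is where I expect the main (minor) obstacle to lie, but it is already settled by the structure results established earlier: $\Aut(H_0,\omega_0) = \SO^*(2n)$ from Proposition \ref{usefrel}, together with the $\SO^*(2n)\Sp(1)$-module decomposition of $\Ed([\E\Hh])$ in Proposition \ref{usefrelfund} which exhibits $\so^*(2n) \cong [\Lambda^2\E]^*$ as exactly the $\gl(n,\Hn)$-part annihilated by contraction with $\omega_0$. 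Thus the argument is essentially a bookkeeping exercise combining the two tensorial identities (\ref{con1})--(\ref{con2}) with the known infinitesimal description of $\so^*(2n)$, and no new ideas are needed beyond what has already been developed.
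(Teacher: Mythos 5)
Your proof is correct and is exactly the routine verification the paper intends: the paper itself states only ``the proof is easy and left to the reader,'' and your expansion of $\nabla J_a$ and $\nabla\omega$ for $\nabla=\nabla^{H}+A$, followed by the identification of the pointwise conditions ($A_X$ commuting with each $J_a$ and being $\omega$-skew) with $A_X\in\gl(n,\Hn)\cap\sp(4n,\R)=\so^*(2n)$ via $\Aut(H_0,\omega_0)=\SO^*(2n)$, fills in precisely that omitted computation. No gaps; the argument matches the intended approach.
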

\begin{proof} The proof is easy and left to the reader.
\end{proof}

Let us now  find some particular $A$ depending only on the Obata connection $\nabla^{H}$ and the almost symplectic form $\omega$, to define the connection $\nabla^{H,\omega}$.

 \begin{theorem}\label{starconnection}
Let   $(M, H=\{J_{a} : a=1, 2, 3\}, \omega)$ be  an \textsf{almost hs-H manifold} endowed with the Obata connection  $\nabla^{H}$. Then, the connection $\nabla^{H,\omega}:=\nabla^{H}+A$, where the tensor field $A$   of type $(1, 2)$ is defined by
\[
\omega\cc A(X, Y), Z\rr=\frac{1}{2}(\nabla^{H}_{X}\omega)(Y, Z)
\]
for any $X, Y, Z\in\Gamma(TM)$,  is  an almost hypercomplex skew-Hermitian connection. In particular, the tensor $\omega\cc A(\cdot\,, \cdot), \cdot\rr$ of type $(0,3)$ takes values in $[\E\Hh]^*\otimes [S^2\E]^*$. Moreover, the torsion of $\nabla^{H,\omega}$ takes the form $T^H+\delta(A)$, where $T^H$ is the torsion of $\nabla^{H}$.
 \end{theorem}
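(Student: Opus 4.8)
The plan is to verify directly the three assertions of the theorem: that $\nabla^{H,\omega}$ preserves $\omega$ and each $J_a$, that the tensor $\omega(A(\cdot,\cdot),\cdot)$ lies in $[\E\Hh]^*\otimes[S^2\E]^*$, and that the torsion has the claimed form. Since $\fr{so}^*(2n)^{(1)}=\{0\}$ by Lemma \ref{zeroprog1}, the connection $\nabla^{H,\omega}$ is automatically the unique adapted connection with torsion $T^H+\delta(A)$, so no further uniqueness argument is needed beyond producing one valid $A$.

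\textbf{Step 1: $A$ is well-defined.} Because $\omega$ is non-degenerate, for fixed $X,Y$ the linear functional $Z\mapsto\tfrac12(\nabla^H_X\omega)(Y,Z)$ determines a unique vector $A(X,Y)$; smoothness and $(1,2)$-tensoriality are immediate from those of $\nabla^H\omega$.

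\textbf{Step 2: $\nabla^{H,\omega}$ preserves $H$.} By Proposition \ref{charcan}, it suffices to check condition (\ref{con2}), i.e. $A(X,J_aY)=J_a A(X,Y)$. Equivalently, via $\omega$, this reads $\omega(A(X,J_aY),Z)=\omega(J_aA(X,Y),Z)=-\omega(A(X,Y),J_aZ)$ (using that $\omega$ is scalar, hence $J_a$-skew in the sense of Proposition \ref{usefrel1}(8)). So I must show $(\nabla^H_X\omega)(J_aY,Z)=-(\nabla^H_X\omega)(Y,J_aZ)$. This follows by differentiating the scalar identity $\omega(J_aY,J_aZ)=\omega(Y,Z)$ (Proposition \ref{usefrel1}(7)) with $\nabla^H$: since $\nabla^H J_a=0$, one gets $(\nabla^H_X\omega)(J_aY,J_aZ)=(\nabla^H_X\omega)(Y,Z)$, and then replacing $Z$ by $J_aZ$ and using $J_a^2=-\id$ yields exactly the desired skew-symmetry. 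Hence $\nabla^{H,\omega} J_a=0$ for $a=1,2,3$.

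\textbf{Step 3: $\nabla^{H,\omega}$ preserves $\omega$, and the type statement.} Condition (\ref{con1}) requires $(\nabla^H_X\omega)(Y,Z)=\omega(A(X,Y),Z)+\omega(Y,A(X,Z))$. By the definition of $A$, $\omega(A(X,Y),Z)=\tfrac12(\nabla^H_X\omega)(Y,Z)$ and $\omega(Y,A(X,Z))=-\omega(A(X,Z),Y)=-\tfrac12(\nabla^H_X\omega)(Z,Y)=\tfrac12(\nabla^H_X\omega)(Y,Z)$ since $\nabla^H_X\omega$ is a $2$-form; adding gives (\ref{con1}). Thus $\nabla^{H,\omega}\omega=0$ and Proposition \ref{charcan} applies. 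For the type claim: preservation of $\omega$ forces $\omega(A(X,\cdot),\cdot)$ to be \emph{symmetric} in its last two slots, hence it lies in $[\E\Hh]^*\otimes S^2[\E\Hh]^*$; Step 2 shows it is moreover $H_0$-Hermitian in those two slots, so it lands in $[\E\Hh]^*\otimes[S^2\E]^*$ by the decomposition in Proposition \ref{modules} (the Hermitian symmetric $2$-forms being precisely $[S^2\E]^*=[S^2_0\E]^*\oplus\langle\omega_0\rangle$).

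\textbf{Step 4: the torsion.} Write $\nabla^{H,\omega}=\nabla^H+A$; then $T^{\nabla^{H,\omega}}(X,Y)=\nabla^{H,\omega}_XY-\nabla^{H,\omega}_YX-[X,Y]=T^H(X,Y)+A(X,Y)-A(Y,X)=T^H+\delta(A)$ by the definition of the Spencer operator $\delta$ applied to $A\in T^*M\otimes\fr{so}^*(2n)_{\mc P}$. This is a one-line computation. The only genuinely delicate point is Step 2 — making sure the scalar/Hermitian properties of $\omega$ interact correctly with the covariant derivative so that condition (\ref{con2}) holds — but this reduces cleanly to differentiating the Hermitian identities for $\omega$ using $\nabla^H J_a=0$, so I expect no real obstacle.
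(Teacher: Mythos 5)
Your overall strategy is the same as the paper's: define $A$ by raising $\tfrac12\nabla^H\omega$ with $\omega$, verify the two conditions (\ref{con1}) and (\ref{con2}) characterizing adapted connections, and get the torsion by alternation. Your Step 2 is correct, and it is in fact a more hands-on argument than the paper's: the paper obtains (\ref{con2}) (and the type claim at the same time) from the representation-theoretic observation that $\nabla^{H}$ is a $\Gl(n,\Hn)$-connection and $[S^2\E]^*$ is the $\Gl(n,\Hn)$-submodule of $H$-Hermitian 2-forms, so that $\nabla^{H}_X\omega$ stays in that sub-bundle, whereas you differentiate the Hermitian identity $\omega(J_a\cdot,J_a\cdot)=\omega(\cdot,\cdot)$ directly using $\nabla^{H}J_a=0$. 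Step 1 and your verification of (\ref{con1}) are also fine.

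Two statements in your write-up are, however, wrong as written. First, in Step 3 you claim that preservation of $\omega$ forces $\omega(A(X,\cdot),\cdot)$ to be \emph{symmetric} in its last two slots, and that the ``Hermitian symmetric 2-forms'' are precisely $[S^2\E]^*$. Both claims are false: $\omega(A(X,Y),Z)=\tfrac12(\nabla^{H}_X\omega)(Y,Z)$ is \emph{antisymmetric} in $(Y,Z)$, being half the covariant derivative of a 2-form (condition (\ref{con1}) only prescribes the antisymmetric part of $\omega(A(X,\cdot),\cdot)$, it does not force symmetry), and the $H$-Hermitian \emph{symmetric} bilinear forms constitute $[\Lambda^2\E]^*$ (the hH pseudo-metrics), while $[S^2\E]^*$ in the statement of the theorem is the submodule of $H$-Hermitian 2-forms inside $\Lambda^2[\E\Hh]^*$ (Proposition \ref{modules}, Remark \ref{zerorem}). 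Your two slips compensate so that you land on the correct conclusion, but the argument is invalid as stated; the correct justification is immediate: the tensor is a 2-form in its last two arguments by construction and is $H$-Hermitian there by your Step 2, hence lies in $[\E\Hh]^*\otimes[S^2\E]^*$. Second, in Step 4 you justify the formula by saying $A\in T^*M\otimes\so^*(2n)_{\mc{P}}$; this is not the case --- if $A_X$ were $\so^*(2n)$-valued, then $\nabla^{H}$ itself would already preserve $\omega$, since by (\ref{con1}) one has $\omega(A_XY,Z)+\omega(Y,A_XZ)=(\nabla^{H}_X\omega)(Y,Z)$. In fact $A_X$ commutes with $H$ but lies in the $[S^2\E]^*$-complement of $\so^*(2n)$ inside $\gl(n,\Hn)$. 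The torsion formula itself survives, because $\delta(A)$ here denotes simply the alternation $A(X,Y)-A(Y,X)$, which is all your computation uses, but the parenthetical justification should be corrected.
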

 \begin{proof}
 When $A$ is defined as above, then the condition (\ref{con1}) is satisfied, hence $\nabla^{H,\omega}\omega=0$. We  will show  that also $\nabla^{H, \omega}J_{a}=0$ for any $a=1, 2, 3$.	By Proposition \ref{charcan}  this is equivalent to  the relation (\ref{con2}). Notice that    after  applying 
   $\omega$ on (\ref{con2}) it yields the relation   
\[
 \omega\cc A(X, J_{a}Y), Z\rr=\omega\cc J_{a}\cc A(X, Y)\rr, Z\rr=-\omega\cc A(X, Y), J_{a}Z\rr\,, \quad \forall X, Y, Z\in\Gamma(TM)\,,
\]
 where the second equality occurs  via the identity
\[
\omega(J_{a}X, Y)+ \omega(X, J_{a}Y)=0, \quad \forall  X, Y\in\Gamma(TM)\,,
\]
 see  Proposition \ref{usefrel1}. In particular,    it turns out that the relation 
 \[
 \omega\cc A(X, J_{a}Y), Z\rr+\omega\cc A(X, Y), J_{a}Z\rr=0\,, \quad \forall X, Y, Z\in\Gamma(TM)\,,
 \]
 is equivalent to say 
 that the tensor $\omega\cc A(\cdot\,, \cdot), \cdot\rr$ of type $(0,3)$ is $H$-Hermitian with respect to the last two indices.    
 However, the Obata connection  $\nabla^{H}$ is  a $\Gl(n,\Hn)$-connection, and   the space $[S^2\E]^*$ of scalar 2-forms is a $\Gl(n,\Hn)$-submodule of the 2-tensors that are $H$-Hermitian.  Hence,   $\omega\cc A(\cdot\,, \cdot), \cdot\rr$ takes values in $[\E\Hh]^*\otimes [S^2\E]^*$, and we conclude that the claim holds.
 \end{proof}

 \begin{prop}\label{parallel1}
An   almost hypercomplex skew-Hermitian connection $\nabla$ on an \textsf{almost hs-H manifold} $(M, H=\{I, J, K\}, \omega)$ satisfies,
\[
\nabla g_{I}=\nabla g_{J}=\nabla g_{K}=\nabla h=\nabla \Phi=0\,.
\]
Hence, it is a metric connection with respect to any of the three pseudo-Riemannian metrics $g_{I}, g_{J}, g_{K}$ and preserves the quaternionic skew-Hermitian form $h$ and the fundamental 4-tensor $\Phi$.
\end{prop}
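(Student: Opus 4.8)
The plan is to prove each claimed parallelism by a short algebraic argument from the definitions, treating the five tensors in an order that reuses earlier ones. First I would recall that an almost hypercomplex skew-Hermitian connection $\nabla$ by definition satisfies $\nabla\omega=0$ and $\nabla J_a=0$ for $a=1,2,3$ (equivalently $\nabla H=0$). From this the parallelism of $g_I,g_J,g_K$ is immediate: since $g_I(X,Y)=\omega(X,IY)$ and $\nabla$ acts as a derivation on tensor products and contractions, one has $(\nabla_X g_I)(Y,Z)=(\nabla_X\omega)(Y,IZ)+\omega(Y,(\nabla_X I)Z)=0+0=0$, and likewise for $J$ and $K$. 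The only thing worth spelling out is that $\nabla$ commutes with the contraction identifying $\omega\otimes I$ with the $(0,2)$-tensor $g_I$, which is the standard Leibniz property of linear connections.

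Next I would treat $h$ and $\Phi$. Both are, by their defining formulas (\ref{qhermtensor}) and (\ref{fundtensor}), polynomial (indeed multilinear) expressions built from $\omega$, the $J_a$'s and the $g_{J_a}$'s using only the tensor product, symmetrization, and $\id_{TM}$. Here I would note that $\Phi=g_I\odot g_I+g_J\odot g_J+g_K\odot g_K$ is independent of the choice of local admissible frame (as established earlier), so the computation may be done with any local admissible frame $\{I,J,K\}$; likewise for $h=\omega\,\id_{TM}+g_II+g_JJ+g_KK$. Since $\nabla$ is a derivation annihilating each building block — $\nabla\omega=0$, $\nabla J_a=0$, $\nabla g_{J_a}=0$ from the previous paragraph, and $\nabla\,\id_{TM}=0$ trivially — it annihilates any tensor obtained from them by $\otimes$, $\odot$ and $\mathsf{Sym}$. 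Hence $\nabla h=0$ and $\nabla\Phi=0$. The final sentence of the statement (metric connection for each $g_{J_a}$, preservation of $h$ and $\Phi$) is then just a restatement of these identities.

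The only genuinely delicate point — really the only ``obstacle'' — is bookkeeping: one should either fix a local admissible frame throughout and observe that the frame-dependence of the $J_a$'s and $g_{J_a}$'s cancels because of the global well-definedness of $h$ and $\Phi$ (Corollaries \ref{fund2tensorQ} and \ref{fund4tensorQ}), or, in the quaternionic skew-Hermitian case, use that $\nabla$ rotates the frame $\{J_a\}$ within $Q$ by a $\mathfrak{so}(3)$-valued $1$-form $(\varphi_a)$ while rotating the $g_{J_a}$ by the same matrix, so that the sums defining $h$ and $\Phi$ remain parallel; either way no computation beyond the Leibniz rule is needed. I would therefore present the proof as: (i) Leibniz rule plus $\nabla\omega=\nabla J_a=0$ gives $\nabla g_{J_a}=0$; (ii) $h$ and $\Phi$ are tensor-polynomial in $\{\omega,\id_{TM},J_a,g_{J_a}\}$, all parallel, hence themselves parallel; (iii) the assertions about metric and preserved structures follow by definition.
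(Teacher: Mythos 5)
Your proof is correct and follows essentially the same route as the paper: the paper also observes that $g_I$ (and likewise $g_J$, $g_K$, $h$, $\Phi$) is obtained by contracting/combining the $\nabla$-parallel tensors $\omega$ and the $J_a$'s, so parallelism follows from the Leibniz rule. Your extra remark about frame rotation is superfluous here (in the hypercomplex case $H$ is global and $\nabla J_a=0$ exactly), but it does no harm.
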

\begin{proof}
	Consider for example $g_{I}$. It coincides with a contraction of the  composition of two $\nabla$-parallel tensor fields, namely $\omega$ and $I$.  Hence, $g_I$ must be parallel, which also occurs by a straightforward computation. The other claims are treated similarly.
\end{proof}

\smallskip
We now proceed with adapted connections to $\SO^*(2n)\Sp(1)$-structures. 
 Given an \textsf{almost qs-H manifold} $(M^{4n}, Q, \omega)$    with $n>1$,  there exists a class of Oproiu connections  $\nabla^{Q}$ preserving $Q$, that is 
$
 \nabla^{Q}_X\sigma\in \Gamma(Q)$ for all $X\in\Gamma(TM)$ and all smooth sections $\sigma \in \Gamma(Q)$.
Moreover, there is a unique Oproiu connection  $\nabla^{Q, \vol}$ preserving $Q$ and the volume form $\vol=\omega^{2n}$ induced by $\omega$, that is 
\[
\nabla^{Q,\vol}\vol=0\,,\quad  \nabla^{Q, \vol}_X\sigma\in \Gamma(Q), \quad \forall \ X\in\Gamma(TM), \quad \forall \ \sigma\in\Gamma(Q)\,.
\]
  This connection is the so-called \textsf{unimodular Oproiu connection} for the pair $(Q, \vol)$. 
  Recall that an Oproiu connection for an almost quaternionic structure $Q$ is a  minimal   connection for $Q$, see the seminal works of Oproiu \cite{Oproiu1, Oproiu2} and see also  \cite{AM} for more details on  $ \nabla^{Q, \vol}$. 

\begin{prop}\label{charcan}
Let $\nabla_{X}Y=\nabla^{Q}_{X}Y+A_{X}Y=\nabla^{Q}_{X}Y+A(X, Y)$ be a linear connection on  an \textsf{almost qs-H manifold} $(M, Q, \omega)$, where $\nabla^{Q}$ is any Oproiu connection, and $A$ is a smooth tensor field on $M$ of type $(1, 2)$.  Then, $\nabla$ satisfies the conditions
	\[
	\nabla \omega=0, \quad\text{and}\quad \nabla_X\sigma\in \Gamma(Q),
	\]
for any vector field $X\in\Gamma(TM)$ and section $\sigma\in\Gamma(Q)$,  if and only if  any $X, Y, Z\in\Gamma(TM)$ satisfy the following two relations
 \begin{eqnarray}
(\nabla^{Q}_{X}\omega)(Y, Z)&=&\omega\cc A(X, Y), Z\rr+\omega\cc Y, A(X, Z)\rr\,, \label{con3} \\
 A(X, J_{a}Y)-J_{a}\cc A(X, Y)\rr&=&\varphi^A_{c}(X)J_{b}(Y)-\varphi^A_{b}(X)J_{c}(Y)\,, \label{con4}
 \end{eqnarray}
for any cyclic permutation $(a,b,c)$ of $(1,2,3)$, any admissible basis $H=\{J_a\}$  and some (local) 1-forms $\varphi^A_{a}$ for any $a=1, 2, 3$. \\
 In particular, if $\nabla^{Q,\omega}$ is  an almost quaternionic skew-Hermitian connection and $A$ is a smooth tensor field on $M$ of type $(1, 2)$, then $\nabla^{Q,\omega,A}_XY=\nabla^{Q,\omega}_{X}Y+A(X, Y)$ is an almost quaternionic skew-Hermitian connection, if and only if $A$ takes values in $[\E\Hh]^*\otimes(\so^*(2n)\oplus\sp(1))$.
\end{prop}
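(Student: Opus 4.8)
The plan is to verify both equivalences by a direct computation, exactly as in the almost \textsf{hs-H} case above. Write $\nabla = \nabla^{Q} + A$. The Leibniz rule gives
\[
(\nabla_{X}\omega)(Y,Z) = (\nabla^{Q}_{X}\omega)(Y,Z) - \omega\big(A(X,Y),Z\big) - \omega\big(Y,A(X,Z)\big)\,,
\]
so $\nabla\omega = 0$ is equivalent to \eqref{con3}, with no input beyond $\nabla^{Q}$ being a linear connection.

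For the second condition, fix a local admissible basis $H=\{J_{a}\}$ of $Q$. The connection induced by $\nabla$ on $\Ed(TM)$ satisfies $\nabla_{X}J_{a} = \nabla^{Q}_{X}J_{a} + [A_{X},J_{a}]$, where $[A_{X},J_{a}](Y) = A(X,J_{a}Y) - J_{a}\big(A(X,Y)\big)$. Since $\nabla^{Q}$ is an Oproiu connection, $\nabla^{Q}_{X}J_{a}\in\Gamma(Q)$; hence $\nabla$ preserves $Q$ if and only if $[A_{X},J_{a}]\in\Gamma(Q)$ for every $a$, i.e. $A_{X}$ normalises $\sp(1)=Q$ for every $X$. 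For such an $A_{X}$, the map $\ad_{A_{X}}$ restricts to a derivation of $\sp(1)\cong\so(3)$, which is semisimple, so this restriction is inner: $\ad_{A_{X}}|_{\sp(1)} = \ad_{w(X)}$ for a unique $w(X)\in\sp(1)$. Writing $w(X) = \frac12\big(\varphi^{A}_{1}(X)J_{1}+\varphi^{A}_{2}(X)J_{2}+\varphi^{A}_{3}(X)J_{3}\big)$ and expanding with the bracket relations of $\{J_{a}\}$ yields $[A_{X},J_{a}] = \varphi^{A}_{c}(X)J_{b} - \varphi^{A}_{b}(X)J_{c}$ for each cyclic $(a,b,c)$, which is \eqref{con4}; that a single triple of local $1$-forms serves all three values of $a$ is precisely the skew-symmetry of the coefficient matrix $\big([A_{X},J_{a}]\big)_{ab}$, which follows from innerness. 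Conversely, \eqref{con4} forces $[A_{X},J_{a}]\in\Gamma(Q)$ for every $a$, hence $\nabla_{X}\sigma\in\Gamma(Q)$ for all $\sigma\in\Gamma(Q)$.

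For the \emph{in particular} assertion, apply the two equivalences above with $\nabla^{Q}$ replaced by the almost quaternionic skew-Hermitian connection $\nabla^{Q,\omega}$. Since $\nabla^{Q,\omega}\omega = 0$, \eqref{con3} reduces to $\omega\big(A(X,Y),Z\big)+\omega\big(Y,A(X,Z)\big)=0$, i.e. $A_{X}\in\sp(\omega)$, while \eqref{con4} says exactly that $A_{X}$ normalises $\sp(1)$. Using the $\SO^{*}(2n)\Sp(1)$-decomposition of $\Ed([\E\Hh])$ from Proposition \ref{modules} (recalled in the proof of Proposition \ref{usefrelfund}), the normaliser of $\sp(1)$ in $\gl([\E\Hh])$ is $\gl(n,\Hn)\oplus\sp(1) = \R\cdot\Id\oplus\so^{*}(2n)\oplus\frac{\sl(n,\Hn)}{\so^{*}(2n)}\oplus\sp(1)$, whereas $\sp(\omega)$ consists of the $\omega_{0}$-skew summands $\so^{*}(2n)\oplus\sp(1)\oplus\big([S^{2}_{0}\E]^{*}\otimes\sp(1)\big)$, the pieces $\R\cdot\Id$ and $\frac{\sl(n,\Hn)}{\so^{*}(2n)}\cong[S^{2}_{0}\E]^{*}$ being $\omega_{0}$-symmetric. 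Intersecting, $A_{X}\in\so^{*}(2n)\oplus\sp(1)$ for every $X$, as claimed.

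The remaining computations are routine. I expect the parts most in need of care to be checking that the coefficients in \eqref{con4} assemble into three globally consistent local $1$-forms (the skew-symmetry just mentioned), and keeping precise track of which $\SO^{*}(2n)\Sp(1)$-submodules of $\Ed([\E\Hh])$ are $\omega_{0}$-symmetric versus $\omega_{0}$-skew; both reduce to the module-theoretic facts established in Section \ref{linears}.
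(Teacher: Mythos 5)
Your proof is correct and follows essentially the same route as the paper's (which is only sketched there and leaves the details to the reader): the Leibniz rule identifies $\nabla\omega=0$ with \eqref{con3}, the commutator $[A_X,J_a]$ landing in $\Gamma(Q)$ identifies $Q$-preservation with \eqref{con4} via the $\sp(1)$-action on an admissible frame, and the final claim is the pointwise intersection $(\gl(n,\Hn)\oplus\sp(1))\cap\sp(\omega_0)=\so^*(2n)\oplus\sp(1)$. Your use of the inner-derivation argument for $\sp(1)\cong\so(3)$ to produce the single triple of $1$-forms is exactly the detail the paper implicitly delegates to the known description of $[\E\Hh]^*\otimes\sp(1)$ acting on admissible bases, so no gap remains.
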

\begin{proof}
	The claim follows since  only the part of $A$ belonging to $[\E\Hh]^*\otimes \sp(1)$ acts non-trivially  on the local admissible basis $\{J_a : a=1, 2, 3\}$, and at the same time  preserves $\{J_a : a=1, 2, 3\}$  in $\Gamma(Q)$. This provides the stated formula \eqref{con4}, and we leave the further details to the reader.
\end{proof}

  Next  we will construct an almost quaternionic skew-Hermitian connection, as in Theorem \ref{starconnection},  by  using a  tensor field $A$ of type $(1, 2)$   defined  via the relation
\begin{equation}\label{aformula}
\omega\cc A(X, Y), Z\rr=\frac{1}{2}(\nabla^{Q}_{X}\omega)(Y, Z)\,,\quad X,Y,Z\in \Gamma(TM)\,.
\end{equation} 
To do so, we benefit from the fact that  the space $[S^2\E]^*$ of scalar 2-forms is a $\Gl(n,\Hn)\Sp(1)$-module and $\nabla^{Q}$ is an $\Gl(n,\Hn)\Sp(1)$-connection. Thus, again $\omega\cc A(\cdot\,, \cdot), \cdot\rr$ has values in $[\E\Hh]^*\otimes [S^2\E]^*$, and consequently the  conditions \eqref{con3} and \eqref{con4} must  be satisfied by similar arguments as in the proof of Theorem \ref{starconnection}. However, to complete our construction, we need to overcome the following  

\smallskip
  \noindent{\bf\textsf{Challenge:}} {\it Although $A$ is determined uniquely by \eqref{aformula}, it depends on the choice of Oproiu connection and thus the space spanned by $\delta A$ is {\it not} complementary to $\delta([\E\Hh]^*\otimes (\so^*(2n)\oplus \sp(1))$.}
 
\smallskip
For a better visualization of this problem,  we need to consider  the following four  components isomorphic to $[\E\Hh]^*$ (see also \cite[p.~215]{AM} but be aware of slightly different conventions):
\begin{enumerate}
\item[$\mf{(A)}$] The component spanned by $\zeta\otimes \id$ for $\zeta\in \Gamma(T^*M)$, with values in $[\E\Hh]^*\otimes [S^2\E]^*\subset [\E\Hh]^*\otimes  \gl(n,\Hn)$.
\item[$\mf{(B)}$] The component spanned by the projection 
\begin{eqnarray*}
\pi_A(\omega\otimes Z)(X,Y)&:=&Asym\Big(\pi_{1, 1}(\omega(X,.) \otimes Z)\Big)Y=\frac{1}{8}\Big(\omega(X,Y) Z-\omega(X,Z) Y\\
&&-\sum_a g_{J_a}(X,Y) J_aZ+\sum_a g_{J_a}(X,Z) J_aY\Big)
\end{eqnarray*}
of $\omega \otimes Z$ for $X,Y,Z\in \Gamma(TM)$,
  with values in  $[\E\Hh]^*\otimes [S^2\E]^*$,  where    $\pi_{1, 1}$ is the  usual projection  (see for example \cite[p.~214]{AM} or \cite[p.~395]{CS})
 \[
\pi_{1, 1} : \gl([\E\Hh])\to \gl(n,\Hn)\,,\quad \pi_{1, 1}(\omega(X,.) \otimes Z)Y:=\frac{1}{4}\Big(\omega(X,Y) Z-\sum_a g_{J_a}(X,Y) J_aZ\Big)\,.
\]
Here, $Asym$  denotes the antisymmetrization with respect to the symplectic transpose (see Definition \ref{asymsym}).
\item[$\mf{(C)}$] The component spanned by the projection 
\begin{eqnarray*}
\pi_S(\omega\otimes Z)(X,Y)&:=&Sym\Big(\pi_{1, 1}(\omega(X,.) \otimes Z)\Big)Y=\frac{1}{8}\Big(\omega(X,Y) Z+\omega(X,Z) Y\\
&&-\sum_a g_{J_a}(X,Y) J_aZ-\sum_a g_{J_a}(X,Z) J_aY\Big)
\end{eqnarray*}
 of $\omega \otimes Z$ for $X,Y,Z\in \Gamma(TM)$, with values in  $[\E\Hh]^*\otimes\fr{so}^*(2n)$,  where   $Sym$  denotes the symmetrization with respect to the symplectic transpose (see Definition \ref{asymsym}).
\item[$\mf{(D)}$] The component locally spanned by $\sum_{a=1}^3 \zeta\circ J_a\otimes J_a$ with values in  $[\E\Hh]^*\otimes \sp(1)$ for some $\zeta\in \Gamma(T^*M)$ and some local admissible basis $H=\{J_1,J_2,J_3\}$.
\end{enumerate}
We also need to consider the following  traces $\Tr_{i}:\Lambda^2[\E\Hh]^*\otimes [\E\Hh]\to [\E\Hh]^*$ for $i=1,\dots,4$:
\begin{enumerate}
\item[$\mf{(i)}$] $\Tr_1(A)(X):=\Tr(A(\cdot \,,X))$;
\item[$\mf{(ii)}$] $\Tr_2(A)(X):=\Tr(A(X\,,\cdot))$;
\item[$\mf{(iii)}$] $\Tr_3(A)(X):=\Tr (A^T_X)$, where $A^T_X$ is the symplectic transpose of   $A_X:=A(X,\cdot)$;
\item[$\mf{(iv)}$]   $\Tr_4(A)(X):=\Tr( \J  A(\J X\,, \cdot))$, for $\J\in \Ss(Q).$ 
\end{enumerate}
Clearly, the components $\mf{(A)}$ and $\mf{(B)}$ are parts of the tensor $A$, and the components $\mf{(C)}$ and $\mf{(D)}$ are in $[\E\Hh]^*\otimes (\so^*(2n)\oplus \sp(1))$.  Hence, to assert our claim, it suffices to show that $\delta$ is {\it not} injective on these four components. 
 \begin{lem}\label{kernel}
Let us set $X^T(Y):=\omega(X,Y)$ for any $X,Y\in \Gamma(TM)$ and consider the tensor field 
\[
A:=\zeta_1\otimes \id+ \pi_A(\omega \otimes Z_2)+\pi_S(\omega \otimes Z_3)+\sum_{a=1}^3 \zeta_4\circ J_a\otimes J_a
\]
 for $\zeta_1, \zeta_4\in \Gamma(T^*M)$ and $Z_2,Z_3\in \Gamma(TM)$, given by the above components $\mf{(A)}$, $\mf{(B)}$, $\mf{(C)}$ and $\mf{(D)}$. Then, the traces $\Tr_{i}(A)\in \Gamma(T^*M)$ for  $i=1,\dots,4$  do not depend on the choice of  $\J\in \Gamma(Z)$ and moreover the following holds:
\begin{enumerate}
\item[$\al)$] $\delta(A)=0$,
if and only if $\zeta_1=-\zeta_4=-\frac14Z_2^T=\frac14Z_3^T$.
\item[$\be)$] $\omega(\delta(A)\,,\cdot)\in \Gamma(\Lambda^3T^*M)$, if and only if 
\[
\Tr_1(A)+\Tr_3(A)=0\,,\quad\text{and}\quad \Tr_4(A)=0\,,
\]
 which is equivalent  to say that  $Z_3^T= -\frac13Z_2^T+\frac83\zeta_1$ and $\zeta_4 =\frac16Z_2^T-\frac13\zeta_1$\,.
\item[$\gamma)$] $A\in \Gamma( [\E\Hh]^*\otimes (\sl(n,\Hn)\oplus \sp(1)))$, if and only if $\Tr_2(A)$ vanishes,  which is equivalent  to say that  $Z_2^T=4n\zeta_1.$
\end{enumerate}
\end{lem}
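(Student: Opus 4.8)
The plan is to compute the four traces $\Tr_i(A)$ on each of the four model components $\mf{(A)}$--$\mf{(D)}$ separately, then combine linearly and impose the stated conditions. First I would record the elementary trace identities: $\Tr(\id)=4n$, $\Tr(J_a)=0$ for each $a$ (since $J_a^2=-\id$), and, for $\zeta\in\Gamma(T^*M)$, $\Tr(\zeta\otimes Z)=\zeta(Z)$. I would also use $\Tr((\zeta\circ J_a)\otimes J_a)=\zeta(J_a\cdot)\big|_{\text{trace}}=-\zeta(J_a^2\cdot)$ paired appropriately; more carefully, for the endomorphism $Y\mapsto \zeta(J_aY)J_aZ$ one gets trace $\zeta(J_a(J_aZ))=-\zeta(Z)$, so summing over $a$ gives $-3\zeta(Z)$. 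These are the only inputs needed, together with the defining formula $X^T(Y)=\omega(X,Y)$ and the scalar property of $\omega$ (Proposition \ref{usefrel1}), which gives $g_{J_a}(X,Y)=\omega(X,J_aY)=-\omega(J_aX,Y)$ and hence lets one rewrite every term containing $g_{J_a}$ back in terms of $\omega$ and the $J_a$.

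Next I would handle the independence-of-$\J$ claim. Each $\Tr_i(A)$ is manifestly $\SO^*(2n)\Sp(1)$-equivariant and lands in $\Gamma(T^*M)$; since $\Tr_4$ is the only one whose definition names a choice of $\J\in\Gamma(Z)$, it suffices to show $\Tr(\J A(\J X,\cdot))$ is unchanged if $\J$ is replaced by another section of the twistor bundle. Writing a general section as $\J'=\mu_1 J_1+\mu_2 J_2+\mu_3 J_3$ with $\sum\mu_a^2=1$ and expanding, the cross terms $J_aA(J_bX,\cdot)$ with $a\neq b$ must cancel; this follows because on each model component the quantity $\Tr(J_aA(J_bX,\cdot))+\Tr(J_bA(J_aX,\cdot))$ is symmetric while $J_aJ_b=-J_bJ_a=J_c$ forces the relevant piece to be antisymmetric, so it vanishes, and the diagonal terms reassemble to $(\sum\mu_a^2)\Tr(J_1A(J_1X,\cdot))=\Tr_4(A)$ evaluated on any single admissible structure. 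Alternatively, and more cleanly, one observes that $\Tr_4$ as defined is the trace of a contraction of $A$ against the $\Sp(1)$-invariant tensor $\sum_a J_a\otimes J_a$ (which, up to scale, is the fundamental object appearing in $\Phi$ and $h$), hence is automatically independent of the admissible basis.

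Then I carry out the three equivalences. For $\al)$: by the explicit formulas for $\pi_A$ and $\pi_S$, the combination $\pi_A(\omega\otimes Z_2)+\pi_S(\omega\otimes Z_3)$ contributes to $\delta(A)(X,Y)=A(X,Y)-A(Y,X)$ a multiple of $\omega(X,Y)(Z_2-Z_3) -\omega(X,Z_2)Y+\omega(Y,Z_2)X +\dots$, while $\zeta_1\otimes\id$ contributes $\zeta_1(X)Y-\zeta_1(Y)X$ and $\sum_a(\zeta_4\circ J_a)\otimes J_a$ contributes $\sum_a\big(\zeta_4(J_aX)J_aY-\zeta_4(J_aY)J_aX\big)$. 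Matching the coefficient of the ``$\zeta(X)Y-\zeta(Y)X$''-type tensor (using $\omega(X,Z)=-Z^T(X)$ to convert the $Z_2,Z_3$ terms into covector form, and using that the $J_a$-terms are linearly independent from the identity-type terms because $\sum_a J_aY\otimes J_aX$ is not proportional to $Y\otimes X$) pins down $\zeta_1=-\zeta_4=-\tfrac14 Z_2^T=\tfrac14 Z_3^T$, and conversely these relations make $\delta(A)=0$ since then $A$ lies in the kernel described by Lemma \ref{zeroprog1} applied at each point. For $\be)$: $\omega(\delta(A)(X,Y),Z)$ is automatically antisymmetric in $X,Y$, so it is a 3-form iff it is totally antisymmetric, iff its symmetrizations in $(Y,Z)$ vanish; the standard computation (as in \cite[p.~215]{AM}) shows the obstruction to total antisymmetry is exactly carried by the two $T^*M$-valued traces $\Tr_1(A)+\Tr_3(A)$ and $\Tr_4(A)$, and substituting the component formulas expresses these as linear conditions on $Z_2^T,Z_3^T,\zeta_1,\zeta_4$; solving gives $Z_3^T=-\tfrac13 Z_2^T+\tfrac83\zeta_1$ and $\zeta_4=\tfrac16 Z_2^T-\tfrac13\zeta_1$. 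For $\gamma)$: membership in $[\E\Hh]^*\otimes(\sl(n,\Hn)\oplus\sp(1))$ is the vanishing of the $\gl(n,\Hn)/\sl(n,\Hn)\cong\R$-part, which is detected by the single trace $\Tr_2(A)(X)=\Tr(A(X,\cdot))$; computing $\Tr_2$ on each component ($\zeta_1\otimes\id\mapsto 4n\zeta_1$, the $J_a$-component is trace-free in this slot, and $\pi_A,\pi_S$ contribute $\pm\tfrac14$ of $\omega(X,Z_i)$-type terms whose trace in the free slot produces multiples of $Z_2^T$, $Z_3^T$ — here one must be careful that it is $Z_2^T$ and not $Z_3^T$ that survives, which follows from the antisymmetric versus symmetric nature of $\pi_A$ versus $\pi_S$ relative to the slot being traced) yields $\Tr_2(A)=\tfrac1{4n}\cdot(\text{something})$; setting it to zero gives $Z_2^T=4n\,\zeta_1$.

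\textbf{Main obstacle.} The routine but error-prone part is bookkeeping the normalizations: the factors $\tfrac18$ in $\pi_A,\pi_S$, the factor $\tfrac14$ in $\pi_{1,1}$, and the sign conventions relating $g_{J_a}$, $\omega$, and the symplectic transpose all have to be tracked consistently, and one must correctly identify which of $Z_2^T$ or $Z_3^T$ each trace sees (this is where the antisymmetric/symmetric distinction between $\pi_A$ and $\pi_S$ is doing real work, not just cosmetic work). The genuinely conceptual step — and the one I would be most careful to state cleanly — is the claim in $\be)$ that the failure of $\omega(\delta(A),\cdot)$ to be a $3$-form is measured \emph{precisely} by the two traces $\Tr_1+\Tr_3$ and $\Tr_4$ and nothing else; this should be argued representation-theoretically using the decomposition of $\Lambda^2[\E\Hh]^*\otimes[\E\Hh]$ from Proposition \ref{modules}, identifying $\Lambda^3[\E\Hh]^*$ inside $\Lambda^2[\E\Hh]^*\otimes[\E\Hh]$ and noting that the complementary ``trace'' part relevant to the image of $\delta$ on these four components is two-dimensional over $\Gamma(T^*M)$, spanned by the functionals $\Tr_1+\Tr_3$ and $\Tr_4$.
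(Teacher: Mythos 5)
Your overall strategy---compute the four traces componentwise on $\mf{(A)}$--$\mf{(D)}$, assemble the linear conditions, and solve---is essentially the paper's: the paper records this as an invertible $4\times 4$ matrix expressing $(\Tr_1(A),\dots,\Tr_4(A))$ in the parameters $(\zeta_1,Z_2^T,Z_3^T,\zeta_4)$, a $3\times 4$ matrix \eqref{transition} for the traces of $\delta(A)$ whose kernel gives $\al)$, an evaluation of $\Tr_1,\Tr_3,\Tr_4$ on the complete antisymmetrization of $2\omega\otimes\zeta$ (yielding $\mp\tfrac43(2n-1)\zeta$ and $0$) to get the two conditions in $\be)$, and the identity $A\cdot\vol=\Tr_2(A)\vol$ for $\gamma)$. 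Your treatment of $\gamma)$ and your representation-theoretic framing of $\be)$ are in the same spirit and are fine.

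There is, however, a concrete error in your converse direction of $\al)$: you justify $\delta(A)=0$ by saying that with the stated relations ``$A$ lies in the kernel described by Lemma \ref{zeroprog1}''. Lemma \ref{zeroprog1} asserts that this kernel is \emph{zero}---$\fr{g}^{(1)}=\ker\delta=\{0\}$ for $\fr{g}=\so^*(2n)$ and $\so^*(2n)\oplus\sp(1)$---so it cannot supply a nonzero kernel element; and indeed the kernel element of Lemma \ref{kernel} does not take values in $\so^*(2n)\oplus\sp(1)$ (its $\zeta_1\otimes\id$ and $\pi_A$ pieces are $[S^2\E]^*$-valued), it is an element of the first prolongation of the full $\gl(4n,\R)$, i.e.\ a symmetric $(1,2)$-tensor. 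The converse has to come out of the same linear computation, or from checking directly that the specific combination is symmetric in its two arguments. Relatedly, your ``coefficient matching'' in $\al)$ needs more care than the sketch suggests: the $\delta$-images of all four components decompose into the \emph{same} three tensor types (an $\eta(X)Y-\eta(Y)X$ term, an $\omega(X,Y)W$ term, and a $\sum_a\big(\theta(J_aX)J_aY-\theta(J_aY)J_aX\big)$ term), since the $[\E\Hh]^*$-isotypic part of the torsion module has multiplicity three; the one-parameter kernel is exactly the single linear relation among the four images, and mis-tracking the $\tfrac18$'s and the symplectic-transpose signs in $\pi_A,\pi_S$ flips the decisive sign $\zeta_1=-\zeta_4$ into $\zeta_1=\zeta_4$. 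The paper's trace matrices are precisely the bookkeeping device that settles this. Finally, your ``cleaner'' argument for the $\J$-independence of $\Tr_4$ (that it is a contraction against $\sum_a J_a\otimes J_a$) presupposes what is to be shown; the expansion over an admissible basis, or simply the $\Sp(1)$-equivariance of the components $\mf{(A)}$--$\mf{(D)}$, is what actually does the work.
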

\begin{proof}
We can directly compute the traces of $A$ and obtain the following matrix equality,  which is independent of the choice of $\J\in \Gamma(Z)$:
\[
\left(\begin{smallmatrix}
\Tr_1(A)\\
\Tr_2(A)\\
\Tr_3(A)\\
\Tr_4(A)
\end{smallmatrix}\right)=\left(\begin{smallmatrix}
1& -\frac{2n+1}{4}  &  \frac{2n-1}{4} & -3\\
4n& -1& 0 &0 \\
-1& \frac{2n+1}{4} &  \frac{2n-1}{4} & -3 \\
0& 0& 0& 4n\\
\end{smallmatrix}\right)\left(\begin{smallmatrix}
\zeta_1\\
Z_2^T\\
Z_3^T\\
\zeta_4
\end{smallmatrix}\right)\,.
\]
Note that the determinant of the matrix is $2n(n+1)(2n-1)^2$, hence the matrix is invertible.
Moreover, $\Tr_1(\delta(A))=-\Tr_2(\delta(A))$ and we obtain
\begin{equation}\label{transition}
\left(\begin{smallmatrix}
\Tr_1(\delta(A))\\
\Tr_3(\delta(A))\\
\Tr_4(\delta(A))
\end{smallmatrix}\right)=\left(\begin{smallmatrix}
1-4n & -\frac{2n-3}{4}  &  \frac{2n-1}{4} & -3\\
-2&  \frac{2n+1}{2}& \frac{2n-1}{2} &-6 \\
1& - \frac{2n+1}{4}&  \frac{2n-1}{4}& 4n+1\\
\end{smallmatrix}\right)\left(\begin{smallmatrix}
\zeta_1\\
Z_2^T\\
Z_3^T\\
\zeta_4
\end{smallmatrix}\right),
\end{equation}
which provides the claimed kernel of $\delta$. On the other hand,   on an element $\phi$ of $\Gamma(\Lambda^3T^*M)$ given by the complete antisymmetrization of $2\omega\otimes \zeta$ we see that
\[
\Tr_1(\phi)=-\frac43(2n-1)\zeta\,,\quad \Tr_3(\phi)=\frac43(2n-1)\zeta\,,\quad \Tr_4(\phi)=0\,.
\]
In this way we obtain   the claimed condition for $\omega(\delta(A)\,,\cdot)\in  \Gamma(\Lambda^3T^*M)$. Since $\pi_S(\omega \otimes Z_3)+\sum_{a=1}^3 \zeta_4\circ J_a\otimes J_a$ has values in $[\E\Hh]^*\otimes (\so^*(2n)\oplus \sp(1))$, we need to characterize when $\zeta_1\otimes \id+ \pi_A(\omega \otimes Z_2)$ has values in $[\E\Hh]^*\otimes (\sl(n,\Hn)\oplus \sp(1))$, which is encoded by the vanishing of $\Tr_2$. This is because $A\cdot \vol=\Tr_2(A)\vol.$ This completes the proof.
\end{proof}
 
Now, we are able to define the connection $\nabla^{Q,\omega}$ explicitly.

 \begin{theorem}\label{starconnectionsp1}
Let  $(M, \omega, Q)$ be an \textsf{almost qs-H manifold}   endowed with any Oproiu connection $\nabla^{Q}$, and let us denote its  torsion by $T^Q$.  Let $A$ be the $(1, 2)$-tensor field on $M$ defined by 
\[
\omega\cc A(X, Y), Z\rr=\frac{1}{2}(\nabla^{Q}_{X}\omega)(Y, Z)\,,\quad  \forall \  X,Y,Z\in \Gamma(TM)\,,
\]
and set $Z_3^T:=\frac{\Tr_2(A)}{n+1}$, $\zeta_4:=-\frac{\Tr_2(A)}{4(n+1)}$. 
 Then, the connection 
\[
\nabla^{Q,\omega}:=\nabla^{Q}+A+\pi_S(\omega\otimes Z_3)+\sum_{a=1}^3 \zeta_4\circ J_a\otimes J_a
\]
is an almost quaternionic skew-Hermitian connection with the following property: The only component of its torsion $T^{Q,\omega}=T^Q+\delta(A+\pi_S(\omega\otimes Z_3)+\sum_{a=1}^3 \zeta_4\circ J_a\otimes J_a)$ isomorphic to $[\E\Hh]^*$, is contained in $\ke(2\Tr_1+\Tr_3)\cap \ke(\Tr_1-\Tr_4)\subset\Gamma(\Tor(M))$. In particular, for the unimodular Oproiu connection $\nabla^{Q,\vol}$  and a tensor $A^{\vol}$ defined by $\omega\cc A^{\vol}(X, Y), Z\rr=\frac{1}{2}(\nabla^{Q,\vol}_{X}\omega)(Y, Z)$, we obtain
\[
\nabla^{Q,\omega}=\nabla^{Q,\vol}+A^{\vol}.
\]
    \end{theorem}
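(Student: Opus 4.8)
The plan is to verify three things in turn: that $\nabla^{Q,\omega}$ preserves $\omega$ and the quaternionic structure $Q$, that the $[\E\Hh]^*$-component of its torsion lies in the prescribed kernels, and finally that the construction applied to the unimodular Oproiu connection simplifies as stated. First I would observe that by Theorem~\ref{starconnection}'s argument (transported to the quaternionic setting via Proposition~\ref{charcan}), the tensor $A$ defined by $\omega\bigl(A(X,Y),Z\bigr)=\tfrac12(\nabla^Q_X\omega)(Y,Z)$ has $\omega\bigl(A(\cdot,\cdot),\cdot\bigr)$ valued in $[\E\Hh]^*\otimes[S^2\E]^*$, since $[S^2\E]^*$ is the $\Gl(n,\Hn)\Sp(1)$-submodule of $H$-Hermitian $2$-tensors and $\nabla^Q$ is a $\Gl(n,\Hn)\Sp(1)$-connection. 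Consequently $\nabla^Q+A$ already satisfies $\nabla\omega=0$ and conditions \eqref{con3}, \eqref{con4}. The further correction terms $\pi_S(\omega\otimes Z_3)$ and $\sum_a\zeta_4\circ J_a\otimes J_a$ lie, by components $\mf{(C)}$ and $\mf{(D)}$, in $[\E\Hh]^*\otimes(\so^*(2n)\oplus\sp(1))$, so adding them preserves $\omega$ and keeps $\nabla_X\sigma\in\Gamma(Q)$; hence $\nabla^{Q,\omega}$ is an almost quaternionic skew-Hermitian connection by Proposition~\ref{charcan}.

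Next I would compute the $[\E\Hh]^*$-isotypic part of the torsion. Writing $\tilde A:=A+\pi_S(\omega\otimes Z_3)+\sum_a\zeta_4\circ J_a\otimes J_a$, the torsion is $T^Q+\delta(\tilde A)$. The point is that $T^Q$ itself has no $[\E\Hh]^*$-component because $\nabla^Q$ is an Oproiu connection, i.e.\ minimal for $Q$ with respect to the Oproiu normalization, whose defining property (cf.\ \cite{AM, Oproiu1, Oproiu2}) is precisely the vanishing of the vectorial trace parts of the torsion of $Q$-connections in the chosen complement; so only $\delta(\tilde A)$ contributes to the $[\E\Hh]^*$-component. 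Now I invoke Lemma~\ref{kernel}: the four components $\mf{(A)}$--$\mf{(D)}$ of $\tilde A$ are parametrized by $(\zeta_1, Z_2^T, Z_3^T, \zeta_4)$, with $\zeta_1, Z_2^T$ already fixed inside $A$ (these are the $\mf{(A)}$ and $\mf{(B)}$ parts of the canonical $A$), and the choice $Z_3^T=\tfrac{\Tr_2(A)}{n+1}$, $\zeta_4=-\tfrac{\Tr_2(A)}{4(n+1)}$ is made exactly so that the image under the transition matrix \eqref{transition} lands in $\ke(2\Tr_1+\Tr_3)\cap\ke(\Tr_1-\Tr_4)$. Concretely, one substitutes these values into \eqref{transition} and checks the two linear relations $2\Tr_1(\delta(\tilde A))+\Tr_3(\delta(\tilde A))=0$ and $\Tr_1(\delta(\tilde A))-\Tr_4(\delta(\tilde A))=0$ hold identically in the remaining free parameters; since $\Tr_1(\delta(\tilde A))=-\Tr_2(\delta(\tilde A))$ always, this pins down the $[\E\Hh]^*$-component completely. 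I expect the main obstacle to be bookkeeping the normalizations: one must be careful that $\Tr_2(A)$ as it appears in the definition of $Z_3^T,\zeta_4$ refers to the trace of the full canonical tensor $A$ (whose $\mf{(A)}$ and $\mf{(B)}$ parts enter), and that the matrix \eqref{transition} is applied to the corrected parameter vector, not the original one — a sign or factor slip here is the easy way to get the wrong kernel.

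Finally, for the last assertion I would specialize to $\nabla^Q=\nabla^{Q,\vol}$, the unimodular Oproiu connection, which by construction satisfies $\nabla^{Q,\vol}\vol=0$. Since $A^{\vol}\cdot\vol=\Tr_2(A^{\vol})\,\vol$ (the identity used at the end of the proof of Lemma~\ref{kernel}) and $\nabla^{Q,\vol}$ preserves $\vol$, the requirement that the total connection $\nabla^{Q,\omega}$ also preserves $\vol$ — which it does, being a $\SO^*(2n)\Sp(1)$-connection and every such connection preserving the volume by the Corollary that almost qs-H structures are unimodular — forces $\Tr_2$ of the full correction to vanish. But $\Tr_2$ of $\pi_S(\omega\otimes Z_3)+\sum_a\zeta_4\circ J_a\otimes J_a$ is a nonzero multiple of $\Tr_2(A^{\vol})$ by the matrix in Lemma~\ref{kernel} together with the chosen values of $Z_3^T,\zeta_4$; matching these two computations gives $\Tr_2(A^{\vol})=0$, hence $Z_3^T=0$ and $\zeta_4=0$, so that $\nabla^{Q,\omega}=\nabla^{Q,\vol}+A^{\vol}$ as claimed. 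Alternatively, and perhaps more cleanly, one argues that both $\nabla^{Q,\vol}+A^{\vol}$ and the general $\nabla^{Q,\omega}$ are almost quaternionic skew-Hermitian connections whose torsions agree modulo the same kernel conditions, and then invokes Corollary~\ref{uniqueminimal}(3): the minimal connection with respect to the fixed normalization condition is unique, so the two must coincide.
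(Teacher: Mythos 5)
Your first two steps essentially reproduce the paper's argument, and they are sound: the preservation of $(Q,\omega)$ follows exactly as you say from Proposition \ref{charcan} and the fact that $\omega\cc A(\cdot,\cdot),\cdot\rr$ lands in $[\E\Hh]^*\otimes[S^2\E]^*$ while the corrections land in $[\E\Hh]^*\otimes(\so^*(2n)\oplus\sp(1))$; and substituting $Z_3^T=\tfrac{\Tr_2(A)}{n+1}=\tfrac{4n\zeta_1-Z_2^T}{n+1}$, $\zeta_4=-\tfrac{\Tr_2(A)}{4(n+1)}$ into \eqref{transition} does make $2\Tr_1+\Tr_3$ and $\Tr_1-\Tr_4$ vanish identically in $\zeta_1,Z_2^T$ (the check works out), while $T^Q$ contributes nothing to these traces since it lies in $\imm(\pi_H)$, which has no $[\E\Hh]^*$-isotypic piece. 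The genuine gap is elsewhere: you never prove that $\nabla^{Q,\omega}$ is independent of the chosen Oproiu connection, which is a central step of the paper's proof and is what makes the final identity $\nabla^{Q,\omega}=\nabla^{Q,\vol}+A^{\vol}$ a statement about the connection built from \emph{any} $\nabla^Q$ (and what makes $\nabla^{Q,\omega}$ the canonical object used in Theorem \ref{zeroprog}). Your main line only shows that the construction applied to $\nabla^{Q,\vol}$ returns $\nabla^{Q,\vol}+A^{\vol}$, because then $\Tr_2(A^{\vol})=0$. The paper closes the gap by Lemma \ref{kernel}: replacing $\nabla^Q$ by another Oproiu connection changes $A$ by $-\zeta_1\otimes\id+\pi_A(\omega\otimes 4\zeta_1^T)$, changes $\Tr_2(A)$ by $-4(n+1)\zeta_1$, hence changes $Z_3^T$ by $-4\zeta_1^T$ and $\zeta_4$ by $\zeta_1$, and the total change of the correction lies in $\ke(\delta)$; so $T^{Q,\omega}$, and then by $\fr{g}^{(1)}=0$ the connection itself, is unchanged. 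Your ``alternative'' appeal to Corollary \ref{uniqueminimal}(3) does not substitute for this as stated: to invoke uniqueness of a minimal connection you must exhibit one fixed invariant complement of $\delta\big([\E\Hh]^*\otimes(\so^*(2n)\oplus\sp(1))\big)$ containing both torsions, i.e.\ you must know that the trace-kernel conditions cut out a copy of $[\E\Hh]^*$ complementary to the two copies sitting inside that image — and this complementarity is precisely what is verified later, in Theorem \ref{zeroprog}, not something your argument supplies.

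Two smaller points. First, your assertion that the Oproiu property is ``precisely the vanishing of the vectorial trace parts'' is loose; what is actually used (and true) is that $T^Q$ lies in $\imm(\pi_H)\cap\ke(\Tr_4)$, and $\imm(\pi_H)\cong[\K S^3\Hh]^*\oplus[\Lambda^3\E S^3\Hh]^*\oplus[\E S^3\Hh]^*$ contains no $[\E\Hh]^*$-summand, so all three traces kill it. Second, in the last paragraph the claim that $\Tr_2$ of $\pi_S(\omega\otimes Z_3)+\sum_a\zeta_4\circ J_a\otimes J_a$ is ``a nonzero multiple of $\Tr_2(A^{\vol})$'' is false: the $\Tr_2$-row of the matrix in Lemma \ref{kernel} is $(4n,-1,0,0)$, so these terms have vanishing $\Tr_2$. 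Fortunately this only simplifies your argument — $\Tr_2$ of the full correction equals $\Tr_2(A^{\vol})$, which must vanish since both $\nabla^{Q,\vol}$ and $\nabla^{Q,\omega}$ preserve $\vol$ — and it agrees with the paper's shorter justification that $\nabla^{Q,\vol}$ is an $\Sl(n,\Hn)\Sp(1)$-connection, so $\Tr_2(A^{\vol})=0$ and $Z_3^T=\zeta_4=0$.
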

 \begin{proof}
 We know  that $\omega\cc A(\cdot\,, \cdot), \cdot\rr$ has values in $[\E\Hh]^*\otimes [S^2\E]^*$, while $ \pi_S(\omega\otimes Z_3)+\sum_{a=1}^3 \zeta_4\circ J_a\otimes J_a$ has values in $[\E\Hh]^*\otimes (\so^*(2n)\oplus \sp(1))\,.$   Thus,  since $A$ satisfies the condition \eqref{con3} by definition, it follows that $A+\pi_S(\omega\otimes Z_3)+\sum_{a=1}^3 \zeta_4\circ J_a\otimes J_a$ will satisfy  the same relation.  Moreover,  for the same reasons as in Theorem \ref{starconnection} we conclude that the relation \eqref{con4} is valid for the tensor field  $A+\pi_S(\omega\otimes Z_3)+\sum_{a=1}^3 \zeta_4\circ J_a\otimes J_a$.  Hence,  Proposition \ref{charcan} is satisfied, and consequently $\nabla^{Q,\omega}$ is an almost quaternionic skew-Hermitian connection. Next, we need to show that $\nabla^{Q,\omega}$ does not depend on the choice of $\nabla^{Q}$.  By Lemma \ref{zeroprog1} and Corollary \ref{uniqueminimal}, it suffices to show that $T^{Q,\omega}$ does not depend on the choice of $\nabla^{Q}$. However, it is well-known that all Oproiu connections have the same torsion $T^Q$ and hence $T^{Q,\omega}=T^Q+\delta\cc A+\pi_S(\omega\otimes Z_3)+\sum_{a=1}^3 \zeta_4\circ J_a\otimes J_a\rr$. 
Since the difference of two  Oproiu connections belongs to the kernel $\ke(\delta)$ described in Lemma \ref{kernel}, we  conclude that such an element in the kernel of $\delta$ will change  $A$ by $-\zeta_1\otimes \id+ \pi_A(\omega \otimes 4\zeta_1^T)$ for some 1-form $\zeta_1$.  We compute
\[
\Tr_2(-\zeta_1\otimes \id+ \pi_A(\omega \otimes 4\zeta_1^T))=-4(n+1)\zeta_1\,,
\]
and hence we conclude that $Z_3$ will change by $-4\zeta_1^T$ and $\zeta_4$ will change by $\zeta_1$.  All together, this change takes the form
$
-\zeta_1\otimes \id+ \pi_A(\omega \otimes 4\zeta_1^T)-4\pi_S(\omega\otimes \zeta_1^T)+\sum_{a=1}^3 \zeta_1\circ J_a\otimes J_a,
$
and hence by Lemma \ref{kernel} we deduce that it belongs to  $\ke(\delta)$. This shows that  $T^{Q,\omega}$ is independent of the Oproiu connection. 
Now,  by using the formula \eqref{transition} we  deduce that the component of the torsion $T^{Q,\omega}$ isomorphic to $[\E\Hh]^*$ belongs to $\ke(2\Tr_1+\Tr_3)\cap \ke(\Tr_1-\Tr_4)$. Finally, since the unimodular Oproiu connection is a $\Sl(n,\Hn)\Sp(1)$-connection, by Lemma \ref{kernel} the corresponding tensor field $A^{\vol}$ satisfies $\Tr_2(A^{\vol})=0$ and the last claim follows, because then we have $Z_3^T=\zeta_4=0$.
 \end{proof}
 \begin{prop}\label{parallel2}
An   almost quaternionic skew-Hermitian connection $\nabla$ on an \textsf{almost qs-H manifold} $(M, Q, \omega)$ satisfies,
\begin{eqnarray*}
\nabla g_{J_a}&=&\varphi_{c}\otimes g_{J_{b}}-\varphi_{b}\otimes g_{J_{c}}\,,\\
\nabla h&=&\nabla \Phi=0\,,
\end{eqnarray*}
for any cyclic permutation $(a,b,c)$ of $(1,2,3)$  and some local 1-forms $\varphi_{a}$ $(a=1, 2, 3)$ on $M$, depending on a local admissible frame $\{J_1, J_2, J_3\}$ of $Q$.
Hence, $\nabla$ is not (necessarily) a metric connection with respect to any of the three pseudo-Riemannian metrics $g_{J_a}$ for $a=1, 2, 3$, but it preserves the quaternionic skew-Hermitian form $h$ and the fundamental 4-tensor $\Phi$.
\end{prop}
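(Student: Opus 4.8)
The plan is to prove all three identities by direct Leibniz-rule computations, using only that an almost quaternionic skew-Hermitian connection $\nabla$ satisfies $\nabla\omega = 0$ together with the structure equation $\nabla_X J_a = \varphi_c(X) J_b - \varphi_b(X) J_c$ (valid for any cyclic permutation $(a,b,c)$ of $(1,2,3)$, any local admissible frame $\{J_1,J_2,J_3\}$ of $Q$, and suitable local $1$-forms $\varphi_a$) recorded in Section \ref{adapatcon}. For the metrics, I would differentiate $g_{J_a}(X,Y) = \omega(X, J_a Y)$ and use $\nabla\omega = 0$ to obtain $(\nabla_X g_{J_a})(Y,Z) = \omega(Y,(\nabla_X J_a)Z)$; inserting the structure equation and recognizing $\omega(\cdot\,, J_b\,\cdot) = g_{J_b}$ immediately yields $\nabla_X g_{J_a} = \varphi_c(X)\,g_{J_b} - \varphi_b(X)\,g_{J_c}$, which is the first assertion. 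This is the \textsf{qs-H} analogue of the ``contraction of $\nabla$-parallel tensors'' argument behind Proposition \ref{parallel1}, with the correction terms now stemming from $\nabla J_a \neq 0$.

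For the form $h = \omega\,\id_{TM} + \sum_a g_{J_a} J_a$, applying $\nabla_X$ and discarding $(\nabla_X\omega)\id_{TM}$ and $\omega\,\nabla_X\id_{TM}$ leaves $\nabla_X h = \sum_a (\nabla_X g_{J_a}) J_a + \sum_a g_{J_a}(\nabla_X J_a)$. Substituting the previous step together with the structure equation, each monomial $\varphi_i(X)\,g_{J_j} J_k$ arises exactly twice with opposite signs — once from varying $g_{J_j}$, once from varying $J_k$ — so the whole sum collapses to zero, giving $\nabla h = 0$. Likewise, from $\Phi = \sum_a g_{J_a}\odot g_{J_a}$ one gets $\nabla_X\Phi = 2\sum_a (\nabla_X g_{J_a})\odot g_{J_a}$, and after inserting the first step the six resulting terms cancel in pairs by the symmetry of $\odot$ (for instance $\varphi_3(X)\,g_{J_2}\odot g_{J_1}$ from $a=1$ against $-\varphi_3(X)\,g_{J_1}\odot g_{J_2}$ from $a=2$), whence $\nabla\Phi = 0$.

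There is no genuine obstacle here: the content is routine, and the only point demanding care is the bookkeeping of the cyclic sums in the last two computations, which I would handle by fixing the cyclic ordering $(1,2,3)\to(2,3,1)\to(3,1,2)$ once and expanding, or alternatively by an $\Sp(1)$-equivariance argument. It is worth stating explicitly that, since $h$ and $\Phi$ are globally defined and stabilized by $\SO^*(2n)\Sp(1)$ (Proposition \ref{usefrelskewherm} and Corollary \ref{fund2tensorQ}), the conclusions $\nabla h = 0$ and $\nabla\Phi = 0$ are automatically independent of the chosen local admissible frame, while the identity for $g_{J_a}$ transforms consistently under a change of admissible frame with the $\varphi_a$ adjusting accordingly; finally, $\nabla$ need not preserve any individual $g_{J_a}$ precisely because the $1$-forms $\varphi_a$ are generically nonzero.
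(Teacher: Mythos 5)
Your proposal is correct and follows essentially the paper's own route: the identity for $\nabla g_{J_a}$ comes from $\nabla\omega=0$ plus the structure equation $\nabla_X J_a=\varphi_c(X)J_b-\varphi_b(X)J_c$, and the vanishing of $\nabla h$ and $\nabla\Phi$ then reflects the fact that the resulting correction terms lie in the $\sp(1)$-action, which annihilates $h$ and $\Phi$. The only cosmetic difference is that you verify this cancellation by explicit expansion of the cyclic sums, whereas the paper simply invokes the triviality of the $\sp(1)$-action on $h$ and $\Phi$ (your suggested $\Sp(1)$-equivariance alternative).
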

\begin{proof}
Since $\omega$ is $\nabla$-parallel, the covariant derivatives $\nabla g_{J_a}$ are induced by the covariant derivatives of the local admissible frame, which  are given by the claimed action of elements of $[\E\Hh]^*\otimes \sp(1)$. Since the Lie algebra $\sp(1)$ acts trivially on $h$ and $\Phi$, we get the other claims directly.  
\end{proof}

Let us consider the adapted connections $\nabla^{H, \omega}$ and $\nabla^{Q, \omega}$ constructed in Theorems \ref{starconnection} and \ref{starconnectionsp1}, respectively.  Due to  Propositions \ref{parallel1} and \ref{parallel2}   it makes sense to consider  the  covariant derivatives $\nabla^{H}\Phi$ and $\nabla^{Q}\Phi$, respectively.  It follows that 
 these  covariant derivatives take values in  modules which are naturally isomorphic to submodules of the intrinsic torsion corresponding to $\SO^*(2n)$- and $\SO^*(2n)\Sp(1)$-structures on a $4n$-dimensional manifold $M$, respectively. These modules  are related to 1st-order integrability conditions which we analyze   in detail in  the second part of this work, together with $T^{Q, \omega}$ (see  Sections 1 and 2 in \cite{CGWPartII}).   
  The covariant derivative $\nabla^{\omega}\Phi$, where $\nabla^{\omega}$ is any almost symplectic connection on $M$, can be used similarly,  while obviously a similar idea can be carried out  via the  quaternionic skew-Hermitian form $h$ and the covariant derivatives $\nabla^{H}h$, $\nabla^{Q, \vol}h$,  and $\nabla^{\omega}h$, respectively. 


\subsection{Decomposition of the space of torsion tensors and intrinsic torsion} \label{intrisictorsion}  

 Next we  present the decomposition  of  the module $\Lambda^2[\E\Hh]^*\otimes [\E\Hh]$ into  submodules   with respect to $\SO^*(2n)\Sp(1)$- and $\SO^*(2n)$-action, respectively.
 
 \begin{prop}\label{genCar1}  Let $(M, Q, \omega)$ be an \textsf{almost qs-H manifold}. 
	Then  the following  $\SO^*(2n)\Sp(1)$-equivariant decompositions hold (and should be read according to the conventions given in Section \ref{ehformsec}):
\begin{eqnarray*}
	\Lambda^2[\E\Hh]^*\otimes [\E\Hh]&\cong& [(\Lambda^3 \E\oplus\K\oplus\E)\otimes S^3\Hh]^*
	\oplus [(\Lambda^3 \E\oplus2\K\oplus3\E\oplus S^3_0 \E)\otimes \Hh]^*\,,\\
	\delta([\E\Hh]^*\otimes \so^\ast(2n))&\cong& [(\Lambda^3 \E\oplus\K\oplus\E)\otimes \Hh]^*\,,\\
	\delta([\E\Hh]^*\otimes \fr{sp}(1))&\cong& [\E\otimes (S^3\Hh\oplus \Hh)]^*\,.
	\end{eqnarray*}
	All the components in the decompositions are irreducible as $\SO^*(2n)\Sp(1)$-representations, with the exception of $\K$ for $n=2$ and of $\Lambda^3 \E$ for $n=3$. Moreover, all  these  $\SO^*(2n)\Sp(1)$-modules are non-equivalent, apart from the stated multiplicities, and the isomorphism $\Lambda^3 \E\cong \E$ for $n=2$.
	\end{prop}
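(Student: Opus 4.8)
The plan is to decompose $\Lambda^2[\E\Hh]^*\otimes[\E\Hh]$ as an $\SO^*(2n)\Sp(1)$-module by first separating the $\Hh$-factors and then the $\E$-factors, using the $\E\Hh$-formalism together with the representation-theoretic facts recorded in Section~\ref{ehformsec}. First I would write $\Lambda^2[\E\Hh]^*$ using Proposition~\ref{modules}, which gives the two summands $[\Lambda^2\E]^*\otimes\sp(1)$ and $[S^2_0\E]^*\oplus\langle\omega_0\rangle$; note that $\sp(1)\cong[S^2\Hh]^*$ by Lemma~\ref{invarena}(2) and $\langle\omega_0\rangle$ is the trivial module, so at the level of $\Hh$-weights the two pieces of $\Lambda^2[\E\Hh]^*$ carry $S^2\Hh$ and the trivial representation respectively. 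Tensoring with $[\E\Hh]=[\E\otimes\Hh]$ and using the classical $\Sp(1)$-fusion rules $S^2\Hh\otimes\Hh\cong S^3\Hh\oplus\Hh$ and $\Hh\otimes\mathbf{1}\cong\Hh$ gives the $\Hh$-type bookkeeping: the $S^3\Hh$-isotypic part comes only from $[\Lambda^2\E]^*\otimes[S^2\Hh]^*\otimes[\E\Hh]$, and contributes $[\Lambda^2\E\otimes\E]^*\otimes S^3\Hh$, while the $\Hh$-isotypic part collects contributions from both summands, namely $[\Lambda^2\E\otimes\E]^*\otimes\Hh$ (from the $S^3\Hh\oplus\Hh$ fusion) plus $[(S^2_0\E\oplus\langle\omega_0\rangle)\otimes\E]^*\otimes\Hh$.

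Second, I would resolve the $\E$-factors. The tensor product $\Lambda^2\E\otimes\E$ for $\SO^*(2n)$ (equivalently $\SO(2n,\C)$) decomposes, for generic $n$, as $\Lambda^3\E\oplus\K\oplus\E$, where $\K=R(\pi_1+\pi_2)$ is the $\SO^*(2n)$-module introduced in Section~\ref{ehformsec} (the ``mixed symmetry'' piece of $\Lambda^2\otimes V$), and the $\E$-summand is the contraction via $g_\E$ (available since $\E$ is self-dual for $\SO^*(2n)$). Likewise $S^2_0\E\otimes\E\cong S^3_0\E\oplus\K\oplus\E$ and $\langle\omega_0\rangle\otimes\E\cong\E$, the extra $\E$ again coming from the $g_\E$-contraction. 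Substituting these into the $\Hh$-graded pieces above yields exactly the claimed
\[
[(\Lambda^3\E\oplus\K\oplus\E)\otimes S^3\Hh]^*\oplus[(\Lambda^3\E\oplus 2\K\oplus 3\E\oplus S^3_0\E)\otimes\Hh]^*,
\]
with the multiplicity $2\K$ and $3\E$ appearing precisely because $\K$ and $\E$ each arise from more than one of the source summands. For the images of $\delta$, I would use that $\delta$ is $\SO^*(2n)\Sp(1)$-equivariant and $[\E\Hh]^*\otimes\so^*(2n)\cong[\E]^*\otimes[\Hh]^*\otimes[\Lambda^2\E]^*$ has $\Hh$-type only $\Hh$; since $\delta$ is injective by Lemma~\ref{zeroprog1}, its image is the full $[\Lambda^2\E\otimes\E]^*\otimes\Hh$, which decomposes as $[(\Lambda^3\E\oplus\K\oplus\E)\otimes\Hh]^*$. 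For $\sp(1)$ we have $[\E\Hh]^*\otimes\sp(1)\cong[\E]^*\otimes[\Hh]^*\otimes[S^2\Hh]^*$, whose $\Hh$-content is $\Hh\otimes S^2\Hh\cong S^3\Hh\oplus\Hh$ and whose $\E$-content is just $\E$; injectivity of $\delta$ on this factor (again Lemma~\ref{zeroprog1}, applied to $\fr{so}^*(2n)\oplus\sp(1)$, together with the fact that $\delta(\sp(1)\text{-part})$ and $\delta(\so^*(2n)\text{-part})$ meet only in the $\fr{so}^*(2n)\oplus\sp(1)$-overlap which is zero) gives $\delta([\E\Hh]^*\otimes\sp(1))\cong[\E\otimes(S^3\Hh\oplus\Hh)]^*$.

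Finally, the irreducibility and inequivalence claims follow from the standard classification of $\SO(2n,\C)$-irreducibles combined with $\Sp(1)$-irreducibility of each $S^k\Hh$: all listed highest weights $\pi_3$ (for $\Lambda^3\E$), $\pi_1+\pi_2$ (for $\K$), $\pi_1$ (for $\E$), $3\pi_1$ (for $S^3_0\E$), tensored with $R(3\theta)$ or $R(\theta)$, are distinct and irreducible for $n\ge 4$; the exceptions are exactly the low-rank coincidences already recorded in Table~\ref{Table1}, namely $\Lambda^3\E$ becomes reducible for $n=3$ (it splits as $R(2\pi_2)\oplus R(2\pi_3)$) and collapses to $\E$ for $n=2$ (both are $R(\pi_1+\pi_2)$ of dimension $4$), and $\K$ becomes reducible for $n=2$, as indicated in the same table. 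I expect the main obstacle to be keeping careful track of the $\E$-multiplicities: one must verify that the $\E$-summand genuinely occurs with multiplicity three in the $\Hh$-isotypic part (once from $\Lambda^2\E\otimes\E$, once from $S^2_0\E\otimes\E$, once from $\langle\omega_0\rangle\otimes\E$) and that none of these coincide, and similarly that $\K$ occurs twice; this is purely a matter of bookkeeping with the branching rules but is where an error would most easily creep in. A clean way to double-check is a dimension count using Table~\ref{Table1}, which I would carry out for, say, $n=4$ to confirm both sides agree.
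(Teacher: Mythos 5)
Your proposal is correct and follows essentially the same route as the paper: decompose $\Lambda^2[\E\Hh]^*$ into its $S^2\Hh$- and trivial $\Hh$-isotypic pieces (the paper does this by direct complexification, you via Proposition \ref{modules}, which is the same content), tensor with $[\E\Hh]$ using the $\Sp(1)$ Clebsch--Gordan rules and the $\SO(2n,\C)$ branchings $\Lambda^2\E\otimes\E\cong\Lambda^3\E\oplus\K\oplus\E$, $S^2_0\E\otimes\E\cong S^3_0\E\oplus\K\oplus\E$, and then identify the images of $\delta$ with their sources via the injectivity from Lemma \ref{zeroprog1}. The low-rank exceptions you list from Table \ref{Table1} are exactly those invoked in the paper, so no gap remains.
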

\begin{proof}
	In terms of the $\E\Hh$-formalism, $\E\Hh$ is the complex tensor product $\E\otimes_{\C}R(\theta)$, and it is irreducible as a complex $\so^\ast(2n)\oplus \fr{sp}(1)$-representation. 	We now compute the decomposition of $\Lambda^2 [\E\Hh]^*$ via complexification (note that all $\so^\ast(2n)\oplus \fr{sp}(1)$-modules that appear  below  have real type, so this is identical to the real decomposition). We obtain
	\begin{eqnarray*}
		\Lambda^2 \E\Hh &=& \big(\Lambda^2\E \otimes S^2R(\theta)\big) \oplus \big(S^2\E \otimes \Lambda^2R(\theta)\big) \\
		&=&\big(\Lambda^2\E \otimes S^2\Hh\big)\oplus \big(S^2_0\E \oplus R(0)\big)\,.
	\end{eqnarray*}
As a consequence, we deduce that
	\begin{eqnarray*}
		\Lambda^2[\E\Hh]^*\otimes [\E\Hh] &=&[ \Lambda^2\E \otimes S^2\Hh \oplus S^2_0\E \oplus R(0) ]^* \otimes [\E\Hh] \\
		&=&[\big(\Lambda^2\E\otimes \E\big) \otimes \big(S^2R(\theta)\otimes R(\theta) \big)]^* \\
		&&\oplus [\big(S^2_0\E\otimes \E)\big) \otimes  R(\theta)]^* \oplus [\E\Hh]^*\\
		&=& [(\Lambda^3 \E\oplus\K\oplus\E)\otimes S^3\Hh]^*
	\oplus [(\Lambda^3 \E\oplus\K\oplus\E)\otimes \Hh]^*\\
&& \oplus [(S^3_0 \E\oplus \K\oplus\E)\otimes \Hh]^* \oplus [\E\Hh]^*\, .
	\end{eqnarray*}
	This gives rise to the first result. We also compute
	\begin{align*}
		[\E\Hh]^*\otimes \so^\ast(2n)&= [\E\otimes R(\theta)\otimes \Lambda^2\E]^*=[(\Lambda^3 \E\oplus\K\oplus\E)\otimes \Hh]^*,\\ 
      [\E\Hh]^*\otimes \fr{sp}(1)&= [\E\otimes R(\theta)\otimes  R(2\theta)]^*= [\E\otimes (R(3\theta)\oplus R(\theta))]^*=[\E\otimes (S^3\Hh\oplus \Hh)]^*.
	\end{align*}
Since by Lemma \ref{zeroprog1} for $\fr{g}=\so^\ast(2n)\oplus \fr{sp}(1)$ or $\fr{g}=\so^\ast(2n)$, the Spencer differential $\delta$ is injective, our final statement easily   follows by the above decomposition. 
\end{proof}

 As a  corollary (in combination with the results in Table \ref{Table1}) we obtain the  number of algebraic types depending on the irreducible intrinsic torsion modules of $\SO^*(2n)\Sp(1)$-structures on a $4n$-dimensional  smooth manifold $M$ $(n>1)$.
\begin{corol}\label{algebtypes}
For $n>3$,  the  intrinsic torsion module corresponding to $\fr{so}^*(2n)\oplus \fr{sp}(1)$ admits the following $\SO^*(2n)\Sp(1)$-equivariant decomposition into irreducible submodules:
\[
\begin{tabular}{l c c c c c c c c c c}
$\mc{H}(\so^\ast(2n) \oplus \fr{sp}(1))$  &  $\cong$ &   $\mc{X}_1$ &  $\oplus$ &    $\mc{X}_2$  &  $\oplus$  &   $\mc{X}_3$  & $\oplus$ &     $\mc{X}_4$   & $\oplus$ &   $\mc{X}_5$\\
& $=$ &  $[\K S^3\Hh]^*$ &  $\oplus$ &    $[\Lambda^3\E S^3\Hh]^*$ & $\oplus$ &   $[\K\Hh]^*$ & $\oplus$ &   $[\E\Hh]^*$  & $\oplus$ & $ [S^3_0 \E \Hh]^*$,
\end{tabular}
\]
where $\mc{X}_3\oplus\mc{X}_4=[\K\Hh]^*\oplus[\E\Hh]^*\subset\Lambda^{3}[\E\Hh]^*$. 
 Consequently, for $n>3$ there exist  five main types of $\SO^*(2n)\Sp(1)$-structures, $\mc{X}_1, \ldots, \mc{X}_5$, defined as above, and up to $2^5=32$ algebraic types of $\SO^*(2n)\Sp(1)$-geometries. 
\begin{itemize}
\item For $n=3$, $\mc{X}_2$ decomposes into two irreducible $\SO^*(6)\Sp(1)$-modules, namely 
\begin{equation}\label{mcX2}
\mc{X}_2=\mc{X}_2^{+} \oplus   \mc{X}_2^-\cong [R(2\pi_2)S^3\Hh]^*\oplus [R(2\pi_3)S^3\Hh]^*\,.
\end{equation}
\item For $n=2$,  both $\mc{X}_1$ and $\mc{X}_3$ decompose into two irreducible $\SO^*(4)\Sp(1)$-modules, namely
\renewcommand\arraystretch{1.5}
\begin{equation}\label{mcX13}
\left\{
\begin{tabular}{ll}
$\mc{X}_1=$ & $\mc{X}_1^{+}\oplus \mc{X}_1^-\cong [R(\pi_1+3\pi_2)S^3\Hh]^*\oplus[R(3\pi_1+\pi_2)S^3\Hh]^*\,,$\\
$\mc{X}_3=$ &$\mc{X}_3^{+}\oplus \mc{X}_3^-\cong [R(\pi_1+3\pi_2)\Hh]^*\oplus[R(3\pi_1+\pi_2)\Hh]^*\,.$
\end{tabular}\right.
\end{equation}
\end{itemize}
Consequently,  there exist up to $2^6$ algebraic types of $\SO^*(6)\Sp(1)$-geometries,  and up to $2^7$ algebraic types of $\SO^*(4)\Sp(1)$-geometries.
\end{corol}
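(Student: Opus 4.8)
The plan is to reduce everything to representation theory of $\so^\ast(2n)\oplus\sp(1)$, which has already been set up in Proposition \ref{genCar1}. The Spencer cohomology $\mc{H}(\so^\ast(2n)\oplus\sp(1))$ is by definition the cokernel of the Spencer differential $\delta$; since $\delta$ is injective (Lemma \ref{zeroprog1}), the dimension of $\mc{H}(\so^\ast(2n)\oplus\sp(1))$ is just $\dim(\Lambda^2[\E\Hh]^*\otimes[\E\Hh]) - \dim([\E\Hh]^*\otimes(\so^\ast(2n)\oplus\sp(1)))$, and more importantly its decomposition into irreducibles is obtained by \emph{subtracting} the two $\delta$-images listed in Proposition \ref{genCar1} from the big module. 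First I would carry out this subtraction componentwise in the $\E\Hh$-bigrading: from
\[
[(\Lambda^3\E\oplus\K\oplus\E)\otimes S^3\Hh]^*\oplus[(\Lambda^3\E\oplus2\K\oplus3\E\oplus S^3_0\E)\otimes\Hh]^*
\]
remove $\delta([\E\Hh]^*\otimes\so^\ast(2n))\cong[(\Lambda^3\E\oplus\K\oplus\E)\otimes\Hh]^*$ and $\delta([\E\Hh]^*\otimes\sp(1))\cong[\E\otimes(S^3\Hh\oplus\Hh)]^*$. Matching up the $S^3\Hh$-isotypic pieces kills one copy of $[\E S^3\Hh]^*$, leaving $[(\Lambda^3\E\oplus\K)\otimes S^3\Hh]^*$; matching the $\Hh$-isotypic pieces removes $[(\Lambda^3\E\oplus\K\oplus\E)\otimes\Hh]^*$ and one more $[\E\Hh]^*$, leaving $[(\K\oplus\E\oplus S^3_0\E)\otimes\Hh]^*$. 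This yields exactly $\mc{X}_1\oplus\cdots\oplus\mc{X}_5$ as stated, and the naming $\mc{X}_i$ is just a relabelling.

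\textbf{The two low-dimensional refinements.} For $n=3$ and $n=2$ some of the $\SO^\ast(2n)$-modules $\Lambda^3\E$, $S^2_0\E$, etc.\ that are irreducible for generic $n$ break up, as recorded in Table \ref{Table1}; so the task is just to propagate these splittings through the above bookkeeping. For $n=3$ one has $\Lambda^3\E\cong R(2\pi_2)\oplus R(2\pi_3)$ (the two half-spinor-type summands of $\so^\ast(6)\cong\su(3,1)$), while $\K=R(\pi_1+\pi_2+\pi_3)$ and $S^3_0\E=R(3\pi_1)$ remain irreducible; the only affected piece of $\mc{H}$ is $\mc{X}_2=[\Lambda^3\E\,S^3\Hh]^*$, giving \eqref{mcX2}. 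For $n=2$, $\so^\ast(4)\cong\su(2)\oplus\su(1,1)$ is not simple, and $\K=R(\pi_1+3\pi_2)\oplus R(3\pi_1+\pi_2)$, $\Lambda^3\E\cong\E=R(\pi_1+\pi_2)$ (using Table \ref{Table1}), while $S^3_0\E$ and $S^2_0\E$ stay irreducible; hence both $\mc{X}_1=[\K S^3\Hh]^*$ and $\mc{X}_3=[\K\Hh]^*$ each split into two, yielding \eqref{mcX13}, whereas $\mc{X}_2=[\Lambda^3\E S^3\Hh]^*\cong[\E S^3\Hh]^*$ stays irreducible. One subtle point to check here is that for $n=2$ the identification $\Lambda^3\E\cong\E$ does not cause $\mc{X}_2$ to merge with $\mc{X}_4=[\E\Hh]^*$: they sit in different $\Hh$-weight spaces ($S^3\Hh$ versus $\Hh$), so no new multiplicity is created.

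\textbf{Counting algebraic types.} Once $\mc{H}$ is written as a direct sum of $k$ pairwise non-isomorphic irreducibles, the algebraic types are the $\SO^\ast(2n)\Sp(1)$-submodules of $\mc{H}$ that arise as images of intrinsic torsion, and by the standard argument (each type is specified by which irreducible components the torsion is allowed to be nonzero in, subject to no a priori relation because the summands are inequivalent) there are $2^k$ of them. So $k=5$ for $n>3$ gives $2^5$; the extra splittings give $k=6$ for $n=3$ ($2^6$) and $k=7$ for $n=2$ ($2^7$). The main obstacle — and the part that needs genuine care rather than formal manipulation — is verifying the \emph{multiplicity} claims: one must be sure that after the subtraction the surviving $[\K\Hh]^*$, $[\E\Hh]^*$, $[S^3_0\E\Hh]^*$ really occur with multiplicity one (so that ``$2^k$'' is literally correct and not an overcount), and that across the $S^3\Hh$ and $\Hh$ sectors no unexpected isomorphism identifies two summands; here one invokes the last sentence of Proposition \ref{genCar1}, which already asserts the non-equivalences (apart from the listed multiplicities and the $n=2$ coincidence $\Lambda^3\E\cong\E$), so the counting follows. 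For $n=2$ one should also double-check that $\mc{X}_2$ and the two pieces of $\mc{X}_3$ remain distinct despite $\Lambda^3\E\cong\E=R(\pi_1+\pi_2)$ and $\K=R(\pi_1+3\pi_2)\oplus R(3\pi_1+\pi_2)$ being built from the same $\so^\ast(4)$-irreducibles — again resolved by the $\Hh$-grading.
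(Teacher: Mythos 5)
Your proposal is correct and follows essentially the same route as the paper: the corollary is obtained exactly by quotienting the decomposition of $\Lambda^2[\E\Hh]^*\otimes[\E\Hh]$ in Proposition \ref{genCar1} by the two $\delta$-images (using injectivity of $\delta$ from Lemma \ref{zeroprog1}), and then feeding in the low-dimensional splittings of $\K$ and $\Lambda^3\E$ from Table \ref{Table1} for $n=2,3$. Your extra remarks on the $\Hh$- versus $S^3\Hh$-grading and on the $n=2$ coincidence $\Lambda^3\E\cong\E$ correctly address the only potential multiplicity pitfalls, so nothing is missing.
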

As it is customary to the theory of $G$-structures,  we split the geometries into classes with respect  to the algebraic types. 
 \begin{defi}\label{quatypes}
	 For $n>3$, a $\SO^*(2n)\Sp(1)$-structure  is said to be of \textsf{pure type $\mc{X}_i$} for $i=1,\ldots, 5$, if the intrinsic torsion takes values in the corresponding irreducible submodule. Similarly, we say that a $\SO^*(2n)\Sp(1)$-structure is of   \textsf{mixed type $\mc{X}_{i_1\ldots i_j}$} for $1\leq i_1< \ldots< i_j\leq 5$, if the intrinsic torsion takes values in the module $\mc{X}_{i_1}\oplus\cdots\oplus\mc{X}_{i_j}$. For instance, mixed type $\mc{X}_{135}$ means that the intrinsic torsion takes values in $\mc{X}_1\oplus\mc{X}_3\oplus\mc{X}_5$.  Similar notations are adapted for $n=2, 3$.
 \end{defi}

  Let  us discuss now the case of $\SO^*(2n)$.   Assume that $W$ is  some   $\SO^*(2n)$-module. Then we obtain the following branching from  $\SO^*(2n)\Sp(1)$-modules  to $\SO^*(2n)$-modules:
\[
[W\otimes \Hh]^*\cong W^*\,,\quad [W\otimes S^3\Hh]^*\cong 2W^*.
\]
Therefore,  the following occurs as a simple corollary of Proposition \ref{genCar1}, where for the low-dimensional cases we again rely on  Table \ref{Table1}.
\begin{corol}\label{genCar2}
\textsf{1)} As  $\SO^\ast(2n)$-modules, $\Lambda^2[\E\Hh]^*\otimes [\E\Hh]$ and the image $\delta([\E\Hh]^*\otimes \so^\ast(2n))$ admit the following $\SO^*(2n)$-equivariant (irreducible for $n>3$) decompositions:
\begin{align*}
	\Lambda^2[\E\Hh]^*\otimes [\E\Hh]&\cong 3\Lambda^3 \E^*\oplus4\K^*\oplus5\E^*\oplus S^3_0 \E^*\,,\\
	\delta([\E\Hh]^*\otimes \so^\ast(2n))&\cong \Lambda^3 \E^*\oplus\K^*\oplus\E^*\,.
	\end{align*}
All the components in the decompositions are irreducible as $\SO^*(2n)$-representations, with the exception of $\K$ for $n=2$ and of $\Lambda^3 \E$ for $n=3$. As a consequence, in terms of $\SO^*(2n)\Sp(1)$-modules we have the decomposition
{\small \[
\begin{tabular}{l c c c c c c c c c c c c c c}
$\mc{H}(\so^\ast(2n))$  &  $\cong$ &   $\mc{X}_1$ &  $\oplus$ &    $\mc{X}_2$  &  $\oplus$  &   $\mc{X}_3$  & $\oplus$ &     $\mc{X}_4$   & $\oplus$ &   $\mc{X}_5$ & $\oplus$ &    $\mc{X}_6$   & $\oplus$ &   $\mc{X}_7$\\
& $=$ &  $[\K S^3\Hh]^*$ &  $\oplus$ &    $[\Lambda^3\E S^3\Hh]^*$ & $\oplus$ &   $[\K\Hh]^*$ & $\oplus$ &   $[\E\Hh]^*$  & $\oplus$ & $ [S^3_0 \E \Hh]^*$ & $\oplus$ & $[\E S^3\Hh]^*$ & $\oplus$ &  $[\E\Hh]^*$ 
\end{tabular}
\]}
where  $\mc{X}_3\oplus\mc{X}_4=[\K\Hh]^*\oplus[\E\Hh]^*\subset\Lambda^{3}[\E\Hh]^*$,  $\mc{X}_6= [\E S^3\Hh]^*\subset \delta\big([\E\Hh]^*\otimes \sp(1)\big)$,  
\[
\mc{X}_7= [\E\Hh]^*= \Big\{\delta\big(\sum_{a=1}^3 \zeta\circ J_a\otimes J_a\big)\in \delta\big([\E\Hh]^*\otimes \sp(1)\big): \zeta\in [\E\Hh]^*\Big\}\,,
\]
and $H=\{J_a : a=1, 2, 3\}$ denotes the corresponding almost hypercomplex structure. 
On the other hand, in terms of 
  $\SO^*(2n)$-modules we get the decomposition 
\begin{equation}\label{spen2}
\mc{H}(\so^\ast(2n))\cong  2\Lambda^3\E^*\oplus3\K^*\oplus4\E^*\oplus S^3_0 \E^*\,.
\end{equation}
 \textsf{2)}  Consequently, for $n>3$ there exist up to  $2^{7}$ $\Sp(1)$-invariant algebraic types of $\SO^*(2n)$-geometries, and totally up to $2^{10}$ algebraic types of $\SO^*(2n)$-geometries. \\
Moreover, for $n=2, 3$ the following hold:
\begin{itemize}
\item For $n=2$  there exist  up to $2^9$  $\Sp(1)$-invariant  algebraic types of $\SO^*(4)$-geometries, since in this case the modules $\mc{X}_1, \mc{X}_3$ decomposes as in {\rm (\ref{mcX13})}.
\item For  $n=3$ there exists up to $2^8$ $\Sp(1)$-invariant  algebraic types of  $\SO^*(6)$-geometries, since in this case  the module  $\mc{X}_2$ decomposes as in {\rm (\ref{mcX2})}.
\end{itemize}
\end{corol}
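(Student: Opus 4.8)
The plan is to deduce Corollary \ref{genCar2} from Proposition \ref{genCar1} by restriction of representations, and then to separate the several copies of equivalent submodules that occur with multiplicity in the torsion module. All that is needed beyond Proposition \ref{genCar1} is the branching behaviour of $\Hh$ and $S^3\Hh$ under $\SO^\ast(2n)\hookrightarrow\SO^\ast(2n)\Sp(1)$, the injectivity of the Spencer differential (Lemma \ref{zeroprog1}), and the low-dimensional data of Table \ref{Table1}.

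\textbf{Step 1: restriction to $\SO^\ast(2n)$.} First I would restrict the three $\SO^\ast(2n)\Sp(1)$-equivariant decompositions of Proposition \ref{genCar1} along the inclusion $\SO^\ast(2n)\hookrightarrow\SO^\ast(2n)\Sp(1)$. Since $\SO^\ast(2n)$ acts trivially on the $\Hh$-factors, the branching rules recorded just above the statement, $[W\otimes \Hh]^*\cong W^*$ and $[W\otimes S^3\Hh]^*\cong 2W^*$, apply verbatim (the multiplicities $1$ and $2$ being forced by $\dim_{\C}\Hh=2$, $\dim_{\C}S^3\Hh=4$ together with the real structures $\epsilon_{\E}\otimes\epsilon_{\Hh}$). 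Collecting multiplicities then yields at once $\Lambda^2[\E\Hh]^*\otimes [\E\Hh]\cong 3\Lambda^3 \E^*\oplus4\K^*\oplus5\E^*\oplus S^3_0 \E^*$ and $\delta([\E\Hh]^*\otimes \so^\ast(2n))\cong \Lambda^3 \E^*\oplus\K^*\oplus\E^*$. Irreducibility of all summands for $n>3$, and the splitting of $\K$ for $n=2$ and of $\Lambda^3\E$ for $n=3$, are inherited directly from Proposition \ref{genCar1}; the explicit highest-weight labels in the exceptional dimensions are read off from Table \ref{Table1}.

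\textbf{Step 2: the intrinsic torsion module and its refinement.} Next I would compute $\mc{H}(\so^\ast(2n))$, the cokernel of $\delta\colon[\E\Hh]^*\otimes \so^\ast(2n)\to\Lambda^2[\E\Hh]^*\otimes [\E\Hh]$. By Lemma \ref{zeroprog1} this $\delta$ is injective, so its image is an $\SO^\ast(2n)\Sp(1)$-submodule isomorphic to $[(\Lambda^3 \E\oplus\K\oplus\E)\otimes \Hh]^*$. In the decomposition of $\Lambda^2[\E\Hh]^*\otimes [\E\Hh]$ from Proposition \ref{genCar1} the summand $[\Lambda^3\E\,\Hh]^*$ occurs with multiplicity one, hence is necessarily consumed by $\delta$, whereas $[\K\,\Hh]^*$ and $[\E\Hh]^*$ occur with multiplicities two and three; therefore the quotient equals $[\K S^3\Hh]^*\oplus[\Lambda^3\E\,S^3\Hh]^*\oplus[\E S^3\Hh]^*\oplus[\K\,\Hh]^*\oplus 2[\E\Hh]^*\oplus[S^3_0\E\,\Hh]^*$, which is precisely $\mc{X}_1\oplus\cdots\oplus\mc{X}_7$, and passing to $\SO^\ast(2n)$-isotypes gives (\ref{spen2}). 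To pin down the distinguished copies I would argue as follows. Because $\so^\ast(2n)$ and $\sp(1)$ are complementary subalgebras of $\gl([\E\Hh])$ and $\delta$ is injective on $[\E\Hh]^*\otimes(\so^\ast(2n)\oplus\sp(1))$, the submodule $\delta([\E\Hh]^*\otimes \fr{sp}(1))\cong [\E\otimes(S^3\Hh\oplus\Hh)]^*$ meets $\delta([\E\Hh]^*\otimes \so^\ast(2n))$ trivially and so embeds in $\mc{H}(\so^\ast(2n))$; its $[\E S^3\Hh]^*$-part is $\mc{X}_6$, and its $[\E\Hh]^*$-part is the image of the basis-independent $\SO^\ast(2n)\Sp(1)$-equivariant embedding $\zeta\mapsto\sum_{a=1}^3\zeta\circ J_a\otimes J_a$, which is $\mc{X}_7$. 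For $\mc{X}_3\oplus\mc{X}_4$ I would use $\omega_0$ to identify $\Lambda^2[\E\Hh]^*\otimes [\E\Hh]\cong\Lambda^2[\E\Hh]^*\otimes [\E\Hh]^*$, under which $\Lambda^3[\E\Hh]^*$ is the totally antisymmetric subspace; the plethysm for the third exterior power of a tensor product gives $\Lambda^3[\E\Hh]^*\cong [S^{(2,1)}\E\otimes\Hh]^*\oplus[\Lambda^3\E\otimes S^3\Hh]^*$, and a dimension comparison with Table \ref{Table1} identifies $S^{(2,1)}\E\cong\K\oplus\E$ as $\so^\ast(2n)$-modules; checking that the image of $\delta|_{[\E\Hh]^*\otimes\so^\ast(2n)}$ meets $\Lambda^3[\E\Hh]^*$ only in the $[\Lambda^3\E\,S^3\Hh]^*$-copy then shows that the $[\K\,\Hh]^*$- and $[\E\Hh]^*$-copies sitting inside $\Lambda^3[\E\Hh]^*$ survive in $\mc{H}(\so^\ast(2n))$ and are $\mc{X}_3$, $\mc{X}_4$, so that $\mc{X}_4$ is a genuinely different copy of $[\E\Hh]^*$ from $\mc{X}_7$.

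\textbf{Step 3: counting, and the main obstacle.} Finally the count of algebraic types follows the usual convention of counting sub-sums of a fixed decomposition into irreducibles: for $n>3$, $\mc{H}(\so^\ast(2n))\cong 2\Lambda^3\E^*\oplus3\K^*\oplus4\E^*\oplus S^3_0\E^*$ has $2+3+4+1=10$ irreducible $\SO^\ast(2n)$-summands, giving up to $2^{10}$ algebraic types, of which the $\Sp(1)$-invariant ones are the sub-sums of $\mc{X}_1\oplus\cdots\oplus\mc{X}_7$, giving up to $2^7$; for $n=2$ the splittings (\ref{mcX13}) replace $\mc{X}_1,\mc{X}_3$ by four irreducible pieces, giving $2^9$ $\Sp(1)$-invariant types, and for $n=3$ the splitting (\ref{mcX2}) replaces $\mc{X}_2$ by two, giving $2^8$. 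I expect the genuine work to be concentrated in Step 2: because $[\K\,\Hh]^*$ appears with multiplicity two and $[\E\Hh]^*$ with multiplicity three in $\Lambda^2[\E\Hh]^*\otimes[\E\Hh]$, the content is not the abstract isomorphism type of $\mc{H}(\so^\ast(2n))$, which is pure bookkeeping on top of Proposition \ref{genCar1}, but the precise location of the surviving copies — proving that $\mc{X}_3\oplus\mc{X}_4$ really lies inside the $3$-form space $\Lambda^3[\E\Hh]^*$ and that $\mc{X}_6,\mc{X}_7$ really come from the $\fr{sp}(1)$-factor. This requires the plethysm computation together with an honest (if short) analysis of $\delta$ restricted to the $3$-form component and to $[\E\Hh]^*\otimes\fr{sp}(1)$.
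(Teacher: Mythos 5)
Your proposal is correct and takes essentially the same route as the paper: Corollary \ref{genCar2} is obtained by branching the $\SO^*(2n)\Sp(1)$-decompositions of Proposition \ref{genCar1} through $[W\otimes\Hh]^*\cong W^*$, $[W\otimes S^3\Hh]^*\cong 2W^*$, using the injectivity of $\delta$ from Lemma \ref{zeroprog1} to subtract isotypic multiplicities, and then counting summands exactly as in your Step 3, with the $n=2,3$ exceptions read off from Table \ref{Table1}. The verification in your Step 2 of where the surviving copies sit is precisely what the paper postpones to Proposition \ref{symtorscomp} (the plethysm \eqref{L3} for $\Lambda^3[\E\Hh]^*$); the only small correction is that the image of $\delta$ on $[\E\Hh]^*\otimes\so^*(2n)$ meets $\Lambda^3[\E\Hh]^*$ trivially -- not ``only in the $[\Lambda^3\E S^3\Hh]^*$-copy'', which that image does not even contain -- because it lies inside $\delta([\E\Hh]^*\otimes\sp(4n,\R))$, which is complementary to $\Lambda^3[\E\Hh]^*$; your conclusion that the $[\K\Hh]^*$- and $[\E\Hh]^*$-copies giving $\mc{X}_3$, $\mc{X}_4$ survive in $\mc{H}(\so^\ast(2n))$ is unaffected.
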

\begin{rem}
\textnormal{
\textsf{1)} Due to the multiplicities appearing in the decomposition (\ref{spen2}) of $ \mc{H}(\so^\ast(2n))$,  a definition similar with the Definition \ref{quatypes} requires precise projections to the individual irreducible factors. In general, such projections are not unique and their construction is a very  complicated task, which requires a further study of  $\SO^*(2n)$-structures. However, we can successfully use the  $\Sp(1)$-invariant algebraic types to describe some distinguished classes of  $\SO^*(2n)$-structures, characterized by $\Sp(1)$-invariant conditions. This procedure is analyzed in Sections 1 and 2 of \cite{CGWPartII}.  Note also that a precise geometric characterization of the pure modules $\mc{X}_i$ $(i=1, \ldots, 7)$  requires deeper investigation of the corresponding Bianchi identities, which we do not present in this first part. \\ 
\textsf{2)}  The decompositions of $\Lambda^2[\E\Hh]^*\otimes [\E\Hh]$ under $\SO^*(2n)\Sp(1)$ and $\SO^*(2n)$, given respectively in Proposition \ref{genCar1} and Corollary \ref{genCar2}, can be viewed as the counterpart to Cartan's decomposition of the space of torsion tensors corresponding to \textsf{metric connections}, see e.g.  \cite{CGW} and the references therein. 
 For $G=\Sp(2n, \R)$ and the torsion of almost symplectic connections, analogous decompositions have been recently  presented   in \cite{APick}.}
\end{rem}
 
\section{Minimality of adapted connections to $\SO^{*}(2n)$- and $\SO^*(2n)\Sp(1)$-structures}\label{minconnections}
 \subsection{Minimal connections}
  Corollaries \ref{algebtypes} and \ref{genCar2}  need to be studied in a greater detail, since neither the posed isomorphisms, nor  the way that the image of the Spencer differential $\delta$ sits inside $\Lambda^2[\E\Hh]^*\otimes [\E\Hh]$, are obvious.  Both these tasks occur due to the  involved multiplicities appearing  in the decompositions presented in Corollary \ref{algebtypes} and Corollary \ref{genCar2}, respectively. As a consequence
\begin{lem}\label{manyp}
For $\SO^*(2n)$- or $\SO^*(2n)\Sp(1)$-structures there are many possible 
invariant normalization conditions, and thus different classes of minimal adapted connections.
\end{lem}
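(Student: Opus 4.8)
The plan is to exhibit the multiplicities appearing in Proposition~\ref{genCar1} and Corollary~\ref{genCar2} as the source of non-uniqueness, and then translate this into a statement about complementary subspaces to $\imm(\delta)$. First I would recall the general mechanism: by Lemma~\ref{zeroprog1} the Spencer differential $\delta$ is injective for $\fr{g}=\so^*(2n)$ and for $\fr{g}=\so^*(2n)\oplus\fr{sp}(1)$, so $\imm(\delta)\cong [\E\Hh]^*\otimes\fr{g}$ as a $G$-module, and a normalization condition $\mc{D}(\fr{g})$ is precisely a choice of $G$-invariant complement of $\imm(\delta)$ inside $\Lambda^2[\E\Hh]^*\otimes[\E\Hh]$. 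Since $G$ is reductive, at least one such complement exists; the issue is uniqueness. By Schur's lemma, a $G$-invariant complement of a submodule $U$ in a module $W$ is unique if and only if every irreducible constituent of $U$ occurs in $W$ with multiplicity exactly equal to its multiplicity in $U$ (equivalently, the isotypic components shared by $U$ and $W/U$ are trivial). Thus non-uniqueness is equivalent to the existence of an irreducible $G$-module $S$ occurring both in $\imm(\delta)$ and in a genuine complement.

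The key step is then to read off from the explicit decompositions that such shared constituents really occur. For $\SO^*(2n)\Sp(1)$ and $n>3$, Proposition~\ref{genCar1} gives $\imm(\delta)\cong\delta([\E\Hh]^*\otimes\so^*(2n))\oplus\delta([\E\Hh]^*\otimes\fr{sp}(1))\cong [(\Lambda^3\E\oplus\K\oplus\E)\otimes\Hh]^*\oplus[\E\otimes(S^3\Hh\oplus\Hh)]^*$, while the full module $\Lambda^2[\E\Hh]^*\otimes[\E\Hh]$ contains $[(\Lambda^3\E\oplus2\K\oplus3\E\oplus S^3_0\E)\otimes\Hh]^*\oplus[(\Lambda^3\E\oplus\K\oplus\E)\otimes S^3\Hh]^*$. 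Comparing the $[\E\Hh]^*$-isotypic component: it appears with multiplicity $3$ in $\Lambda^2[\E\Hh]^*\otimes[\E\Hh]$ (the three copies of $[\E\otimes\Hh]^*$, plus the copy $[\E S^3\Hh]^*$ on the $S^3\Hh$ side), whereas $\imm(\delta)$ contributes $[\E\Hh]^*$ from $\delta([\E\Hh]^*\otimes\so^*(2n))$, from $\delta(\sum_a\zeta\circ J_a\otimes J_a)$, and $[\E S^3\Hh]^*$ from $\delta([\E\Hh]^*\otimes\fr{sp}(1))$ — so the $[\E\Hh]^*$-isotypic part of $\imm(\delta)$ has dimension strictly less than that of the ambient module, leaving room in the complement for a copy of $[\E\Hh]^*$ that can be rotated into $\imm(\delta)$. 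Concretely, Lemma~\ref{kernel} already displays this: the kernel of $\delta$ on the four components $\mf{(A)}$–$\mf{(D)}$, each isomorphic to $[\E\Hh]^*$, is $1$-dimensional over the space of $1$-forms, which means $\delta$ identifies a whole $1$-parameter family of such $[\E\Hh]^*$'s, and hence the complement can be chosen in many ways. For $\SO^*(2n)$ the situation is only worse, as Corollary~\ref{genCar2} shows multiplicities $3,4,5,1$ for $\Lambda^3\E^*,\K^*,\E^*,S^3_0\E^*$ in the ambient module against $1,1,1,0$ in $\imm(\delta)$; the isotypic components of $\Lambda^3\E^*$, $\K^*$ and $\E^*$ all overlap non-trivially.

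Having established that shared isotypic components exist, I would conclude by the standard homological fact: if $W=U\oplus C_1=U\oplus C_2$ and the isotypic component $W_S$ of some irreducible $S$ satisfies $U_S\neq 0$ and $\dim W_S>\dim U_S$, then $\Hom_G(C,U)\neq 0$ for any complement $C$, and for each $\varphi\in\Hom_G(C,U)$ the ``sheared'' subspace $\{c+\varphi(c):c\in C\}$ is another $G$-invariant complement of $U$, distinct from $C$ whenever $\varphi\neq0$. Since such $\varphi$ exist by the multiplicity count, the normalization condition is genuinely non-unique; and by the Corollary following Lemma~\ref{zeroprog1} (triviality of $\fr{g}^{(1)}$), each choice of $\mc{D}(\fr{g})$ singles out exactly one minimal adapted connection, so different choices generically give different minimal connections. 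I would then note that the adapted connections $\nabla^{H,\omega}$ and $\nabla^{Q,\omega}$ of Theorems~\ref{starconnection} and~\ref{starconnectionsp1} correspond to specific such choices, and that Lemma~\ref{kernel}$(\al)$--$(\gamma)$ supplies three natural but inequivalent normalization recipes (e.g.\ demanding the $[\E\Hh]^*$-part of the torsion be a $3$-form, or lie in $\sl(n,\Hn)\oplus\sp(1)$, etc.), making the non-uniqueness completely explicit. The main obstacle in a fully rigorous write-up is not conceptual but bookkeeping: one must carefully track, component by component, the multiplicities of each irreducible in $\imm(\delta)$ versus the ambient space — this is where the low-dimensional exceptions for $n=2,3$ (Table~\ref{Table1}) require separate attention — and verify that at least one overlapping isotypic component has a nonzero $\Hom$-space; the representation-theoretic input of Proposition~\ref{genCar1} and Corollary~\ref{genCar2} does all the heavy lifting once this comparison is made.
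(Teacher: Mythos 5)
Your proposal is correct and follows essentially the same route as the paper: the lemma is stated there as an immediate consequence of the multiplicities in Proposition \ref{genCar1} and Corollaries \ref{algebtypes} and \ref{genCar2}, and your write-up simply makes explicit the standard Schur-lemma/isotypic-component argument (shared constituents such as $[\E\Hh]^*$ and $[\K\Hh]^*$ in $\imm(\delta)$ and its complement, plus the graph-shearing construction) together with the uniqueness of the minimal connection coming from $\fr{g}^{(1)}=0$.
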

\begin{example}
For example, our choice of the modules $\mc{X}_1, \ldots, \mc{X}_7$ provides a particular normalization condition for $\SO^*(2n)$-structures. However, $\nabla^{H, \omega}$ is not a minimal connection with respect to this normalization condition. 
\end{example}
Our goal below is to provide the normalization conditions which establish both of our connections  $\nabla^{H,\omega}$ and $\nabla^{Q,\omega}$ defined in Section \ref{adapatcon}, as minimal. In order to do this, we will combine  results of Sections \ref{adapatcon} and \ref{intrisictorsion}   with certain results from \cite{Bon, AM, Gau}, which we   recall.

Let $(H=\{J_a : a=1, 2, 3\}, \omega)$ be  an \textsf{almost hs-H structure}     on a smooth manifold $M$, or let $(Q, \omega)$ be an \textsf{almost qs-H structure} on $M$ for which $H$ provides a local admissible frame.   Next  it is again convenient to work  at an algebraic level, in terms of the $\E\Hh$-formalism. Since $\omega$ is a scalar 2-form with respect to  $H$, the space $\Lambda^{2}[\E\Hh]^*\otimes [\E\Hh]$ 
 admits several distinguished equivariant projections, which induce projections on the  space of sections of the induced vector bundles. 
 First, we may associate to  any $J_{a}\in H$    the subspace   
\[
\mathscr{C}_{J_{a}}:=\big\{\phi\in \Lambda^{2}[\E\Hh]^*\otimes [\E\Hh] : \phi(J_{a}X,  Y)=\phi(X,  J_{a}Y)=-J_{a}\phi(X,  Y)\big\}\,,
\]  
which is isomorphic to $\delta([\E\Hh]^*\otimes\fr{gl}(2n, \C))$. We have a $\Gl(2n, \C)$-equivariant 
  projection   $\pi_{J_{a}} : \Lambda^{2}[\E\Hh]^*\otimes [\E\Hh]  \longrightarrow \mathscr{C}_{J_{a}}$ 
defined by
\[
\pi_{J_{a}}(\phi)(X,  Y):=\frac{1}{4}\CC\phi(X,  Y)+J_{a}\big(\phi(J_{a}X, Y)+\phi(X,  J_{a}Y)\big)-\phi(J_{a}X, J_{a}Y)\RR\,,\quad a=1, 2, 3,
\]
 for any $X,  Y, Z\in [\E\Hh]$. By setting 
\[
	\pi_{H}:=\frac{2}{3}(\pi_{J_{1}}+\pi_{J_{2}}+\pi_{J_{3}})=\frac{2}{3}\sum_{a=1}^{3}\pi_{J_{a}},
\]
we  obtain a $\Gl(n, \Hn)$-equivariant projection, i.e., $\pi_{H}^{2}=\pi_{H}$, see \cite[p.~420]{Bon}. In full terms 
\begin{eqnarray*}
\pi_H(\phi)(X,Y)&=&\frac{1}{6}\Big(3\phi(X,Y)-\phi(IX,IY)-\phi(JX,JY)-\phi(KX,KY) +I\phi(X,IY)\\
 &&+I\phi(IX,Y)+J\phi(X,JY)+J\phi(JX,Y)+K\phi(X,KY)+K\phi(KX,Y)\Big)\,,
\end{eqnarray*}
for any $\phi\in  \Lambda^{2}[\E\Hh]^*\otimes [\E\Hh]$, where we have assumed that $H=\{I, J, K\}$. In fact, $\pi_H$ is $\Gl(n, \Hn)\Sp(1)$-equivariant.   Then, we get the following $\Gl(n, \Hn)$-equivariant isomorphisms
\[
\ke(\pi_{H})\cong\cap_{J_{a}\in\Ss^2}\ke(\pi_{J_{a}}) \cong \delta([\E\Hh]^*\otimes\fr{gl}(n, \Hn))\,,
\]
which are also $\Gl(n, \Hn)\Sp(1)$-equivariant. It follows that  for a  $\Gl(n, \Hn)$-structure $H=\{I, J, K\}$ the    intrinsic torsion module $\mc{H}(\fr{gl}(n, \Hn))$ coincides with $ \sf{Im}(\pi_{H})$, and moreover the intrinsic torsion itself   is expressed by an appropriate linear combination of the  Nijenhuis tensors corresponding to two anticommuting elements $I, J\in H$.   In particular, the torsion $T^{H}$ of the  Obata connection $\nabla^{H}$ associated to $H$ satisfies $T^H=\pi_H(T^H)$. The torsion $T^{Q}$ of $\nabla^{Q}$ is also in the image of $\pi_H$ and in addition it satisfies the condition (see \cite{AM})
\[
\Tr_4(T^{Q}_X)=\Tr( \J \circ T^{Q}_{\J X})=0\,,
\]
 for all $\J\in S(Q)\cong\Ss^2$.  
 We are  now ready  to pose our main theorem related to minimal connections.

\begin{theorem}\label{zeroprog}
\textsf{1)} The connection $\nabla^{H,\omega}$ defined in Theorem \ref{starconnection}  is the unique almost hypercomplex skew-Hermitian connection with torsion in the module
\begin{eqnarray}
\imm(\pi_H)\oplus \delta([\E\Hh]^*\otimes [S^2\E]^*)&\cong&(\mc{X}_1\oplus \mc{X}_2\oplus\mc{X}_6)\oplus (\mc{X}_3\oplus\mc{X}_4\oplus\mc{X}_5\oplus\mc{X}_7)\label{decso21n}\\
&\cong& (2\Lambda^3 \E^*\oplus 2\K^*\oplus2\E^*) \oplus (\K^*\oplus2\E^*\oplus S^3_0 \E^*)\label{decso2n}\,,
\end{eqnarray}
which is complementary to $\delta\big([\E\Hh]^*\otimes \so^*(2n)\big)$, that is  $\nabla^{H,\omega}$ is  the unique  minimal ($\mc{D}$-connection) for the normalization condition
\[
\mc{D}(\so^*(2n)):=\imm(\pi_H)\oplus \delta([\E\Hh]^*\otimes [S^2\E]^*)\,.
\]
Note that the first decomposition {\rm (\ref{decso21n})}  is given in terms of $\SO^*(2n)\Sp(1)$-modules, while   {\rm (\ref{decso2n})} should be read in terms of $\SO^*(2n)$-modules. \\ 
\textsf{2)}  Let us define $\mc{D}(\so^*(2n)\oplus \sp(1))$ as the submodule of $\imm(\pi_H)\oplus \delta\big([\E\Hh]^*\otimes [S^2\E]^*\big)$  which is in the kernel of $(2\Tr_1+\Tr_3)$ and $(\Tr_1-\Tr_4)$, for all $\J\in \Ss(Q)$. Then, the connection $\nabla^{Q,\omega}$ defined in Theorem   \ref{starconnectionsp1}   is the unique almost quaternionic skew-Hermitian connection with torsion in the module
\begin{gather*}
\mc{D}(\so^*(2n)\oplus \sp(1))\cong    [\K S^3\Hh]^* \oplus [\Lambda^3\E S^3\Hh]^* \oplus [\K\Hh]^* \oplus [\E\Hh]^* \oplus  [S^3_0 \E \Hh]^* \,,
\end{gather*}
which is complementary to $\delta\big([\E\Hh]^*\otimes (\so^*(2n)\oplus \sp(1))\big)$, that is $\nabla^{Q,\omega}$ is  the unique minimal ($\mc{D}$-connection) for the normalization condition $\mc{D}(\so^*(2n)\oplus \sp(1)).$
\end{theorem}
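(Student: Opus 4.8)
The plan is to verify the two claims by a direct module-theoretic argument, using the constructions of Theorems \ref{starconnection} and \ref{starconnectionsp1} together with the trace calculus of Lemma \ref{kernel} and the decompositions of Proposition \ref{genCar1} and Corollary \ref{genCar2}. For part \textsf{1)}, I would first recall that the torsion of $\nabla^{H,\omega}$ is $T^{H}+\delta(A)$, where $T^{H}=\pi_H(T^H)$ lies in $\imm(\pi_H)$ and $\omega(A(\cdot,\cdot),\cdot)$ takes values in $[\E\Hh]^*\otimes[S^2\E]^*$ by Theorem \ref{starconnection}; hence the torsion lies in $\imm(\pi_H)\oplus\delta([\E\Hh]^*\otimes[S^2\E]^*)$. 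The key step is then to show that this space is genuinely complementary to $\delta([\E\Hh]^*\otimes\so^*(2n))$ inside $\Lambda^2[\E\Hh]^*\otimes[\E\Hh]$. Since $\delta$ is injective (Lemma \ref{zeroprog1}), we have the dimension count $\dim\delta([\E\Hh]^*\otimes\gl(n,\Hn))=\dim\imm(\pi_H)$ and $\gl(n,\Hn)=\so^*(2n)\oplus[S^2\E]^*$ as $\Gl(n,\Hn)$-modules, so $\delta([\E\Hh]^*\otimes\gl(n,\Hn))=\imm(\pi_H)\oplus\delta([\E\Hh]^*\otimes[S^2\E]^*)$ is exactly $\ke(\pi_H)^{\perp}$-complementary; intersecting with $\delta([\E\Hh]^*\otimes\so^*(2n))$ gives zero because $\so^*(2n)\subset\gl(n,\Hn)$ and $\imm(\pi_H)\cap\delta([\E\Hh]^*\otimes\gl(n,\Hn))^{c}$... more carefully, I would argue that $\imm(\pi_H)\cong\delta([\E\Hh]^*\otimes\gl(n,\Hn))$ already meets $\delta([\E\Hh]^*\otimes\so^*(2n))$ trivially (the Obata-type torsion is orthogonal to the $\so^*(2n)$-part), while $\delta([\E\Hh]^*\otimes[S^2\E]^*)$ meets $\delta([\E\Hh]^*\otimes\so^*(2n))$ trivially by injectivity of $\delta$ and $[S^2\E]^*\cap\so^*(2n)=\{0\}$ inside $\gl([\E\Hh])$. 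A dimension comparison using the decompositions $\Lambda^2[\E\Hh]^*\otimes[\E\Hh]\cong 3\Lambda^3\E^*\oplus4\K^*\oplus5\E^*\oplus S^3_0\E^*$ and $\delta([\E\Hh]^*\otimes\so^*(2n))\cong\Lambda^3\E^*\oplus\K^*\oplus\E^*$ then forces $\mc{D}(\so^*(2n))\cong 2\Lambda^3\E^*\oplus3\K^*\oplus4\E^*\oplus S^3_0\E^*$; identifying which copies belong to $\imm(\pi_H)$ versus $\delta([\E\Hh]^*\otimes[S^2\E]^*)$ gives the refined splitting \eqref{decso2n} (and \eqref{decso21n} after restoring the $\Sp(1)$-action). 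Uniqueness is immediate from Corollary \ref{uniqueminimal}\textsf{3)}.

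For part \textsf{2)}, I would start from Theorem \ref{starconnectionsp1}: the torsion of $\nabla^{Q,\omega}$ is $T^{Q}+\delta(A+\pi_S(\omega\otimes Z_3)+\sum_a\zeta_4\circ J_a\otimes J_a)$, which already lies in $\imm(\pi_H)\oplus\delta([\E\Hh]^*\otimes[S^2\E]^*)$ since $T^Q\in\imm(\pi_H)$ and the correction terms live in $[\E\Hh]^*\otimes[S^2\E]^*$ and $[\E\Hh]^*\otimes(\so^*(2n)\oplus\sp(1))$ respectively — but the latter two are in $[\E\Hh]^*\otimes\gl(n,\Hn)\Sp(1)$, so their $\delta$-image is within the part already controlled. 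The additional constraint proved there is that the unique $[\E\Hh]^*$-component of the torsion lies in $\ke(2\Tr_1+\Tr_3)\cap\ke(\Tr_1-\Tr_4)$; together with $T^Q$ satisfying $\Tr_4(T^Q_X)=\Tr(\J\circ T^Q_{\J X})=0$ (from \cite{AM}), this pins down the torsion inside the subspace $\mc{D}(\so^*(2n)\oplus\sp(1))$ defined in the statement. The key step is to verify that this subspace is complementary to $\delta([\E\Hh]^*\otimes(\so^*(2n)\oplus\sp(1)))$: I would use Proposition \ref{genCar1}, which gives $\delta([\E\Hh]^*\otimes\so^*(2n))\cong[(\Lambda^3\E\oplus\K\oplus\E)\otimes\Hh]^*$ and $\delta([\E\Hh]^*\otimes\sp(1))\cong[\E\otimes(S^3\Hh\oplus\Hh)]^*$, and observe that the trace conditions $(2\Tr_1+\Tr_3)=0$, $(\Tr_1-\Tr_4)=0$ precisely cut out the $[\E S^3\Hh]^*$ and one copy of $[\E\Hh]^*$ corresponding to the $\sp(1)$-image, while the $\so^*(2n)$-image $[(\Lambda^3\E\oplus\K\oplus\E)\otimes\Hh]^*$ is excised by the $\imm(\pi_H)\oplus\delta([\E\Hh]^*\otimes[S^2\E]^*)$ condition as in part \textsf{1)}. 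Matching with the decomposition $\mc{H}(\so^*(2n)\oplus\sp(1))\cong[\K S^3\Hh]^*\oplus[\Lambda^3\E S^3\Hh]^*\oplus[\K\Hh]^*\oplus[\E\Hh]^*\oplus[S^3_0\E\Hh]^*$ from Corollary \ref{algebtypes} identifies $\mc{D}(\so^*(2n)\oplus\sp(1))$ with exactly this module, and uniqueness follows again from Corollary \ref{uniqueminimal}\textsf{3)}.

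The main obstacle I anticipate is the bookkeeping in part \textsf{1)}: because $\Lambda^3\E^*$, $\K^*$ and $\E^*$ each appear with multiplicity in $\Lambda^2[\E\Hh]^*\otimes[\E\Hh]$, one must track \emph{which} isotypic copies are hit by $\imm(\pi_H)$, which by $\delta([\E\Hh]^*\otimes[S^2\E]^*)$, and which by $\delta([\E\Hh]^*\otimes\so^*(2n))$, and confirm these three sub-collections of copies are linearly independent (i.e. that the three images are in direct sum). The cleanest route is to use the $\Sp(1)$-action as a grading refinement — the $S^3\Hh$ versus $\Hh$ isotypic pieces separate several of the copies — together with the explicit projection formula for $\pi_H$ and the explicit formula $\omega(A(X,Y),Z)=\tfrac12(\nabla^H_X\omega)(Y,Z)$ to see that $\delta(A)$ has no component in $\imm(\pi_H)$. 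Establishing this directness, rather than the dimension count per se, is where the real work lies; once it is in place, both parts follow by assembling the already-proven ingredients.
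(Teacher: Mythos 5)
Your overall strategy is the same as the paper's (torsion of $\nabla^{H,\omega}$ splits as $T^H+\delta(A)$ with $T^H\in\imm(\pi_H)$ and $A$ valued in $[\E\Hh]^*\otimes[S^2\E]^*$, injectivity of $\delta$, trace conditions from Lemma \ref{kernel}, module identifications from Proposition \ref{genCar1} and Corollaries \ref{algebtypes}, \ref{genCar2}), but the pivotal complementarity step in part \textsf{1)} is garbled in a way that leaves a genuine gap. The identities you write down are false: $\dim\delta([\E\Hh]^*\otimes\gl(n,\Hn))\neq\dim\imm(\pi_H)$, and $\delta([\E\Hh]^*\otimes\gl(n,\Hn))$ is \emph{not} $\imm(\pi_H)\oplus\delta([\E\Hh]^*\otimes[S^2\E]^*)$, nor is $\imm(\pi_H)\cong\delta([\E\Hh]^*\otimes\gl(n,\Hn))$; the correct relation (used by the paper and available from the discussion preceding Theorem \ref{zeroprog}) is $\ke(\pi_H)=\delta([\E\Hh]^*\otimes\gl(n,\Hn))$. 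From this the whole point becomes a one-liner: $\Lambda^2[\E\Hh]^*\otimes[\E\Hh]=\imm(\pi_H)\oplus\ke(\pi_H)$, and since $\gl(n,\Hn)=\so^*(2n)\oplus[S^2\E]^*$ and $\delta$ is injective on $[\E\Hh]^*\otimes\gl(n,\Hn)$ (this needs the classical vanishing of $\gl(n,\Hn)^{(1)}$, not Lemma \ref{zeroprog1}, which only covers $\so^*(2n)$ and $\so^*(2n)\oplus\sp(1)$), one gets the three-fold direct sum $\imm(\pi_H)\oplus\delta([\E\Hh]^*\otimes[S^2\E]^*)\oplus\delta([\E\Hh]^*\otimes\so^*(2n))$ immediately. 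Your substitute — checking that the three spaces have pairwise trivial intersections and then deferring "the real work" to isotypic-copy bookkeeping refined by the $\Sp(1)$-grading — is both insufficient as stated (pairwise trivial intersection does not give a direct sum when $\Lambda^3\E^*$, $\K^*$, $\E^*$ occur with multiplicity) and unnecessary; the directness is exactly what $\ke(\pi_H)=\delta([\E\Hh]^*\otimes\gl(n,\Hn))$ hands you, and the observation that $\delta(A)$ has no $\imm(\pi_H)$-component is trivial for the same reason and is not the issue.

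In part \textsf{2)} there is a second gap: you assert that the torsion of $\nabla^{Q,\omega}$ "already lies in $\imm(\pi_H)\oplus\delta([\E\Hh]^*\otimes[S^2\E]^*)$" because the correction terms take values in $[\E\Hh]^*\otimes\gl(n,\Hn)$ and $[\E\Hh]^*\otimes(\so^*(2n)\oplus\sp(1))$. That reasoning cannot be right: $\delta(\pi_S(\omega\otimes Z_3))$ lies in $\delta([\E\Hh]^*\otimes\so^*(2n))$, which is precisely the complement of the space you want to land in, so "being in the part already controlled" is exactly what has to be proved, not assumed (the same argument would wrongly place all of $\delta([\E\Hh]^*\otimes\so^*(2n))$ inside $\mc{D}(\so^*(2n))$). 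The containment is true but delicate: it concerns only the $[\E\Hh]^*$- and $[\E S^3\Hh]^*$-isotypic pieces, where Lemma \ref{kernel} shows $\delta([\E\Hh]^*\otimes[S^2\E]^*)$ fails to be complementary to $\delta([\E\Hh]^*\otimes(\so^*(2n)\oplus\sp(1)))$, and it is resolved exactly by the trace statement of Theorem \ref{starconnectionsp1} (the $[\E\Hh]^*$-component of $T^{Q,\omega}$ lies in $\ke(2\Tr_1+\Tr_3)\cap\ke(\Tr_1-\Tr_4)$) together with the multiplicity computation \eqref{transition}; the paper's proof consists of assembling precisely these two ingredients, plus Corollary \ref{algebtypes} for the module identification. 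So while your list of ingredients is the right one, the two steps above are the actual content of the theorem and are either incorrectly argued or asserted without justification in your proposal.
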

\begin{proof}
A part of the uniqueness claim is related to the vanishing of $\fr{g}^{(1)}$ for 
\[
\fr{g}\in\{\fr{so}^{\ast}(2n), \fr{so}^{\ast}(2n)\oplus\sp(1), \fr{gl}(n,\Hn), \fr{sl}(n,\Hn)\oplus\sp(1)\}\,,
\]
as we have mentioned before.  Also, by the previous discussion,  we know that $\imm(\pi_H)$ is complementary to the image  $\delta \big([\E\Hh]^*\otimes \fr{gl}(n,\Hn)\big)$.
Now, $[\E\Hh]^*\otimes [S^2\E]^*$ is clearly a complementary subspace to $[\E\Hh]^*\otimes \fr{so}^{\ast}(2n)$ in $[\E\Hh]^*\otimes \fr{gl}(n,\Hn)$. Since $\fr{g}^{(1)}$ vanishes,   $\delta$ is injective and so the first claim follows in combination with Corollary \ref{genCar2}.
For the second assertion,  by Lemma \ref{kernel} we know that
\[
\delta([\E\Hh]^*\otimes [S^2\E]^*)
\]
 is {\it not} a complementary subspace of $\delta\big([\E\Hh]^*\otimes (\so^*(2n)\oplus \sp(1))\big)$. On the other hand,  Theorem \ref{starconnectionsp1} implies that the connection $\nabla^{Q,\omega}$ is the unique almost quaternionic skew-Hermitian connection with torsion in the module $\mc{D}(\so^*(2n)\oplus \sp(1))$. Since its torsion component isomorphic to $[\E\Hh]^*$ is by Lemma \ref{kernel} complementary to $\delta\big([\E\Hh]^*\otimes (\so^*(2n)\oplus \sp(1))\big)$, we get the second claim  in combination with Corollary \ref{algebtypes}.
\end{proof}

  Let us now recall that the torsion of almost symplectic connections is normalized to be an element of $\Lambda^3[\E\Hh]^*$, which is complementary to $\delta([\E\Hh]^*\otimes \fr{sp}(4n,\R))$ in the space of torsion tensors $\Lambda^2[\E\Hh]^*\otimes [\E\Hh]$ (after raising the last index using $\omega$). 
  In particular,   $\Lambda^{3}[\E\Hh]^*$    sits  inside $\Lambda^{2}[\E\Hh]^*\otimes [\E\Hh]$  and one can  introduce the  following $\Sp(4n, \R)$-equivariant projection 
\[
\pi_{\omega} :  \Lambda^{2}[\E\Hh]^*\otimes [\E\Hh] \longrightarrow \Lambda^{3}[\E\Hh]^*\,,\quad  \pi_{\omega}(\phi)(X, Y, Z):=\frac{1}{3}\fr{S}_{X, Y, Z}\omega\big(\phi(X,  Y), Z\big)\,,\\
\]
where $\fr{S}_{X, Y, Z}$ denotes the cyclic sum over $X, Y, Z\in [\E\Hh]$.  Next we shall prove that the decomposition $\mc{X}_1\oplus\cdots\oplus\mc{X}_7$ described in Corollary  \ref{genCar2}  for $\SO^*(2n)$-structures,  is compatible with the decomposition  $\Lambda^3[\E\Hh]^*\oplus\delta([\E\Hh]^*\otimes \fr{sp}(4n,\R))$, and we shall analyze some further properties of these special $\Sp(1)$-invariant intrinsic torsion modules.  

To do so,  initially we need to do some preparatory work and prove a preliminary lemma.   Next we shall denote the ``\textsf{lowering operator}'' by 
\[
\ll :  \Lambda^{2}[\E\Hh]^*\otimes [\E\Hh] \longrightarrow  \Lambda^{2}[\E\Hh]^*\otimes [\E\Hh]^*\subset \otimes^{3}[\E\Hh]^*\,,\quad \ll(\phi)(X, Y, Z):=\omega\big(\phi(X, Y), Z\big)\,,
\]
for any $\phi\in  \Lambda^{2}[\E\Hh]^*\otimes [\E\Hh]$.   The   left inverse of $\ll$ is the    
   ``\textsf{raising operator}''
\[
\ll^{-1} : \otimes^{3}[\E\Hh]^*\longrightarrow  \Lambda^{2}[\E\Hh]^*\otimes [\E\Hh]\,,  
\]
 defined as follows:  
\[
\otimes^{3}[\E\Hh]^*\ni \Uptheta \longmapsto  \ll^{-1}(\Uptheta)\in \Lambda^{2}[\E\Hh]^*\otimes [\E\Hh]\,,\quad \ll^{-1}(\Uptheta)(X, Y):=\phi(X, Y)\,,
\]
where $X, Y\in[\E\Hh]$ and  $\phi\in  \Lambda^{2}[\E\Hh]^*\otimes [\E\Hh]$ is a vector-valued 2-form with $\Uptheta =\ll(\phi)$.  Passing to the level of bundles, we can prove that
\begin{lem} \label{altoper} 
Let $(M, \omega)$ be an \textsf{almost symplectic manifold}. Then,
 for any $\phi\in\Gamma(\Tor(M))$   we have  $\ll^{-1}\big(\pi_{\omega}(\phi)\big)=\sf{Alt}_{\phi}$, where  $\sf{Alt}_{\phi}\in \Gamma(\Tor(M))$ is  the operator  given by 
 \begin{equation}\label{Alt}
{\sf{Alt}}_{\phi}(X, Y)=\frac{1}{3}(\phi_{X}Y-\phi_{X}^{T}Y+\phi^{T}_{Y}X)\,.
\end{equation}
Here, for the vector-valued  2-form  $\phi$, and for any $X\in\Gamma(TM)$, we denote by $\phi_{X}\in \Ed(TM)$ the induced  endomorphism  with $\phi_{X}Y=\phi(X, Y)$, and by $\phi_{X}^{T}$ its symplectic transpose with respect to $\omega$.
\end{lem}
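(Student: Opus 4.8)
The statement to prove is the identity $\ll^{-1}\big(\pi_\omega(\phi)\big)=\sf{Alt}_\phi$ on an almost symplectic manifold $(M,\omega)$, where $\pi_\omega$ is the $\Sp(4n,\R)$-equivariant projection onto $\Lambda^3[\E\Hh]^*$ defined by $\pi_\omega(\phi)(X,Y,Z)=\tfrac13\fr{S}_{X,Y,Z}\omega(\phi(X,Y),Z)$, and $\sf{Alt}_\phi(X,Y)=\tfrac13(\phi_XY-\phi_X^TY+\phi_Y^TX)$. The plan is to verify this by a direct computation in terms of the lowering operator $\ll$: since $\ll$ is a bijection onto its image with left inverse $\ll^{-1}$, it suffices to show $\ll(\sf{Alt}_\phi)=\pi_\omega(\phi)$ as elements of $\otimes^3[\E\Hh]^*$, i.e. to check that $\omega(\sf{Alt}_\phi(X,Y),Z)=\tfrac13\fr{S}_{X,Y,Z}\omega(\phi(X,Y),Z)$ for all $X,Y,Z$.

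First I would unwind the left-hand side using the definition of the symplectic transpose (Definition \ref{asymsym}): recall $\omega(\phi_X^TW,Z)+\omega(W,\phi_XZ)=0$, hence $\omega(\phi_X^TY,Z)=-\omega(Y,\phi_XZ)=\omega(\phi_XZ,Y)=\omega(\phi(X,Z),Y)$, and similarly $\omega(\phi_Y^TX,Z)=\omega(\phi(Y,Z),X)$. Substituting these into $\omega(\sf{Alt}_\phi(X,Y),Z)=\tfrac13\big(\omega(\phi(X,Y),Z)-\omega(\phi_X^TY,Z)+\omega(\phi_Y^TX,Z)\big)$ gives
\[
\omega(\sf{Alt}_\phi(X,Y),Z)=\tfrac13\big(\omega(\phi(X,Y),Z)-\omega(\phi(X,Z),Y)+\omega(\phi(Y,Z),X)\big).
\]
Now I would use the antisymmetry of $\phi$ in its two arguments (it is a vector-valued $2$-form, $\phi\in\Gamma(\Lambda^2T^*M\otimes TM)$) to rewrite $-\omega(\phi(X,Z),Y)=\omega(\phi(Z,X),Y)$ and $\omega(\phi(Y,Z),X)=\omega(\phi(Y,Z),X)$, so the right-hand side becomes $\tfrac13\big(\omega(\phi(X,Y),Z)+\omega(\phi(Y,Z),X)+\omega(\phi(Z,X),Y)\big)$, which is precisely $\tfrac13\fr{S}_{X,Y,Z}\omega(\phi(X,Y),Z)=\pi_\omega(\phi)(X,Y,Z)$. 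This establishes $\ll(\sf{Alt}_\phi)=\pi_\omega(\phi)$.

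To finish, I would observe that $\sf{Alt}_\phi$ lies in $\Gamma(\Tor(M))=\Gamma(\Lambda^2T^*M\otimes TM)$ — this needs a brief check that $\sf{Alt}_\phi(X,Y)=-\sf{Alt}_\phi(Y,X)$, which follows from the expression $\tfrac13(\phi(X,Y)-\phi_X^TY+\phi_Y^TX)$ together with the antisymmetry of $\phi$ and the symmetry of the roles of the transpose terms under swapping $X\leftrightarrow Y$; concretely, using the identities above, $\omega(\sf{Alt}_\phi(X,Y),Z)$ is totally antisymmetric in $X,Y,Z$ after the rewriting, so in particular antisymmetric in $X,Y$, and non-degeneracy of $\omega$ forces $\sf{Alt}_\phi(X,Y)=-\sf{Alt}_\phi(Y,X)$. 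Then, applying $\ll^{-1}$ to both sides of $\ll(\sf{Alt}_\phi)=\pi_\omega(\phi)$ and using $\ll^{-1}\circ\ll=\id$ on $\Gamma(\Tor(M))$ yields $\sf{Alt}_\phi=\ll^{-1}(\pi_\omega(\phi))$, as claimed. I do not expect any real obstacle here; the only point requiring a little care is the correct bookkeeping of signs coming from the definition of the symplectic transpose (which carries a minus sign, unlike the metric transpose) and the antisymmetry of $\phi$, and the verification that $\sf{Alt}_\phi$ genuinely lands in the torsion bundle rather than merely in $\otimes^2T^*M\otimes TM$.
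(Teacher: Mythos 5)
Your proposal is correct and follows essentially the same route as the paper: both verify the identity $\omega\big(\mathsf{Alt}_{\phi}(X,Y),Z\big)=\pi_{\omega}(\phi)(X,Y,Z)$ by a direct computation with the definition of the symplectic transpose and the antisymmetry of $\phi$ and $\omega$, and then apply $\ll^{-1}$ (you merely run the chain of equalities in the opposite direction and add the — correct — check that $\mathsf{Alt}_{\phi}$ is antisymmetric in $X,Y$, which the paper leaves implicit).
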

\begin{proof}
 It is sufficient to prove that 
\begin{equation}\label{equT1}
\pi_{\omega}(\phi)(X, Y, Z)=\omega\cc {\sf Alt}_{\phi}(X, Y), Z\rr\,,
\end{equation}
 where ${\sf Alt}_{\phi}$ is given by $(\ref{Alt})$.   Recall that the symplectic transpose $\phi_{X}^{T}$ is defined via the relation $\omega(\phi_{X}^{T}Y, Z)=-\omega(Y, \phi_{X}Z)$, for any $X, Y, Z\in\Gamma(TM)$.   
Since any    $\phi\in \Lambda^{2}T^*M\otimes TM$   satisfies $\phi_{X}Y=-\phi_{Y}X$, by the definition of $\pi_\omega$ we see that
\begin{eqnarray*}
3\pi_{\omega}(\phi)(X, Y, Z)&=&\omega\cc\phi(X, Y), Z\rr+\omega\cc\phi(Y, Z), X\rr+\omega\cc\phi(Z, X), Y\rr\\
&=&\omega(\phi_{X}Y, Z)-\omega(X, \phi_{Y}Z)+\omega(Y, \phi_{X}Z)\\
&=&\omega(\phi_{X}Y, Z)+\omega(\phi_{Y}^{T}X, Z)-\omega(\phi_{X}^{T}Y, Z)\\
&=&\omega(\phi_{X}Y-\phi_{X}^TY+\phi_{Y}^{T}X, Z)\,,
\end{eqnarray*}
for any $X, Y, Z\in\Gamma(TM)$, which proves (\ref{equT1}). Then, by the definition of $\ll^{-1}$ it follows that
\[
\ll^{-1}\cc\pi_{\omega}(\phi)\rr(X, Y)={\sf Alt}_{\phi}(X, Y)=\frac{1}{3}(\phi_{X}Y-\phi_{X}^TY+\phi_{Y}^{T}X)\,.
\]
\end{proof}
Now we are able to proceed with a proof of the  claims pronounced above.

\begin{prop}\label{symtorscomp}
\textsf{1)}  The decomposition $\mc{X}_1\oplus\cdots\oplus\mc{X}_7$ is compatible with the decomposition  $\Lambda^3[\E\Hh]^*\oplus\delta([\E\Hh]^*\otimes \fr{sp}(4n,\R))$. In particular, $\mc{X}_{234}=\Lambda^{3}[\E\Hh]^*$ and $\mc{X}_{1567}\subset \delta([\E\Hh]^*\otimes \fr{sp}(4n,\R))$.\\
\textsf{2)} The torsion components $\mc{X}_1, \mc{X}_2, \mc{X}_5$ and $\mc{X}_6$ are independent of the normalization condition and coincide with the following torsion components of the connection $\nabla^{H, \omega}$ introduced in Theorem \ref{starconnection}: $[\K S^3\Hh]^*$,    $[\Lambda^3 \E  S^3\Hh]^*$,  $[ S^3_0 \E \Hh]^*$ and  $[\E S^3\Hh]^*$, respectively.  Moreover, there are the following $\SO^*(2n)\Sp(1)$-equivariant   maps
\[
\pi_3 : [\K\Hh]^*\to \mc{X}_3\,,\quad \pi_4 : 2[\E\Hh]^*\to \mc{X}_4\,,\quad \pi_7 : 2[\E\Hh]^*\to \mc{X}_7\,,
\]
explicitly defined as 
\begin{eqnarray*}
\pi_3(\phi)&:=&{\sf Alt}_{\phi}\in \mc{X}_3\subset \Lambda^3[\E\Hh]^*\,,\\
\pi_4\Big(\delta\big(\zeta_1\otimes \id+\pi_A(\omega \otimes Z_2)\big)\Big) &:=& {\sf Alt}_{(2\zeta_1+\frac12Z_2^T)\otimes \omega}\in \mc{X}_4\subset \Lambda^3[\E\Hh]^*\,,\\
\pi_7\Big(\delta\big(\zeta_1\otimes \id+\pi_A(\omega \otimes Z_2)\big)\Big) &:=& \delta\big(\sum_{a=1}^3 (\frac13\zeta_1-\frac16Z_2^T)\circ J_a\otimes J_a\big)\in \mc{X}_7\subset \delta([\E\Hh]^*\otimes \fr{sp}(4n,\R))\,,
\end{eqnarray*}
where $\zeta_1, Z_2^T\in [\E\Hh]^*$, $[\K\Hh]^*$ and $2[\E\Hh]^*$ are torsion components of $\nabla^{H, \omega}$, and $\sf{Alt}$ is the operator   introduced in Lemma \ref{altoper}.   In particular,  $\pi_3$ is an isomorphism,   $\pi_4, \pi_7$ are surjections, and the sum $\pi_4\oplus\pi_7$ is also an isomorphism.\\ 
\textsf{3)} The torsion components $\mc{X}_1, \mc{X}_2$ and  $\mc{X}_5$  are independent of the normalization condition and coincide with the following torsion components of the connection $\nabla^{Q, \omega}$ introduced in Theorem \ref{starconnectionsp1}: $[\K S^3\Hh]^*$,    $[\Lambda^3 \E  S^3\Hh]^*$, and  $[ S^3_0 \E \Hh]^*$, respectively.  Moreover, there are the following $\SO^*(2n)\Sp(1)$-equivariant isomorphisms 
\[
\pi_3 : [\K\Hh]^*\to \mc{X}_3\,,\quad \pi_4 :  [\E\Hh]^*\to \mc{X}_4\,,
\]
where $\pi_3$, $\pi_4$ are given as above, and  $[\K\Hh]^*$ and $[\E\Hh]^*$ are torsion components of $\nabla^{Q, \omega}$. 
\end{prop}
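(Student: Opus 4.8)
The plan is to exploit that we already possess, from Theorem \ref{zeroprog}, two explicit minimal connections $\nabla^{H,\omega}$ and $\nabla^{Q,\omega}$, together with the decompositions of $\Lambda^2[\E\Hh]^*\otimes[\E\Hh]$ under $\SO^*(2n)$ and $\SO^*(2n)\Sp(1)$ from Proposition \ref{genCar1} and Corollary \ref{genCar2}. The whole statement is at the algebraic (linear) level, so I would fix the model $[\E\Hh]$ with the standard admissible basis $H_0$ and standard scalar 2-form $\omega_0$, and work entirely with $\SO^*(2n)$- and $\SO^*(2n)\Sp(1)$-equivariant maps between the modules listed. For part \textsf{1)}, I would first show $\mc{X}_{234}=\Lambda^3[\E\Hh]^*$: by Corollary \ref{genCar2} the module $\Lambda^3[\E\Hh]^*$ decomposes into irreducibles, and comparing with the branching $[W S^3\Hh]^*\cong 2W^*$, $[W\Hh]^*\cong W^*$ of Proposition \ref{genCar1} one identifies the $\Sp(1)$-trivial-on-$\Hh$ part. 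Concretely, $\Lambda^3[\E\Hh]^*=[\Lambda^3\E\, S^3\Hh]^*\oplus[\K\,\Hh]^*\oplus[\E\,\Hh]^*$ by a Littlewood--Richardson/plethysm count as in the proof of Proposition \ref{usefrelfund}; these are exactly $\mc{X}_2\oplus\mc{X}_3\oplus\mc{X}_4$. The complement $\mc{X}_{1567}=[\K\,S^3\Hh]^*\oplus[S^3_0\E\,\Hh]^*\oplus[\E\,S^3\Hh]^*\oplus\mc{X}_7$ must then lie in $\delta([\E\Hh]^*\otimes\fr{sp}(4n,\R))$, which follows because $\pi_\omega$ (the projection onto $\Lambda^3[\E\Hh]^*$) kills precisely $\delta([\E\Hh]^*\otimes\fr{sp}(4n,\R))$, and one checks these four modules are annihilated by $\pi_\omega$ — equivalently, via Lemma \ref{altoper}, that $\sf{Alt}$ vanishes on them, which reduces to the symmetry type of each plethysm summand.

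\textbf{Identifying the stable components.} For part \textsf{2)}, the components $\mc{X}_1=[\K S^3\Hh]^*$, $\mc{X}_2=[\Lambda^3\E\,S^3\Hh]^*$, $\mc{X}_5=[S^3_0\E\,\Hh]^*$, $\mc{X}_6=[\E\,S^3\Hh]^*$ all appear with multiplicity one in $\mc{H}(\so^*(2n))$ and hence are ``normalization-independent'': any two minimal connections differ by an element of $\fr{g}^{(1)}=\{0\}$, so in fact the relevant point is that these modules do not appear in $\delta([\E\Hh]^*\otimes\so^*(2n))$, hence the corresponding torsion component is the same for $\nabla^{H,\omega}$ as for the intrinsic torsion. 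I would spell this out by comparing $\delta([\E\Hh]^*\otimes\so^*(2n))\cong[(\Lambda^3\E\oplus\K\oplus\E)\otimes\Hh]^*$ (Proposition \ref{genCar1}) with the target list: none of $[\K S^3\Hh]^*,[\Lambda^3\E S^3\Hh]^*,[S^3_0\E\Hh]^*,[\E S^3\Hh]^*$ occurs, so these torsion components are unaffected by any change of $\so^*(2n)$-valued correction term, and by Theorem \ref{zeroprog}\,\textsf{1)} they agree with the $\nabla^{H,\omega}$-torsion in those slots. The multiplicity-one components $\mc{X}_3\subset\Lambda^3[\E\Hh]^*$, on the other hand, receive contributions only from the $[\K\Hh]^*$-slot, and $\pi_3:=\sf{Alt}$ restricted there is equivariant and nonzero, hence by Schur an isomorphism. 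For $\mc{X}_4$ and $\mc{X}_7$, which are the two copies of $[\E\Hh]^*$ (beyond $\mc{X}_3$) inside $\Lambda^2[\E\Hh]^*\otimes[\E\Hh]$, I would use the explicit components $\mf{(A)}$–$\mf{(D)}$ and Lemma \ref{kernel}: the torsion of $\nabla^{H,\omega}$ in the $2[\E\Hh]^*$-part is spanned by $\delta(\zeta_1\otimes\id+\pi_A(\omega\otimes Z_2))$, and the explicit formulas for $\pi_4,\pi_7$ come straight from the trace relations \eqref{transition} — $\pi_4$ projects to the $\ke(2\Tr_1+\Tr_3)$-type part via $\sf{Alt}$, and $\pi_7$ to the $\sp(1)$-valued part. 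Surjectivity of each follows by evaluating on a nonzero test element; that $\pi_4\oplus\pi_7$ is injective follows because its kernel would be an $\so^*(2n)\oplus\sp(1)$-valued $\delta$-closed element isomorphic to $[\E\Hh]^*$ landing in $\ke\delta=\{0\}$ after the adjustments of Lemma \ref{kernel}\,$\alpha)$, forcing it to vanish.

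\textbf{The $\SO^*(2n)\Sp(1)$ case and the main obstacle.} Part \textsf{3)} is the quaternionic shadow of \textsf{2)}: here there is no multiplicity in $\mc{H}(\so^*(2n)\oplus\sp(1))$ beyond the single copy of $[\E\Hh]^*$, so $\mc{X}_1,\mc{X}_2,\mc{X}_5$ are normalization-independent for the same Schur-lemma reason, and $\pi_3,\pi_4$ are now genuine isomorphisms onto $\mc{X}_3,\mc{X}_4$ (the second copy of $[\E\Hh]^*$ and the $[\E S^3\Hh]^*$-part having been absorbed into $\delta([\E\Hh]^*\otimes\sp(1))$, cf.\ Theorem \ref{starconnectionsp1} and the choice $Z_3^T=\frac{\Tr_2(A)}{n+1}$, $\zeta_4=-\frac{\Tr_2(A)}{4(n+1)}$). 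I would close by noting these are exactly the torsion components of $\nabla^{Q,\omega}$ by Theorem \ref{zeroprog}\,\textsf{2)}, since its torsion lies in $\mc{D}(\so^*(2n)\oplus\sp(1))\cong[\K S^3\Hh]^*\oplus[\Lambda^3\E S^3\Hh]^*\oplus[\K\Hh]^*\oplus[\E\Hh]^*\oplus[S^3_0\E\Hh]^*$ with the $[\E\Hh]^*$-part placed in $\ke(2\Tr_1+\Tr_3)\cap\ke(\Tr_1-\Tr_4)$. The main obstacle I anticipate is the bookkeeping in part \textsf{2)} for $\pi_4$ and $\pi_7$: one must show that the three $[\E\Hh]^*$-copies ($\mc{X}_4,\mc{X}_7$, plus the $[\E S^3\Hh]^*$ piece of $\mc{X}_6$ which is genuinely different) are correctly separated, that the stated coefficients $2\zeta_1+\tfrac12 Z_2^T$ and $\tfrac13\zeta_1-\tfrac16 Z_2^T$ really are the images, and that $\sf{Alt}$ composed with $\delta$ behaves as claimed — all of which requires carefully tracking the trace identities of Lemma \ref{kernel} and the action of $\sf{Alt}$ (Lemma \ref{altoper}) through $\delta$. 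This is a finite linear-algebra computation in the $4\times4$ / $3\times4$ trace matrices already recorded in \eqref{transition}, so it is routine but error-prone; everything else is Schur's lemma plus the plethysm decompositions already established.
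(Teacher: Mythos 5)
Your proposal is correct and follows essentially the same route as the paper's proof: the plethysm computation of $\Lambda^3[\E\Hh]^*$ giving $\mc{X}_{234}$, the identification of $\delta([\E\Hh]^*\otimes \fr{sp}(4n,\R))$ with the kernel of the antisymmetrization onto $\Lambda^3[\E\Hh]^*$, Schur/multiplicity arguments for the normalization-independent components and for $\pi_3={\sf Alt}$ on the $[\K\Hh]^*$-slot, and the trace bookkeeping of Lemma \ref{kernel} and \eqref{transition} for $\pi_4,\pi_7$. The coefficient verification you defer is precisely the composition of the two transition matrices to the trace parametrization that the paper's own proof invokes without displaying, so nothing essential is missing from your outline.
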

\begin{proof}
First we need to compute $\Lambda^3[\E\Hh]^\ast$. This is of real type, and can be computed via the following $\big(\gl([\E])\oplus \gl([\Hh])\big)$-invariant decomposition
\[
	\Lambda^3\E\Hh = \bigoplus_{Y\in \text{Young(3)}}Y(\E)Y^t(\Hh)\,,
\]
where  as  before $\text{Young}(3)$ denotes  the set of plethysms associated to Young diagrams with $3$ boxes, and $Y^t$ is the diagram $Y$ transposed. 
This is straightforward to evaluate and yields the following $\SO^*(2n)\Sp(1)$-equivariant decomposition
\begin{equation}\label{L3}
	\Lambda^3[\E\Hh]^\ast \cong [\Lambda^3\E S^3\Hh]^\ast \oplus [\K\Hh]^\ast \oplus [\E\Hh]^\ast\,.
\end{equation}
  Hence,  by Corollaries \ref{algebtypes} and \ref{genCar2}  we obtain the assertion $\Lambda^3[\E\Hh]^\ast \cong\mc{X}_{234}$.
 Next,  identifying $[\E\Hh]$ with the standard $\sp(4n,\R)$-module, we see that the branching of the $\sp(4n,\R)$-module $\delta([\E\Hh]^\ast \otimes \sp(4n,\R))$ to $\sp(1)\oplus \so^\ast(2n)$ is  isomorphic  to the quotient
\[
	\Lambda^2[\E\Hh]^\ast \otimes [\E\Hh]^\ast / \Lambda^3[\E\Hh]^\ast\,,
\]
as abstract modules.
 Then, by using the decomposition from Proposition \ref{genCar1}, as well as the computation above, we obtain the following equivariant isomorphism
\[
	\delta([\E\Hh]^\ast\otimes \sp(4n,\R)) \cong
	[(\K\oplus\E)\otimes S^3\Hh]^*\oplus [(\Lambda^3 \E\oplus\K\oplus2\E\oplus S^3_0 \E)\otimes \Hh]^*\,.
\]
This still leaves the question of embedding into $\Lambda^2 [\E\Hh]^\ast \otimes [\E\Hh]^\ast.$ We  can uniquely recognize the modules with multiplicity one in this embedding, independently of any $\SO^*(2n)\Sp(1)$-equivariant choice of embedding. Thus, the relation between the submodules of torsion tensors in $2[\K\Hh]^*\oplus 3[\E\Hh]^*$, $\mc{X}_{347}$ and $\Lambda^3[\E\Hh]^*\oplus\delta([\E\Hh]^*\otimes \fr{sp}(4n,\R))$ remains to be clarified. However, by definition of $\mc{X}_{347}$ in Corollaries \ref{algebtypes} and \ref{genCar2},  these modules are compatible with the decomposition $\Lambda^3[\E\Hh]^*\oplus\delta([\E\Hh]^*\otimes \fr{sp}(4n,\R))$. This completes the proof of the first assertion.\\
 Now, having the explicit formula of ${\sf Alt}_{\phi}$ by Lemma \ref{altoper} we see that 
\[
\ke({\sf Alt}_{\phi}\big|_{2[\K\Hh]^*})=[\K\Hh]^*\subset \delta([\E\Hh]^*\otimes \so^*(2n))\subset \delta([\E\Hh]^*\otimes \fr{sp}(4n,\R))\,,
\]
which follows  by counting multiplicities. Hence, $\pi_3$ is a well-defined isomorphism between the spaces  presented in the  second and third claim, respectively.  Be aware however  that  $\pi_3$  is not the identity map.\\
Finally, we have three parametrizations of  $3[\E\Hh]^*$ by Lemma \ref{kernel}. The first one is given  by the torsion components $2[\E\Hh]^*$ of $\nabla^{H,\omega}$ and by $[\E\Hh]^*\subset \delta([\E\Hh]^*\otimes \so^*(2n))$. The second one consists of the torsion component $\mc{X}_4$ of $\nabla^{Q,\omega}$, and of $\mc{X}_7\oplus [\E\Hh]^*\subset  \delta([\E\Hh]^*\otimes \sp(1))\oplus \delta([\E\Hh]^*\otimes \so^*(2n))$. The third one is provided by the traces $\Tr_1,\Tr_3,\Tr_4$. The transition matrix  from the first parametrization to the third one can be immediately deduced from the formula \eqref{transition}. The transition matrix from the second parametrization to the third one can be also deduced from the formula \eqref{transition}, and the explicit computation of the traces presented at the end of the proof of  Lemma \ref{kernel}. Finally, the composition of the endomorphisms corresponding to these two matrices provides the claimed formulas for $\pi_4$ and $\pi_7$. This finishes the proof of the second and third claim.
\end{proof}

\begin{rem}\label{remjan}
 \textnormal{The projections $\pi_3,\pi_4,\pi_7$ from Proposition \ref{symtorscomp} provide the difference of the torsions of  the two minimal connections  with respect to our normalization conditions from Theorem \ref{zeroprog},  and  the normalization condition given by the modules $\mc{X}_1,\ldots,\mc{X}_7$, respectively. Therefore, by the inverse of $\delta$ one obtains the difference of the corresponding minimal connections.  However, the formula for inverse of $\delta$ is too complicated to be presented here. }  \end{rem}

 Finally, as a conclusion of the above results we obtain the following.
\begin{corol} 
\textsf{1)} Let $(H, \omega)$ be a $\SO^*(2n)$-structure.
Then,   the corresponding intrinsic torsion is a 3-form if and only if  $(H, \omega)$  is of type $\mc{X}_{234}$, and it is of vectorial type (i.e., it is defined by a non-trivial vector field on $M$) if and only if $(H, \omega)$ is of type $\mc{X}_{47}$.\\
\textsf{2)} Let $(Q, \omega)$ be a $\SO^*(2n)\Sp(1)$-structure.
Then,   the corresponding intrinsic torsion is a 3-form if and only if  $(Q, \omega)$  is of type $\mc{X}_{234}$, and it is of vectorial type if and only if $(Q, \omega)$ is of type $\mc{X}_{4}$.
\end{corol}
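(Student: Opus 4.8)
The plan is to prove the two equivalences by reading off, from the module decompositions established in Corollaries~\ref{algebtypes} and~\ref{genCar2} and the compatibility statements in Proposition~\ref{symtorscomp}, exactly which of the pure modules $\mc{X}_i$ sit inside $\Lambda^3[\E\Hh]^*$ and which inside the ``vectorial'' part $[\E\Hh]^*$. For the first halves of both claims I would simply invoke Proposition~\ref{symtorscomp}\textsf{1)}, which asserts $\mc{X}_{234}=\Lambda^3[\E\Hh]^*$ at the level of $\SO^*(2n)$- and hence $\SO^*(2n)\Sp(1)$-modules; by definition the intrinsic torsion $\tau$ is a $3$-form precisely when $\tau$, viewed via the lowering operator $\ll$, lies in $\Lambda^3[\E\Hh]^*$, i.e.\ when all components outside $\mc{X}_2,\mc{X}_3,\mc{X}_4$ vanish. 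Since $\mc{X}_2\oplus\mc{X}_3\oplus\mc{X}_4$ is an invariantly defined submodule (independent of the normalization condition, by Proposition~\ref{symtorscomp}), the condition ``$\tau\in\mc{X}_{234}$'' is well-posed and equivalent to $\tau$ being a $3$-form. This covers both \textsf{1)} and \textsf{2)}.

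For the second halves I would make precise what ``vectorial type'' means: the torsion is determined by a vector field, equivalently $\tau$ is a nonzero section of a copy of $[\E\Hh]^*$ inside the intrinsic torsion, and — in the $3$-form case — of the canonical copy $[\E\Hh]^*\subset\Lambda^3[\E\Hh]^*$ appearing in the decomposition (\ref{L3}), obtained by wedging a $1$-form with $\omega$. The point is that this copy is \emph{not} a single pure module $\mc{X}_i$ for $\SO^*(2n)$-structures: by Proposition~\ref{symtorscomp}\textsf{2)} the vectorial torsion inside $\Lambda^3[\E\Hh]^*$ is the image of $\pi_4$, while the map $\pi_4\oplus\pi_7$ is an isomorphism, so the ``$3$-form vector'' decomposes into the $\mc{X}_4$-part and an $\mc{X}_7$-part which lies in $\delta([\E\Hh]^*\otimes\fr{sp}(4n,\R))$. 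Hence for a $\SO^*(2n)$-structure the intrinsic torsion is of vectorial type iff it is of type $\mc{X}_{47}$. For a $\SO^*(2n)\Sp(1)$-structure there is no $\mc{X}_7$ in the decomposition (Corollary~\ref{algebtypes}), so the vectorial part of $\Lambda^3[\E\Hh]^*$ is already the whole of $\mc{X}_4$ by Proposition~\ref{symtorscomp}\textsf{3)} (where $\pi_4\colon[\E\Hh]^*\to\mc{X}_4$ is stated to be an isomorphism), and vectorial type is equivalent to type $\mc{X}_4$.

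The steps in order: (i) fix the definition of ``intrinsic torsion is a $3$-form'' and ``of vectorial type'' in terms of $\ll(\tau)$; (ii) quote $\mc{X}_{234}=\Lambda^3[\E\Hh]^*$ from Proposition~\ref{symtorscomp}\textsf{1)} to get the $3$-form equivalences; (iii) for the $\SO^*(2n)\Sp(1)$ case, use that $[\E\Hh]^*\subset\Lambda^3[\E\Hh]^*$ corresponds under $\pi_4$ to $\mc{X}_4$ (isomorphism, Proposition~\ref{symtorscomp}\textsf{3)}), so vectorial $\iff$ $\mc{X}_4$; (iv) for the $\SO^*(2n)$ case, use Proposition~\ref{symtorscomp}\textsf{2)}: the vectorial $3$-form $\mathsf{Alt}_{\zeta\otimes\omega}$ is hit by $\pi_4$, but a nonzero vectorial torsion not normalized into $\Lambda^3$ has components in $\mc{X}_7$ as well, and $\pi_4\oplus\pi_7$ being an isomorphism shows these are exactly the structures with $\tau\in\mc{X}_4\oplus\mc{X}_7=\mc{X}_{47}$.

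I expect the main obstacle to be step (iv): one has to argue carefully that ``the intrinsic torsion is determined by a single vector field'' forces $\tau$ into $\mc{X}_{47}$ and not merely into some other copy of $[\E\Hh]^*$ — there are three copies of $[\E\Hh]^*$ floating around (inside $\mc{X}_4$, inside $\mc{X}_7$, and inside $\delta([\E\Hh]^*\otimes\so^*(2n))$), and one must use that the first two together exhaust the $\mathsf{Sp}(1)$-invariantly-defined ``vectorial'' submodule of $\mc{H}(\so^*(2n))$ while the third is quotiented out. This is precisely the content of Lemma~\ref{kernel} (the transition matrix (\ref{transition})) combined with the parametrizations discussed in the proof of Proposition~\ref{symtorscomp}, so the argument is available; the care needed is just in matching the abstract $\mathsf{Sp}(1)$-module $[\E\Hh]^*$ with the geometric object ``$1$-form giving the torsion'' and checking no other pure module can produce a vectorial torsion.
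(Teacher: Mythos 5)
Your argument is correct and follows the paper's own (implicit) route: the corollary is stated there as a direct consequence of Proposition~\ref{symtorscomp} together with Corollaries~\ref{algebtypes} and \ref{genCar2} and Lemma~\ref{kernel}, which is precisely what you invoke — $\mc{X}_{234}=\Lambda^3[\E\Hh]^*$ for the $3$-form characterization, and the maps $\pi_4$, $\pi_4\oplus\pi_7$ identifying the vectorial copies of $[\E\Hh]^*$ with $\mc{X}_4$ (resp.\ $\mc{X}_{47}$) for the vectorial one. Your care in step (iv) about the several copies of $[\E\Hh]^*$ and the one killed by $\delta([\E\Hh]^*\otimes\so^*(2n))$ matches the parametrization discussion in the proof of Proposition~\ref{symtorscomp}, so nothing essential is missing.
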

Note that for $n=2, 3$ the module $\mc{X}_{234}$  decomposes into further irreducible submodules.

\subsection{Symplectomorphisms that are affine maps of minimal connections}

Since the first prolongation of our $G$-structures $G\in \{\SO^*(2n),\SO^*(2n)\Sp(1)\}$ vanishes,    the general theory of $G$-structures (see \cite{Kob2}) provides   several important assertions about the hypercomplex/quaternionic symplectomorphisms.   Such conclusions occur due to the uniqueness of the  minimal almost hypercomplex skew-Hermitian connection $\nabla^{H,\omega}$ (respectively, minimal almost quaternionic skew-Hermitian connection $\nabla^{Q,\omega}$), with respect to certain normalization conditions described in Theorem \ref{zeroprog}. In particular:

\begin{prop}\label{firstjet}
\textsf{1)} The hypercomplex symplectomorphisms between two \textsf{almost hs-H manifolds} $(M,H,\omega)$ and $(\hat{M},\hat{H},\hat{\omega})$ are those  affine transformations between $(M,\nabla^{H,\omega})$ and $(\hat{M},\nabla^{\hat{H},\hat{\omega}})$,   satisfying the relations
\[
f^*\hat{H}_{f(x)}=H_x\,, \quad f^*\hat{\omega}_{f(x)}=\omega_x\,,\quad x\in M\,.
\]
\textsf{2)} The quaternionic symplectomorphisms between two \textsf{almost qs-H manifolds} $(M,Q,\omega)$ and $(\hat{M},\hat{Q},\hat{\omega})$ are those affine transformations between $(M,\nabla^{Q,\omega})$ and $(\hat{M},\nabla^{\hat{Q},\hat{\omega}})$,
satisfying   the relations
\[
 f^*\hat{Q}_{f(x)}=Q_x\,,\quad f^*\hat{\omega}_{f(x)}=\omega_x\,,\quad x\in M\,.
\]
 \textsf{3)} If  two hypercomplex/quaternionic symplectomorphisms $f_1, f_2 : M\to \hat{M}$  satisfy $j^{1}_xf_1=j^{1}_xf_2$ for some $x\in M$, then $f_1=f_2$, where in general $j^{1}_{x}f$ denotes the first jet at $x\in M$ of a smooth function $f : M\to\hat{M}$.
\end{prop}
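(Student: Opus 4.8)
The plan is to reduce each of the three assertions to the uniqueness of the minimal adapted connection, which is available because the first prolongation $\fr{g}^{(1)}$ vanishes for $\fr{g}\in\{\so^*(2n),\so^*(2n)\oplus\sp(1)\}$ (Lemma \ref{zeroprog1}) together with the uniqueness of the $\mc{D}$-connection stated in Theorem \ref{zeroprog}.

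\begin{proof}
\textsf{1)} Let $f:(M,H,\omega)\to (\hat M,\hat H,\hat\omega)$ be a hypercomplex symplectomorphism, i.e.\ $f^*\hat H=H$ and $f^*\hat\omega=\omega$. Then $f$ maps the reduction bundle $\mc{P}\to M$ of Proposition \ref{hsH} to the corresponding reduction $\hat{\mc{P}}\to\hat M$, so the pull-back $f^*\nabla^{\hat H,\hat\omega}$ is again an almost hypercomplex skew-Hermitian connection on $(M,H,\omega)$. Moreover, $f^*\nabla^{\hat H,\hat\omega}$ is the $\mc{D}$-connection for the normalization condition $\mc{D}(\so^*(2n))$ of Theorem \ref{zeroprog}, because pulling back by a diffeomorphism commutes with the construction of Theorem \ref{starconnection} (the Obata connection $\nabla^{\hat H}$ pulls back to $\nabla^{H}$ by its uniqueness, the scalar $2$-form pulls back to $\omega$, and the tensor $A$ is defined by the same algebraic formula); equivalently, the decomposition (\ref{decso21n})--(\ref{decso2n}) is $\Gl(n,\Hn)\Sp(1)$-invariant, hence intertwined by $f$. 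By the uniqueness part of Theorem \ref{zeroprog}.1 we get $f^*\nabla^{\hat H,\hat\omega}=\nabla^{H,\omega}$, i.e.\ $f$ is an affine map between $(M,\nabla^{H,\omega})$ and $(\hat M,\nabla^{\hat H,\hat\omega})$. Conversely, if $f$ is an affine transformation with $f^*\hat H_{f(x)}=H_x$ and $f^*\hat\omega_{f(x)}=\omega_x$ for all $x$, then $f^*\hat H=H$ and $f^*\hat\omega=\omega$ as tensor fields, so $f$ is a hypercomplex symplectomorphism by definition. This proves the first claim, and \textsf{2)} is verbatim the same argument with $\nabla^{Q,\omega}$, $\mc{D}(\so^*(2n)\oplus\sp(1))$, Theorem \ref{starconnectionsp1} and Theorem \ref{zeroprog}.2 in place of their hypercomplex counterparts; here one uses that all Oproiu connections have the same torsion, so $f^*\nabla^{\hat Q,\hat\omega}$ is again the $\mc{D}$-connection for $\mc{D}(\so^*(2n)\oplus\sp(1))$.

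\textsf{3)} By parts \textsf{1)} and \textsf{2)}, any hypercomplex (resp.\ quaternionic) symplectomorphism is an affine transformation between manifolds equipped with the fixed connections $\nabla^{H,\omega}$ and $\nabla^{\hat H,\hat\omega}$ (resp.\ $\nabla^{Q,\omega}$ and $\nabla^{\hat Q,\hat\omega}$). An affine map between manifolds with fixed linear connections is determined by its $1$-jet at a single point of a connected manifold: indeed, if $f_1,f_2$ are affine and $j^1_xf_1=j^1_xf_2$, then $f_1$ and $f_2$ agree together with their differentials at $x$, and since both intertwine the connections they map geodesics through $x$ to geodesics through $f_i(x)$ with the same initial velocity; thus $f_1\circ\exp^{\nabla}_x=f_2\circ\exp^{\nabla}_x$ on a normal neighbourhood of $x$, so $f_1=f_2$ there, and a standard connectedness argument (the set where $f_1,f_2$ have equal $1$-jets is open and closed) propagates the equality to all of $M$. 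Since $M$ is connected by our standing assumption, $f_1=f_2$.
\end{proof}

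The steps are all essentially formal once Theorem \ref{zeroprog} is in hand; the only point requiring a little care is the verification that $f^*\nabla^{\hat H,\hat\omega}$ (resp.\ $f^*\nabla^{\hat Q,\hat\omega}$) is again the \emph{minimal} connection for the \emph{same} normalization condition, i.e.\ that the normalization conditions $\mc{D}(\so^*(2n))$ and $\mc{D}(\so^*(2n)\oplus\sp(1))$ are intrinsic to the $G$-structure and not an extra choice that could differ between $M$ and $\hat M$ — this is immediate from their definition via the $G$-invariant projections $\pi_H$, $\pi_\omega$ and the trace maps $\Tr_i$, all of which are natural. I expect this to be the main (very mild) obstacle; the jet-rigidity in \textsf{3)} is the classical fact that the automorphism group of a connection is a Lie group acting freely on the frame bundle, so no difficulty arises there.
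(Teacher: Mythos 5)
Your argument is correct and follows essentially the same route as the paper: the paper's proof simply observes that hypercomplex/quaternionic symplectomorphisms map minimal connections to minimal connections and invokes the uniqueness of the minimal connection (Corollary \ref{uniqueminimal}), leaving part \textsf{3)} to the reader. You have merely filled in the details — the naturality of the normalization conditions and the classical $1$-jet rigidity of affine maps via geodesics and an open-closed argument — all of which are sound.
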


\begin{proof}
Since the hypercomplex/quaternionic symplectomorphisms map minimal connections to minimal connections, the first two claims are  consequences of  the uniqueness of such connections, see Corollary \ref{uniqueminimal}.  We leave the details of the remaining assertion to the reader.
\end{proof}

This result has the following classical consequences.

\begin{corol}
\textsf{1)} The group of hypercomplex symplectomorphisms of a $4n$-dimensional \textsf{almost hs-H manifold} $(M,H,\omega)$ is a Lie group of dimension less than or equal to $2n^2+3n.$\\
\textsf{2)} The group of quaternionic symplectomorphisms of a $4n$-dimensional \textsf{almost qs-H manifold} $(M,Q,\omega)$ is a Lie group of dimension less than or equal to $2n^2+3n+3.$
\end{corol}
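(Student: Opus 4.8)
The plan is to deduce both statements from Proposition \ref{firstjet} together with the standard structure theory of automorphism groups of $G$-structures of finite type. The key observation is that an \textsf{almost hs-H manifold} (resp.\ \textsf{almost qs-H manifold}) carries the unique minimal connection $\nabla^{H,\omega}$ (resp.\ $\nabla^{Q,\omega}$) from Theorem \ref{zeroprog}, so by Proposition \ref{firstjet} the group of hypercomplex (resp.\ quaternionic) symplectomorphisms is precisely the group of affine transformations of $(M,\nabla^{H,\omega})$ preserving the defining tensors, and moreover such a transformation is determined by its $1$-jet at a single point. This immediately gives that the automorphism group $G$ is a Lie group, via the classical argument (see \cite{Kob2}) that the bundle of $1$-jets of such maps is a closed submanifold of a suitable jet bundle and $G$ embeds into a fibre of it.

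First I would set up the dimension count. Since a hypercomplex symplectomorphism $f$ is determined by $j^1_x f$ at a fixed $x\in M$, the map $f\mapsto j^1_x f$ is injective. The $1$-jet $j^1_x f$ consists of the point $f(x)\in \hat M$ together with the linear isomorphism $df_x : T_xM\to T_{f(x)}\hat M$; the constraints $f^*\hat H_{f(x)}=H_x$ and $f^*\hat\omega_{f(x)}=\omega_x$ force $df_x$ to belong to the reduced structure group, i.e.\ $df_x$ lies in a coset of $\SO^*(2n)$ (resp.\ $\SO^*(2n)\Sp(1)$). Hence
\[
\dim G \leq \dim M + \dim \SO^*(2n) = 4n + n(2n-1) = 2n^2+3n
\]
in the hs-H case, using $\dim_{\R}\SO^*(2n)=n(2n-1)$ from the preliminaries in Section \ref{linears}, and
\[
\dim G \leq \dim M + \dim\big(\SO^*(2n)\Sp(1)\big) = 4n + n(2n-1) + 3 = 2n^2+3n+3
\]
in the qs-H case, since $\dim\Sp(1)=3$ and the central $\Z_2$ does not affect dimension. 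For the bound to be rigorous one argues that $G$ acts on the $\SO^*(2n)$-bundle $\mc P$ (resp.\ $\SO^*(2n)\Sp(1)$-bundle $\mc Q$) freely — freeness is exactly the statement that $j^1_xf=j^1_x\id$ implies $f=\id$, which is Proposition \ref{firstjet}(3) applied with $f_1=f$, $f_2=\id$ — and a free smooth action of a Lie group on a manifold of dimension $\dim\mc P = 4n+\dim\SO^*(2n)$ has orbits of dimension at most that; choosing $x$ and the frame gives the stated inequalities.

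The main technical point — and the step I expect to require the most care — is establishing that $G$ is indeed a \emph{Lie} group acting smoothly, rather than merely a group, since Proposition \ref{firstjet} as stated only identifies its elements. Here one invokes the general theory: the vanishing of the first prolongation $\fr g^{(1)}$ for $\fr g\in\{\fr{so}^*(2n),\fr{so}^*(2n)\oplus\fr{sp}(1)\}$, proved in Lemma \ref{zeroprog1}, means these $G$-structures are of \emph{finite type} (indeed of order $1$), and the classical theorem of Ehresmann--Kobayashi (see \cite{Kob2}) then guarantees that the automorphism group of such a $G$-structure equipped with the intrinsic connection is a Lie group, with the embedding $f\mapsto j^1_xf$ giving it a manifold structure of the asserted dimension. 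Concretely, one realizes $G$ as the group of affine transformations of the minimal connection $\nabla^{H,\omega}$ (resp.\ $\nabla^{Q,\omega}$) that additionally preserve the defining tensor fields $H,\omega$ (resp.\ $Q,\omega$); the group of affine transformations of a connection is a Lie group of dimension $\leq \dim M + \dim M^2$ acting smoothly (another application of the $1$-jet determination), and imposing the closed tensorial conditions cuts this down to a closed subgroup, hence still a Lie group, now of the sharper dimension above. This completes both parts.
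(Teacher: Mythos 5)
Your proof is correct and follows essentially the same route as the paper: the vanishing of the first prolongation (Lemma \ref{zeroprog1}) combined with the classical bound $\dim M+\dim_{\R}\fr{g}$ for automorphism groups of finite-type $G$-structures from \cite{Kob2}, giving $4n+n(2n-1)=2n^2+3n$ and the extra $+3$ from $\fr{sp}(1)$. The additional detail you supply via Proposition \ref{firstjet} and the $1$-jet/freeness argument is just an unpacking of that same citation, not a different method.
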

\begin{proof}
	By Lemma \ref{zeroprog1}, a  Lie algebra $\fr{g}\in \{\fr{so}^*(2n),\fr{so}^*(2n)\sp(1)\}$  has vanishing first prolongation. Since the dimension of the automorphism group of a $G$-structure with $\fr{g}^{(1)}$ trivial is bounded by  $\dim_{\R}M+\dim_{\R}\fr{g}$, see \cite{Kob2}, the claim follows. 
\end{proof}

\section{Torsion-free examples}\label{tfexamples}
 In this final section of this article,  we will focus on torsion-free examples.  In particular, based on certain conclusions presented in the articles  \cite{ACort} and \cite{G13}, we  shall present the  classification  of symmetric spaces $K/L$ admitting \textsf{invariant torsion-free $\SO^*(2n)\Sp(1)$-structures}, under the assumption that $K$ is semisimple. Moreover, we recall a construction from the theory of special symplectic connections.  
   Note that many examples with torsion which realize   some of the types $\mc{X}_{i_1\ldots i_j}$ introduced in this article, are  described  in \cite{CGWPartII}.  There we illustrate non-integrable $\SO^*(2n)$- and $\SO^*(2n)\Sp(1)$-structures  in terms of   both homogeneous  and non-homogeneous geometries, and other constructions arising for example by using the bundle of Weyl structures, and more.  
\subsection{Semisimple symmetric spaces}
 For $\SO^*(2n)$- and $\SO^*(2n)\Sp(1)$-structures  a natural source where one may initially look for \textsf{integrable examples}  is the category of symmetric spaces (see \cite{Hel} for the theory of symmetric spaces).  By the results in  \cite{G13} it follows that  there are no semisimple symmetric spaces with an  invariant  almost hypercomplex skew-Hermitian structure. However, let us consider the symmetric space 
\[
M=K/L=\SO^*(2n+2)/\SO^*(2n)\U(1)\,.
\]
By \cite{G13} it is known that $M$ carries a $\SO^*(2n+2)$-invariant quaternionic structure, although  it is {\it not} a pseudo-Wolf space. Let us denote by $\fr{k}=\fr{l}\oplus\fr{m}$ the corresponding Cartan decomposition  and by $\chi : K\to\Aut(\fr{m})$ the isotropy representation, where as usual we identify $\fr{m}\cong T_{eL}K/L$. Since 
\[
\chi(\SO^*(2n)\U(1))\subset \SO^*(2n)\Sp(1)
\]
 clearly holds, there is also an invariant \textsf{qs-H} structure on $K/L$.  Let us consider   the $\Ad(L)$-invariant complex structure   $I_{o} : \fr{m}\to\fr{m}$  induced by $\chi|_{\U(1)}$, that is
 $I_{o}=\chi_{*}(U)=\ad(U)$ for some $U\in\fr{u}(1)$. Let us also denote by $\langle \ , \ \rangle_{\fr{m}}$ the $\Ad(L)$-invariant  symmetric pseudo-Hermitian metric (with respect to $I_{o}$) on $\fr{m}$. Note that $I_{o}$ corresponds to a  $K$-invariant complex structure $I$ on $M$ which we may use to build a local admissible base of the  invariant quaternionic structure $Q$ on the origin $o=eL\in K/L$,   induced  by  the $\Sp(1)$ action on $\fr{m}$.    Then,  since $M=K/L$ is isotropy irreducible, by Schur's lemma we deduce that  the 2-form $\omega$ defined by $\omega_o(\cdot\,, \cdot):=\langle I_{o}\cdot\,, \cdot\rangle_{\fr{m}}$  is an $\Ad(L)$-invariant scalar 2-form with respect to $Q$. In terms of  Proposition \ref{signprop}  this  means that $\langle \ , \ \rangle_{\fr{m}}=g_{I_{o}}$.
 \begin{rem}
 \textnormal{Note that $\langle \ , \ \rangle_{\fr{m}}$ induces the unique (up to scale) $K$-invariant Einstein metric on $M=K/L$ of signature $(2n, 2n)$, which is actually a multiple of the Killing form of $\SO^*(2n+2)$ restricted to $\fr{m}$.}
 \end{rem}

Next  by using the  classification of the pseudo-Wolf spaces, i.e., quaternionic pseudo-K\"ahler symmetric spaces, given by   Alekseevsky-Cortes in \cite{ACort}, we prove that in addition to $M=K/L$ there are a few more  symmetric spaces with the same property.  
\begin{theorem}\label{homogthem}
 The symmetric space $\SO^*(2n+2)/\SO^*(2n)\U(1)$ and the pseudo-Wolf spaces 
\[
\SU(2+p,q)/(\SU(2)\SU(p,q)\U(1))\,,\quad\ \Sl(n+1,\mathbb{H})/(\Gl(1,\mathbb{H})\Sl(n,\mathbb{H}))
\]
 are the only (up to covering) symmetric spaces $K/L$ with $K$ semisimple, admitting an invariant torsion-free quaternionic skew-Hermitian structure $(Q, \omega)$.  In particular, the corresponding canonical connections on  these symmetric spaces coincides with  the  associated minimal  quaternionic skew-Hermitian connection $\nabla^{Q,\omega}$.
\end{theorem}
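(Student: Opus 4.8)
The plan is to reduce the classification to a two-step combinatorial problem: first identify which symmetric spaces $K/L$ with $K$ semisimple can carry an invariant almost quaternionic structure whose isotropy lands in $\SO^*(2n)\Sp(1)$ rather than merely in $\Gl(n,\Hn)\Sp(1)$, and second determine when such a structure additionally admits an invariant scalar $2$-form. For the first step I would invoke the results of the second author in \cite{G13}: there the invariant almost (hyper)complex and quaternionic structures on semisimple symmetric spaces are classified, so we already know the list of candidate pairs $(K,L)$. The point that immediately kills the hypercomplex case is the cited statement from \cite{G13} that no semisimple symmetric space admits an invariant almost hs-H structure, which is why only the $\SO^*(2n)\Sp(1)$ statement survives. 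So the real work is: among the semisimple symmetric spaces $K/L$ admitting an invariant quaternionic structure $Q$, decide for which of them there is an $\Ad(L)$-invariant non-degenerate $Q$-Hermitian $2$-form $\omega$ on $\fr{m}\cong T_oK/L$.

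The second step is where the pseudo-Wolf space classification of Alekseevsky--Cort\'es \cite{ACort} enters. An invariant torsion-free $\SO^*(2n)\Sp(1)$-structure means the canonical connection of the symmetric space is a $\SO^*(2n)\Sp(1)$-connection, hence the holonomy algebra of the canonical connection (equivalently $\ad(\fr{l})|_{\fr{m}}$) lies in $\fr{so}^*(2n)\oplus\fr{sp}(1)$. Since $\fr{so}^*(2n)\oplus\fr{sp}(1)$ preserves both the fundamental $4$-tensor $\Phi_0$ and the scalar $2$-form $\omega_0$, and since on a symmetric space the curvature of the canonical connection is parallel, I would argue that $\omega$ being parallel forces the metric $g_{I_o}(\cdot,\cdot)=\omega(I_o\cdot,\cdot)$ of signature $(2n,2n)$ to be a parallel pseudo-Riemannian metric for the canonical connection; by irreducibility of the isotropy representation (Schur) it is then, up to scale, the Killing form restricted to $\fr{m}$. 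Thus the candidate spaces are precisely the isotropy-irreducible semisimple symmetric spaces with invariant quaternionic structure for which the Killing-form metric has signature $(2n,2n)$ and is $Q$-Hermitian --- i.e. those whose quaternionic structure makes them quaternionic pseudo-K\"ahler. Running this against the \cite{ACort} list of pseudo-Wolf spaces, together with the extra non-Wolf example $\SO^*(2n+2)/\SO^*(2n)\U(1)$ already exhibited above (and which is checked directly by the computation in the paragraph preceding the theorem), yields exactly the three families. One then checks that the signature condition and the quaternionic-Hermitian condition exclude all other entries of the \cite{ACort} table, leaving $\SU(2+p,q)/(\SU(2)\SU(p,q)\U(1))$ and $\Sl(n+1,\Hn)/(\Gl(1,\Hn)\Sl(n,\Hn))$.

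The last clause, that the canonical connection coincides with the minimal connection $\nabla^{Q,\omega}$ of Theorem~\ref{starconnectionsp1}, follows formally once one knows the structure is torsion-free: the canonical connection is an adapted $\SO^*(2n)\Sp(1)$-connection (it preserves $Q$ and $\omega$), it is torsion-free, and by Corollary~\ref{uniqueminimal}(2) a torsion-free $\SO^*(2n)\Sp(1)$-connection is unique; since $\nabla^{Q,\omega}$ has torsion in the normalization complement $\mc{D}(\fr{so}^*(2n)\oplus\fr{sp}(1))$, which contains $0$, the torsion-free adapted connection is automatically the minimal one. So I would phrase this as an immediate consequence of the uniqueness statements established earlier.

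The main obstacle I anticipate is the bookkeeping in the second step: one must match Salamon's $\E\Hh$-description of the scalar-$2$-form condition against the explicit pseudo-Hermitian data of each pseudo-Wolf space in \cite{ACort}, in particular verifying that for the surviving families the Killing-form metric is indeed $Q$-Hermitian of the correct split signature, while for all other entries either the signature is wrong or the metric fails to be Hermitian with respect to all of $S(Q)$. A secondary subtlety is ensuring the argument covers the non-pseudo-K\"ahler example $\SO^*(2n+2)/\SO^*(2n)\U(1)$ correctly --- here the invariant quaternionic structure is not the one coming from a Wolf-space construction, so one cannot simply read it off \cite{ACort}; instead one relies on the direct verification above that $\chi(\SO^*(2n)\U(1))\subset\SO^*(2n)\Sp(1)$ together with the Einstein/Killing-form computation in the remark, and then argues (again via \cite{G13} and isotropy irreducibility) that no further such non-Wolf examples exist.
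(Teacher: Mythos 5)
Your skeleton (reduce via \cite{G13}, feed the $\Sp(1)$-intersection case into the pseudo-Wolf classification of \cite{ACort}, and obtain the last clause from uniqueness of the torsion-free adapted connection, Corollary \ref{uniqueminimal}) is the paper's, and your treatment of the final clause is correct. The genuine gap is in the step that is supposed to cut down the pseudo-Wolf list. Your discriminating criterion -- ``the Killing-form metric has signature $(2n,2n)$ and is $Q$-Hermitian'' -- does not discriminate: every entry of the \cite{ACort} table is quaternionic pseudo-K\"ahler, i.e.\ its Killing metric is $Q$-Hermitian (of signature $(4p,4q)$, not $(2n,2n)$), so ``checking the signature and quaternionic-Hermitian conditions'' excludes nothing. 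The way you arrive at it is also flawed: you claim that $\omega$ parallel forces $g_{I_o}=\omega(\cdot,I_o\cdot)$ to be a parallel metric, but in the cases with $\chi(L)\cap\Sp(1)=\Sp(1)$ there is no invariant element $I_o\in \Ss(Q)$ at all, and the metrics $g_{\J}$ are neither globally defined nor parallel for a $\SO^*(2n)\Sp(1)$-connection (Proposition \ref{parallel2}); moreover each $g_{\J}$ is only $\J$-Hermitian, never $Q$-Hermitian (Remark \ref{SOninvariantmetrics}), so it cannot be identified with the quaternionic pseudo-K\"ahler metric. You thus conflate two inequivalent invariant bilinear forms, and the reduction ``candidate $=$ quaternionic pseudo-K\"ahler'' singles out nothing beyond what \cite{ACort} already lists.

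What the paper actually uses, and what is missing from your proposal, is the criterion for the existence of the invariant scalar $2$-form itself: on these simple symmetric spaces, invariant (almost) symplectic forms correspond to the center of the isotropy algebra. Hence among pseudo-Wolf spaces only those whose isotropy algebra has a non-trivial center can carry an invariant $\omega$, and by \cite[Theorem 2]{ACort} these are exactly $\SU(2+p,q)/(\SU(2)\SU(p,q)\U(1))$ and $\Sl(n+1,\mathbb{H})/(\Gl(1,\mathbb{H})\Sl(n,\mathbb{H}))$; existence on these two is then confirmed by taking $\omega:=g(I\cdot,\cdot)$, where $I\notin\Gamma(Q)$ is the invariant (para)complex structure coming from the central element and $g$ is the pseudo-Wolf metric, which is $Q$-Hermitian because $I$ commutes with $Q$. (Equivalently, you would need a representation-theoretic check that the space of $\chi(\fr{l})$-invariants in $[S^2\E]^*$ contains a non-degenerate element.) For the case $\chi(L)\cap\Sp(1)=\U(1)$ the paper does not argue via irreducibility as you suggest, but simply reads off from the classification in \cite{G13} that $\SO^*(2n+2)/\SO^*(2n)\U(1)$ is, up to covering, the unique such space; without that citation your claim that ``no further non-Wolf examples exist'' is unsupported.
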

\begin{proof} 
As it was shown in \cite{G13}, the classification of invariant quaternionic structures on semisimple symmetric spaces $K/L$ is divided according to the dimension of intersection of $\chi(L)$ with $\Sp(1)$, where  $\chi : L\to\Aut(\fr{m})$ is the isotropy representation and $\fr{m}$ is the symmetric reductive complement. This dimension can not be zero. Up to covering, the above symmetric space $\SO^*(2n+2)/\SO^*(2n)\U(1)$ is the only one in the classification with  one dimensional intersection, i.e., $\chi(L)\cap\Sp(1)=\U(1)$. This coincides with the center of $L$.

The remaining cases in the classification have the intersection $\chi(L)\cap\Sp(1)=\Sp(1)$ and this induces an invariant quaternionic structure $Q$ on $K/L$ induced by the isotropy representation $\chi$. This means that in this case, the invariant pseudo-Riemmanian metric $g$ induced by restriction of the Killing form to $\fr{m}$ is a quaternionic pseudo-K\"ahler metric, and so $K/L$ is a pseudo-Wolf space. The classification of  pseudo-Wolf spaces was obtained in \cite[Theorem 2]{ACort}. On the other hand, it is well-known that invariant symplectic structures on a simple symmetric space correspond to the center of the isotropy algebra. Then, the isotropy action of the center of the stabilizer  provides a invariant complex or paracomplex structure $I\notin \Gamma(Q)$ on $K/L$ and
 \[
 \omega(\cdot\,, \cdot):=g(I\cdot\,, \cdot)\,,
 \] 
is an invariant scalar 2-form with respect to $Q$. In the classification of the pseudo-Wolf spaces obtained in \cite[Theorem 2]{ACort}, we see that 
\[
\SU(2+p,q)/(\SU(2)\SU(p,q)\U(1))\,,\quad \Sl(n+1,\mathbb{H})/(\Gl(1,\mathbb{H})\Sl(n,\mathbb{H}))
\]
 are the only  pseudo-Wolf spaces for which the  isotropy algebra contains a   non-trivial center. This proves our first assertion. Recall finally that by the Ambrose-Singer theorem the
canonical connection  $\nabla^0$ on $K/L$ must preserve $(Q, \omega)$ and is in particular a minimal connection because it is torsion free, see \cite{KoNo}.  Then,  we obtain the identification $\nabla^{0}=\nabla^{Q, \omega}$ by uniqueness of the minimal connection, see also Proposition \ref{uniqueminimal}.
\end{proof}

\subsection{Examples with special symplectic holonomy}

 Let us recall that
on a symplectic manifold $(M, \omega)$ a symplectic 
connection $\nabla$ is said to be of \textsf{special symplectic holonomy} if ${\sf Hol}(\nabla)$ is a proper subgroup of $\Sp(2n,\R)$ that acts absolutely irreducibly on the tangent space, i.e., it acts irreducibly and does not preserve a complex structure. A  \textsf{special symplectic connection} is a symplectic connection with special   symplectic  holonomy, and it is known that such connections may exist only in dimensions $\geq 4$.
The first special symplectic holonomies were constructed by Bryant \cite{Bryant91}, and by  Chi, Merkulov and Schwachh\"ofer  \cite{ChiMS1, ChiMS2}.  Finally, these exotic holonomies were classified by
Merkulov and   Schwachh\"ofer   and include the Lie group $\SO^*(2n)\Sp(1)$, see for example  \cite[Table 3]{MS1} (note that in contrast to $\SO^*(2n)$, the Lie group $\SO^{\ast}(2n)\Sp(1)$ is a real non-symmetric Berger subgroup, see \cite[Tab.~II]{Schw}).

The construction providing such special symplectic holonomies has been  described in \cite{CahS}. Let us recall how this procedure works for  a  $\SO^*(2n)\Sp(1)$-structure $(Q, \omega)$. 
Let $P$ be the connected subgroup of $\SO^\ast(2n+2)$ which stabilizes an isotropic (with respect to $\omega$) quaternionic line in $\Hn^{n+1}$.   Then, the homogeneous space $N=\SO^\ast(2n+2)/P$ admits an invariant contact structure (see \cite[p.~298]{CS}), and we denote by $\mathscr{D}$  the corresponding contact distribution. In such terms we obtain the following local construction:

\begin{prop}
	Let $(Q,\omega,\nabla^{Q,\omega})$ be a smooth torsion-free $\SO^*(2n)\Sp(1)$-structure with  special symplectic holonomy, i.e.,  $T^{Q,\omega}=0$ and  \ ${\sf Hol}(\nabla^{Q, \omega})=\SO^*(2n)\Sp(1)$. 
	Then $(Q,\omega,\nabla^{Q,\omega})$ is analytic, and locally equivalent to a symplectic reduction $\mathbb{T}\backslash U$ by a one-parameter subgroup $\mathbb{T}\subset \SO^\ast(2n+2)$ with Lie algebra $\mathfrak{t}$, such that the corresponding right-invariant vector fields are transversal to $\mathscr{D}$ everywhere on $U$. 
	Here $U\subset N$ is a sufficiently small open subset of $N$.
	In particular, the moduli space of such structures is $n$-dimensional, where $n$ represents the quaternionic dimension of the symplectic reduction.
	\end{prop}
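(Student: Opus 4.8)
The plan is to invoke the classification of special symplectic connections due to Merkulov--Schwachh\"ofer together with the general correspondence, established in \cite{CahS}, between torsion-free special symplectic structures and symplectic reductions of the canonical contact geometry on a suitable homogeneous space. Concretely, for the Berger group $\G=\SO^\ast(2n)\Sp(1)$ one forms the ambient simple Lie group $\widetilde{\G}=\SO^\ast(2n+2)$ whose Lie algebra is contact-graded with $\G$ (up to center) sitting as the reductive part of the degree-zero component; the line being stabilized is precisely an $\omega$-isotropic quaternionic line in $\Hn^{n+1}$, which produces the parabolic $P$ and the contact homogeneous space $N=\SO^\ast(2n+2)/P$ with contact distribution $\mathscr{D}$ invariant under the $\SO^\ast(2n+2)$-action. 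This is exactly the setup that \cite[p.~298]{CS} describes for the contact projective structure attached to $\SO^\ast(2n+2)$.

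First I would recall the abstract reduction theorem: given a symplectic manifold carrying a torsion-free connection with ${\sf Hol}(\nabla)=\G$, where $\G$ is one of the special symplectic holonomy groups, the manifold is locally a symplectic quotient of $(N,\mathscr{D})$ by the flow of a right-invariant vector field coming from a one-parameter subgroup $\mathbb{T}\subset \widetilde{\G}$, subject to a transversality condition ensuring that the quotient inherits a symplectic structure of the correct dimension. The analyticity is automatic because the model $N$ is a homogeneous analytic manifold and the reduction construction is algebraic/analytic in nature; this forces $(Q,\omega,\nabla^{Q,\omega})$ to be real-analytic as well. Here one must match our $\SO^\ast(2n)\Sp(1)$-structure, characterized via the fundamental 4-tensor $\Phi$ or the quaternionic skew-Hermitian form $h$ (Corollaries \ref{fund2tensorQ}, \ref{fund4tensorQ}) and its minimal connection $\nabla^{Q,\omega}$ (Theorem \ref{starconnectionsp1}, Theorem \ref{zeroprog}), with the abstract $\G$-structure in the classification; the key point is that for torsion-free $\SO^\ast(2n)\Sp(1)$-structures the minimal connection \emph{is} the (unique) torsion-free adapted connection by Corollary \ref{uniqueminimal}, so ${\sf Hol}(\nabla^{Q,\omega})$ makes invariant sense.

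Next I would count the moduli. The symplectic reduction depends on the conjugacy class of the one-parameter subgroup $\mathbb{T}$, equivalently on the $P$-orbit (or $\widetilde{\G}$-orbit modulo the local equivalence) of its generator $\xi\in\mathfrak{t}$, together with the transversality requirement that the induced right-invariant field is nowhere tangent to $\mathscr{D}$. Since $\dim N = 4n+1$ and reducing by a one-parameter group transversal to a contact distribution drops the dimension by $2$, the quotient has dimension $4n-1+1-1=4n$ wait --- more precisely, a contact reduction of a $(4n+1)$-dimensional contact manifold by a transversal flow yields a $4n$-dimensional symplectic manifold, matching $\dim M = 4n$, i.e.\ quaternionic dimension $n$. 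The essential moduli are then parametrized by the admissible generators $\xi$ modulo the relevant symmetries, and a direct inspection (as in \cite{CahS}) shows this space has dimension $n$: one has the freedom of choosing a ray (scaling) plus the position of $\xi$ relative to a fixed contact element, and after quotienting by the stabilizer one is left with $n$ parameters, equal to the quaternionic dimension.

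\textbf{Main obstacle.} The genuinely nontrivial part is verifying that our intrinsically-defined $\SO^\ast(2n)\Sp(1)$-structure $(Q,\omega)$ with its minimal connection coincides, in the torsion-free absolutely-irreducible case, with the structure appearing in the Merkulov--Schwachh\"ofer list --- that is, checking that $\SO^\ast(2n)\Sp(1)$ acts absolutely irreducibly on $[\E\Hh]$ (it does: $[\E\Hh]=[R(\pi_1)\otimes R(\theta)]$ is irreducible over $\R$ and, since $n>1$, admits no invariant complex structure because the only $\Sp(1)$-commuting complex structures lie in $\sp(1)$, which is not preserved), and that the relevant Lie-algebra cohomology / Berger criterion identifies $\fr{so}^\ast(2n)\oplus\sp(1)$ as the holonomy algebra with the correct contact prolongation $\fr{so}^\ast(2n+2)$. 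Once this identification is in place, everything else follows by citing \cite{CahS} and performing the elementary dimension count; I would therefore spend most of the write-up pinning down this correspondence and defer the reduction mechanics and moduli dimension to references.
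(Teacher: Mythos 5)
Your proposal is correct and follows essentially the same route as the paper: the paper's entire proof is a citation of \cite[Corollary C]{CahS}, restricted to the torsion-free quaternionic skew-Hermitian case, which is exactly the reduction-to-Cahen--Schwachh\"ofer strategy you describe (your additional remarks on absolute irreducibility, the uniqueness of the torsion-free adapted connection, and the dimension count are consistent supporting details rather than a different method). No gap to report.
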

\begin{proof}
	This result occurs  as the restriction of  \cite[Corollary C]{CahS} to the particular case of torsion-free almost quaternionic skew-Hermitian structures.
\end{proof}
 
  Note that the manifold $N=\SO^\ast(2n+2)/P$ happens to be a flat parabolic geometry. The interplay  between the parabolic geometry on $N$  and the  almost conformal symplectic geometry on the symplectic reduction was explored in detail by \v{Cap} and Sala\v{c} in a series of   papers, see  \cite{Cap, CapII} for example.
  Indeed,  they described a generalization of the above construction in the presence of torsion. This construction   requires $\mathbb{T}$ to be  a  flow of a transversal infinitesimal automorphism of the parabolic contact structure,  and yields structures which are almost conformally symplectic, rather than almost symplectic, and hence  less relevant to our current situation.

\appendix

\section{Adapted bases with coordinates in $\Hn^n$}\label{appendix}
  
    \subsection{Left quaternionic vector space $\Hn^n$}
    
   Observe that after a choice of an admissible hypercomplex basis $H$ as in Lemma \ref{IJKstandard} we can identify $a+bj\in [\E\Hh]$  with $a+bj\in\Hn^n$, where the latter is viewed as  a left quaternionic vector space.  Consequently:
    
     \begin{corol}\label{corolA1}
 Let $(h, H=\{J_1,J_2,J_3\})$ be a linear hypercomplex skew-Hermitian structure on a $4n$-dimensional real vector space $V$, or let $H=\{J_1,J_2,J_3\}$ be an admissible basis of the linear quaternionic skew-Hermitian structure $h$ on $V$, and set $\omega:=\Re(h)$. Then, a skew-Hermitian basis of $(h, H)$, in terms of Definition \ref{basSKEW},  provides an isomorphism $V\cong \Hn^n$, such that
 \begin{enumerate}
 \item[$\mf{(1)}$] $(a_1+a_2J_1+a_3J_2+a_4J_3)x=(a_1+a_2i+a_3j+a_4k)x$, for any $x\in  \Hn^n.$
 \item[$\mf{(2)}$]  $\omega(x,y)=\frac12(x^tj\bar{y}-y^tj\bar{x})$ for all $x,y\in  \Hn^n$, where $\bar{x}$ is the quaternionic conjugate.
 \item[$\mf{(3)}$]  $g_{\J}(x,y)=\frac12(-x^tj\bar{y}(\mu_1 i+\mu_2 j+\mu_3 k)-(\mu_1 i+\mu_2 j+\mu_3 k)y^tj\bar{x})$, for $\J=\mu_1 J_1+\mu_2 J_2+\mu_3 J_3\in\Ss^2$.
  \item[$\mf{(4)}$]  $h(x,y)=x^tj\bar{y}$.
 \end{enumerate}
\end{corol}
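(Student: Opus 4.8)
\textbf{Proof plan for Corollary \ref{corolA1}.}

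The plan is to reduce everything to the standard model $[\E\Hh]$ and the explicit formula for $h_0$ obtained in Proposition \ref{usefrelskewherm}, so that no new computation is really needed beyond tracking identifications. First I would invoke Proposition \ref{adaptbas}: a skew-Hermitian basis of $(h,H)$ (Definition \ref{basSKEW}) yields an isomorphism $V\cong[\E\Hh]$ under which $\omega=\Re(h)$ becomes the standard scalar $2$-form $\omega_0$ and $H$ becomes the standard admissible basis $H_0=\{\mc{J}_a\}$. Then, after a choice of admissible hypercomplex basis as in Lemma \ref{IJKstandard}, an element $a+bj\in[\E\Hh]$ is identified with $a+bj\in\Hn^n$ viewed as a \emph{left} quaternionic vector space, and under this identification $\mc{J}_1,\mc{J}_2,\mc{J}_3$ act by left multiplication by $i,j,k$ respectively (last sentence of the proof of Lemma \ref{IJKstandard}). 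This gives claim $\mf{(1)}$ immediately by $\R$-bilinearity, since $(a_1+a_2\mc{J}_1+a_3\mc{J}_2+a_4\mc{J}_3)$ acts as left multiplication by $a_1+a_2i+a_3j+a_4k$.

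For $\mf{(4)}$ I would use the explicit expression for $h_0$ from the proof of Proposition \ref{usefrelskewherm}, where it is computed that
\[
h_0(a+bj,c+dj)=\mathpzc{h}(a+bj,c+dj)+g_{\E}(a+bj,c+dj)\mc{J}_2=(a^t\bar d-b^t\bar c)+(a^tc+b^td)j\,.
\]
On the other hand, writing $x=a+bj$, $y=c+dj$ in $\Hn^n$ and using $\bar y = \bar c - dj$ together with $jq=\bar q j$ for $q\in\C$, one computes $x^tj\bar y$ directly: $x^tj\bar y=(a+bj)^tj(\bar c - dj)=\ldots=(a^t\bar d - b^t\bar c)+(a^tc+b^td)j$, which matches $h_0$ exactly. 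This is the one genuinely computational step, but it is a short quaternion bookkeeping exercise; the main care is keeping track of the rule $jq=\bar q j$ and $j^2=-1$. Since $h$ itself is just the coordinate expression of $h_0$ in the skew-Hermitian basis (last claim of Proposition \ref{usefrelskewherm}), $\mf{(4)}$ follows.

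Claims $\mf{(2)}$ and $\mf{(3)}$ are then pure corollaries of $\mf{(4)}$ together with Definitions \ref{basicsright} and \ref{defnew01}. Indeed $\omega=\Re(h)$ means $\omega(x,y)=\tfrac12(h(x,y)-h(y,x))=\tfrac12(x^tj\bar y - y^tj\bar x)$, giving $\mf{(2)}$; one should also note $h(y,x)=-\overline{h(x,y)}$ so this is consistent. For $\mf{(3)}$, recall from Proposition \ref{signprop} and Proposition \ref{usefrelskewherm} that $g_{\J}(x,y)=\omega(x,\J y)$ and that $\Im(h)(x,y)=\sum_a g_{\mc{J}_a}(x,y)\mc{J}_a$; equivalently, for $\J=\mu_1J_1+\mu_2J_2+\mu_3J_3$ one has $g_{\J}(x,y)=-\Re\big(h(x,y)\cdot(\mu_1i+\mu_2j+\mu_3k)\big)$ in the appropriate normalization, and symmetrizing the two orderings of the arguments (using $h(y,x)=-\overline{h(x,y)}$) produces the stated symmetric formula $g_{\J}(x,y)=\tfrac12\big(-x^tj\bar y\,(\mu_1i+\mu_2j+\mu_3k)-(\mu_1i+\mu_2j+\mu_3k)\,y^tj\bar x\big)$. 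I would verify this by substituting $\mf{(4)}$ and checking the $a,b,c,d$ components against the formulas for $g_1,g_2,g_3$ displayed in the proof of Proposition \ref{signprop}. The only mild obstacle is fixing conventions so that the scalar factors of $\tfrac12$ and signs line up; once the identification with $[\E\Hh]$ is in place this is routine.
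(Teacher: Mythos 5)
Your proposal is correct and follows exactly the route the paper intends: the corollary is stated there without a separate proof, as an immediate consequence of Proposition \ref{adaptbas} (skew-Hermitian basis gives $V\cong[\E\Hh]$ with $\omega=\omega_0$, $H=H_0$), the identification of $[\E\Hh]$ with the left quaternionic space $\Hn^n$ from Lemma \ref{IJKstandard}, and the explicit formula for $h_0$ in Proposition \ref{usefrelskewherm}, which is precisely the chain you invoke. Your quaternion bookkeeping for $\mf{(4)}$ and the derivation of $\mf{(2)}$, $\mf{(3)}$ from $\Re(h)$, $g_{\J}=\omega(\cdot,\J\cdot)$ and $h(y,x)=-\overline{h(x,y)}$ check out.
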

  \begin{rem}
\textnormal{The formula $h(x,y)=x^tj\bar{y}$ is the usual formula for a skew-Hermitian form on the left quaternionic space vector space $\Hn^n$, but be aware that some authors replace equivalently $j$ by $i$.}
\end{rem}
    \subsection{Right quaternionic vector space  $\Hn^n$}
    
    Left and right quaternionic vector spaces are related by conjugation. Therefore, after the choice of an admissible hypercomplex basis $H$, the element $\bar{a}-bj\in [\E\Hh]$ can be identified with $a+bj$ in the right quaternionic  vector space $\Hn^n$ (by Lemma \ref{IJKstandard}). Thus, we get the following 
    
         \begin{corol}
 Let $(h, H=\{J_1,J_2,J_3\})$ be a linear hypercomplex skew-Hermitian structure on $V$, or let $H=\{J_1,J_2,J_3\}$ be an admissible basis of the linear quaternionic skew-Hermitian structure $h$, and let $\omega=\Re(h)$. Then a skew-Hermitian basis of $(h, H)$,   in terms of Definition \ref{basSKEW},  provides an  isomorphism $V\cong \Hn^n$,  such that
 \begin{enumerate}
 \item[$\mf{(1)}$]  $(a_1+a_2J_1+a_3J_2+a_4J_3)x=x(a_1-a_2i-a_3j-a_4k)$, for any $x\in  \Hn^n.$
 \item[$\mf{(2)}$]  $\omega(x,y)=\frac12(x^*jy-y^*jx)$ for all $x,y\in  \Hn^n$, where $x^*$ is conjugate transpose.
 \item[$\mf{(3)}$]  $g_{\J}(x,y)=\frac12(x^*(-\mu_2+\mu_3 i-\mu_1 k)y+y^*(-\mu_2-i\mu_3+\mu_1 k)x)$, for  $\J=\mu_1 J_1+\mu_2 J_2+\mu_3 J_3\in\Ss^2$.
 \item[$\mf{(4)}$]  $h(x,y)=x^*jy$.
 \end{enumerate}
\end{corol}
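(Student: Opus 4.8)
The final statement to prove is the second corollary in the appendix, giving explicit formulas for the right quaternionic vector space description. Let me sketch a proof plan.

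\medskip

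The plan is to reduce everything to the corresponding left quaternionic statement, Corollary \ref{corolA1}, via the conjugation map that intertwines left and right quaternionic module structures. First I would recall the identification set up in the remark preceding the corollary: after choosing the admissible basis $H$ as in Lemma \ref{IJKstandard}, the element $\bar{a}-bj\in[\E\Hh]$ corresponds to $a+bj$ viewed in the \emph{right} quaternionic vector space $\Hn^n$. Equivalently, if $c: \Hn^n\to\Hn^n$ denotes componentwise quaternionic conjugation, then $c$ carries the left module structure to the right one: $c(px)=c(x)\bar p=c(x)\bar p$, so that left multiplication by $p$ on the left-module side becomes right multiplication by $\bar p$ on the right-module side. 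In particular, from Corollary \ref{corolA1}$\mf{(1)}$, the operator $a_1+a_2J_1+a_3J_2+a_4J_3$ acts on the left module as left multiplication by $a_1+a_2i+a_3j+a_4k$; transporting along $c$ turns this into right multiplication by the conjugate $a_1-a_2i-a_3j-a_4k$, which is exactly formula $\mf{(1)}$ of the statement. This also fixes the skew-Hermitian basis on the right module side as the image under $c$ of the skew-Hermitian basis of Corollary \ref{corolA1}.

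\medskip

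Next I would transport the bilinear forms. Writing $x = c(\tilde x)$, $y = c(\tilde y)$ with $\tilde x,\tilde y$ the corresponding left-module vectors, one has $\tilde x = c(x)=\bar x$ componentwise, and the key observation is the elementary identity relating the ``column-conjugate'' notation to the conjugate-transpose notation: $\tilde x^t = x^*$ (conjugate transpose) since $c$ is componentwise conjugation. Then for $\mf{(4)}$: $h(x,y)$ should equal the value of the left-module form on $c^{-1}$ of the arguments, but one must be careful that $h$ is $\R$-bilinear and valued in $\Gl(V)$ — here, after restriction to the $1$-dimensional quaternionic setting as in Definition \ref{basicsright}, $h$ takes values in $\Hn$. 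Using $h(\tilde x,\tilde y)=\tilde x^t j\bar{\tilde y}$ from Corollary \ref{corolA1}$\mf{(4)}$ and substituting $\tilde x^t = x^*$, $\bar{\tilde y}= y$, gives $h(x,y)=x^* j y$, which is formula $\mf{(4)}$. For $\mf{(2)}$ one takes $\omega=\Re(h)=\tfrac12(h(x,y)-h(y,x))$ and applies the same substitution to Corollary \ref{corolA1}$\mf{(2)}$, obtaining $\omega(x,y)=\tfrac12(x^* j y - y^* j x)$. For $\mf{(3)}$, one uses $g_{\J}(x,y)=\omega(x,\J y)$ together with the fact that $\J=\mu_1 J_1+\mu_2 J_2+\mu_3 J_3$ now acts by right multiplication by $-\mu_1 i-\mu_2 j-\mu_3 k$ (from $\mf{(1)}$); substituting $\J y = y(-\mu_1 i-\mu_2 j-\mu_3 k)$ into the formula for $\omega$ from $\mf{(2)}$ and simplifying the quaternionic products $j(-\mu_1 i-\mu_2 j-\mu_3 k)$ and its conjugate yields the stated expression $g_{\J}(x,y)=\tfrac12\big(x^*(-\mu_2+\mu_3 i-\mu_1 k)y+y^*(-\mu_2-i\mu_3+\mu_1 k)x\big)$.

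\medskip

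The main obstacle, and the only place genuine care is needed, is bookkeeping of the left-versus-right multiplication and the conjugations: it is easy to drop a sign or swap $i$ with $j$ when converting $x^t j \bar y$ on the left-module side into $x^* j y$ on the right-module side, and when computing the triple quaternion products in $\mf{(3)}$. So the strategy is to state once and for all the dictionary $c(px)=c(x)\bar p$, $\;\tilde x^t = x^*$, $\;\bar{\tilde y}=y$, and then mechanically rewrite each of the four formulas of Corollary \ref{corolA1}; the non-degeneracy and the claim that the resulting basis is again a skew-Hermitian basis (i.e. that the transported $\omega$ is again the standard scalar $2$-form in the standard symplectic coordinates) follow because $c$ is an $\R$-linear isomorphism intertwining the two module structures and Proposition \ref{adaptbas} applies verbatim to the image basis. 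I would present the proof as: ``Apply Corollary \ref{corolA1} and transport along componentwise conjugation $c:\Hn^n\to\Hn^n$, using $c(px)=c(x)\bar p$ and $c(x)^t = x^*$; the four formulas then follow by the computations $\mf{(1)}$–$\mf{(4)}$ above,'' leaving the one-line quaternionic simplifications to the reader as the paper does for Corollary \ref{corolA1} itself.
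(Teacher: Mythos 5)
Your overall route is the same as the paper's: the paper offers no separate argument for this corollary beyond the preceding remark that $\bar a-bj\in[\E\Hh]$ is identified with $a+bj$ in the right quaternionic vector space, i.e.\ transport of Corollary \ref{corolA1} along componentwise quaternionic conjugation. Your dictionary $c(px)=c(x)\bar p$, $\tilde x^t=x^*$, $\bar{\tilde y}=y$ is the right one, and it does settle items $\mf{(1)}$, $\mf{(2)}$ and $\mf{(4)}$ exactly as you say.

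The step you propose for item $\mf{(3)}$, however, fails, and this is a genuine gap rather than a bookkeeping slip. Writing $q=\mu_1 i+\mu_2 j+\mu_3 k$, item $\mf{(1)}$ gives $\J y=y(-q)$, and since $(y(-q))^*=\overline{(-q)}\,y^*=q\,y^*$, substitution into $\mf{(2)}$ yields
\[
g_{\J}(x,y)=\omega(x,\J y)=\tfrac12\bigl(-x^*j\,y\,q-q\,y^*j\,x\bigr),
\]
with the quaternion $q$ standing to the \emph{right} of $y$ (resp.\ to the left of $y^*$). Your plan to ``simplify the quaternionic product $j(-\mu_1 i-\mu_2 j-\mu_3 k)$'' presupposes commuting $q$ past the quaternionic entries of $y$, which is not allowed; the coefficient cannot be moved between $x^*$ and $y$. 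In fact no sandwich expression $\tfrac12\bigl(x^*wy+y^*\bar w x\bigr)$ with a constant quaternion $w$ equals $\omega(\cdot,\J\cdot)$: already for vectors supported in a single quaternionic coordinate with $x_1=y_1=1$ and $\J=J_2$, formulas $\mf{(1)}$--$\mf{(2)}$ give $\omega(x,\J y)=+1$, while the displayed right-hand side of $\mf{(3)}$ gives $-1$ (and $x_1=k$, $y_1=1$, $\J=J_1$ gives $+1$ versus $-1$ again). So the correct right-module formula is the exact mirror of Corollary \ref{corolA1}$\mf{(3)}$, namely $g_{\J}(x,y)=\tfrac12\bigl(-x^*jy(\mu_1 i+\mu_2 j+\mu_3 k)-(\mu_1 i+\mu_2 j+\mu_3 k)y^*jx\bigr)$, and item $\mf{(3)}$ as printed should be restated in this form; your claimed simplification cannot be completed (even granting the illegal commutation it would produce $\mu_2-\mu_3 i+\mu_1 k$, not the printed $-\mu_2+\mu_3 i-\mu_1 k$).
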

  \begin{rem}
\textnormal{The formula $h(x,y)=x^*jy$ is the usual formula for a skew-Hermitian form on the right quaternionic space vector space $\Hn^n$. However, observe that  some authors replace equivalently $j$ by $i$, see \cite[p.~8]{Harvey}.}
\end{rem}
     
On the right quaternionic vector space $\Hn^n$, we can find a skew-Hermitian  basis of the linear \textsf{hs-H} structure $(\Hn^n, h)$, as follows:
\begin{enumerate}
\item[$\mf{(I)}$] We start with a quaternionic skew-Hermitian form $h$ on  $\Hn^n$.
\item[$\mf{(IIa)}$] If $n=1$, then by definition $h(x,y)=\bar{x}(h_1i+h_2j+h_3k)y$ for some $h_1,h_2,h_3\in \R$, and by non-degeneracy exists  some $q\in \Hn$ such that $\bar{q}(h_1i+h_2j+h_3k)q=j$. Thus we are done.
\item[$\mf{(IIb)}$] If $n>1$, then we start by finding $e_1\in \Hn^n$, such that $h(e_1x,e_1y)=\bar{x}jy,\ x,y\in \Hn$. Since $h$ is non-degenerate, there are $f_1,f_2\in \Hn^n$ such that $h(f_1,f_2)\neq 0$. Thus,  for either $e=f_1$, $e=f_2$, or $e=kf_1+jf_2$ it holds $h(e,e)\neq 0$ and  the step \textsf{(IIa)} can be applied, i.e., $e_1=qe$.
\item[$\mf{(III)}$] If $n>1$, then we complete $e_1$ to a quaternionic basis of $\Hn^n$. In this basis, $h$ can be expressed by the following block matrix
\[\left( 
    \begin{smallmatrix}
j & -X^*\\
 X&\square
  \end{smallmatrix}
  \right)
  \]
  for $X\in \Hn^{n-1}$, and $\square$ is a  quantity not important for us.    Thus, changing the quaternionic basis by left multiplication by $\left( \begin{smallmatrix}
 1 & -jX^\ast\\
 0&\id_{\Hn^{n-1}}
  \end{smallmatrix}  \right)$, we compute
  \[\left( \begin{smallmatrix}
 1 & -jX^\ast\\
 0&\id_{\Hn^{n-1}}
  \end{smallmatrix}  \right)^*\left( 
    \begin{smallmatrix}
 j & -X^*\\
 X&\square
  \end{smallmatrix}
  \right)\left( \begin{smallmatrix}
 1 & -jX^*\\
 0&\id_{\Hn^{n-1}}
  \end{smallmatrix}  \right)=\left( \begin{smallmatrix}
 1 &0\\
 Xj&\id_{\Hn^{n-1}}
  \end{smallmatrix}  \right)\left( 
    \begin{smallmatrix}
 j &0\\
 X&\square
  \end{smallmatrix} \right)=\left( 
    \begin{smallmatrix}
 j &0\\
 0&\square
  \end{smallmatrix}
  \right)\,.
  \]
  Consequently, we have constructed an orthogonal complement of $e_1$, and we may repeat the algorithm for restriction of $h$ to the orthogonal complement.  Then we conclude by induction with respect to the dimension.
\end{enumerate}

At this point the right basis looks very useful, however there is still the following task.

\begin{lem}
 Let  $(h, H=\{J_1,J_2,J_3\})$ be a  linear hypercomplex skew-Hermitian structure on $V$.  Then, there is no basis $e_1,\dots,e_{2n},f_1,\dots,f_{2n}$ of $V$ satisfying  
\[
J_1(e_{c})=-e_{c+n}\,,\quad
J_2(e_{c})=-f_{c}\,,\quad
J_3(e_{c})=-f_{c+n}\,,
\] 
which is at the same time a symplectic basis with respect to  the induced scalar 2-form $\omega=\Re(h)$.
 \end{lem}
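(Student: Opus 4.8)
The plan is to argue by contradiction, using only two facts: that $\omega=\Re(h)$ is a scalar $2$-form with respect to $H$, hence $H$-Hermitian, $\omega(J_ax,J_ay)=\omega(x,y)$ for $a=1,2,3$ (Definition \ref{basicsright}, Proposition \ref{usefrel1}); and the quaternionic identities, which under the convention $J_1^2=J_2^2=J_3^2=-\id=J_1J_2J_3$ yield $J_1J_2=J_3$, $J_2J_3=J_1$, $J_3J_1=J_2$, so in particular $J_3^{-1}J_1=-J_3J_1=-J_2$, equivalently $J_1e=J_3(-J_2e)$ for all $e$. Conceptually, the relations $J_1(e_c)=-e_{c+n}$, $J_2(e_c)=-f_c$, $J_3(e_c)=-f_{c+n}$ are precisely those of an adapted basis for the \emph{right} quaternionic picture (cf.\ the discussion preceding the lemma), so the claim is that the ``right'' adaptation is incompatible with being symplectic for $\omega$.

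Suppose such a basis $e_1,\dots,e_{2n},f_1,\dots,f_{2n}$ exists. For each $c$ with $1\le c\le n$ I would first record the consequences of the adaptation relations, $e_{c+n}=-J_1e_c$, $f_c=-J_2e_c$, $f_{c+n}=-J_3e_c$, and then compute the two ``diagonal'' symplectic pairings attached to the pair of indices $c$ and $c+n$. On one hand, $\omega(e_c,f_c)=\omega(e_c,-J_2e_c)=-\omega(e_c,J_2e_c)$. On the other hand, writing $J_1e_c=J_3(-J_2e_c)$ and invoking the $J_3$-Hermitian property of $\omega$ followed by skew-symmetry,
\[
\omega(e_{c+n},f_{c+n})=\omega(-J_1e_c,-J_3e_c)=\omega(J_1e_c,J_3e_c)=\omega\big(J_3(-J_2e_c),J_3e_c\big)=\omega(-J_2e_c,e_c)=\omega(e_c,J_2e_c).
\]
Comparing the two expressions gives $\omega(e_{c+n},f_{c+n})=-\omega(e_c,f_c)$.

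To finish, I would simply recall that a symplectic basis satisfies $\omega(e_r,f_r)=1$ for \emph{every} index $r\in\{1,\dots,2n\}$; applying this to $r=c$ and to $r=c+n$ (with $n+1\le c+n\le 2n$) yields $1=-1$, the desired contradiction, valid already for any $n\ge 1$. The argument is entirely elementary, and the only point that genuinely needs care is the sign bookkeeping in the quaternionic relations (which of $J_1J_2=J_3$ or $J_2J_1=J_3$ holds under the paper's convention) together with the repeated use of the skew-symmetry and $H$-Hermiticity of $\omega$; I do not anticipate any substantive obstacle beyond that.
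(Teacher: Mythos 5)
Your proof is correct. The sign bookkeeping all checks out: with the paper's convention $J_1J_2J_3=-\id$ one indeed has $J_3J_2=-J_1$, so $J_1e_c=J_3(-J_2e_c)$, and the $J_3$-Hermiticity of the scalar form $\omega$ (Definition \ref{basicsright}, Proposition \ref{usefrel1}) together with skew-symmetry gives $\omega(e_{c+n},f_{c+n})=-\omega(e_c,f_c)$, which is incompatible with $\omega(e_r,f_r)=1$ for both $r=c$ and $r=c+n$. Your route differs from the paper's in a worthwhile way: the paper works in a specific skew-Hermitian basis, passes to the right-quaternionic identification of $[\E\Hh]$ with $\Hn^n$, computes there that $\omega(e_c,f_c)=1$ and $\omega(e_{c+n},f_{c+n})=-1$, and then asserts that the $\Gl(n,\Hn)$-action relating any two bases adapted in this sense preserves the relation $\omega(e_c,f_c)=-\omega(e_{c+n},f_{c+n})$; your argument instead establishes that relation directly for an arbitrary basis satisfying the stated adaptation identities, using nothing but $H$-Hermiticity and the quaternion relations. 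This makes your proof self-contained (it supplies, in effect, a proof of the invariance step the paper only asserts, and it bypasses the normal-form computation and the orbit argument entirely), at the cost of being a purely local sign computation rather than exhibiting the explicit coordinate form of $\omega$ that the paper reuses in the subsequent Lemma \ref{darb}.
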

 \begin{proof}
In a skew-Hermitian basis of $(H, \omega)$ (in terms of Definition \ref{basSKEW}), and  after identifying  $\bar{a}-bj\in [\E\Hh]$ with $a+bj\in \Hn^n$, we compute 
\[
\omega(e_c,f_c)=1\,,\quad \omega(e_{c+n},f_{c+n})=-1
\]
 for $c=1, \ldots, n$, and   all the other  combinations (up to antisymmetry) are zero.  The action of $\Gl(n,\Hn)$ on $\omega$ preserves the property $\omega(e_c,f_c)=-\omega(e_{c+n},f_{c+n})$, and thus there is no basis satisfying the given condition.
  \end{proof}
     
  However,  as we show below, this problem can be resolved exactly when    the  quaternionic dimension is {\it even}. This is because 
of the existence of another natural ordering of a basis of a right quaternionic vector space, which can be adapted to our purpose.  

\begin{lem}\label{darb}
Let  $H=\{J_1,J_2,J_3\}$ be  a linear hypercomplex structure on $V$. Then, $V$ admits  a basis $e_1,\dots,e_{4n}$  satisfying 
  \[
  J_1(e_{4a-3})=-e_{4a-2}\,,\quad J_2(e_{4a-3})=-e_{4a-1}\,, \quad J_3(e_{4a-3})=-e_{4a}\,,
  \] 
and   such that  the set of vectors  $e_1,\dots,e_{2n},f_1=e_{2n+1},\dots,f_{2n}=e_{4n}$  is  a symplectic basis for a scalar 2-form $\omega$ on $V$, if and only if $n$ is even, $n=2m$.
\end{lem}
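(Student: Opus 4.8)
The statement asserts the existence of a basis $e_1,\dots,e_{4n}$ in which $H$ acts blockwise by $-i,-j,-k$ on consecutive quadruples, such that the reindexed set $e_1,\dots,e_{2n},e_{2n+1},\dots,e_{4n}$ is Darboux for a scalar $2$-form $\omega$, and to show this is possible exactly when $n$ is even. The plan is to work on the right quaternionic vector space $\Hn^n$ as in the second subsection of the appendix, using the identification $\bar a-bj\in[\E\Hh]\leftrightarrow a+bj\in\Hn^n$, so that $H$ acts by right multiplication by $-i,-j,-k$ (part $\mf{(1)}$ of the second Corollary) and $\omega(x,y)=\frac12(x^*jy-y^*jx)$ (part $\mf{(2)}$). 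First I would observe that for a single quaternionic coordinate $e$, the $4$-dimensional real subspace $e\Hn$ inherits the quadruple $\{e,ei,ej,ek\}$ on which $H$ acts by the prescribed blockwise $-i,-j,-k$, and compute $\omega$ restricted to such a quadruple purely from $\omega(e,e)=\Re$ of a purely imaginary quaternion; using the skew-Hermitian normalization $h(e,e)=j$ (achievable by the algorithm $\mf{(I)}$--$\mf{(III)}$ already in the appendix), $\omega$ on the ordered tuple $(e,ei,ej,ek)$ becomes the fixed antisymmetric matrix coming from right multiplication by $j$, which is symplectic on that $4$-space but whose ``Darboux type'' (which pairs of basis vectors are conjugate, and with which sign) is rigidly determined.

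The key combinatorial point is then the reindexing: pooling the first halves $e_{4a-3},e_{4a-2}$ of all quaternionic blocks into $e_1,\dots,e_{2n}$ and the second halves $e_{4a-1},e_{4a}$ into $f_1,\dots,f_{2n}$, and asking when the resulting pairing is the standard one $\omega(e_r,f_r)=1$, $\omega(e_r,f_s)=0$ for $r\ne s$, $\omega(e_r,e_s)=\omega(f_r,f_s)=0$. Within one block the $4\times4$ matrix of $\omega$ in coordinates $(e,ei,ej,ek)$ has a fixed sign pattern; I would extract from it the induced $2\times2$ coupling between $(e,ei)$ and $(ej,ek)$ and find it is a signed permutation whose determinant/sign is, say, $-1$ (this is the same sign obstruction as in the preceding Lemma, where $\omega(e_c,f_c)=-\omega(e_{c+n},f_{c+n})$). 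Hence a single block produces a Darboux pair with the ``wrong'' sign, two blocks taken together can be recombined — using that $\Gl(2,\Hn)$ acting on two quaternionic coordinates can mix them, e.g. the substitution analogous to $e\mapsto ke_1+je_2$ used in step $\mf{(IIb)}$ — to produce two honest positive Darboux pairs. Thus blocks must be consumed in pairs, which forces $n=2m$; conversely, pairing up the $m$ double-blocks and applying the mixing substitution in each gives the desired global basis, completing the ``if'' direction by induction on $m$.

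Concretely the ``if'' construction would run: apply the algorithm $\mf{(I)}$--$\mf{(III)}$ of the appendix to get an $h$-orthogonal quaternionic basis $v_1,\dots,v_n$ with $h(v_s,v_s)=j$; group $v_1,\dots,v_n$ into consecutive pairs $(v_{2\ell-1},v_{2\ell})$, $\ell=1,\dots,m$; in each pair perform an explicit $\Gl(2,\Hn)$ change of coordinates (the one that turns the block-diagonal pair of ``$j$''-forms into the standard $4$-dimensional symplectic normal form compatible with the ordering $e_{4a-3},e_{4a-2}$ first and $e_{4a-1},e_{4a}$ second) — this is a fixed finite computation identical in spirit to the matrix manipulation already displayed in step $\mf{(III)}$; finally list the real bases of the $8$-dimensional blocks in the order $e,ei,ej,ek$ for each quaternionic coordinate and collect all ``first halves'' then all ``second halves''. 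For the ``only if'' direction I would argue that any candidate basis satisfying the $H$-conditions is, block by block, of the above form up to the $\Gl(n,\Hn)$-action, that this action preserves the block-wise sign invariant $\omega(e_{4a-3}\text{-part})=-\omega(e_{4a-1}\text{-part})$ (as in the preceding Lemma), and that a global Darboux numbering demands these signs cancel in pairs, so the number of blocks — hence $n$ — must be even.

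\textbf{Main obstacle.} The delicate part is bookkeeping the signs: one must pin down the exact $4\times4$ (and then $8\times8$) matrix of the scalar $2$-form in the $H$-adapted ordering, identify the precise sign invariant that the $\Gl(n,\Hn)$-action leaves unchanged on odd-sized block configurations, and verify that the two-block mixing substitution genuinely removes it. I expect the conceptual content to be short once the right invariant (parity of the number of ``$j$-blocks'' with the bad sign) is isolated, but getting the signs consistent between the left/right quaternionic conventions, the identification $\bar a-bj\leftrightarrow a+bj$, and the ordering $e,ei,ej,ek$ versus $e,-ei,-ej,-ek$ is where care is needed; I would lean on the already-proved preceding Lemma to supply the sign obstruction rather than recomputing it from scratch.
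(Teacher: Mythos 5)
Your proposal is built on a misreading of the reindexing in the statement, and this is a genuine gap. The lemma asks that the ordered list $e_1,\dots,e_{4n}$, cut in half as a list ($e_1,\dots,e_{2n}$ versus $f_r=e_{2n+r}$), be a symplectic basis; since the $H$-blocks are the consecutive quadruples $(e_{4a-3},\dots,e_{4a})$, for $n=2m$ the ``$e$-half'' consists of the first $m$ whole quaternionic blocks and the ``$f$-half'' of the last $m$ blocks (this is exactly Definition \ref{darbas}, where the $f_r$ also satisfy the $H$-block relations), and the Darboux pairs go \emph{across} blocks, pairing block $a$ with block $a+m$. You instead pool ``the first halves $e_{4a-3},e_{4a-2}$ of all blocks'' into the $e$'s and the second halves into the $f$'s; that is precisely the within-block configuration of the \emph{preceding} lemma, which is impossible for every $n$: in the right-quaternionic picture each block pairing requires $\omega(w,wj)=\omega(wi,wk)=1$, while $\omega(w,wj)=-\omega(wi,wk)$ identically (with $h(w,w)$ purely imaginary), and no $\Gl(2,\Hn)$ mixing of two blocks removes this, because after any quaternionic change of basis the required pairing is still internal to each new block. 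So the central step of your ``if'' direction — that combining two blocks produces ``two honest positive Darboux pairs'' — is false for the configuration you describe, and your per-block sign invariant, if it were the right obstruction, would contradict the even case rather than prove it.

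Under the correct reading the mechanism is different. One needs the span of the first $m$ blocks and of the last $m$ blocks to be $\omega$-isotropic quaternionic subspaces paired standardly, i.e.\ a quaternionic basis in which the matrix of $h$ is the real matrix $\left(\begin{smallmatrix}0&\Id_m\\-\Id_m&0\end{smallmatrix}\right)$; this is what the paper's explicit congruence $\mathscr{C}_{2m}^{*}(j\Id_n)\mathscr{C}_{2m}$ produces, and it is where the two-coordinate mixing actually enters (turning two $j$'s on the diagonal into a real off-diagonal coupling, not ``flipping a sign''). For odd $n$ the half-cut straddles one block, and the obstruction is of the $n=1$ type: a $4\times4$ block of $\omega$ in an $H$-block basis is always of the form $\Re(\bar p\,u\,q)$ for a single quaternion $u$, and the entry pattern forced on the straddling block is not of this form — not a ``signs must cancel in pairs'' parity count. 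Your general framework (right quaternionic space, $h(x,y)=x^{*}jy$, $\Gl(n,\Hn)$ congruence, an explicit two-coordinate mixing matrix as in step $\mf{(III)}$) is the same as the paper's, so the outline can be repaired along these lines once the reindexing is corrected, but as written both directions of your argument target the wrong pairing.
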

\begin{proof}
Let us consider $e_1,\dots,e_{4n}$ as the reordering of a symplectic basis adapted to a hypercomplex structure $H$, after identifying  $\bar{a}-bj\in [\E\Hh]$ with $a+bj\in \Hn^n$, so that the conditions on the action of $H$ are satisfied. Then, the formula $\omega(x,y)=\frac12(x^*jy-y^*jx)$ is still {\it not} the standard symplectic form in this basis.  For $n=1$, it is immediate to show that the transformation $q^*jq$ never induces the standard symplectic form. But if $n=2$, then we can directly compute that the following matrix 
\[\mathscr{C}:=\left( 
    \begin{smallmatrix}
 -\frac12 k & i\\
 -\frac12 j &-1
  \end{smallmatrix}
  \right)\] 
  satisfies 
\[h(\mathscr{C}x, \mathscr{C}y)=x^*
\left(\begin{smallmatrix}
 \frac12 k & \frac12 j \\
 -i&-1
  \end{smallmatrix}
  \right)\left(\begin{smallmatrix}
 -\frac12 i  & -k\\
 \frac12  &j
  \end{smallmatrix}
  \right)y=x^*\left(\begin{smallmatrix}
 0 & 1\\
 -1 &0
  \end{smallmatrix}
  \right)y\,.
\]
This  provides the   standard expression for $\omega$ in our new basis, by setting $\omega=\Re(h)$.  This procedure can be successfully generalized to  the general case $n=2m$,  via the matrix
   \[\mathscr{C}_{2m}:=\left( 
    \begin{smallmatrix}
 -\frac12 k\id_{\Hn^m} & i\id_{\Hn^m}\\
 -\frac12 j\id_{\Hn^m} &-\id_{\Hn^m}
  \end{smallmatrix}
  \right)\,.\] 
In particular, note that $\mathscr{C}_{2}=\mathscr{C}$. However,  for $n=2m+1$ this is not possible. Indeed,   we can restrict $\mathscr{C}_{2m}$  to the first $4m$ vectors of the above basis, and for the remaining vectors in the basis, we get $j$ in the diagonal of the matrix corresponding to $\omega$.  Hence, for $n=2m+1$ this task can not be resolved, in line with the case $n=1$.  
  \end{proof}
     
It is reasonable to provide a special name for the basis  constructed  in Lemma \ref{darb}, as we do below.
     
    \begin{defi}\label{darbas}
Let $e_1,\dots,e_{4m},f_1,\dots,f_{4m}$ be  a symplectic basis on a symplectic vector space $(V,\omega)$ of real dimension $8m$.  Then, we say that it forms a  \textsf{quaternionic Darboux basis} for the linear hypercomplex structure $H=\{J_1,J_2,J_3\}$  on $V$,   if the following holds:
\[
  \begin{tabular}{l l}
  $J_1(e_{4a-3})=-e_{4a-2}$\,,  & $J_1(f_{4a-3})=-f_{4a-2}$\,,\\
  $J_2(e_{4a-3})=-e_{4a-1}$\,, &  $J_2(f_{4a-3})=-f_{4a-1}$\,,\\
  $J_3(e_{4a-3})=-e_{4a}$\,, &   $J_3(f_{4a-3})=-f_{4a}$\,, 
  \end{tabular}
  \]
for any $a=1,\ldots, m$.
 \end{defi}
 Note that this definition coincides with the definition of  an admissible basis to a linear hypercomplex structure, given in \cite{AM}, but be aware that their $V$ is a left quaternionic vector space.
 
 \begin{example}
For $n=2$,  assume that $e_1, e_2$ are non-zero vectors in $\Hn^2$ for which the quaternionic lines $e_1\cdot \Hn$ and $e_2\cdot \Hn$ do not coincide. If $H$ is the linear hypercomplex structure induced by right multiplication via $-i,-j,-k$, then a quaternionic Darboux basis is given by 
\[
\{e_1,e_1i,e_1j,e_1k, e_2, e_2i,e_2j,e_2k\}\,.
\]
 Let us  now consider the skew-Hermitian basis $\fr{B}=\{e_1, e_2, ie_1, ie_2, je_1, je_2, ke_1, ke_2\}$ of $\Hn^2$ (viewed as a left quaternionic vector space) described in Example \ref{examplebase2}.  Then  we can multiply the transition matrix $\mathscr{C}$ in the proof of Lemma \ref{darb} with the matrix corresponding to the linear quaternionic conjugation. This composition  maps the coordinates $(a_1,\dots,a_8)$ in the quaternionic Darboux basis to 
\[
(-a_6+\frac12a_4, -a_5+\frac12a_3, -a_5-\frac12a_3, a_6+\frac12a_4, a_8+\frac12a_2, a_7+\frac12a_1, -a_7+\frac12a_1, a_8-\frac12a_2)
\]
 in the skew-Hermitian basis $\fr{B}=\{e_1, e_2, ie_1, ie_2, je_1, je_2, ke_1, ke_2\}$.  Note that the indicated composition is a linear isomorphism between the linear  hs-H structures provided by these bases.
 \end{example}
 Let us finally emphasize the following nice application of quaternionic Darboux bases in the theory of parabolic geometries.
\begin{prop}\label{gradbas}
\textsf{1)} In a quaternionic Darboux basis, $\so^{*}(4n)$ carries a $|1|$-grading 
\[
 \so^{*}(4n)_{-1}\oplus  \so^{*}(4n)_{0}\oplus  \so^{*}(4n)_{1}
 \]
  represented by the following matrix
\[
\left(\begin{smallmatrix}
A & C\\
B &-A^*
\end{smallmatrix}\right),
\]
for $A,B,C\in \gl(n,\Hn)$, $A\in \so^{*}(4n)_{0}$ $B^*=B\in \so^{*}(4n)_{-1},C^*=C\in \so^{*}(4n)_{1}.$ This depth 1-gradation corresponds to the last node in the corresponding Satake diagram.\\
\textsf{2)} In the basis from the proof of Lemma \ref{darb}, $\so^{*}(4n+2)$  carries a $|2|$-grading 
\[
\so^{*}(4n)_{-2}\oplus \so^{*}(4n)_{-1}\oplus  \so^{*}(4n)_{0}\oplus  \so^{*}(4n)_{1}\oplus \so^{*}(4n)_{2}
\]
 represented by the following matrix
\[
\left(\begin{smallmatrix}
A &Y& C\\
X& uj & jY^*\\
B & -X^*j& -A^*
\end{smallmatrix}\right),
\]
for $A,B,C\in \gl(n,\Hn)$, $A\in \so^{*}(4n)_{0}, B^*=B\in  \so^{*}(4n)_{-2},C^*=C\in \so^{*}(4n)_{2}$, $X,Y\in \Hn^n$, $X\in \so^{*}(4n)_{-1}$, $Y\in \so^{*}(4n)_{1}$ and $u\in \R\subset \so^{*}(4n)_{0}$. This depth 2-gradation corresponds to the last two nodes in the corresponding Satake diagram.\\
\textsf{3)} There is a diagonal Cartan subalgebra of $\so^{*}(4n)$ and $\so^{*}(4n+2)$ in both of these matrix representations.
\end{prop}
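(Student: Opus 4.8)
The plan is to realize $\fr g\in\{\so^{*}(4n),\so^{*}(4n+2)\}$ explicitly, in the basis under consideration, as the Lie algebra of endomorphisms annihilating the quaternionic skew-Hermitian form $h$, and then to read off the gradings directly from the resulting block shape. In a quaternionic Darboux basis (Definition \ref{darbas}) the hypercomplex structure acts by right multiplication by $-i,-j,-k$ (Corollary in the right-quaternionic part of this Appendix), so an endomorphism commutes with $H$ exactly when it is left multiplication by a matrix in $\gl(2n,\Hn)$; including the leftover $\Hn$-coordinate produced in the proof of Lemma \ref{darb} replaces $\gl(2n,\Hn)$ by $\gl(2n+1,\Hn)$. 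Grouping the Darboux basis of $\so^{*}(4n)$ into its ``$e$-blocks'' and ``$f$-blocks'' identifies the right quaternionic vector space with $\Hn^{n}\oplus\Hn^{n}$, and a short computation from the defining relations together with $h=\omega\,\id+\sum_a g_{J_a}J_a$ (formula (\ref{qhermtensor})) gives
\[
h\big((x',x''),(y',y'')\big)=(x')^{*}y''-(x'')^{*}y',
\]
where $*$ is the conjugate transpose. In the $\so^{*}(4n+2)$-case one likewise gets $\Hn^{n}\oplus\Hn\oplus\Hn^{n}$ and $h\big((x_1,x_2,x_3),(y_1,y_2,y_3)\big)=x_1^{*}y_3-x_3^{*}y_1+x_2^{*}j\,y_2$, the summand $x_2^{*}j\,y_2$ being the ``$j$ in the diagonal'' already encountered there. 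Since $\SO^{*}(2N)=\Aut(h)$, in each case $\fr g$ consists of the block matrices $\mathcal A$ satisfying $h(\mathcal A x,y)+h(x,\mathcal A y)=0$.

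Imposing this identity blockwise for $\so^{*}(4n)$ forces precisely $\mathcal A_{22}=-\mathcal A_{11}^{*}$ and $\mathcal A_{12}^{*}=\mathcal A_{12}$, $\mathcal A_{21}^{*}=\mathcal A_{21}$, i.e.\ the asserted shape $\left(\begin{smallmatrix}A&C\\ B&-A^{*}\end{smallmatrix}\right)$ with $A\in\gl(n,\Hn)$ and $B,C$ Hermitian. Setting $\fr g_{0}$ to be the block-diagonal part ($\cong\gl(n,\Hn)$), $\fr g_{-1}$ the lower-left and $\fr g_{1}$ the upper-right Hermitian blocks, one verifies $[\fr g_{i},\fr g_{j}]\subset\fr g_{i+j}$ (with $\fr g_{\pm2}=0$) by matrix multiplication, using $B^{*}=B$, $C^{*}=C$; the grading element is $\tfrac12\left(\begin{smallmatrix}\id&0\\0&-\id\end{smallmatrix}\right)$. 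A count of real dimensions ($4n^{2}+2(2n^{2}-n)=8n^{2}-2n=\dim_{\R}\so^{*}(4n)$) shows the list is complete. For $\so^{*}(4n+2)$ the same bookkeeping with the $3\times3$ pattern forces the middle entry to be left multiplication by an $a\in\Hn$ with $\bar a j+ja=0$, hence $a=uj$ with $u\in\R$, and forces the couplings $\mathcal A_{23}=j\mathcal A_{12}^{*}$, $\mathcal A_{32}=-\mathcal A_{21}^{*}j$, together with $\mathcal A_{13},\mathcal A_{31}$ Hermitian and $\mathcal A_{33}=-\mathcal A_{11}^{*}$; writing $A=\mathcal A_{11}$, $Y=\mathcal A_{12}$, $X=\mathcal A_{21}$, $C=\mathcal A_{13}$, $B=\mathcal A_{31}$ recovers the displayed matrix. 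The five block levels give the $|2|$-grading with $\fr g_{0}\cong\gl(n,\Hn)\oplus\R$ (grading element $\left(\begin{smallmatrix}\id&0&0\\0&0&0\\0&0&-\id\end{smallmatrix}\right)$), $\fr g_{\pm1}\cong\Hn^{n}$, $\fr g_{\pm2}\cong\{\text{Hermitian }n\times n\}$, and the dimension count matches $(2n+1)(4n+1)$. For part 3, in each realization the block-diagonal matrices with $A=\mathrm{diag}(a_{1},\dots,a_{n})$, $a_{j}\in\C\subset\Hn$ --- and, for $\so^{*}(4n+2)$, the additional real parameter $u$ in the middle slot --- form an abelian subalgebra of semisimple elements (every diagonal entry lies in $\C\subset\Hn$ or is a real multiple of $j$, hence is $\C$-diagonalizable), of real dimension $2n$ (resp.\ $2n+1$), which equals the rank; so it is a Cartan subalgebra, and it sits inside $\fr g_{0}$, hence is compatible with the grading.

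The genuinely delicate step is the identification of these gradings with ``the last node'', resp.\ ``the last two nodes'', of the two Satake diagrams recorded in Section \ref{linears}. For this I would fix the diagonal Cartan above, write the root system of $D_{2n}$ (resp.\ $D_{2n+1}$) in the usual $\pm\varepsilon_{p}\pm\varepsilon_{q}$ coordinates adapted to the complexification $\gl(n,\Hn)\otimes_{\R}\C\cong\gl(2n,\C)$ of $\fr g_{0}$, choose simple roots so that $\fr g_{0}$ becomes the standard Levi factor, verify that the real form one has produced is indeed $\so^{*}$ with the Satake involution as drawn (trivial on nodes for $n$ even; the fork-exchange for $n$ odd), and then check that the grading element equals the fundamental coweight dual to $\Lambda_{2n}$ (resp.\ to the pair $\Lambda_{2n},\Lambda_{2n+1}$). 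In practice this reduces to observing that the Levi type $\gl(n,\Hn)$, resp.\ $\gl(n,\Hn)\oplus\R$, together with the dimensions of $\fr g_{\pm1}$ and $\fr g_{\pm2}$, determines these nodes uniquely among the white nodes of the diagram. This matching is the main obstacle; the rest --- the block-triangular shapes, the bracket relations and the Cartan subalgebra claim --- is routine linear algebra, and the ``parabolic geometry'' content of the proposition then consists precisely in having exhibited these as the $|1|$- and $|2|$-gradings of the respective real forms.
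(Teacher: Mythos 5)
Your proposal is correct and follows essentially the same route as the paper: the paper's proof simply conjugates its matrix realization of $\so^{*}(4n)$ and $\so^{*}(4n+2)$ by the transition matrix $\mathscr{C}_{2n}$ from Lemma \ref{darb}, which is the same computation as yours of writing the stabilizer of $h$ whose Gram matrix in that basis is antidiagonal (with the extra middle entry $j$ in the odd case) and solving $\mathcal{A}^{*}S+S\mathcal{A}=0$ blockwise. The bracket relations, dimension counts, grading element, Cartan subalgebra and Satake-node identification that you spell out are exactly the points the paper dismisses as ``a simple observation,'' so your write-up is a more detailed version of the same argument rather than a different one.
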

\begin{proof}
We can use the  transition  matrix $\mathscr{C}_{2n}$ posed in the proof of Lemma \ref{darb} on our representation of $\so^{*}(4n)$ and $\so^{*}(4n+2)$ and obtain the claimed matrices.  Also,  it is a simple observation that the claimed decompositions are $|1|$-gradings and $|2|$-gradings, respectively, and that there is a diagonal Cartan subalgebra.
\end{proof}
We should mention that the $|2|$-grading described above, differs from the unique contact $|2|$-grading  corresponding to the second node of the Satake diagram associated to $\fr{so}^*(2n)$, see \cite{CS}. In particular, a decomposition of  the  matrix given  in Proposition \ref{gradbas} for the $\fr{so}^*(4n)$-case into blocks of size $1$, $(n-1)$, $1$ and $(n-1)$,  provides the contact $|2|$-grading of $\fr{so}^*(4n)$.  Similarly,  a decomposition of  the  matrix given in the second part of Proposition \ref{gradbas}   into blocks of size $1$, $n$, $1$ and  $(n-1)$, determines  the contact $|2|$-grading of $\fr{so}^*(4n+2)$.

{ }

\begin{thebibliography}{90}
 
  
 \bibitem[AP15]{APick}
 Albuquerque, R.; R. Picken.
   \newblock ``On invariants of almost symplectic connections.''
   \newblock \textit{Math. Phys. Anal Geom.} 18, 8 (2015).
 
 

 \bibitem[A$\ell$68]{Ale68}
 Alekseevskii, D.V. 
   \newblock ``Compact quaternion spaces.''
  \newblock \textit{Funct. Anal.  Its Appl.} 2,   106--114, (1968).
 
 

 \bibitem[ACo05]{ACort}
 Alekseevsky, D. V.;  V. Cort\'es.
 \newblock  ``Classification of pseudo-Riemannian symmetric spaces of quaternionic K\"ahler type.''
 \newblock in ``Lie groups and invariant theory.''  Providence, RI: American Mathematical Society 213 (AMS). Translations. Series 2. \textit{Adv. Math. Sci.}  56, 33--62,  (2005).

\bibitem[AM96] {AM}
 Alekseevsky, D. V.; S. Marchiafava.
\newblock ``Quaternionic structures on a manifold and subordinated structures.'' 
\newblock \textit{Ann. Mat. Pura Appl.} (IV), Vol  CLXXI, 205-273, (1996).


 \bibitem[Bon67]{Bon}
Bonan, E.
\newblock  ``Sur les  $G$-structures de type quaternionien.''
\newblock \textit{Cahiers de topologie et g\'eom\'etrie diff\`erentielle cat\'egoriques} tome
9, (4), 389--463, (1967).
 
   \bibitem[BG08]{BG08} 
    Boyer, C.P.;  K. Galicki.
  \newblock  ``Sasakian Geometry.''
  \newblock  \textit{Oxford Mathematical Monographs}, Oxford University Press:
Oxford, UK, 2008.


        
    \bibitem[Br91]{Bryant91}
       Bryant, R.
   \newblock    ``Two exotic holonomies in dimension four, path geometries, and twistor theory.''
\newblock \textit{Proc. Symp. in Pure Math}., 53, 33--88, (1991).
       
         \bibitem[Br96]{Bryant}
Bryant, R. L. 
\newblock ``Classical, exceptional, and exotic holonomies A status report.'' 
\newblock in  {\it Actes de la Table Ronde de G\'epm\'etrie  Diff\'erentielle  en l'Honneur de Marcel Berger}, Collection SMF S\'eminaires and congr\'es 1, 93--166, \textit{Soc. Math. de France}, 1996.



  
   \bibitem[Cab04]{Cabrera04}
      Cabrera, F. M.
   \newblock   ``Almost quaternion-Hermitian manifolds.''
    \newblock   \textit{Ann. Glob. Anal. Geom.}    25, 277--301, (2004).
 
 
       
       \bibitem[CabS10]{Cabrera}
      Cabrera, F. M.; A. Swann.
    \newblock   ``Curvature of almost quaternion-Hermitian manifolds.''
    \newblock   \textit{Forum Math}. 22, 21--52, (2010).
    
    
      
      \bibitem[CahS09]{CahS}
       Cahen, M.;   L. J. Schwachh\"ofer.
    \newblock     ``Special symplectic connections.''
     \newblock \textit{J. Differential Geom}. 83, 2, 229--271, (2009). 
     
  
  \bibitem[CahGR21]{CahGR}
  Cahen, M.; Gutt, S.;  J. Rawnsley.
  \newblock     ``On twistor almost complex structures.''
\newblock \textit{arXiv:2010.04780}, (2021).

 
      
         
       \bibitem[\v{C}S$\ell$09]{CS}
       \v{C}ap,  A.; J. Slovak. 
        \newblock ``Parabolic Geometries I.''
        \newblock \textit{Amer. Math. Soc}.,   Math. Surveys and Monographs,  Vol. 154, 2009.

       
    \bibitem[\v{C}S17]{Cap}
    \v{C}ap,  A.; T.~Sala\v{c}. 
    \newblock ``Parabolic conformally symplectic structures I; definition and distinguished connections.''
    \newblock  \textit{Forum Mathematicum},  30  733--751,  (2017). 
    
        \bibitem[\v{C}S17II]{CapII}
    \v{C}ap,  A.; T.~Sala\v{c}. 
    \newblock ``Parabolic conformally symplectic structures: parabolic contactification .''
    \newblock  \textit{Annali di Matematica},  197  1175--1199,  (2017).
    
    \bibitem[CMS96]{ChiMS1}
Chi, Q.-S.;   Merkulov, S. A.;   L. J. Schwachh\"ofer.
  \newblock``On the existence of infinite series of exotic holonomies.''
  \newblock\textit{Invent. Math.},  126, 391--411, (1996).

\bibitem[CMS97]{ChiMS2}
Chi, Q.-S.;   Merkulov, S. A.;   L. J. Schwachh\"ofer.
  \newblock``Exotic holonomies $E_7(a)$.'' 
  \newblock\textit{Int. J. Math.} , 8,  583--594,  (1997).


 \bibitem[CO'GW19]{CGW}
  Chrysikos, I.;  O'Cadiz Gustad, C.; H.~Winther.
  \newblock ``Invariant connections and $\nabla$-Einstein structures on isotropy irreducible spaces.''
  \newblock  \textit{Journal of Geometry and Physics}, 138,  257--284,  (2019).


 \bibitem[CGW21]{CGWPartII}
  Chrysikos, I.;  Gregorovi\v{c}, J.; H. Winther.
  \newblock   ``Differential geometry of $\SO^\ast(2n)$-structures and $\SO^{\ast}(2n)\Sp(1)$-structures--Part II.''
  \newblock   \textit{preprint}.
  
    \bibitem[Cor00]{Cortes}
 Cort\'es, V.
\newblock``A new construction of homogeneous quaternionic manifolds and related
geometric structures.''
\newblock  \textit{Mem. Amer. Math. Soc.}, 147 (700), viii+63,  (2000).
  
\bibitem[FH04]{FH}
Fulton, W.;  J. Harris.
 \newblock ``Representation Theory, A First Course.''
 \newblock  \textit{Springer-Verlag}, New York, 2004
 
\bibitem[G13]{G13}	Gregorovic, J., 
\newblock``Classification of invariant AHS-structures on semisimple locally symmetric spaces.''
\newblock \textit{Central European journal of mathematics}, 11 (12),  2062--2075, (2013).
    
    \bibitem[Gau97]{Gau}
   Gauduchon, P.
    \newblock  ``Canonical connections for almost-hypercomplex structures.''
     \newblock   in \textit{Complex Analysis and Geometry}, editors: V.~Ancona, E.~Ballico, R.~M.~Mir\`o-Roig, A.~Silva, Addison Wesley Longman Limited 1997.
     
  
  \bibitem[HOz56]{Hano}
 Hano, J.;  Ozeki, H.
    \newblock ``On the holonomy groups of linear connections.''
    \newblock  \textit{Nagoya Math.} J. 10, 97--100, (1956).
  
  \bibitem[Har90]{Harvey}
  Harvey, F. R.
   \newblock  ``Spinors and Calibrations.'' (1st Edition)
  \newblock  \textit{Academic Press},  1990.
 
     
    
        \bibitem[Hel78]{Hel} 
        Helgason, S.
       \newblock ``Differential Geometry, Lie Groups and Symmetric Spaces.''
       \newblock \textit{Academic Press}, New York 1978.
       
      
         \bibitem[KN69]{KoNo}  
  Kobayashi, S.; K. Nomizu. 
   \newblock ``Foundations of Differential Geometry,  Vol II.''
   \newblock    \textit{Wiley - Interscience}, New York, 1969.
       
       
          \bibitem[Kob72]{Kob2}
   Kobayashi, S.
    \newblock ``Transformation Groups in  Differential Geometry: Reprint of the 1972 Edition''
    \newblock  \textit{Springer-Verlag}, Berlin-Heidelberg 1995.
       
   
 \bibitem[MR75]{MR}
 Marchiafava, S.; G. Romani.
 \newblock `` Sui fibrati con struttura quaternionale generalizzata.''
 \newblock \textit{Ann. Mat. Pura. Appl.}, (4) 107, 131--157, (1975).
 
 \bibitem[MS99]{MS1}
 Merkulov, S. A.; Schwachh\"ofer,  L. J. 
 ``Classification of irreducible holonomies of torsion free affine connections.''
 \textit{Ann. Math.}, 150, 77--49, (1999). Addendum:
``Classification of irreducible holonomies of torsion-free affine connections.''
 \textit{Ann. Math.}, 150, 1177--1179, (1999).
 
 
  \bibitem[Op77]{Oproiu1}
   Oproiu, V. 
    \newblock ``Almost quaternal structures.''
     \newblock \textit{An. St. Univ. ``Al.I.Cuza'' Iasi},  23,  287--298, (1977).
     
 \bibitem[Op84]{Oproiu2}
   Oproiu, V. 
   \newblock  ``Integrability of almost quaternal structures.''
    \newblock  \textit{An. St. Univ. ``Al.I.Cuza'' Iasi}, Mat. 30, 75--84, (1984).
  
  \bibitem[Ob56]{Obata}
   Obata, M.
   \newblock  Affine connections on manifolds with almost complex, quaternion or Hermitian
structure.''
   \newblock  \textit{Jap. J. Math}., 26, 43--77, (1956).
  
  \bibitem[On04]{Oni}
   Onishchik, A.~L.
   \newblock ``Lectures on Real Semisimple Lie Algebras and their Representations.''
   \newblock \textit{European Mathematical Society}, 2004.
   
   
\bibitem[Sa$\ell$82]{Salamon82}
Salamon, S.  M. 
\newblock ``Quaternionic K\"ahler manifolds.''
\newblock \textit{Invent. Math.} 67,143--171, (1982).
   
   
\bibitem[Sa$\ell$86]{Salamon86}
Salamon, S.   M.
 \newblock ``Differential geometry of quaternionic manifolds.''
 \newblock \textit{Ann. Scient. Ec. Norm. Sup.}, $4^{e}$ s\'erie,
19,  31--55, (1986).

\bibitem[S89]{Salamon89}
Salamon, S.  M. 
 \newblock ``Riemannian geometry and holonomy groups.''
 \newblock \textit{Pitman Research Notes in Mathematics Series 201}, Longman Scientific \& Technical, Harlow Essex, 1989.
    
   \bibitem[Schw01I]{Schw} 
   Schwachh\"ofer, J.
   \newblock  ``Connections with irreducible holonomy representations.''
   \newblock  \textit{Advances in Mathematics} 160 (1), 1--80, (2001).
   
    \bibitem[Schw01II]{Schw2} 
   Schwachh\"ofer, L. J.
  \newblock   ``Homogeneous connections with special symplectic holonomy.''
  \newblock   \textit{Math. Z}. 238, 655--688, (2001).
  
  \bibitem[Sw91]{Swann} 
   Swann, A. 
  \newblock  ``HyperK\"ahler and quaternionic K\"ahler geometry.''
  \newblock  \textit{Math. Ann.} 289, 3, 421--450, (1991).
   
 \bibitem[Sw97]{Swann2} 
   Swann, A. 
 \newblock   ``Some remarks on quaternion-Hermitian manifolds.''
 \newblock   \textit{Archivum Mathematicum},  33, 4, 349--354, (1997).
   
   \bibitem[V85]{Vais}
   Vaisman, I. 
  \newblock  ``Symplectic curvature tensors.''
  \newblock  \textit{Monatshefte f\"ur Mathematik}, 100, 299--327, (1985).

    \end{thebibliography}
\end{document}